\mathchardef\mhyphen="2D
\def\on{\operatorname}
\providecommand{\leftsquigarrow}{%
  \mathrel{\mathpalette\reflect@squig\relax}%
}
\newcommand{\reflect@squig}[2]{%
  \reflectbox{$\m@th#1\rightsquigarrow$}%
}
\def\ww{node[white]{$\bullet$}node[black]{$\circ$}}
\definecolor{ao}{rgb}{0.0, 0.5, 0.0}
\newtheorem{theorem}{Theorem}[section]
\newtheorem{lemma}[theorem]{Lemma}
\newtheorem{proposition}[theorem]{Proposition}
\newtheorem{corollary}[theorem]{Corollary}
\newtheorem{introthm}{Theorem}
\newtheorem{introprop}{Proposition}
\theoremstyle{definition}
\newtheorem{construction}[theorem]{Construction}
\newtheorem{definition}[theorem]{Definition}
\newtheorem{notation}[theorem]{Notation}
\newtheorem{remark}[theorem]{Remark}
\newtheorem{example}[theorem]{Example}
\newcommand\noloc{%
  \nobreak
  \mspace{6mu plus 1mu}
  {:}
  \nonscript\mkern-\thinmuskip
  \mathpunct{}
  \mspace{2mu}
}
\newcommand\cocolon{%
  \nobreak
  \mspace{6mu plus 1mu}
  {:}
  \nonscript\mkern-\thinmuskip
  \mathpunct{}
  \mspace{2mu}
}
\newcommand{\rgraph}{{\bf G}}
\newcommand{\A}{\mathcal{A}}
\newcommand{\B}{\mathcal{B}}
\newcommand{\C}{\mathcal{C}}
\newcommand{\D}{\mathcal{D}}
\newcommand{\E}{\mathcal{E}}
\newcommand{\V}{\mathcal{V}}
\newcommand{\N}{\mathcal{N}}
\newcommand{\cof}{\on{cof}}
\newcommand{\fib}{\on{fib}}
\newcommand{\Tr}{\on{Tr}}
\newcommand{\HH}{\on{HH}}
\newcommand{\id}{\on{id}}
\newcommand{\unit}{\on{u}}
\newcommand{\counit}{\on{cu}}
\newcommand{\glsec}{\Gamma}
\newcommand{\cptglsec}{\Gamma^{\on{dual}}}
\title{Relative Calabi--Yau structures and perverse schobers on surfaces}
\author{Merlin Christ}
\date{\today}
\begin{document}
\maketitle

\abstract{We give a treatment of relative Calabi--Yau structures on functors between $R$-linear stable $\infty$-categories, with $R$ any $\mathbb{E}_\infty$-ring spectrum, generalizing previous treatments in the setting of dg categories. Using their gluing properties, we further construct relative Calabi--Yau structures on the global sections of perverse schobers, i.e.~categorified perverse sheaves, on surfaces with boundary. We treat examples coming from Fukaya categories and representation theory. In a related direction, we define the monodromy of a perverse schober parametrized by a ribbon graph on a framed surface and show that it forms a local system of stable $\infty$-categories.} 
\tableofcontents

\section{Introduction}

Let $k$ be a field. A $k$-linear triangulated category $C$ with finite dimensional Homs is called $n$-Calabi--Yau if there exists an isomorphism of vector spaces
\[\on{Ext}_C^i(X,Y)\simeq \on{Ext}^{n-i}_C(Y,X)^*\,,\]
bifunctorial in $X,Y\in C$. 
To obtain a well-behaved notion of $n$-Calabi--Yau structure on a proper $k$-linear stable $\infty$-category $\C$, one can ask for a trivialization $S\simeq [n]$ of the Serre functor $S$, i.e.~the functor satisfying the following duality for $k$-linear derived Homs in $\C$:
\[
\on{Mor}_{\C}(X,Y)\simeq \on{Mor}_{\C}(Y,S(X))^*\in \mathcal{D}(k)\,.
\]
The trivialization $S\simeq [n]$ is called a weak right $n$-Calabi--Yau structure on $\C$. Note that the natural transformations $\on{Mor}(\on{id}_{\C},S)$ describe the dual Hochschild homology $\on{HH}(\C)^*$ of $\C$. The identification $S\simeq [n]$ may thus additionally be required to be $S^1$-invariant, i.e.~to lie in the image of a dual cyclic homology class under the morphism $\on{HH}_{S^1}(\C)^*\to \on{HH}(\C)^*$. This leads to the notion of a right $n$-Calabi--Yau structure on $\C$. The importance of this $S^1$-invariance comes from the relation with topological field theories \cite{Lur09,Cos07}.

There is a similar notion of left $n$-Calabi--Yau structure on a smooth $k$-linear stable $\infty$-category $\C$, where instead of the Serre functor, one asks for a trivialization of the bimodule left dual $\on{id}_\C^!$ of the evaluation bimodule. The endofunctor $\on{id}_\C^!$ is sometimes called the inverse dualizing bimodule, as it is inverse to the Serre functor if $\C$ is smooth and proper.\\

In this paper, we will be concerned with generalizations of left and right Calabi--Yau structures to $R$-linear functors between $R$-linear stable $\infty$-categories, where $R$ is a base $\mathbb{E}_\infty$-ring spectrum. Calabi--Yau structures on functors are also referred to as relative Calabi--Yau structures. This notion was suggested by To\"{e}n \cite{Toe14} and fully worked out in the setting of dg categories by Brav--Dyckerhoff \cite{BD19}. The first half of this paper concerns a careful lift of this theory to the setting of $R$-linear stable $\infty$-categories.

There are many natural examples of relative Calabi--Yau categories, the known ones usually coming from Fukaya categories, representation theory, topology and algebraic geometry. Applications of relative Calabi--Yau structures include the constructions of shifted symplectic structures and Lagrangian structures on moduli spaces of objects \cite{BD21}, the construction of additive categorifications of cluster algebras with coefficients \cite{Wu21,Chr22b,KW23}, of $2$-Calabi--Yau exact $\infty$-categories/extriangulated categories from right $2$-Calabi--Yau functors \cite{Chr22b}, and of framed $E_2$-algebra structures on Hochschild cohomology \cite{BR23}, to name a few.

Relative Calabi--Yau structures posses the remarkable feature that they can be glued together along suitable pushouts or pullbacks of categories to produce new relative or absolute Calabi--Yau structures. As emphasized by Brav--Dyckerhoff \cite{BD19}, Calabi--Yau structures should be considered as noncommutative orientations and their gluing property as a noncommutative version of the gluing property of oriented manifolds with boundary along boundary components. 

The gluing properties of relative Calabi--Yau structures can be used to construct relative Calabi--Yau structures on functors with target the topological Fukaya categories associated with framed marked surfaces \cite{BD19}. These topological Fukaya categories can be seen as the global sections of perverse schobers on surfaces, i.e.~categorified perverse sheaves in the sense of \cite{KS14}. More generally, perverse schobers allow to define Fukaya categories of surfaces 'with coefficients'. Using the framework of \cite{Chr22}, we parametrize such perverse schobers by a ribbon graph homotopic to the surface. More specifically, such a parametrized perverse schober then amounts by definition to a certain constructible sheaf of stable $\infty$-categories on the ribbon graph, which is locally described by spherical adjunctions. 

In the second half of this paper, we discuss ways to construct relative Calabi--Yau structures on the global sections of more general perverse schobers on surfaces. The main results of this second half can be summarized as follows:

\begin{itemize}
\item We associate a local system of stable $\infty$-categories to a parametrized perverse schober on a framed marked surface encoding its monodromy on the surface away from its singularities. We also establish the independence of this local system on the chosen parametrizing ribbon graph. Further, we prove that parametrized perverse schobers without singularities are determined, up to equivalence, by their monodromy local systems.
\item In the special case of perverse schobers without singularities, we prove the existence of a relative Calabi--Yau structure on its $\infty$-category of global sections given the invariance of a local (negative) cyclic homology class under the monodromy action. This generalizes Brav--Dyckerhoff's result \cite{BD19} on relative Calabi--Yau structures on topological Fukaya categories of framed surfaces. 
\item We describe conditions which guarantee the existence of relative Calabi--Yau structures on the local and on the global sections of singular parametrized perverse schobers. 
\item We apply our results to construct relative Calabi--Yau structures on classes of examples, including Fukaya--Seidel categories, periodic topological Fukaya categories of marked surfaces, the derived categories of relative Ginzburg algebras associated with $n$-angulated surfaces, as well as variants of the latter which are linear over an arbitrary $\mathbb{E}_\infty$-ring spectrum. 
\end{itemize}

The remainder of the introduction is structured as follows. We begin in \Cref{introsec:CY} by reviewing the notion of a relative Calabi--Yau structure over a base $\mathbb{E}_\infty$-ring spectrum. We proceed in \Cref{introsec:schober} with describing our results on relative Calabi--Yau structures on perverse schobers. In \Cref{introsec:examples}, we describe the implications of our results for Fukaya--Seidel categories and other Fukaya-type categories.

\subsection{Relative Calabi--Yau structures}\label{introsec:CY}

Let $R$ be the base $\mathbb{E}_\infty$-ring spectrum. Our setting for the definition of relative Calabi--Yau structures is that of stable, presentable, dualizable, $R$-linear $\infty$-categories and dualizable (in particular colimit preserving), $R$-linear functors between them. In the following, we sketch the definition of relative Calabi--Yau structures and describe the gluing properties. The definition makes use of the functoriality of $R$-linear Hochschild homology, as well as its $S^1$-action, which we obtain from the formalism of traces \cite{HSS17,TV15}. 

Consider a dualizable $R$-linear functor $F\colon \D\to \C$ between dualizable $R$-linear $\infty$-categories. To define the notion of a right Calabi--Yau structure on $F$ (also sometimes called a relative right Calabi--Yau structure on $F$), we assume that $\C,\D$ are proper as $R$-linear $\infty$-categories. The $R$-linear $\infty$-category $\C$ being proper means that the evaluation functor $\on{ev}_\C\colon \C^\vee \otimes \C\to \on{RMod}_R$ admits an $R$-linear right adjoint, which can be identified with an endofunctor $\on{id}_\C^*$ of $\C$. If $\C$ is compactly generated, the functor $\on{id}_\C^*$ is a Serre functor on $\C$. The natural transformations between $\on{id}_\C^*$ and the identity functor are described by the dual Hochschild homology $\on{HH}(\C)^*$. In a similar way, a class $\sigma\colon R[n]\to \HH(\D,\C)^*\coloneqq \on{cof}(\HH(\C)^*\to\HH(\D)^*)$ in the dual relative Hochschild homology of $F$ defines a map $\alpha\colon \on{id}_\C\to \on{id}_\C^*[1-n]$ together with a null-homotopy of the composite map $\on{id}_\D\to \on{id}_\D^*[1-n]$ contained in the following diagram
\[
\begin{tikzcd}
{\on{id}_{\mathcal{D}}} \arrow[r] \arrow[d, dashed] & {F^*(\on{id}_\C)} \arrow[d, "F^*(\alpha)"] \arrow[r] & \on{cof} \arrow[d, dashed]      \\
\on{fib} \arrow[r]                                               & F^*(\on{id}_\C^*)[1-n] \arrow[r]     & \on{id}_{\mathcal{D}}^*[1-n]
\end{tikzcd}
\]
with horizontal fiber and cofiber sequences.  This null-homotopy allows us to fill in the dashed arrows. We call $\sigma$ a weak right $n$-Calabi--Yau structure on $F$ if the vertical maps in the above diagram are equivalences. A right $n$-Calabi--Yau structure on $F$ then further consists of a lift of $\sigma$ to a relative dual cyclic homology class. If $F$ admits a right $n$-Calabi--Yau structure, we also say that $\D$ is relative right $n$-Calabi--Yau. Non-relative right Calabi--Yau structures correspond to the case $\C=0$. 

The above definition is thus analogous to the definition of relative Calabi--Yau structures given in the setting of dg categories by Brav--Dyckerhoff \cite{BD19}. We will show in \Cref{lem:dg_CY_to_infty_CY} below that a dg functor $f\colon A\to B$ admits a weak Calabi--Yau structure if and only if the $k$-linear functor $\D(f)\colon \D(A)\to \D(B)$ between the compactly generated derived $\infty$-categories admits a weak Calabi--Yau structure. The possibility to consider relative Calabi--Yau structures on dualizable $k$-linear $\infty$-categories thus makes the $\infty$-categorical setting slightly more general than the dg categorical setting. The definition of weak relative left and right Calabi--Yau structures for compactly generated categories linear over a commutative ring spectrum was also previously described in a model categorical framework in the unpublished Master's thesis \cite{Lei17}. 

We again summarize the above definition as follows: If $\D$ is right $n$-Calabi--Yau, we have a trivialization $\on{id}_\D^*[-n]\simeq \on{id}_{\D}$ of the shifted Serre functor. If instead $\D$ is relative right Calabi--Yau, we have some natural transformation $\on{id}_\D^*[-n]\to \on{id}_{\D}$, together with an identification of its cofiber with $F^*(\on{id}_\C)=GF$, where $G$ is the right adjoint of $F$. To get a well behaved theory, it is however important that this is not just any identification of the cofiber of $\on{id}_\D^*[-n]\to \on{id}_{\D}$ with $GF$, but rather that there is a specific such cofiber sequence coming from a relative dual Hochschild class. 

Left Calabi--Yau structures for functors between smooth $R$-linear $\infty$-categories are defined similarly, by replacing the right adjoint of the evaluation functor by the left adjoint, corresponding to an endofunctor $\on{id}_\C^!$, and dual cyclic homology by negative cyclic homology.\\

\noindent {\bf Gluing Calabi--Yau structures}

Relative Calabi--Yau structure can be glued together along pushouts or pullbacks of $\infty$-categories. There are also variants of this for $(\infty,2)$-categorical lax pushouts and pullbacks, see \cite[Lem.~6.3.3]{CDW23}. 

For the gluing of left Calabi--Yau structures, consider a pushout diagram of smooth, dualizable $R$-linear $\infty$-categories and dualizable functors:
\[
\begin{tikzcd}
                        &                                                                    & \B_3 \arrow[d] \\
                        & \B_2 \arrow[rd, "\ulcorner", phantom] \arrow[r] \arrow[d] & \C_2 \arrow[d] \\
\B_1 \arrow[r] & \C_1 \arrow[r]                                            & \D            
\end{tikzcd}
\]

\begin{introthm}[\Cref{thm:leftCYglue},\cite{BD19} for $R=k$ a field]\label{introthm:leftCYglue}
If the functors $\B_1\times \B_2\to \C_1$ and $\B_2\times \B_3\to \C_2$ carry $R$-linear left $n$-Calabi--Yau structures, which are compatible at $\B_2$, then the functor $\B_1\times \B_3\to \D$ inherits an $R$-linear left $n$-Calabi--Yau structure.
\end{introthm}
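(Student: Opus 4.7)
My strategy for proving this gluing theorem is to construct the desired class on $F\colon \B_1\times \B_3 \to \D$ by first gluing the given classes in relative negative cyclic homology, and then verifying non-degeneracy of the resulting class via descent along the pushout $\D \simeq \C_1 \sqcup_{\B_2} \C_2$.

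The first step rests on the fact that $\HH$ and $\HHS$, viewed as functors on dualizable $R$-linear $\infty$-categories via the trace formalism of \cite{HSS17,TV15}, preserve small colimits. Applied to the pushout square defining $\D$, this yields a Mayer--Vietoris type cofiber sequence
\[
\HHS(\B_2) \to \HHS(\C_1) \oplus \HHS(\C_2) \to \HHS(\D)\,.
\]
Assembling this with the analogous data for the maps out of $\B_1$ and $\B_3$, one sees that the shifted relative negative cyclic homology $\HHS(\D, \B_1\times \B_3)[n]$ is the pushout of $\HHS(\C_1, \B_1\times\B_2)[n]$ and $\HHS(\C_2, \B_2\times\B_3)[n]$ along $\HHS(\B_2)[n]$. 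The compatibility datum for $\eta_1$ and $\eta_2$ at $\B_2$---concretely, a homotopy between their restrictions in $\HHS(\B_2)$---is then precisely what the universal property of this pushout requires in order to produce a well-defined class $\eta \in \HHS(\D, \B_1\times\B_3)[n]$.

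For non-degeneracy, the class $\eta$ induces a candidate fiber sequence of $\D$-bimodules comparing $\on{id}_\D^!$, $\on{id}_\D[-n]$, and a term built functorially from $F$ and its adjoints; the CY condition requires that the induced comparison map be an equivalence. Now a morphism of colimit-preserving endofunctors of $\D$ is an equivalence if and only if its restriction along each $\C_i \to \D$ is an equivalence, compatibly over $\B_2$---this is the bimodule incarnation of descent for $\D \simeq \C_1 \sqcup_{\B_2} \C_2$. By the construction of $\eta$ via the universal property, these restrictions recover the non-degeneracy data of $\eta_1$ and $\eta_2$, which are equivalences by hypothesis. Matching them compatibly via the data at $\B_2$ then shows that $\eta$ itself exhibits a left $n$-Calabi--Yau structure on $\B_1\times \B_3 \to \D$.

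The main obstacle will be the careful control of the inverse dualizing bimodule $\on{id}_\D^!$ under pushouts of $R$-linear $\infty$-categories. Specifically, the argument needs a base change and projection formula ensuring that the restriction of $\on{id}_\D^!$ along $\C_i \to \D$ sits in a controlled fiber sequence involving $\on{id}_{\C_i}^!$ and contributions from $F$, matching the cofiber of the candidate map produced by $\eta$. While this is standard in the compactly generated dg setting of \cite{BD19}, lifting it to the dualizable $R$-linear $\infty$-category setting requires unwinding the trace-theoretic construction of the left adjoint of the evaluation functor in families. Once this compatibility is in place, the remainder of the argument is a formal application of the universal property of the colimit and of the functoriality of the trace pairing.
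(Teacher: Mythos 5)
Your argument hinges on the claim that $\HH$ and $\HH(\mhyphen)^{S^1}$ preserve small colimits of dualizable $R$-linear $\infty$-categories, so that $\HH(\D)^{S^1}$ sits in a Mayer--Vietoris cofiber sequence and the glued class $\eta$ is produced by the universal property of a pushout. This is false, and the failure is not a technicality. Take $\B_2=\on{RMod}_k$ and $\C_1=\C_2=\on{RMod}_{k[\mathbb{Z}]}$ with the induction functors: the pushout in $\on{LinCat}_k^{\on{cpt-gen}}$ is $\on{RMod}_{k[F_2]}$ for $F_2$ the free group, and already $\on{HH}_0(k[F_2])=k[\on{conj}(F_2)]$ is strictly larger than $k[\mathbb{Z}]\oplus_k k[\mathbb{Z}]$ because of the conjugacy classes of mixed words. (Equivalently: by cyclicity of the trace, $\on{Tr}_\D(i_zj_z)\simeq \on{Tr}_{\C_z}(j_zi_z)$, and $j_zi_z\not\simeq\on{id}_{\C_z}$ unless $i_z$ is fully faithful.) The paper deliberately avoids this claim: the glued class is obtained from a map $X\to \HH(\D,\B_1\times\B_3)^{S^1}[-1]$, where $X$ is defined by pullback squares encoding the compatibility homotopy at $\B_2$, and the square involving $\HH(\C_1)^{S^1}\oplus\HH(\C_2)^{S^1}\to\HH(\D)^{S^1}$ is explicitly noted to be ``not necessarily biCartesian''. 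Your first step must be replaced by this weaker, but sufficient, construction.

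What \emph{is} true---and is the actual engine of the proof---is a bimodule-level statement: for a pushout in $\on{LinCat}_R^{\on{cpt-gen}}$, the counit maps $i_zj_z\to\on{id}_\D$ assemble into a biCartesian square in $\on{Lin}_R(\D,\D)$ (\Cref{objcor}, proved objectwise via sections of the Grothendieck construction in \Cref{lem:obj}). This is precisely the ``base change'' input you defer to at the end, and it is where the work lies. It allows one to build a $3\times 3$ diagram of biCartesian squares out of the decomposition of $\on{id}_\D$, apply the exact contravariant functor $(\mhyphen)^!$, identify the dualized counits with the units $\tilde{\unit}$ via \Cref{lem:dualcounit}, transplant the equivalences coming from the non-degeneracy of $\eta_1,\eta_2$ across the diagram using biCartesianness, and read off non-degeneracy of $\eta$ on the outer square. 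Your ``descent for endofunctors of $\D$'' heuristic does not by itself control the restriction of $\on{id}_\D^!$ along $\C_i\to\D$, so as written the proposal has a genuine gap in both the construction of the class and the non-degeneracy argument.
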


For the gluing of right Calabi--Yau structures, we consider a pullback diagram of proper dualizable $R$-linear $\infty$-categories and dualizable functors as follows:
\[
\begin{tikzcd}
\D \arrow[r] \arrow[d] \arrow[rd, "\lrcorner", phantom] & \C_2 \arrow[d] \arrow[r] & \B_3 \\
\C_1 \arrow[r] \arrow[d]                                & \B_2                     &      \\
\B_1                                                    &                          &     
\end{tikzcd}
\]

\begin{introthm}[\Cref{thm:rightCYglue}]\label{introthm:rightCYglue}
If the functors $\C_1\to \B_1\times \B_2$ and $\C_2\to \B_2\times \B_3$ carry $R$-linear right $n$-Calabi--Yau structures, which are compatible at $\B_2$, then the functor $\D\to \B_1\times \B_3$ inherits an $R$-linear right $n$-Calabi--Yau structure.
\end{introthm}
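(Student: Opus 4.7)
The plan is to follow the strategy behind \Cref{introthm:leftCYglue} after installing a Mayer--Vietoris identification for relative dual Hochschild homology. The key observation is that $R$-linear Hochschild homology, viewed via the trace formalism $\HH(\C)\simeq \Tr(\id_\C)$ of \cite{HSS17,TV15}, is invariant under the duality $(-)^\vee$ on dualizable $R$-linear $\infty$-categories and is symmetric monoidal. Hence applying $(-)^\vee$ to the pullback $\D\simeq\C_1\times_{\B_2}\C_2$ produces a pushout of dualizable categories, to which $\HH$ then sends to a pushout
\[
\begin{tikzcd}
\HH(\B_2) \arrow[r] \arrow[d] & \HH(\C_2) \arrow[d] \\
\HH(\C_1) \arrow[r] & \HH(\D)
\end{tikzcd}
\]
in $\on{Mod}_R$. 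Dualizing once more yields a fiber square for $\HH(-)^*$.

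Combining this fiber square with the evident fiber sequences $\HH(\B_1\times\B_2)^*\to \HH(\C_1)^*\to \HH(\C_1,\B_1\times\B_2)^*$, and analogous ones for $\C_2$ and $\D$, I would produce a Mayer--Vietoris fiber square identifying $\HH(\D,\B_1\times\B_3)^*$ with the fiber product of $\HH(\C_i,\B_i\times\B_2)^*$ over $\HH(\B_2)^*$. The hypothesis that the input classes $\sigma_1,\sigma_2$ are compatible at $\B_2$ amounts exactly to the statement that their images in $\HH(\B_2)^*$ agree; the fiber square then assembles $\sigma_1,\sigma_2$ into a canonical class $\sigma\colon R[n]\to \HH(\D,\B_1\times\B_3)^*$.

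To verify weak non-degeneracy of $\sigma$, I would unpack it into the cofiber-sequence diagram defining a weak right $n$-Calabi--Yau structure, relating $\id_\D$, $\id_\D^*[1-n]$, and $G\circ F$, where $G$ is the right adjoint of the functor $F\colon \D\to \B_1\times\B_3$. Base change along the pullback square defining $\D$ expresses this diagram as the fiber product of the analogous diagrams coming from $\sigma_1$ and $\sigma_2$; since the latter are non-degenerate by hypothesis and equivalences are closed under finite limits, non-degeneracy of $\sigma$ should follow by a two-out-of-three argument.

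The main obstacle will be the $S^1$-equivariant refinement required to upgrade from a weak right $n$-Calabi--Yau structure to a genuine one, since the classes actually live in dual cyclic homology rather than in dual Hochschild homology. Here I would lean on the fact that the trace, together with the duality $(-)^\vee$ used above, is naturally $S^1$-equivariant in the $(\infty,2)$-categorical formalism of \cite{HSS17,TV15}, so that the Mayer--Vietoris square above refines to an $S^1$-equivariant pullback square for $\HHS(-)^*$. Given compatible cyclic lifts of $\sigma_1,\sigma_2$, one thereby obtains a cyclic lift of $\sigma$, and the non-degeneracy verification is formally the same. The most delicate point will be making the notion of $S^1$-equivariant compatibility at $\B_2$ precise enough that the cyclic lift of $\sigma$ is canonically determined rather than merely existing.
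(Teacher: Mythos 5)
Your overall architecture (assemble the glued class from a pullback over $\HH(\B_2)^*$, then verify non-degeneracy by expressing the bimodule diagram for $\D$ as a gluing of the diagrams for $\C_1,\C_2$) matches the paper's, but two steps as written do not hold up. First, the claimed Mayer--Vietoris equivalence is false: $R$-linear Hochschild homology does not send pushouts (or pullbacks) of dualizable categories to pushouts (or pullbacks) of $R$-modules. A minimal counterexample: the pushout of $0\leftarrow \C\to 0$ in $\on{LinCat}_R$ is $0$, while the pushout of $0\leftarrow\HH(\C)\to 0$ in $\on{RMod}_R$ is $\HH(\C)[1]$. Accordingly, the paper's square \eqref{eq:XHH} relating $\HH(\D,\B_1\times\B_3)^*_{S^1}$ to the two relative groups over $\HH(\B_2)^*_{S^1}$ is explicitly \emph{not} asserted to be biCartesian; all that exists, and all that is needed, is a canonical map from the iterated pullback $X$ to $\HH(\D,\B_1\times\B_3)^*_{S^1}[-1]$, which already assembles compatible classes into $\eta$. (A minor point: compatibility means the restrictions to $\HH(\B_2)^*_{S^1}$ differ by a sign, not that they agree.) Your construction of $\sigma$ survives once ``identifying'' is weakened to ``mapping to'', but the step as stated is wrong; it also dissolves your worry about an $S^1$-equivariant Mayer--Vietoris square, since the paper works with $\HH(-)^*_{S^1}$ from the outset and only passes to underlying Hochschild classes to test non-degeneracy.

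Second, the phrase ``base change along the pullback square expresses this diagram as the fiber product of the analogous diagrams'' conceals the actual content of the proof. What is needed, and what the paper establishes as \Cref{prop:cptobjcolimit}, is that the square of unit maps $\on{id}_\D\to k_zj_z$ (with $j_z$ the projections from $\D$ to $\C_1,\C_2,\B_2$ and $k_z$ their right adjoints) is biCartesian in $\on{Lin}_R(\D,\D)$; equivalently, the identity bimodule of $\D$ is glued from the pushforwards of the identity bimodules of the pieces. This is proved by an explicit analysis of sections of the Grothendieck construction of the restricted diagram of compact objects (with some care, since $\on{Ind}$-completion and passing to opposite categories do not commute), and it is not formal. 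Once it is in place, applying the exact functor $(-)^*$, matching the resulting $3\times 3$ diagram against the $(1-n)$-shifted unit diagram via $\eta_1,\eta_2$, and filling in by biCartesianness gives the non-degeneracy exactly as in \Cref{thm:leftCYglue}. Without this lemma your two-out-of-three argument has nothing to run on.
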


We note that the analogue of \Cref{introthm:rightCYglue} was not previously known for right Calabi--Yau structures on dg categories \cite{BD19}.

\subsection{Perverse schobers and relative Calabi--Yau structures}\label{introsec:schober}

Perverse schobers are a, in general conjectural, categorification of perverse sheaves, proposed by Kapranov-Schechtman \cite{KS14}. In this paper, we use the framework of perverse schobers parametrized by ribbon graphs of \cite{Chr22}. This describes perverse schobers on marked surfaces with boundary in terms of constructible sheaves valued in stable $\infty$-categories defined on a spanning ribbon graph embedded in the surfaces. Concretely, a perverse schober $\mathcal{F}$ parametrized by a graph ${\rgraph}$ is encoded as a functor $\mathcal{F}\colon \on{Exit}({\rgraph})\to \on{St}$. Here $\on{St}$ denotes the $\infty$-category of stable $\infty$-categories and the domain denotes the exit path category of $\rgraph$, whose objects are the vertices and edges of $\rgraph$ and whose morphisms describe the incidence between vertices and edges. The limit of this functor, denoted by $\glsec(\rgraph,\mathcal{F})$, is called the $\infty$-category of global sections of $\mathcal{F}$.

We remark that the usage of enhanced triangulated categories (such as the stable $\infty$-categories we employ in this paper) in our treatment of perverse schobers is essential, since there is no sensible theory of homotopy (co)limits of non-enhanced triangulated categories, which is needed for such a sheaf theory.\\

\noindent {\bf Monodromy of perverse schobers} 

A perverse sheaf on a topological surface restricts to a cochain complex of locally constant sheaves on the top dimensional stratum, which is the complement of the discrete set of singularities. The cohomology sheaves of this cochain complex are trivial except in degree $-1$ (in the typical convention), thus defining a local system of vector spaces. We discuss in \Cref{subsec:monodromy}, and sketch in the following, how to associate a similar local system of stable $\infty$-categories to parametrized perverse schobers.

If we only consider connected surfaces, then all generic, i.e.~non-singular, stalks of a given perverse sheaf are equivalent. The same is true for a $\rgraph$-parametrized perverse schober $\mathcal{F}$: the value of $\mathcal{F}$ at any edge of $\rgraph$ is independent of the chosen edge, up to equivalence, and should be considered as the generic stalk. Denote the generic stalk by $\N$. Locally at each vertex $v$ of $\Gamma$, a perverse schober is described by a spherical adjunction $\mathcal{V}\leftrightarrow \mathcal{N}$, with $\mathcal{V}$ called the $\infty$-category of vanishing cycles at $v$. If $\mathcal{V}\neq 0$, we call the vertex $v$ a singularity of $\mathcal{F}$. 

Let $\rgraph_0$ be the set of vertices of $\rgraph$. Given a perverse schober $\mathcal{F}$ with set of singularities $P\subset \rgraph_0$, we wish to associate a local system valued in $\on{St}$ on ${\bf S}\backslash P$, which we describe as a group homomorphism 
\[  \pi_1({\bf S}\backslash P)\longrightarrow \pi_0\on{Aut}(\mathcal{N})\]
to the group of equivalence classes of autoequivalences of the generic stalk $\N$. 

There is in general no \textit{canonical} choice of such a local system. We can however canonically define a local system $\mathcal{L}\mathcal{F}$ on the total space of the frame bundle $\on{Fr}({\bf S}\backslash \rgraph_0)\to {\bf S}\backslash \rgraph_0$. The fiber of the frame bundle has the homotopy type of the circle $S^1$, the monodromy of the local system along the fiber is given $[2]$. Suppose now that we choose a framing $\xi$ of the surface ${\bf S}\backslash P$, meaning a section of its frame bundle. We can then pull back to a local system $\xi^*\mathcal{L}\mathcal{F}$ on ${\bf S}\backslash \rgraph_0$, and crucially, this local system extends to ${\bf S}\backslash P$. This defines the desired monodromy local system of $\mathcal{F}$. We stress that this local system depends on the choice of framing $\xi$.

Note that in the special case that $\N$ is $2$-periodic, i.e.~$[2]\simeq\on{id}_\N$, the local system on the frame bundle has trivial monodromy on the fiber. It thus already reduces to a local system on ${\bf S}\backslash P$ and no choice of framing is required as input.

Perverse schobers without singularities are fully determined by their monodromy:

\begin{introprop}[\Cref{prop:schobersfrommonodromy}]\label{introprop:monodromy}
Let $\xi$ be a framing of ${\bf S}$. Let $\mathcal{F}_1,\mathcal{F}_2$ be two $\rgraph$-parametrized perverse schobers without singularities with identical generic stalk $\N$. Then there exists an equivalence $\mathcal{F}_1\simeq \mathcal{F}_2$ if and only if the the corresponding local systems 
\[  \xi^*\mathcal{L}\mathcal{F}_1,\xi^*\mathcal{L}\mathcal{F}_2\colon \pi_1({\bf S}\backslash P)\longrightarrow \pi_0\on{Aut}(\mathcal{N})\]
are equivalent.
\end{introprop}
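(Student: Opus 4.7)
My plan is to prove the two directions separately. The \emph{only if} direction is a formal consequence of the functoriality of the construction $\mathcal{F} \mapsto \xi^*\mathcal{L}\mathcal{F}$: any equivalence $\mathcal{F}_1 \simeq \mathcal{F}_2$ of $\rgraph$-parametrized perverse schobers induces an equivalence of the associated local systems.

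For the \emph{if} direction, I would first reduce the classification of non-singular $\rgraph$-parametrized schobers to that of local systems on ${\bf S}$ with fiber $\mathcal{N}$. Because $\mathcal{F}$ has no singularities, the spherical adjunction at every vertex $v$ is of the form $0 \leftrightarrow \mathcal{N}$ and hence trivial; consequently $\mathcal{F}(v) \simeq \mathcal{N}$, and every exit morphism $\mathcal{F}(v) \to \mathcal{F}(e)$ is an equivalence in $\on{St}$. The functor $\mathcal{F}\colon \on{Exit}(\rgraph) \to \on{St}$ thus factors through the sub-$\infty$-groupoid $B\on{Aut}(\mathcal{N}) \subset \on{St}$, and further through the groupoidification of $\on{Exit}(\rgraph)$. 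Since this groupoidification is homotopy equivalent to the underlying topological space of $\rgraph$, and since the spanning ribbon graph inclusion $\rgraph \hookrightarrow {\bf S}$ is itself a homotopy equivalence, non-singular $\rgraph$-parametrized schobers with generic stalk $\mathcal{N}$ correspond, up to equivalence, to group homomorphisms $\pi_1({\bf S}) \to \pi_0\on{Aut}(\mathcal{N})$ (note that $P = \emptyset$ in the non-singular case).

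The final step, which I expect to be the main obstacle, is to identify this naive local system, obtained by composing the transition equivalences $\mathcal{F}(v) \xrightarrow{\sim} \mathcal{F}(e)$ along loops in $\rgraph$, with the monodromy local system $\xi^*\mathcal{L}\mathcal{F}$ defined via the frame bundle in \Cref{subsec:monodromy}. This requires unpacking the construction of $\mathcal{L}\mathcal{F}$ on $\on{Fr}({\bf S}\backslash \rgraph_0)$ and verifying that, in the non-singular case, its transport along paths is governed by the transition equivalences of $\mathcal{F}$, while the $[2]$-monodromy along the fibers of the frame bundle is precisely what the framing $\xi$ trivializes. Once this comparison is in place, two non-singular schobers with equivalent monodromy local systems $\xi^*\mathcal{L}\mathcal{F}_i$ correspond to the same class in $[{\bf S}, B\on{Aut}(\mathcal{N})]$ and are therefore equivalent as $\rgraph$-parametrized schobers, proving the proposition.
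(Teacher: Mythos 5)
There is a genuine gap at the very start of your ``if'' direction. You claim that, because $\mathcal{F}$ has no singularities, $\mathcal{F}(v)\simeq \mathcal{N}$ and every exit morphism $\mathcal{F}(v)\to\mathcal{F}(e)$ is an equivalence, so that $\mathcal{F}$ factors through $B\on{Aut}(\mathcal{N})\subset\on{St}$. This is false for vertices of valency $m\geq 3$: by \Cref{prop:localmodel}, the local model at a non-singular $m$-valent vertex is $\mathcal{V}^m_{0_{\mathcal{N}}}\simeq\on{Fun}(\Delta^{m-2},\mathcal{N})$, which is not equivalent to $\mathcal{N}$, and the exit functors are the $\varrho_i$ (evaluation/cofiber functors, up to shift and autoequivalence), which are localizations with fully faithful right adjoints $\varsigma_i$ but are not invertible. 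Consequently the functor $\on{Exit}(\rgraph)\to\on{St}$ does not factor through a groupoid, and the reduction of non-singular schobers to maps $\pi_1({\bf S})\to\pi_0\on{Aut}(\mathcal{N})$ that you propose is precisely the content of the proposition, not something you may assume. The invertibility only appears after composing the \emph{left adjoint} of one exit functor with another exit functor (the transport of \Cref{constr:transport}), and even then a full loop around a non-singular $m$-valent vertex yields $[m-2]$ rather than the identity --- this shift is exactly what the framing correction in $\xi^*\mathcal{L}\mathcal{F}$ is designed to cancel, and it is invisible in your set-up.

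The paper's proof instead contracts $\rgraph$ to a graph with a single vertex (using \Cref{prop:constraction} and part (3) of \Cref{lem:transport} to transport the hypothesis), writes both schobers in the normal form $\mathcal{G}_j(v\to e_i)=S_{i,j}\circ\varrho_i[-i]$ with $S_{i,j}\in\on{Aut}(\mathcal{N})$, and then extracts from the equivalence of monodromy local systems the relations $S_{i',1}S_{i,1}^{-1}\simeq S_{i',2}S_{i,2}^{-1}$. A gauge-fixing step (modifying the $\mathcal{G}_j$ by an equivalence so that one $S_{i,j}$ becomes the identity) is then needed to conclude $S_{i,1}\simeq S_{i,2}$ for all $i$; note that the monodromy only ever sees the ratios $S_{i'}S_i^{-1}$, so some such normalization is unavoidable in any correct argument. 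If you want to pursue your route, you would have to first \emph{prove} (rather than assert) that a non-singular $\rgraph$-parametrized schober is, after the choices above, equivalent to the data of the tuple $(S_i)$ modulo gauge, and then match this against $\xi^*\mathcal{L}\mathcal{F}$ including the shifts $[-i]$ and the winding numbers of the line field $\nu_{\rgraph}$ --- at which point you have essentially reproduced the paper's argument.
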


The notion of a non-singular parametrized perverse schober is thus non-canonically equivalent to the notion of a local system of stable $\infty$-categories on the surface. Note that what we refer to as the global sections of the non-singular perverse schober is however very different to the global sections of a local system. The former type of global sections describes a generalized topological Fukaya category and categorifies the first cohomology of the surface relative the complement in the boundary of the marked points.

Our results about the monodromy of perverse schobers relate with the problem of defining the topological Fukaya category of a marked surface over an arbitrary base $\infty$-category $\N$: Without further assumptions on $\N$, a choice of framing of the surface is required. Then there exists a perverse schober (unique up to equivalence) with generic stalk $\N$ and trivial monodromy relative to the chosen framing. Its $\infty$-category of global sections describes the desired $\N$-valued topological Fukaya category. In the case $\N=\D(k)$, this $\infty$-categorical topological Fukaya category recovers the derived $\infty$-category of the dg categorical topological Fukaya category, or equivalently of the $A_\infty$-categorical partially wrapped Fukaya-category. If $\N$ is $2$-periodic, then no choice of framing is required, there is already a perverse schober with a well-defined trivial monodromy, whose global sections give the $\N$-valued topological Fukaya category. In the setting of dg categories, this problem of constructing topological Fukaya categories (up to a contractible space of choices) was fully solved by Dyckerhoff--Kapranov \cite{DK18,DK15} using the formalism of $2$-Segal objects. Their construction in fact additionally supplies a choice of perverse schober with trivial monodromy for every choice spanning ribbon graph. Their construction was extended to the case of $\N$ the stable $\infty$-category of right modules over the $2$-periodic sphere spectrum by Lurie \cite{LurWaldhausen}.\\

\noindent {\bf Local Calabi--Yau structures}

Let $\mathcal{F}$ be an $R$-linear $\rgraph$-parametrized perverse schober. 
Locally at any vertex $v$ of the graph ${\rgraph}$, with incident edges $e_1,\dots,e_m$, the perverse schober $\mathcal{F}$ is given by a collection of functors $\mathcal{F}(v)\to \mathcal{F}(e_i)\simeq \N$, $1\leq i\leq m$. One can show that these functors arise, up to suitable equivalence, from a single spherical adjunction $F \colon \V\leftrightarrow \N\noloc G$ via an explicit construction based on the relative Waldhausen $S_\bullet$-construction, see \Cref{prop:localmodel}. We say that the adjunction $F\dashv G$ underlies $\mathcal{F}$ at $v$.

\begin{introprop}[Combine \Cref{prop:localmodel,prop:loccy}]\label{introprop:localCY}
Let $\mathcal{F}$ be a ${\rgraph}$-parametrized perverse schober, with underlying spherical adjunction $F\colon \V\leftrightarrow \N\noloc G$ near a vertex $v$ of ${\rgraph}$.
\begin{enumerate}[(1)]
\item If $F$ admits a right $n$-Calabi--Yau structure, which restricts to a right $(n-1)$-Calabi--Yau structure on $\N$, then the functor $\mathcal{F}(v)\to \prod_{i=1}^n\mathcal{F}(e_i)$ also admits a right $n$-Calabi--Yau structure.
\item If $G$ admits a left $n$-Calabi--Yau structure, which restricts to a left $(n-1)$-Calabi--Yau structure of $\N$, then the right adjoint $\prod_{i=1}^n\mathcal{F}(e_i)\to \mathcal{F}(v)$ of the above functor also admits a left $n$-Calabi--Yau structure.
\end{enumerate}
\end{introprop}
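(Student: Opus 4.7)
The plan is to combine the local model of \Cref{prop:localmodel} with iterated applications of the gluing theorems \Cref{introthm:rightCYglue} (for part (1)) and \Cref{introthm:leftCYglue} (for part (2)). Write $m$ for the valence of $v$, and let $\Phi_v$ denote the functor $\mathcal{F}(v)\to\prod_{i=1}^m \mathcal{F}(e_i)$ of interest. By \Cref{prop:localmodel}, $\Phi_v$ is canonically equivalent to a functor built from the spherical adjunction $F\colon \V\leftrightarrow \N\noloc G$ via the relative Waldhausen $S_\bullet$-construction. The first step is to unfold this construction as a tower of $(m-1)$ successive pullbacks (for part (1)) or pushouts (for part (2)), where each square glues a single copy of $F$ (resp.\ $G$) along a distinguished boundary copy of $\N$.

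For part (1) I would then argue by induction on $m$. The base case $m=1$ reduces to the given right $n$-Calabi--Yau structure on $F$, together with the observation that the identity functor on $\N$ is trivially relative right $n$-Calabi--Yau with respect to any fixed right $(n-1)$-Calabi--Yau structure on $\N$. For the inductive step, \Cref{prop:localmodel} presents $\Phi_v$ as the gluing of the analogous functor $\Phi_{v'}$ for a vertex $v'$ of valence $m-1$ with a fresh copy of $F$, identified along a common boundary copy of $\N$. The three entries of the gluing diagram of \Cref{introthm:rightCYglue} then carry right $n$-Calabi--Yau structures --- by induction, by hypothesis on $F$, and trivially on the identity factor --- and the compatibility of their restrictions on the shared $\N$-boundary is precisely the hypothesis that the right $n$-CY structure on $F$ restricts to a right $(n-1)$-CY structure on $\N$. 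Applying \Cref{introthm:rightCYglue} yields the desired right $n$-Calabi--Yau structure on $\Phi_v$.

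Part (2) proceeds formally dually, with pushouts in place of pullbacks, the right adjoint of $\Phi_v$ in place of $\Phi_v$, and \Cref{introthm:leftCYglue} in place of \Cref{introthm:rightCYglue}. The compatibility data at each inductive step are supplied by the hypothesis that the left $n$-CY structure on $G$ restricts to a left $(n-1)$-CY structure on $\N$.

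The step I expect to be the main obstacle is the compatibility bookkeeping at each inductive stage: one must verify that the boundary CY structure on the shared copy of $\N$, as inherited from the inductively-built factor $\Phi_{v'}$, genuinely coincides with the restriction of the CY structure on $F$ (respectively $G$) to $\N$. This amounts to tracking a (dual) cyclic homology class through the relative $S_\bullet$-model of \Cref{prop:localmodel} and showing that its restriction to any internal edge of the local model agrees, up to the chosen equivalence, with the fixed $(n-1)$-CY class on $\N$. This part is conceptually transparent but technically delicate; once the restrictions match at every intermediate $\N$-boundary, the $(m-1)$-fold iteration of the gluing theorem is routine.
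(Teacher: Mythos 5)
Your overall strategy (decompose the local model, then iterate the gluing theorems) is the right one, but there are two genuine gaps in the way you set it up.

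First, the decomposition is wrong. You propose to present the $m$-valent local model as an iterated gluing in which ``each square glues a single copy of $F$'' --- i.e.\ the inductive step attaches a fresh copy of the spherical functor $F\colon\V\to\N$ to the $(m-1)$-valent model along a shared copy of $\N$. That gluing does not produce $\mathcal{F}(v)\simeq\V^m_F$: it produces a category with $m$ copies of the vanishing-cycle category $\V$, i.e.\ $m$ singularities, whereas the local model at $v$ has exactly one. The correct decomposition (used in the proof of \Cref{prop:loccy}, via \cite[Lemma~4.26]{Chr22}) isolates the singularity once and for all: $\V^m_F\simeq \D\times_{\C}\V^{m+1}_{0_\C}$, so a single copy of $F$ is glued in at the very last step, and the induction on valence takes place entirely within the \emph{non-singular} models $\V^{m}_{0_\C}\simeq\V^{m-1}_{0_\C}\times_\C\V^{3}_{0_\C}$.

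Second, and more seriously, your induction has no valid base case for the non-singular part. The piece being glued at each non-singular stage is not $\on{id}_\N$ but the functor $(\varrho_1[-1],\varrho_2[-2],\varrho_3[-3])\colon\on{Fun}(\Delta^1,\C)\to\C^{\times 3}$, and establishing its relative (right or left) $n$-Calabi--Yau structure is the actual analytic content of the proposition. In the paper this is \Cref{lem:1cy} (for $\C=\on{RMod}_R$, via the explicit identification of the Serre functor of $\on{Fun}(\Delta^1,\on{RMod}_R)$ with the cotwist of the rotation adjunction in \Cref{utconstr} and \Cref{lem:serreofDelta1C}, together with a non-degeneracy and sign analysis of the glued Hochschild class), extended to general $\C$ by tensoring via \Cref{lem:cytp}. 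Your observation that ``the identity functor on $\N$ is trivially relative right $n$-Calabi--Yau'' does not substitute for this. By contrast, the compatibility bookkeeping you flag as the main obstacle is comparatively routine once the restrictions are computed (they come out as $\pm\eta_\C$ with alternating signs because $\HH([1])=-\on{id}$, which is why the shifts $\varrho_i[-i]$ appear in \Cref{prop:loccy}).
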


We further prove a novel criterion for a spherical functor $F\colon \V\to \N$ between compactly generated, proper $R$-linear $\infty$-categories, where $\N$ is weakly right $(n-1)$-Calabi--Yau, to admit a weak right $n$-Calabi--Yau structure: this is the case if and only if its twist functor $T_\V$ is equivalent to the shifted Serre functor $\on{id}_\V^*[1-n]$, see \Cref{prop:sphericalrightCY}.\\

\noindent {\bf Global Calabi--Yau structures}

Given a $\rgraph$-parametrized perverse schober $\mathcal{F}$, we can evaluate global sections at the external (i.e.~boundary) edges of $\rgraph$, whose set is denoted by $\rgraph_1^\partial$. This yields a functor 
\[ \prod_{e\in {\rgraph}_1^\partial}\on{ev}_{e}\colon \glsec({\rgraph},\mathcal{F})\longrightarrow \prod_{e\in {\rgraph}_1^\partial} \mathcal{F}(e)\,.\]
The right adjoint of this functor is denoted by $\partial \mathcal{F}$. 

Typically, a relative Calabi--Yau structure on the $\infty$-category of global sections $\glsec({\rgraph},\mathcal{F})$ arises in the smooth setting as a left Calabi--Yau structure on the functor $\partial \mathcal{F}$ and in the proper setting as right Calabi--Yau structure on the functor $\prod_{e\in {\rgraph}_1^\partial}\on{ev}_{e}$. 

Note that finite limits of proper dualizbale $R$-linear $\infty$-categories in $\on{St}$ (or equivalently in the $\infty$-category $\on{LinCat}_R$ of $R$-linear $\infty$-categories) are not necessarily again dualizable and proper. We can fix this issue by forming the limits in the $\infty$-category $\on{LinCat}_R^{\on{dual}}$ of dualizable $R$-linear $\infty$-categories, see \Cref{cor:limitisproper}. The arising notion of $\infty$-category of global sections is denoted by $\cptglsec({\bf G},\mathcal{F})$, we call these the dualizable global sections. In the proper setting, we should thus ask for the restriction of $\prod_{e\in {\rgraph}_1^\partial}\on{ev}_{e}$ to $\cptglsec({\bf G},\mathcal{F})$ to be relative right Calabi--Yau. 

When the global sections describe the partially wrapped Fukaya category of a surface, the difference between $\glsec({\bf G},\mathcal{F})$ and $\cptglsec({\bf G},\mathcal{F})$ can be explained as follows: the category $\glsec({\bf G},\mathcal{F})$ describes the usual smooth partially wrapped Fukaya category of the marked surface. The proper full subcategory $\cptglsec({\bf G},\mathcal{F})\subset \glsec({\bf G},\mathcal{F})$ consists of those Lagrangians which do not end at the boundary components containing no marked points. 

For perverse schobers without singularities, we prove the following.

\begin{introthm}[\Cref{thm:FukayaCY}]\label{introthm:FukayaCY} 
Let $\mathcal{F}$ be a $\rgraph$-parametrized perverse schober without singularities valued in dualizable $R$-linear $\infty$-categories. Suppose that the generic stalk $\mathcal{N}$ is smooth and admits a left $(n-1)$-Calabi--Yau structure 
\[\eta\colon R[n-1]\to \HH(\mathcal{N})^{S^1}\,.\]
Suppose that the monodromy local system $\HH(\mathcal{L}\mathcal{F})^{S^1}$ on ${\bf S}$ preserves $\eta$. Then the functor 
\[
\partial \mathcal{F}\colon \prod_{e\in {\rgraph}_1^\partial}\mathcal{F}(e) \longrightarrow \glsec({\rgraph},\mathcal{F})
\]
admits a left $n$-Calabi--Yau structure. 

A similar statement holds for relative right Calabi--Yau structures on the dualizable global sections $\cptglsec(\rgraph,\mathcal{F})$.
\end{introthm}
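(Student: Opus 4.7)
The plan is to build the left $n$-Calabi--Yau structure on $\partial\mathcal{F}$ via an iterated gluing argument: I would express $\partial\mathcal{F}$ as an iterated pushout of vertex-local right adjoints, each inheriting a left $n$-CY structure from \Cref{introprop:localCY}(2), and then iteratively apply \Cref{introthm:leftCYglue} one edge at a time.

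First, I would use the non-singularity assumption to identify the local model at each vertex $v$ of valence $m_v$: the underlying spherical adjunction is the trivial one $0\leftrightarrow\mathcal{N}$, and in this case the required left $n$-Calabi--Yau structure on $G\colon\mathcal{N}\to 0$ restricting to a left $(n-1)$-CY on $\mathcal{N}$ is simply the given class $\eta$. \Cref{introprop:localCY}(2) then produces a left $n$-CY structure on the right adjoint $\prod_{i=1}^{m_v}\mathcal{F}(e_i)\to\mathcal{F}(v)$ whose restriction to each factor $\mathcal{F}(e_i)\simeq\mathcal{N}$ is $\eta$. Next, I would fix a spanning tree $T\subseteq\rgraph$ and iteratively glue the vertex-local structures along the edges of $T$; because $T$ is a tree, at each step the gluing edge is shared between the already-assembled piece and a single new vertex, so the compatibility hypothesis of \Cref{introthm:leftCYglue} at that edge is automatic from the preceding step. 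This yields a left $n$-CY structure on the boundary functor of $\mathcal{F}\vert_T$, equivalently on $\partial\mathcal{F}$ with the non-tree edges regarded provisionally as additional boundary edges. Each remaining edge $e\in\rgraph\setminus T$ is then closed up by one further application of \Cref{introthm:leftCYglue}, which requires that the two local CY structures at $\mathcal{F}(e)$ agree.

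The main obstacle is verifying this last compatibility, and this is precisely where the monodromy hypothesis enters. The two identifications $\mathcal{F}(e)\simeq\mathcal{N}$ coming from the two endpoints of $e$ differ by parallel transport in the local system $\mathcal{L}\mathcal{F}$ along the unique loop in $T\cup\{e\}$; applying $\HH(-)^{S^1}$, this transport becomes the monodromy action of $\HH(\mathcal{L}\mathcal{F})^{S^1}$ on ${\bf S}$, and the hypothesis that this action preserves $\eta$ yields exactly the needed compatibility. The framing $\xi$ is essential here, as it is what descends $\mathcal{L}\mathcal{F}$ from the frame bundle to ${\bf S}$; without it, the monodromy of $[2]$ around the fibers of $\on{Fr}({\bf S})\to{\bf S}$ would act nontrivially on $\eta$ and obstruct the descent. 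The right Calabi--Yau statement on $\cptglsec(\rgraph,\mathcal{F})$ follows by the dual argument, using \Cref{introthm:rightCYglue} and \Cref{introprop:localCY}(1), with $\cptglsec$ expressed as an iterated pullback in compactly generated $R$-linear $\infty$-categories and the input being a relative dual cyclic homology class Serre-dual to $\eta$.
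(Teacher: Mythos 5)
Your strategy coincides with the paper's in its essential inputs: the local left $n$-Calabi--Yau structures come from \Cref{prop:loccy} applied to the trivial spherical adjunction $0\leftrightarrow\mathcal{N}$ with input $\eta$, and the global structure is assembled by gluing. The paper organizes the assembly differently --- it first contracts $\rgraph$ to a single vertex via \Cref{prop:constraction} and \Cref{lem:invcontraction}, so every internal edge becomes a loop, and then applies \Cref{thm:schobercy} once --- whereas you glue inductively over a spanning tree; these are equivalent bookkeeping devices. However, two steps you treat as free are where the actual work lies.

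First, the compatibility at tree edges is not ``automatic.'' The gluing condition at an internal edge $e$ with halfedges $a\neq b$ is $\eta_{e,a}\simeq -\eta_{e,b}$, with a sign, and the canonical class of \Cref{prop:loccy} lives on the \emph{shifted} functors $(\varsigma_1[1],\dots,\varsigma_m[m])$; its restrictions along the schober's actual edge functors therefore carry alternating signs $(-1)^i\eta$ (since $\HH([1])^{S^1}=-\on{id}$) depending on the position $i$ of the halfedge, and at a non-tree edge the monodromy autoequivalence contributes a further shift by the winding number of the loop relative to $\xi$. Matching all these signs is exactly where the paper uses that the winding numbers of a framing are even; omitting this, the classes need not satisfy the gluing hypothesis. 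Second, your assertion that the two identifications $\mathcal{F}(e)\simeq\mathcal{N}$ at a non-tree edge differ precisely by the transport of $\mathcal{L}\mathcal{F}$ around the loop in $T\cup\{e\}$ presupposes that $\mathcal{F}$ has already been replaced by an explicit model built from the standard local pieces twisted by these transports. That replacement is legitimate but is itself a theorem: the paper constructs such a model $\mathcal{G}$ by hand and invokes \Cref{prop:schobersfrommonodromy} (non-singular parametrized schobers are classified by their monodromy local systems) to conclude $\mathcal{F}\simeq\mathcal{G}$. Your proof needs this classification, or an explicit normalization along the spanning tree, to be complete; the dual right Calabi--Yau statement for $\cptglsec$ then goes through as you indicate.
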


\Cref{introthm:FukayaCY} generalizes Brav--Dyckerhoff's result \cite{BD19} on relative Calabi--Yau structures on topological Fukaya categories of framed marked surfaces (corresponding to the case $\N=\D(k)$ and a perverse schober with trivial monodromy relative to the chosen framing). 

There is no direct analogue of \Cref{introthm:FukayaCY} for general perverse schobers with singularities. Essentially, this is because a perverse schober is not determined up to equivalence by the separate records of monodromy data and local singularity data, see \Cref{ex:monodromy}. There is however an almost immediate consequence of the gluing property of relative Calabi--Yau structures for global sections of perverse schobers, see \Cref{thm:schobercy}, which can be applied in practice by using the local Calabi--Yau structures from \Cref{introprop:localCY}. 

\subsection{Examples: Fukaya categories and Fukaya-type categories}\label{introsec:examples}

\noindent {\bf Fukaya--Seidel categories}

Let $X$ be an exact symplectic manifold of dimension $2n$ and $\pi\colon X\to \mathbb{D}$ a Lefschetz fibration with base the disc. Let $F$ be the regular fiber of $\pi$ and $\on{Fuk}(F)$ the proper Fukaya $A_\infty$-category of compact Lagrangians in $F$. The Fukaya--Seidel $A_\infty$-category $\on{FS}(\pi)$ is equivalent to the directed $A_\infty$-subcategory of $\on{Fuk}(F)$ on the vanishing cycles of the Lefschetz fibrations \cite{Sei08}. The corresponding derived Fukaya--Seidel category is a triangulated category, and admits a canonical enhancement to a $k$-linear stable $\infty$-category $\D(\on{FS}(\pi))$.

The formalism of parametrized perverse schobers on the disc $\mathbb{D}$, considered as a marked surface with a single marked point, realizes the derived Fukaya--Seidel category $\D(\on{FS}(\pi))$ as the global sections of a perverse schober. The generic stalk of the schober is the derived Fukaya category of the fiber $\D(\on{Fuk}(F))$. The singularities of the perverse schober lie at the singular values of the Lefschetz fibration, the corresponding spherical adjunctions arise from the spherical objects in $\on{Fuk}(F)$ given by the vanishing cycles. The ribbon graph parametrizing $\mathcal{F}$ is chosen so that the singular values all lie at $1$-valent vertices. There is a further non-singular $(m+1)$-valent vertex $v$, with $m$ the number of vanishing cycles. The value of $\mathcal{F}$ at $v$ is the directed $\infty$-category $\mathcal{F}(v)\simeq \on{Fun}(\Delta^{m-1},\D(\on{Fuk}(F)))$.

Any spherical object in a weak $(n-1)$-Calabi--Yau category gives rise to a weak $n$-Calabi--Yau spherical functor, see \Cref{lem:sphCY}. The gluing properties of right Calabi--Yau structures thus yield a weak relative right $n$-Calabi--Yau structure on $\D(\on{FS}(\pi))$. This induces the known natural transformation \cite{Sei06} from the Serre functor $\on{id}_{\D(\on{FS}(\pi))}^*\to \on{id}_{\D(\on{FS}(\pi))}[n+1]$.\\

In \Cref{subsec:FScats}, we will give a more detailed account of the above construction in the alternative framework of \cite{GPS24} for the definition of Fukaya--Seidel categories. In this framework, we furthermore prove that the smooth and proper Fukaya--Seidel category admits not only a weak relative right $n$-Calabi--Yau structure, but also a weak relative left $n$-Calabi--Yau structure:

\begin{introthm}[\Cref{thm:FSschober}]
Let $\pi\colon X\to \mathbb{C}_{\geq 0}$ be a Lefschetz fibration as in \Cref{subsec:FScats}.
\begin{enumerate}[(i)]
\item The derived $\infty$-category of the Fukaya--Seidel category $\D(\on{FS}(\pi))$ arises as the $\infty$-category of global sections of the perverse schober $\mathcal{F}$ on the disc from \Cref{constr:FSschober}. 
\item The smooth and proper derived Fukaya--Seidel category $\D(\on{FS}(\pi))$ admits both a weak relative left $n$-Calabi--Yau structure and a weak relative right $n$-Calabi--Yau structure. 
\end{enumerate} 
\end{introthm}

The Fukaya--Seidel category is defined in \cite{GPS24} as a partially wrapped Fukaya category with a stop in the fiber over $\infty$. Part (i) of \Cref{thm:FSschober} should readily generalize to the partially wrapped Fukaya categories arising from Lefschetz fibrations over an arbitrary marked surface. Up to technicalities, this follows from the cosheaf properties of such a partially wrapped Fukaya category shown in \cite{GPS24}. The statement about the relative left Calabi--Yau property of part (ii) of \Cref{thm:FSschober} may be generalized to this setting given an understanding of the action of the monodromy of the Lefschetz fibration on the non-degenerate Hochschild class of the wrapped Fukaya category of the fiber.\\

\noindent {\bf Periodic topological Fukaya categories}

The author's initial motivating example for treating relative Calabi--Yau structures over an arbitrary base was the construction of relative right $2$-Calabi--Yau structures on $1$-periodic topological Fukaya categories of marked surfaces. These can be considered as $\mathbb{Z}/1\mathbb{Z}$-graded versions of the partially wrapped Fukaya categories. Their construction is the topic of \Cref{subsec:periodicFukaya}. Relative right $2$-Calabi--Yau structure induce $2$-Calabi--Yau Frobenius exact $\infty$-structures, see \cite{Chr22b}, which in turn induce $2$-Calabi--Yau Frobenius extriangulated structures on the homotopy $1$-categories. In the case of $1$-periodic topological Fukaya categories, this exact/extriangulated structure allows for the additive categorification of cluster algebras with coefficients associated with surfaces, see \cite{Chr22b}. 

As $k$-linear $\infty$-categories, with $k$ a field, these periodic categories are smooth but not proper, since the Ext-groups are non-zero in infinitely many degrees. This changes when we work with respect to a different base. For $n$ an integer, the derived category of $n$-periodic chain complexes is equivalent to the derived $\infty$-category of the dg algebra $k[t_{n}^\pm]$ of graded Laurent polynomials, with generator $t_n$ in degree $n$. If $n$ is even, then $k[t_{n}^\pm]$ is graded commutative and thus gives rise to an $\mathbb{E}_\infty$-ring spectrum. If $n$ is odd, we can consider $k[t_n^\pm]$ as a $k[t_{2n}^\pm]$-linear algebra object. Over the base $k[t_{n}^\pm]$, or $k[t_{2n}^\pm]$ if $n$ is odd, the derived $\infty$-category $\mathcal{D}(k[t_{n}^\pm])$ is both smooth and proper and admits left and right $n$-Calabi--Yau structures. 

Considering the $n$-periodic topological Fukaya category over the base $k[t_{n}^\pm]$, or $k[t_{2n}^\pm]$ if $n$ is odd, \Cref{introthm:FukayaCY} yields the desired relative $(n+1)$-Calabi--Yau structure on it.\\

\noindent {\bf Relative Ginzburg algebras over any base ring spectrum}

The derived $\infty$-categories of relative Ginzburg algebras of $n$-angulated surfaces arise as the global sections of parametrized perverse schobers, see \cite{Chr22,Chr21b}. In \Cref{subsec:relGinzburg}, we construct relative left $n$-Calabi--Yau structures on these derived $\infty$-categories. In the case $n=3$, this result is a special case of results \cite{Yeu16,Wu21} on relative left $3$-Calabi--Yau structures on relative Ginzburg algebras of ice quivers with potentials or equivalently relative Calabi--Yau completions. In the case $n=3$, these $\infty$-categories can further be expected to describe the derived $\infty$-categories of the partially wrapped Fukaya categories of the threefolds studied in \cite{Smi15}.

The relevant perverse schobers are locally near each vertex described by the spherical adjunction $f^*\colon \mathcal{D}(k)\leftrightarrow \on{Fun}(S^{n-1},\mathcal{D}(k))\noloc f_*$, where $f\colon S^{n-1}\to \ast$ is the map from the singular simplicial set of the $(n-1)$-sphere to the point. The functor $f_*$ is left $n$-Calabi--Yau. Furthermore, the functor $\bar{f}^*$, obtained by restricting $f^*$ to a functor $\mathcal{D}(k)\to \on{Ind}\on{Fun}(S^{n-1},\mathcal{D}^{\on{perf}}(k))$, is right $n$-Calabi--Yau. 

We can replace $\mathcal{D}(k)$ by $\on{RMod}_R$, with $R$ an $\mathbb{E}_\infty$-ring spectrum, to obtain an $R$-linear version of this adjunction. We expect that both the Calabi--Yau structures of $f_*$ and $\bar{f}^*$ can be lifted to the $R$-linear setting, but only prove that we have a weak right Calabi--Yau structure on $\bar{f}^*$. Its existence is proven using the criterion for the existence of weak right Calabi--Yau structures on spherical functors of \Cref{prop:sphericalrightCY}. 
Via gluing, the Calabi--Yau structure on $\bar{f}^*$ yields weak relative right $n$-Calabi--Yau structures on the locally compact global sections of $R$-linear perverse schobers that generalize ($\on{Ind}$-finite) derived categories of relative Ginzburg algebras.

Besides the Calabi--Yau structures for this class of examples, many other classes of examples of $R$-linear relative Calabi--Yau structures also remain to be worked out.\\

\subsection{Acknowledgments}

This paper is based on and refines results from the author's Ph.D.~thesis, advised by Tobias Dyckerhoff. I thank him for helpful discussions and feedback. I thank Gustavo Jasso for pointing out a gap in the proof of \Cref{lem:functorialinnerHom} in a previous version of the article. I further thank Chris Brav, Fabian Haiden and Bernhard Keller for helpful discussions. The author acknowledges support by the Deutsche Forschungsgemeinschaft under Germany’s Excellence Strategy – EXC 2121 “Quantum Universe” – 390833306. This project has received funding from the European Union’s Horizon 2020 research and innovation programme under the Marie Skłodowska-Curie grant agreement No 101034255. The author is a member of the Hausdorff Center for Mathematics at the University of Bonn (DFG GZ 2047/1, project ID 390685813).

\subsection{Notation}

We generally follow the notation and conventions of \cite{HTT,HA}. In particular, we use the homological grading convention. Given an $\infty$-category $\mathcal{C}$ and two objects $X,Y\in \mathcal{C}$, we denote by $\on{Map}_{\C}(X,Y)$ the mapping space. We denote the $\on{Ind}$-completion of $\C$ by $\on{Ind}(\C)=\on{Ind}_{\omega}(\C)$ and the subcategory of ($\omega$-)compact objects of $\C$ by $\C^{\on{c}}$. Given a functor $F\colon \C\to \D$, we denote its left and right adjoints, if existent, by $\on{ladj}(F)$ and $\on{radj}(F)$, respectively. 

\section{Linear \texorpdfstring{$\infty$}{infinity}-categories and Hochschild homology}\label{sec:dualityHH}

In this section, we review background material on $R$-linear $\infty$-categories, with $R$ an $\mathbb{E}_\infty$-ring spectrum, different notions of duals of bimodules, smooth and proper $R$-linear $\infty$-categories and $R$-linear traces and Hochschild homology. Much of this material appears in a similar form in \cite{HA,HSS17,SAG,BD19,BD21}, though partly with less general proofs. The reader will find that most things work in the $R$-linear setting very similarly to those in the setting of dg categories.

\subsection{Linear \texorpdfstring{$\infty$}{infinity}-categories}\label{extsec}\label{subsec:lininftycats}

Let $\on{Cat}_\infty$ be the $\infty$-category of $\infty$-categories and $\mathcal{S}$ the $\infty$-category of spaces. We denote by $\mathcal{P}r^L\subset \on{Cat}_\infty$ the subcategory of presentable $\infty$-categories and left adjoint functors and by $\mathcal{P}r^R\subset \on{Cat}_\infty$ the subcategory of presentable $\infty$-categories and right adjoint functors. The $\infty$-category $\mathcal{P}r^L$ admits a symmetric monoidal structure, such that a commutative algebra object in $\mathcal{P}r^L$ amounts to a symmetric monoidal presentable $\infty$-category $\mathcal{C}$, satisfying that its tensor product $\mhyphen \otimes \mhyphen \colon \mathcal{C}\times \mathcal{C}\rightarrow \mathcal{C}$ preserves colimits in both entries, see \cite[Section 4.8]{HA}. An example of a commutative algebra object in $\mathcal{P}r^L$ is the $\infty$-category $\on{RMod}_R$ of right module spectra over an $\mathbb{E}_\infty$-ring spectrum $R$. Note that if $R=k$ is a commutative ring, then $\on{RMod}_k$ is equivalent as a symmetric monoidal $\infty$-category to the (unbounded) derived $\infty$-category $\mathcal{D}(k)$, see \cite[7.1.2.13]{HA}.

\begin{definition}
Let $R$ be an $\mathbb{E}_\infty$-ring spectrum. The $\infty$-category 
\[ \on{LinCat}_R\coloneqq \on{Mod}_{\on{RMod}_R}(\mathcal{P}r^L)\] 
of modules in $\mathcal{P}r^L$ over $\on{RMod}_{R}$ is called the $\infty$-category of $R$-linear $\infty$-categories. 
\end{definition}

As noted in \cite[Section D.1.5]{SAG}, $R$-linear $\infty$-categories in the above sense are automatically stable. Given $\C\in \on{LinCat}_R$, we denote the result of the action of an element $C\in \on{RMod}_R$ on $X\in \C$ by $C\otimes X\in\C$.

\begin{definition}[$\!\!${\cite[4.2.1.28]{HA}}]
Let $R$ be an $\mathbb{E}_\infty$-ring spectrum. Let $\mathcal{C}$ be an $R$-linear $\infty$-category and let $X,Y\in \mathcal{C}$. A morphism object is an $R$-module $\on{Mor}_{\mathcal{C}}(X,Y)\in \on{RMod}_R$ equipped with a map $\alpha\colon \on{Mor}_{\mathcal{C}}(X,Y)\otimes X\rightarrow Y$ in $\mathcal{C}$ such that for every object $C\in \on{RMod}_R$, the following composite morphisms is an equivalence of spaces
\[ \on{Map}_{\on{RMod}_R}(C,\on{Mor}_{\mathcal{C}}(X,Y))\rightarrow \on{Map}_{\C}(C\otimes X,\on{Mor}_{\mathcal{C}}(X,Y)\otimes X) \xrightarrow{\alpha\circ\mhyphen} \on{Map}_{\mathcal{C}}(C\otimes X,Y)\,.\]
\end{definition}

We thus have $\pi_i\on{Mor}_{\mathcal{C}}(X,Y)\simeq \pi_0\on{Map}_{\mathcal{C}}(X[i],Y)$ for all $i \in \mathbb{Z}$. 

\begin{remark}
Morphism objects always exist and the formation of morphism objects forms a functor 
\[ \on{Mor}_\mathcal{C}(\mhyphen,\mhyphen)\colon \mathcal{C}^{\on{op}}\times \mathcal{C}\longrightarrow \on{RMod}_R\]
which preserves limits in both entries, see \cite[4.2.1.31]{HA}.
\end{remark}

The $\infty$-category $\on{LinCat}_R$ inherits a symmetric monoidal structure, as the module category over a commutative algebra object. We will often make use of this monoidal product and denote it by $\otimes$. The tensor product of $\C,\D\in \on{LinCat}_R$ arises as the geometric realization of the two-sided bar construction $\on{Bar}(\C,\D)_\ast\colon \Delta^{\on{op}}\to \mathcal{P}r^L$, given informally by the formula $\on{Bar}(\C,\D)_n=\C\otimes_{\mathcal{P}r^L}\on{RMod}_R^{\otimes_{\mathcal{P}r^L} n}\otimes_{\mathcal{P}r^L}\D$, where $\otimes_{\mathcal{P}r^L}$ denotes the symmetric monoidal product of $\mathcal{P}r^L$. The symmetric monoidal $\infty$-category $\on{LinCat}_R$ is closed. As observed in \cite[Section 4.1]{HSS17}, using that $\mathcal{P}r^L$ is closed, the internal Hom in $\on{LinCat}_R$, denoted $\on{Lin}_R(\C,\D)$, can be obtained as the limit of a cosimplicial object obtained from replacing the tensor products in the two-sided bar resolution by the right adjoint internal Homs in $\mathcal{P}r^L$. We record the following functoriality of the internal Hom of $\on{LinCat}_R$: 

\begin{lemma}\label{lem:functorialinnerHom}
There is a functor 
\[ \on{Lin}_R(\mhyphen,\mhyphen)\colon \on{LinCat}_R^{\on{op}}\times \on{LinCat}_R\longrightarrow \on{LinCat}_R\]
satisfying 
\begin{equation}\label{eq:linlin=linotimes}
\on{Lin}_R(\B,\on{Lin}_R(\C,\D))\simeq \on{Lin}_R(\B\otimes \C,\D)\,,
\end{equation}
functorial in $\B,\C\in \on{LinCat}_R^{\on{op}}$ and $\D\in \on{LinCat}_R$. We call $\on{Lin}_R(\C,\D)$ the $R$-linear $\infty$-category of $R$-linear functors from $\C$ to $\D$.
\end{lemma}

\begin{proof}
To construct the functor $\on{Lin}_R(\mhyphen,\mhyphen)$, we follow \cite[4.2.1.31]{HA}. Consider the functor 
\[ \on{Map}_{\on{LinCat}_R}(\mhyphen \otimes \mhyphen,\mhyphen)\colon \on{LinCat}_R^{\on{op}}\times \on{LinCat}_R^{\on{op}}\times \on{LinCat}_R\longrightarrow \mathcal{S} \,.\]
Moving the second copy of $\on{LinCat}_R^{\on{op}}$ to the other side, we obtain a functor \[ \on{LinCat}_R^{\on{op}}\times \on{LinCat}_R\rightarrow \on{Fun}(\on{LinCat}_R^{\on{op}},\mathcal{S})\,,\] 
whose image image lies in the full subcategory of representable presheaves, since $\on{LinCat}_R$ is a closed monoidal $\infty$-category. Composing with the inverse of the Yoneda embedding $\on{LinCat}_R\to \on{Fun}(\on{LinCat}_R^{\on{op}},\mathcal{S})$ yields the functor $\on{Lin}_R(\mhyphen,\mhyphen)$. By construction, we have 
\[ 
\on{Map}_{\on{LinCat}_R}(\B\otimes \C,\D)\simeq \on{Map}_{\on{LinCat}_R}(\B,\on{Lin}_R(\C,\D))\,,
\]
functorial in $\B,\C,\D$. It follows that 
\begin{align*} \on{Map}_{\on{LinCat}_R}(\A,\on{Lin}_R(\B,\on{Lin}_R(\C,\D)))&\simeq \on{Map}_{\on{LinCat}_R}(\A\otimes \B\otimes \C,\D)\\
&\simeq \on{Map}_{\on{LinCat}_R}(\A,\on{LinCat}_R(\B\otimes \C,\D))\,,\end{align*}
functorial in $\A,\B,\C\in \on{LinCat}_R^{\on{op}}$ and $\D\in \on{LinCat}_R$. Composing again with the inverse of the Yoneda embedding shows \eqref{eq:linlin=linotimes}, concluding the proof.
\end{proof}

\begin{lemma}\label{lem:internalHomexact}
Let $\C,D\in \on{LinCat}_R$. 
\begin{enumerate}[(1)]
\item The forgetful functor $\on{Lin}_R(\C,\D)\to \on{Fun}(\C,\D)$ is exact and reflects finite limits and colimits.
\item The subcategory of $\on{Lin}_R(\C,\D)$ of functors admitting an $R$-linear right adjoint (i.e.~of dualizable functors) is closed under finite limits and colimits.
\end{enumerate}
\end{lemma}

\begin{proof}
The functor $\on{Lin}_R(\C,\D)\to \on{Fun}(\C,\D)$ factors through the internal Hom $\on{Fun}^L(\C,\D)$ in $\mathcal{P}r^L$ and the functor $\on{Lin}_R(\C,\D)\to \on{Fun}^L(\C,\D)$ is exact and reflects finite limits and colimits. Since $\on{Fun}^L(\C,\D)$ and $\on{Fun}(\C,\D)$ are stable $\infty$-categories, it thus suffices for part (1) to show that the full inclusion $\on{Fun}^L(\C,\D)\subset \on{Fun}(\C,\D)$ preserves finite colimits and loops. Colimits of colimit preserving functors again preserve colimits since colimits commute with colimits. 

For part (2), it suffices to note that finite limit or colimit diagrams in $\on{Lin}_R(\C,\D)$ turn into finite limit or colimit diagrams upon passing to right adjoints. By part (1), the property of the right adjoints to preserve colimits is preserved under such finite limits and colimits.
\end{proof}

\subsection{Dualizable \texorpdfstring{$\infty$}{infinity}-categories}

We fix an an $\mathbb{E}_\infty$-ring spectrum $R$. Recall that an $R$-linear $\infty$-category $\C\in \on{LinCat}_R$ is called dualizable if it admits a duality datum consisting of evaluation and coevaluation functors 
\[\on{ev}_\C\colon \C^\vee\otimes \C\longrightarrow \on{RMod}_R\]
and 
\[\on{coev}_\C\colon \on{RMod}_R\longrightarrow \C \otimes \C^\vee \,,\]
satisfying the triangle identities. 

Recall further that an $\infty$-category $\C$ is called compactly generated, if $\C\simeq \on{Ind}(\C^{\on{c}})$ is equivalent to the $\on{Ind}$-completion of its subcategory of compact objects. 

\begin{definition}
We denote by $\on{LinCat}^{\on{dual}}_R\subset \on{LinCat}_R$ the subcategory consisting of dualizable $R$-linear $\infty$-categories and dualizable functors, meaning $R$-linear functors whose right adjoint preserves colimits and is thus again $R$-linear. 

We denote by $\on{LinCat}_R^{\on{cpt-gen}}\subset \on{LinCat}_R$ the subcategory consisting of compactly generated $\infty$-categories and compact objects preserving functors.
\end{definition} 

An $R$-linear $\infty$-category $\C\in \on{LinCat}_R$ is dualizable if and only if it is compactly assembled, see \cite[D.7.0.7]{SAG}, which is equivalent to $\C$ being a retract of a compactly generated, presentable and stable $\infty$-category in the $\infty$-category $\mathcal{Pr}^L_{\on{St}}\subset \mathcal{P}r^L$ of stable, presentable $\infty$-categories, see \cite[Prop.~D.7.3.1]{SAG}. 

In particular, any compactly generated $R$-linear $\infty$-category $\C$ is dualizable. In this case, the dual is given by the Ind-completion $\C^\vee\coloneqq \on{Ind}(\mathcal{C}^{\on{c},\on{op}})$, where $\mathcal{C}^{\on{c}}$ denotes the subcategory of compact object and $\mathcal{C}^{\on{c},\on{op}}$ its opposite category. If $\C$ is compactly generated, the evaluation functor $\on{ev}_\C$ restricts along 
\[\C^{\on{c},\on{op}}\times \C^{\on{c}}\subset \C^\vee\times \C\longrightarrow \C^\vee\otimes \C\]
to the restriction of the morphism object functor $\on{Mor}_{\C}(\mhyphen,\mhyphen)$, see \cite[D.7.2.3, D.7.7.6]{SAG}. Under the assumption that $\C$ is compactly generated, an $R$-linear functor is dualizable if and only if it preserves compact objects, see \cite[5.5.7.2]{HTT}. We thus have a fully faithful inclusion $\on{LinCat}_R^{\on{cpt-gen}}\subset \on{LinCat}^{\on{dual}}_R$. We remark that this inclusion preserves both limits and colimits, as follows from the results of \cite{Efi24}.

\begin{remark}
The $\infty$-category $\on{LinCat}^{\on{dual}}_R$ admits (small) colimits and these are preserved by the forgetful functor $\on{LinCat}_R^{\on{dual}}\to \on{LinCat}_R$. This follows from combining \cite[Prop.~1.65]{Efi24} and \cite[3.4.4.6]{HA}.

The $\infty$-category $\on{LinCat}^{\on{dual}}_R$ also admits (small) limits, described in \cite{Efi24}. In the setting of compactly generated $\infty$-categories, limits can be very concretely described: On the level of the underlying $\infty$-categories, the limit of a diagram $D\colon Z\to \on{LinCat}^{\on{cpt-gen}}$ is computed by first restricting the values $D(z)$ to compact objects, for all $z\in Z$, computing the limit of the resulting diagram in $\on{Cat}_\infty$, and then passing to the $\on{Ind}$-completion. This essentially follows from the fact that limits in the subcategory $\mathcal{P}r^{L,\on{cpt-gen}}_{\on{St}}\subset \mathcal{P}r^L$ of compactly generated, stable $\infty$-categories and compact objects preserving functors are computed this way, since $\mathcal{P}r^{L,\on{cpt-gen}}_{\on{St}}\simeq \on{St}^{\on{idem}}$ is equivalent to the $\infty$-category of idempotent complete stable $\infty$-categories.
\end{remark}

\begin{definition}
Given a dualizable $R$-linear functor $F\colon \C\to\D$ between dualizable $R$-linear $\infty$-categories with right adjoint $G$, we define the functor $F^\vee\colon \C^\vee\to \D^\vee$ as the composite
\[ \C^\vee \xlongrightarrow{\on{id}_{\C^\vee}\otimes \on{coev}_{\D}} \C^\vee\otimes \D\otimes \D^\vee \xlongrightarrow{\on{id}_{\C^\vee}\otimes G \otimes \on{id}_{\D^\vee}} \C^\vee\otimes \C\otimes \D^\vee\xrightarrow{\on{ev}_{\C^\vee}\otimes \on{id}_{\D^\vee}}\D^\vee\,.\]
Note that if $\C,\D$ are compactly generated, then 
\[ F^\vee\simeq \on{Ind}(f^{\on{op}})\]
is obtained taking the opposite functor of the restriction $f\colon \C^{\on{c}}\to \D^{\on{c}}$ of $F$ to compact objects and then $\on{Ind}$-completing.
\end{definition}

\begin{definition}\label{def:RYoneda}
Given a dualizable $R$-linear $\infty$-category $\C$, we denote by
\[ \mathcal{Y}\colon \C \simeq \on{Lin}_R(\on{RMod}_R,\C)\xlongrightarrow{\on{ev}_\C\circ ( \on{id}_{\C^\vee}\otimes(\mhyphen) )} \on{Lin}_R(\C^\vee,\on{RMod}_R)\]
the $R$-linear Yoneda embedding. Its inverse is given by
\[ 
\on{Lin}_R(\C^\vee,\on{RMod}_R)\xlongrightarrow{((\mhyphen)\otimes \on{id}_\C)\circ \on{coev}_\C} \on{Lin}_R(\on{RMod}_R,\C)\simeq \C\,.
\]
\end{definition}

\begin{lemma}
Let $F\colon \C\to \D$ be a dualizable $R$-linear functor between dualizable $R$-linear $\infty$-categories with right adjoint $G$. Then the following diagram commutes:
\[
\begin{tikzcd}
\mathcal{C}^\vee \arrow[d, "\simeq"'] \arrow[r, "F^\vee"] & \D^\vee \arrow[d, "\simeq"] \\
{\on{Lin}_R(\C,\on{RMod}_R)} \arrow[r, "(\mhyphen)\circ G"]      & {\on{Lin}_R(\D,\on{RMod}_R)}      
\end{tikzcd}
\]
The functor $(\mhyphen)\circ G$ is left adjoint to $(\mhyphen) \circ F$, and the functor $F^\vee$ is hence dualizable. 
\end{lemma}

\begin{proof}
This readily follows from inspecting the definitions and using the triangle identities for the evaluation and coevaluation functors.
\end{proof}

\begin{lemma}\label{lem:trifunctorial}
Let $\mathcal{C}$ be a dualizable $R$-linear $\infty$-category. There exist an equivalence of $R$-linear functors $\on{RMod}_R^\vee\otimes\mathcal{C}^\vee \otimes \mathcal{C}\to \on{RMod}_R$:
\begin{equation}\label{eq:evotimes} \on{ev}_{\mathcal{C}}(C\otimes Y,Z)\simeq \on{ev}_{\on{RMod}_R}(C,\on{ev}_{\mathcal{C}}(Y,Z))\,.\end{equation}
\end{lemma}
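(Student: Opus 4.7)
The plan is to recognize the claimed equivalence as the $R$-linearity of $\on{ev}_\mathcal{C}$, after identifying the counit $\on{ev}_{\on{RMod}_R}$ of the self-duality of the tensor unit with the monoidal product.

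First, I would use that $\on{RMod}_R$ is the unit of the symmetric monoidal $\infty$-category $\on{LinCat}_R$ and therefore canonically self-dual, with coevaluation given by the inverse of the unit isomorphism $\on{RMod}_R \otimes \on{RMod}_R \simeq \on{RMod}_R$. Under the induced equivalence $\on{RMod}_R^\vee \simeq \on{RMod}_R$, the counit $\on{ev}_{\on{RMod}_R}$ is identified with this same unit isomorphism, so that $\on{ev}_{\on{RMod}_R}(C, W) \simeq C \otimes W$ naturally in $C \in \on{RMod}_R^\vee$ and $W \in \on{RMod}_R$. The right-hand side of the displayed equivalence therefore rewrites as $C \otimes \on{ev}_{\mathcal{C}}(Y, Z)$, so the task reduces to producing a natural equivalence $\on{ev}_{\mathcal{C}}(C\otimes Y, Z) \simeq C \otimes \on{ev}_{\mathcal{C}}(Y, Z)$ of $R$-linear functors $\on{RMod}_R \otimes \mathcal{C}^\vee \otimes \mathcal{C} \to \on{RMod}_R$.

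Second, this reduced claim is the $R$-linearity of $\on{ev}_\mathcal{C}$. Since $\on{ev}_{\mathcal{C}} \colon \mathcal{C}^\vee \otimes \mathcal{C} \to \on{RMod}_R$ is a morphism in $\on{LinCat}_R = \on{Mod}_{\on{RMod}_R}(\mathcal{P}r^L)$, it intertwines the $\on{RMod}_R$-module structures on source and target; taking the $\on{RMod}_R$-action on the first factor of the source (the canonical action on $\mathcal{C}^\vee$) and noting that the action on the target $\on{RMod}_R$ is precisely the monoidal product, this intertwining produces exactly the required natural equivalence.

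The main subtlety is that the $R$-linearity of $\on{ev}_\mathcal{C}$ is a coherent structure rather than a mere property, as is the self-duality of $\on{RMod}_R$; they cannot be verified by checking a single commutative diagram. However, both are pinned down up to a contractible space of choices by universal properties—the self-duality of $\on{RMod}_R$ by the universal characterization of the tensor unit of $\on{LinCat}_R$, and the $R$-linearity of $\on{ev}_\mathcal{C}$ by the fact that it is defined in $\on{LinCat}_R$ rather than merely in $\mathcal{P}r^L$—so once these universal characterizations are unwound the comparison is canonical and no further combinatorial coherence argument is required.
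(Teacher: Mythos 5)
Your proposal is correct and follows essentially the same route as the paper: the paper's proof likewise identifies $\on{RMod}_R^\vee\simeq\on{RMod}_R$ via the self-duality of the unit and observes that, after composing with the resulting equivalence $\C^\vee\otimes\C\simeq\on{RMod}_R^\vee\otimes\C^\vee\otimes\C$, both sides of \eqref{eq:evotimes} become $\on{ev}_\C$, which is exactly the $R$-linearity statement you unwind. Your write-up is a somewhat more explicit elaboration of the same one-line argument.
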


\begin{proof}
We note that $\on{RMod}_R^\vee \simeq \on{RMod}_R$ and thus $\C^\vee\otimes \C\simeq \on{RMod}_R^\vee\otimes\mathcal{C}^\vee \otimes \mathcal{C}$. Composing with this equivalence, both functors in \eqref{eq:evotimes} yield $\on{ev}_\C(\mhyphen,\mhyphen)$, showing their equivalence.
\end{proof}

\subsection{Duals of bimodules}

We again fix a base $\mathbb{E}_\infty$-ring spectrum $R$. Suppose we are given two $R$-linear ring spectra $A,A'$. The $\infty$-category of $A$-$A'$-bimodules $_A\!\on{BMod}_{A'}(\on{RMod}_R)$ is equivalent to the $\infty$-category $\on{Lin}_R(\on{RMod}_A,\on{RMod}_{A'})$ of $R$-linear functors between the respective right module $\infty$-categories, see \cite[4.8.4.1, 4.3.2.7]{HA}. In terms of functors, left and right duals of bimodules, if they exist, correspond to left and right adjoints of the corresponding functors. In the following, we will work with functors instead of bimodules.

Let $\C$ be a dualizable $R$-linear $\infty$-category. We are especially interested in the adjoints of functors $\C\otimes \C^\vee\to \on{RMod}_R$ or $\on{RMod}_R\to \C\otimes \C^\vee$. This corresponds as a special case to studying modules over the enveloping algebra $A^e=A\otimes_R A^{\on{rev}}$ of some $R$-linear ring spectrum $A$. We have the following equivalences.

\begin{lemma}\label{lem:ThetaXi}~
\begin{enumerate}[(1)]
\item The $R$-linear functor $\Theta_\C$, defined as the composite
\[
\on{Lin}_R(\C^\vee\otimes \C,\on{RMod}_R)\xlongrightarrow{\on{id}_{\C}\otimes (\mhyphen)} \on{Lin}_R(\C\otimes \C^\vee\otimes \C,\C)\xlongrightarrow{(\mhyphen)\circ (\on{coev}_\C\otimes \on{id}_\C)} \on{Lin}_R(\C,\C)\,,
\]
is an equivalence with inverse $\Theta_\C^{-1}$ given by
\[
\on{Lin}_R(\C,\C)\xlongrightarrow{\on{id}_{\C^\vee}\otimes (\mhyphen)}\on{Lin}_R(\C^\vee\otimes\C,\C^\vee\otimes\C)\xlongrightarrow{ \on{ev}_\C \circ(\mhyphen)}\on{Lin}_R(\C^\vee\otimes \C,\on{RMod}_R)\,.
\]
\item The $R$-linear functor $\Xi_\C$, defined as the composite
\[
\on{Lin}_R(\on{RMod}_R,\C\otimes \C^\vee)\xlongrightarrow{(\mhyphen)\otimes \on{id}_\C} \on{Lin}_R(\C,\C\otimes \C^\vee\otimes \C)\xlongrightarrow{\on{id}_\C\otimes \on{ev}_\C\circ (\mhyphen)} \on{Lin}_R(\C,\C)\,,
\]
is an equivalence with inverse $\Xi_\C^{-1}$ given by
\[
 \on{Lin}_R(\C,\C)\xlongrightarrow{(\mhyphen)\otimes \on{id}_{\C^\vee}}\on{Lin}_R(\C\otimes \C^\vee,\C\otimes \C^\vee)\xrightarrow{(\mhyphen)\circ \on{coev}_\C}\on{Lin}_R(\on{RMod}_R,\C\otimes \C^\vee)\,.\]
\end{enumerate}
\end{lemma}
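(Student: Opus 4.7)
The plan is to verify each equivalence directly by computing the two composites with the stated inverse and showing that each is homotopic to the respective identity. Both assertions are incarnations of the standard hom-tensor adjunction for a dualizable object in a closed symmetric monoidal $\infty$-category, and the two candidate inverse formulas arise naturally from the duality datum $(\on{ev}_\C,\on{coev}_\C)$. In each case, the verification reduces to the two triangle identities
\[
(\on{id}_\C \otimes \on{ev}_\C) \circ (\on{coev}_\C \otimes \on{id}_\C) \simeq \on{id}_\C, \qquad (\on{ev}_\C \otimes \on{id}_{\C^\vee}) \circ (\on{id}_{\C^\vee} \otimes \on{coev}_\C) \simeq \on{id}_{\C^\vee}.
\]

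For (1), I would fix $F \in \on{Lin}_R(\C,\C)$ and unfold the composite $\Theta_\C(\Theta_\C^{-1}(F))$. Using the bifunctoriality of $\otimes$ on $\on{LinCat}_R$ to slide $F$ past the outer identities, the expression rewrites as $\bigl((\on{id}_\C \otimes \on{ev}_\C) \circ (\on{coev}_\C \otimes \on{id}_\C)\bigr) \circ F$, and the first triangle identity then collapses this to $F$. For the opposite composite, applied to $G\colon \C^\vee \otimes \C \to \on{RMod}_R$, a similar bifunctorial rearrangement (exploiting $\on{ev}_\C \circ (\on{id}_{\C^\vee \otimes \C} \otimes G) \simeq G \circ (\on{ev}_\C \otimes \on{id}_{\C^\vee \otimes \C})$) reduces $\Theta_\C^{-1}(\Theta_\C(G))$ to $G$ postcomposed with the functor $\bigl((\on{ev}_\C \otimes \on{id}_{\C^\vee}) \circ (\on{id}_{\C^\vee} \otimes \on{coev}_\C)\bigr) \otimes \on{id}_\C$, which is the identity on $\C^\vee \otimes \C$ by the second triangle identity.

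Part (2) will be verified by a completely parallel argument, with the roles of $\on{ev}_\C$ and $\on{coev}_\C$ and the positions of the sliding tensor factors appropriately swapped: once $F$ (respectively the input functor $\on{RMod}_R \to \C \otimes \C^\vee$) is pulled to one side of the composite, what remains in the middle is precisely one of the two triangle identity expressions tensored with an identity. The one mildly fiddly aspect is the bookkeeping of bifunctoriality coherences in the symmetric monoidal $\infty$-category $\on{LinCat}_R$ when moving functors across tensor factors; these coherences are part of the symmetric monoidal structure and their proper use reduces each verification to a single application of one of the triangle identities, so no essential obstacle is anticipated.
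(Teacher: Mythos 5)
Your proposal is correct, and for one of the two composites it coincides with the paper's argument: the paper also obtains $\Theta_\C\circ\Theta_\C^{-1}\simeq \on{id}_{\on{Lin}_R(\C,\C)}$ (and the analogue for $\Xi_\C$) directly from the triangle identities. Where you diverge is in handling the other composite. You verify $\Theta_\C^{-1}\circ\Theta_\C\simeq\on{id}$ head-on, sliding the input functor across the tensor factors via the interchange law so that the remaining middle term is the second triangle identity tensored with an identity; this works, and since every identification you use (pre/post-composition with fixed functors, unitors, interchange) is natural in the input, the objectwise equivalences do assemble into the required natural equivalence of functors — the coherence bookkeeping you flag is real but unproblematic. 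The paper instead avoids this second direct computation: it observes that the composite $\on{Lin}_R(\C,\C)\to\on{Lin}_R(\C,\on{Lin}_R(\C^\vee,\on{RMod}_R))\simeq\on{Lin}_R(\C^\vee\otimes\C,\on{RMod}_R)$ induced by the $R$-linear Yoneda embedding sends $X$ to $\on{ev}_\C\circ(\on{id}_{\C^\vee}\otimes X)$, so every object of the target lies in the essential image of $\Theta_\C^{-1}$; combined with the retraction $\Theta_\C\circ\Theta_\C^{-1}\simeq\on{id}$, an objectwise check of $\Theta_\C^{-1}\Theta_\C(Y)\simeq Y$, and exactness, this yields fullness and faithfulness. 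Your route is more symmetric and self-contained (it does not lean on the invertibility of $\mathcal{Y}$ recorded in Definition~\ref{def:RYoneda}), at the cost of carrying out the interchange manipulations twice; the paper's route trades that second computation for a short formal argument. Either is acceptable.
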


\begin{proof}
We begin by proving part (1). The equivalence of $\infty$-categories
\[  \on{Lin}_R(\C,\C)\xrightarrow{\on{Lin}_R(\C,\mathcal{Y})} \on{Lin}_R(\C,\on{Lin}_R(\C^\vee,\on{RMod}_R))\simeq \on{Lin}_R(\C^\vee\otimes \C,\on{RMod}_R)\,,\]
with $\mathcal{Y}$ the $R$-linear Yoneda embedding, maps an endofunctor $X\colon \C\to\C$ to $\on{ev}_\C\circ (\on{id}_{\C^\vee}\otimes X)$. This shows that $\Theta_\C^{-1}$ is essentially surjective. The triangle identity for $\on{ev}_\C$ and $\on{coev}_\C$ implies that $\Theta_\C\circ \Theta_\C^{-1}\simeq \on{id}_{\on{Lin}_R(\C,\C)}$. It follows that $\Theta_\C^{-1}$ is faithful, and in fact a split inclusion on Hom spaces. Using that all objects $Y\in \on{Lin}_R(\C^\vee\otimes \C,\on{RMod}_R)$ are of the form $Y\simeq \on{ev}_\C\circ (\on{id}_{\C^\vee}\otimes X)$, we find $\Theta_\C^{-1}\circ \Theta_\C(Y)\simeq Y$. Using that $\Theta_\C$ and $\Theta_\C^{-1}$ are exact, it follows that $\Theta_\C^{-1}$ is also full, showing that $\Theta_\C^{-1}$ is an equivalence. Since $\Theta_\C\circ \Theta_\C^{-1}\simeq \on{id}_{\on{Lin}_R(\C,\C)}$, the inverse of $\Theta_\C^{-1}$ is given by $\Theta_\C$. 

For part (2), a similar argument as above applies, using that the equivalence of $\infty$-categories
\[
\on{Lin}_R(\C,\C)\simeq \on{Lin}_R(\on{RMod}_R,\on{Lin}_R(\C,\C))\simeq \on{Lin}_R(\on{RMod}_R,\C\otimes \C^\vee)
\]
maps a functor $X\colon \C\to \C$ to $(X\otimes \on{id}_{\C^\vee})\circ \on{coev}_{\C}$.
\end{proof}

\begin{notation}
We denote by $\tau$ the $R$-linear equivalence $\C\otimes \C^\vee\simeq \C^\vee\otimes \C$ which permutes the factors.
\end{notation}

We can use the equivalences $\Theta_\C$ and $\Xi_\C$ to define the dual of an $R$-linear endofunctor $\C\to \C$, considered as a functor $\C^\vee\otimes \C\to \on{RMod}_R$ or $\on{RMod}_R \to \C\otimes \C^\vee$.

\begin{definition}\label{dual!*def}
Let $X\in \on{Lin}_R(\C,\C)$ be an $R$-linear endofunctor.
\begin{enumerate}
\item We call $X$ left dualizable if $\Theta_{\C}^{-1}(X)$ admits an $R$-linear left adjoint. In this case, we call
\[X^!\coloneqq \Xi_\C(\tau \circ \on{ladj}(\Theta_\C^{-1}(X)))\in \on{Lin}_R(\C,\C)\] the left dual of $X$. 
\item We call $X$ right dualizable if $\Theta_\C^{-1}(X)$ admits an $R$-linear right adjoint. In this case, we call 
\[X^\ast\coloneqq \Xi_\C(\tau \circ \on{radj}(\Theta_\C^{-1}(X)))\in \on{Lin}_R(\C,\C)\]
the right dual of $X$.
\end{enumerate}
\end{definition}

\begin{proposition}\label{prop:leftdualasradj}
Let $X\in \on{Lin}_R(\C,\C)$.
\begin{enumerate}[(1)]
\item $X$ is left dualizable if and only if $\Xi_\C^{-1}(X)$ admits a right adjoint, and in this case 
\[ X^!\simeq \Theta_\C(\tau \circ \on{radj}(\Xi_\C^{-1}(X)))\,.\]
\item $X$ is right dualizable if and only if $\Xi_\C^{-1}(X)$ admits a left adjoint, and in this case 
\[ X^*\simeq \Theta_\C(\tau \circ \on{ladj}(\Xi_\C^{-1}(X)))\,.\]
\end{enumerate}
\end{proposition}

\begin{proof}
The lemma follows from \Cref{lem:coev=dualev} below and the observations that for any dualizable $R$-linear functor $F\colon \A\to \B$, we have
\[
\on{Lin}_R(\on{radj}(F),\on{RMod}_R) \dashv \on{Lin}_R(F,\on{RMod}_R)\,.
\]
\end{proof}

\begin{lemma}\label{lem:coev=dualev}
Let $X\in \on{Lin}_R(\C,\C)$. There are commutative diagrams,
\[
\begin{tikzcd}[column sep=60pt]
\on{RMod}_R \arrow[r, "\Xi_\C^{-1}(X)"] \arrow[d, "\mathcal{Y}"']                               & \C\otimes \C^\vee \arrow[d, "\mathcal{Y}"'] \\
{\on{Lin}_R(\on{RMod}_R,\on{RMod}_R)} \arrow[r, "{\on{Lin}_R(\Theta_\C^{-1}(X),\on{RMod}_R)}"'] & {\on{Lin}_R(\C^\vee\otimes \C,\on{RMod}_R)}
\end{tikzcd}
\]
and 
\[
\begin{tikzcd}[column sep=60pt]
\C^\vee\otimes \C \arrow[d, "\mathcal{Y}"'] \arrow[r, "\Theta_\C^{-1}(X)"]                           & \on{RMod}_R \arrow[d, "\mathcal{Y}"]  \\
{\on{Lin}_R(\C\otimes \C^\vee,\on{RMod}_R)} \arrow[r, "{\on{Lin}_R(\Xi_{\C}^{-1}(X),\on{RMod}_R)}"'] & {\on{Lin}_R(\on{RMod}_R,\on{RMod}_R)}
\end{tikzcd}
\]
with $\mathcal{Y}$ the $R$-linear Yoneda embedding, see \Cref{def:RYoneda}.
\end{lemma}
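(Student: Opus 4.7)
The plan is to verify the commutativity of each diagram by unwinding both composites pointwise, using the explicit formulas from \Cref{def:RYoneda} for $\mathcal{Y}$ and from \Cref{lem:ThetaXi} for $\Theta_\C^{-1}$ and $\Xi_\C^{-1}$, and then matching them via the triangle identity for the duality pairing $(\on{ev}_\C,\on{coev}_\C)$ together with \Cref{lem:trifunctorial} on the $R$-linearity of evaluation.

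For the first diagram, I would unfold both composites as functors $\on{RMod}_R\to \on{Lin}_R(\C^\vee\otimes \C,\on{RMod}_R)$. On an input $M\in\on{RMod}_R$, the top-right composite produces the functor represented, via the Yoneda embedding for $\C\otimes\C^\vee$, by the object
\[(X\otimes\on{id}_{\C^\vee})(\on{coev}_\C(M))\in \C\otimes\C^\vee\,;\]
evaluated on a tensor $Y\otimes Z\in \C^\vee\otimes\C$, this yields an expression of the form $\on{ev}_{\C\otimes \C^\vee}(Y\otimes Z,(X\otimes\on{id}_{\C^\vee})(\on{coev}_\C(M)))$, which after applying the triangle identity for $(\on{ev}_\C,\on{coev}_\C)$ collapses to $\on{ev}_\C(Y,X(Z))\otimes M$. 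The bottom-left composite sends $M$ first to $\mathcal{Y}(M)\simeq (\mhyphen)\otimes M$ and then precomposes with $\Theta_\C^{-1}(X)=\on{ev}_\C\circ(\on{id}_{\C^\vee}\otimes X)$, directly producing the same functor $Y\otimes Z\mapsto \on{ev}_\C(Y,X(Z))\otimes M$, giving the desired equivalence naturally in $M$.

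For the second diagram, the argument is formally dual: the roles of $\on{ev}_\C$ and $\on{coev}_\C$ are interchanged, and the reduction is effected by the triangle identity in the opposite direction. Concretely, the bottom-left composite evaluates the Yoneda image of $Y\otimes Z\in\C^\vee\otimes\C$ (as a functor $\C\otimes\C^\vee\to\on{RMod}_R$) on $\Xi_\C^{-1}(X)(M)=(X\otimes\on{id}_{\C^\vee})(\on{coev}_\C(M))$, and the triangle identity again collapses the coevaluation to recover $\on{ev}_\C(Y,X(Z))\otimes M$, matching the top-right composite $\mathcal{Y}\circ \Theta_\C^{-1}(X)$ evaluated at $Y\otimes Z$.

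The main obstacle I anticipate is coherent bookkeeping: the duality identification $(\C\otimes \C^\vee)^\vee\simeq \C^\vee\otimes \C$ and the decomposition of $\on{ev}_{\C\otimes\C^\vee}$ in terms of $\on{ev}_\C$ depend on choices of symmetric monoidal coherence data, which must be handled consistently, in particular in treating the permutation $\tau$ implicit in these identifications. Once these coherences are fixed via the symmetric monoidal structure on $\on{LinCat}_R$, the argument reduces to the triangle identity at the level of $\infty$-categories, invoked through the universal properties encoded in \Cref{lem:ThetaXi} and \Cref{lem:trifunctorial}.
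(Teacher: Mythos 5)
Your proposal is correct and follows essentially the same route as the paper: both arguments rest on decomposing the Yoneda embedding of $\C\otimes\C^\vee$ via $\on{ev}_\C\otimes\on{ev}_{\C^\vee}$ (handling the permutation $\tau$), inserting the formulas for $\Theta_\C^{-1}(X)$ and $\Xi_\C^{-1}(X)$ from \Cref{lem:ThetaXi}, and collapsing the result by the triangle identities. The paper phrases the comparison at the level of composite functors rather than "pointwise on tensors," which sidesteps the coherence bookkeeping you flag, but the substance is identical.
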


\begin{proof}
The evaluation functor
\[ \on{ev}_{\C\otimes \C^\vee}\colon \C^\vee \otimes \C\otimes \C\otimes \C^\vee \simeq  (\C\otimes \C^\vee)^\vee\otimes \C\otimes \C^\vee\longrightarrow \on{RMod}_R\]
is, after reordering the factors of the tensor product, given by the tensor product of the evaluation functors of $\C$ and $\C^\vee$. Note that these two evaluation functors are themselves equivalent, up to composition with $\tau\colon \C\otimes \C^\vee\simeq \C^\vee\otimes \C$. The Yoneda embedding 
\[\on{Lin}_R(\on{RMod}_R,\C\otimes \C^\vee)\simeq \C\otimes \C^\vee \xlongrightarrow{\mathcal{Y}} \on{Lin}_R(\C^\vee\otimes \C,\on{RMod}_R)\] 
is thus given by the functor 
\begin{equation}\label{eq:yonedaCCvee}\left(\on{ev}_{\C}\otimes \on{ev}_{\C}\right)\circ \left(\on{id}_{\C^\vee}\otimes (\mhyphen) \otimes \on{id}_{\C}\right)\,.\end{equation}
Using this, the commutativity directly follows from the triangle identities for the evaluation and coevaluation functors when inserting the descriptions of $\Theta^{-1}_\C(X),\Xi^{-1}_\C(X)$ in \Cref{lem:ThetaXi}. 
\end{proof}

\begin{remark}\label{dualfunrem}
We denote by $\on{Lin}_R^{\on{ld}}(\C,\C)\subset \on{Lin}_R(\C,\C)$ the stable subcategories of left dualizable functors. We similarly denote by $\on{Lin}_R^{\on{rd}}(\C,\C)\subset \on{Lin}_R(\C,\C)$ the stable subcategory of right dualizable functors. Since passing to adjoints is functorial, see \cite[5.2.6.2]{HTT}, there are exact functors 
\[ (\mhyphen)^!\colon \on{Lin}_R^{\on{ld}}(\C,\C)\rightarrow \on{Lin}_R(\C,\C)^{\on{op}}\]
and
\[ (\mhyphen)^*\colon \on{Lin}_R^{\on{rd}}(\C,\C)\rightarrow \on{Lin}_R(\C,\C)^{\on{op}}\,.\]
More concretely, we find that $\on{Lin}_R^{\on{ld}}(\C,\C)=\on{Lin}_R(\C,\C)^{\on{c}}$ is given by the subcategory of compact objects. This follows from the observation that a compact object in 
\[ \C\otimes \C^\vee \overset{\mathcal{Y}}{\simeq} \on{Lin}_R(\C^\vee\otimes \C,\on{RMod}_R)\simeq \on{Lin}_R(\C,\C)\] gives via the Yoneda embedding rise to an exact functor $\C^\vee\otimes \C\to \on{RMod}_R$ which also preserves filtered limits, and hence all limits, and thus admits a left adjoint by the adjoint functor theorem. If $\C$ is compactly generated, then an endofunctor is right dualizable if and only if its image under $\Theta_\C^{-1}$ in $\on{Lin}_R(\C^\vee\otimes \C,\on{RMod}_R)$ preserves compact objects, since in this case the right adjoint preserves colimits and is thus $R$-linear.
\end{remark}

\begin{lemma}\label{lem:pullingbackendofunctors}
Let $F\colon \C\to \D$ be a morphism in $\on{LinCat}_R^{\on{dual}}$ with $R$-linear right adjoint $G$.
\begin{enumerate}[(1)]
\item There exists a commutative diagram 
\[
\begin{tikzcd}
{\on{Lin}_R(\on{RMod}_R,\C\otimes \C^\vee)} \arrow[d, "(F\otimes F^\vee)\circ (\mhyphen)"'] \arrow[r, "\Xi_\C"] & {\on{Lin}_R(\C,\C)} \arrow[d, "F\circ (\mhyphen)\circ G"] \\
{\on{Lin}_R(\on{RMod}_R,\D\otimes \D^\vee)} \arrow[r, "\Xi_\D"]                                                 & {\on{Lin}_R(\D,\D)}                 
\end{tikzcd}
\]
In particular, it follows that the $R$-linear functor
\[ F_!\coloneqq F\circ(\mhyphen)\circ G\colon \on{Lin}_R(\C,\C)\rightarrow \on{Lin}_R(\D,\D)\]
preserves compact objects, meaning left dualizable functors.
\item There exists a commutative diagram 
\[
\begin{tikzcd}
{\on{Lin}_R(\D^\vee\otimes \D,\on{RMod}_R)} \arrow[d, "(\mhyphen)\circ (F^\vee\otimes F)"'] \arrow[r, "\Theta_\D"] & {\on{Lin}_R(\D,\D)} \arrow[d, "G\circ (\mhyphen)\circ F"] \\
{\on{Lin}_R(\C^\vee\otimes \C,\on{RMod}_R)} \arrow[r, "\Theta_\C"]                                                 & {\on{Lin}_R(\C,\C)}                                      
\end{tikzcd}
\]
In particular, it follows that the $R$-linear functor
\[ F^*\coloneqq G\circ(\mhyphen)\circ F\colon \on{Lin}_R(\D,\D)\rightarrow \on{Lin}_R(\C,\C)\]
preserves right dualizable functors.
\end{enumerate}
\end{lemma}

\begin{proof}
We only prove part (1), part (2) is analogous. It follows from
\[ \on{ev}_\D \circ (F^\vee\otimes \on{id}_\D)\simeq \on{ev}_\C\circ (\on{id}_{\C^\vee}\otimes\ G)\]
that 
\begin{align*} 
\Xi_{\D}((F\otimes F^\vee)\circ \alpha)&\simeq ( \on{id}_\D\otimes\on{ev}_{\D})\circ (F\otimes F^\vee\otimes \on{id}_\D)\circ (\alpha\otimes \on{id}_\D)\\
& \simeq (\on{id}_\D\otimes \on{ev}_{\C})\circ (F\otimes \on{id}_{\D^\vee}\otimes \ G)\circ (\alpha \otimes \on{id}_\D)\\
&\simeq F\circ \Xi_\C(\alpha)\circ G\,,
\end{align*}
functorial in $\alpha\colon \on{RMod}_R\to \C\otimes \C^\vee$.
\end{proof}

\subsection{Smooth and proper linear \texorpdfstring{$\infty$}{infinity}-categories}

We fix an $\mathbb{E}_\infty$-ring spectrum $R$ and a dualizable $R$-linear $\infty$-category $\C$.

\begin{definition}~
\begin{enumerate}[(1)]
\item The $\infty$-category $\C$ is called smooth if $\on{id}_{\C}\in \on{Lin}_R(\C,\C)$ is left dualizable. In this case, the left dual $\on{id}_{\C}^!$ is also called the inverse dualizing bimodule.
\item The $\infty$-category $\C$ is called proper if the functor $\on{id}_\C$ is right dualizable.
\end{enumerate}
Note that we do not require smooth or proper $\infty$-categories to be compactly generated. However, if $\C$ is compactly generated, then $\C$ being proper is equivalent to the assertion that for any two compact objects $X,Y\in \C^c$ the $R$-linear morphism object $\on{Mor}_{\C}(X,Y)\in \on{RMod}_R$ is compact.
\end{definition}

We denote $(\mhyphen)^*=\on{Mor}_{\on{RMod}_R}(\mhyphen,R)\colon (\on{RMod}_R^{\on{c}})^{\on{op}}\to \on{RMod}_R^{\on{c}}$

\begin{definition}\label{def:serre}
Suppose that $\C$ is compactly generated and proper. We call an $R$-linear endofunctor $U\in \on{Lin}_R(\C,\C)$ a Serre functor of $\C$ if there exists a natural equivalence
\[ \on{Mor}_{\C}(\mhyphen_1,\mhyphen_2)\simeq \on{Mor}_{\C}(\mhyphen_2,U(\mhyphen_1))^\ast\colon \C^{\on{c},\on{op}}\times \C^{\on{c}}\rightarrow \on{RMod}_R^{\on{c}}\,.\]
\end{definition}

\begin{lemma}\label{lem:Serreunique}
Suppose that $\C$ is compactly generated and proper and let $U,U'$ be two Serre functors of $\C$. Then $U\simeq U'\in \on{Lin}_R(\C,\C)$. 
\end{lemma}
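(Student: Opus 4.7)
The plan is to reduce to Yoneda on the subcategory of compact objects. Given two Serre functors $U, U'$ on $\C$, composing the two defining equivalences provides, for $X, Y \in \C^{\on{c}}$, a chain of equivalences
\[
\on{Mor}_{\C}(Y, U(X))^{\ast} \simeq \on{Mor}_{\C}(X, Y) \simeq \on{Mor}_{\C}(Y, U'(X))^{\ast}
\]
in $\on{RMod}_R^{\on{c}}$, bifunctorial in $X, Y$. Since $\C$ is proper and compactly generated, $\on{Mor}_\C(X,Y)$ lies in $\on{RMod}_R^{\on{c}}$, and on compact $R$-modules the duality $(\mhyphen)^\ast = \on{Mor}_{\on{RMod}_R}(\mhyphen, R)$ is an involution. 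I would then apply $(\mhyphen)^\ast$ once more to strip off the dual and obtain a natural equivalence
\[
\on{Mor}_{\C}(Y, U(X)) \simeq \on{Mor}_{\C}(Y, U'(X))
\]
in $X, Y \in \C^{\on{c}}$.

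Next, I would invoke Yoneda. Because $\C \simeq \on{Ind}(\C^{\on{c}})$, the restricted Yoneda functor $\C \to \on{Fun}(\C^{\on{c}, \on{op}}, \on{RMod}_R)$, $Z \mapsto \on{Mor}_\C(\mhyphen, Z)|_{\C^{\on{c}}}$, is fully faithful. The previous display identifies the images of $U(X)$ and $U'(X)$ under this embedding, naturally in $X \in \C^{\on{c}}$, hence yields an equivalence of functors $U|_{\C^{\on{c}}} \simeq U'|_{\C^{\on{c}}} \colon \C^{\on{c}} \to \C$. Finally, $U$ and $U'$ are $R$-linear, hence colimit preserving, so each is the (unique up to equivalence) $\on{Ind}$-extension of its restriction to $\C^{\on{c}}$ along $\C^{\on{c}} \hookrightarrow \C = \on{Ind}(\C^{\on{c}})$; this upgrades the equivalence of restrictions to the desired equivalence $U \simeq U'$ in $\on{Lin}_R(\C, \C)$.

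The only nontrivial point is verifying that the two morphism objects lie in $\on{RMod}_R^{\on{c}}$ so that double-dualization is genuinely the identity. This is where properness and compact generation enter in tandem: properness gives $\on{Mor}_\C(X, Y) \in \on{RMod}_R^{\on{c}}$ for $X, Y \in \C^{\on{c}}$, and the Serre functor axiom as stated (taking values in $\on{RMod}_R^{\on{c}}$) ensures that $\on{Mor}_\C(Y, U(X))^\ast$ is compact, whence so is $\on{Mor}_\C(Y, U(X))$ (as the dual of a compact $R$-module). Everything else is formal bookkeeping with naturality and Ind-extension.
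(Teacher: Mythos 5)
Your argument is correct and is essentially the paper's own proof: both flip the defining Serre equivalences into the form $\on{Mor}_{\C}(\mhyphen_1,U(\mhyphen_2))\simeq \on{Mor}_{\C}(\mhyphen_2,\mhyphen_1)^\ast\simeq \on{Mor}_{\C}(\mhyphen_1,U'(\mhyphen_2))$ on compact objects, deduce $U|_{\C^{\on{c}}}\simeq U'|_{\C^{\on{c}}}$ from the Yoneda lemma, and conclude by passing to $\on{Ind}$-completions. The one caveat is your justification of the double-dualization step: the implication ``$M^\ast$ compact $\Rightarrow M$ compact'' is not valid for general $R$-modules (your parenthetical ``as the dual of a compact $R$-module'' already presupposes $M\simeq M^{\ast\ast}$); what actually makes the step work is that $(\mhyphen)^\ast$ is only defined on $\on{RMod}_R^{\on{c}}$ in this paper, so \Cref{def:serre} already forces $\on{Mor}_{\C}(Y,U(X))$ to be compact, hence reflexive.
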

\begin{proof}
Since $U$ and $U'$ are both Serre functors, there exist natural equivalences 
\[ \on{Mor}_{\C}(\mhyphen_1,U(\mhyphen_2))\simeq \on{Mor}_{\C}(\mhyphen_2,\mhyphen_1)^\ast \simeq \on{Mor}_{\C}(\mhyphen_1,U'(\mhyphen_2))\,.\]
Applying $\on{Map}_{\on{RMod}_R}(R,\mhyphen)$ to this equivalence yields 
\[\on{Map}_{\C^{\on{c}}}(\mhyphen_1,U(\mhyphen_2))\simeq \on{Map}_{\C^{\on{c}}}(\mhyphen_1,U'(\mhyphen_2))\,.\] 
It follows that $U\simeq U'$ on $\C^{\on{c}}$ by (a corollary of) the Yoneda lemma, see for instance \cite[Cor.~5.8.14]{Cis}. Passing to $\on{Ind}$-completions shows $U\simeq U'$.
\end{proof}

\begin{lemma}\label{lem:serreinv}~
\begin{enumerate}[(1)]
\item If $\C$ is proper and compactly generated, the right dual $\on{id}_{\C}^\ast$ is a Serre functor of $\C$.
\item If $\C$ is smooth and proper, the functors $\on{id}_{\C}^\ast$ and $\on{id}_{\C}^!$ are inverse equivalences.
\end{enumerate}
\end{lemma}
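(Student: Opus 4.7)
The plan is to read off the Serre property directly from the definition $\on{id}_\C^\ast=\Xi_\C(\tau\circ\on{radj}(\on{ev}_\C))$. Writing $K\coloneqq\on{radj}(\on{ev}_\C)(R)\in\C^\vee\otimes\C$, the $R$-linear adjunction $\on{ev}_\C\dashv\on{radj}(\on{ev}_\C)$ combined with $\on{ev}_\C|_{\C^{\on{c},\on{op}}\times\C^{\on{c}}}\simeq\on{Mor}_\C$ yields, for compact $X,Y\in\C^{\on{c}}$, an equivalence $\on{Mor}_{\C^\vee\otimes\C}(X^\vee\otimes Y,K)\simeq\on{Mor}_\C(X,Y)^\ast$. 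To identify the left-hand side with $\on{Mor}_\C(Y,\on{id}_\C^\ast(X))$, I would apply the first diagram of \Cref{lem:coev=dualev} to $\on{id}_\C^\ast$: since $\Xi_\C^{-1}(\on{id}_\C^\ast)(R)=\tau(K)$ and $\mathcal{Y}(R)$ corresponds under the self-duality of $\on{RMod}_R$ to the identity endofunctor of $\on{RMod}_R$, one obtains $\mathcal{Y}(\tau(K))\simeq\Theta_\C^{-1}(\on{id}_\C^\ast)$ as functors $\C^\vee\otimes\C\to\on{RMod}_R$. Evaluating at $(Y^\vee,X)$ and using the compact-object Yoneda identification $\mathcal{Y}(\tau(K))(Y^\vee,X)\simeq\on{Mor}_{\C^\vee\otimes\C}(X^\vee\otimes Y,K)$ then yields the claimed natural equivalence. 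All steps are functorial in $X,Y$, so this upgrades to the Serre bifunctorial identity.

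\textbf{Part (2).} The plan is to combine (1) with an identification of $\on{id}_\C^!$ as the inverse of $\on{id}_\C^\ast$. First, by (1) and \Cref{lem:Serreunique}, $\on{id}_\C^\ast$ is the Serre functor of $\C$, and a standard Yoneda-type argument (the presheaves $Y\mapsto\on{Mor}_\C(X,Y)^\ast$ and $Y\mapsto\on{Mor}_\C(Y,X)^\ast$ on $\C^{\on{c}}$ are both representable, providing mutually inverse endofunctors up to Yoneda) shows that $\on{id}_\C^\ast$ is an equivalence on $\C^{\on{c}}$, and hence on $\C$ by $\on{Ind}$-completion. It remains to prove $\on{id}_\C^!\simeq(\on{id}_\C^\ast)^{-1}$. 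For this, I would exploit that under $\Xi_\C$, composition of endofunctors of $\C$ corresponds to the convolution $(\on{id}_\C\otimes\on{ev}_\C\otimes\on{id}_{\C^\vee})(K_1\otimes K_2)$ of kernels in $\C\otimes\C^\vee$. The main obstacle—and the heart of the proof—is to verify that the convolution of $\tau(\on{radj}(\on{ev}_\C)(R))$ and $\tau(\on{ladj}(\on{ev}_\C)(R))$, which represent $\on{id}_\C^\ast$ and $\on{id}_\C^!$ respectively, reduces to the identity kernel $\on{coev}_\C(R)$. I expect this to follow from the triangle identities for the chain of adjunctions $\on{ladj}(\on{ev}_\C)\dashv\on{ev}_\C\dashv\on{radj}(\on{ev}_\C)$, using in an essential way both smoothness (to get $\on{ladj}(\on{ev}_\C)$) and properness (to get $\on{radj}(\on{ev}_\C)$), together with the naturality of the equivalences from \Cref{lem:ThetaXi} and \Cref{lem:leftdualasradj}. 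Once $\on{id}_\C^\ast\circ\on{id}_\C^!\simeq\on{id}_\C$ is established, the invertibility of $\on{id}_\C^\ast$ forces $\on{id}_\C^!\simeq(\on{id}_\C^\ast)^{-1}$, yielding the two inverse equivalences.
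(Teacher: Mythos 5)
Your part (1) is correct and takes a more direct route than the paper's. Where the paper sets up the auxiliary adjunctions $\on{ladj}(\on{ev}_X')\dashv \on{ev}_X'$ and computes the right adjoint of the composite $\widetilde{\on{ev}}\circ\on{ladj}(\on{ev}_X')$, you dualize directly through the $R$-linear adjunction $\on{ev}_\C\dashv\on{radj}(\on{ev}_\C)$ and identify the kernel $\tau(K)$ of $\on{id}_\C^*$ with $\Theta_\C^{-1}(\on{id}_\C^*)$ via \Cref{lem:coev=dualev}; the chain of equivalences then closes up exactly as you describe. The one point you should make explicit is that the identification of $\on{ev}_{\C\otimes\C^\vee}$ with morphism objects is only given on compact arguments, while $K=\on{radj}(\on{ev}_\C)(R)$ need not be compact; one extends by continuity in the second variable (this is what the paper's functor $\widehat{\on{Mor}}_\C$ is for). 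With that caveat, your argument is a clean alternative for (1).

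Part (2) has a genuine gap. Note first that the lemma assumes only that $\C$ is dualizable, smooth and proper --- not compactly generated --- whereas your argument invokes part (1) and Yoneda over $\C^{\on{c}}$. More seriously, the invertibility of $\on{id}_\C^*$ does not follow from the Serre property and properness alone: a Serre functor in the sense of \Cref{def:serre} on a proper, compactly generated but non-smooth category is in general not an equivalence (e.g.\ $-\otimes_A^{L}DA$ on $\mathcal{D}(A)$ for a finite-dimensional non-Gorenstein algebra $A$). The corepresentability of $Y\mapsto\on{Mor}_\C(Y,X)^*$ is exactly where smoothness must enter (it is the content of \Cref{lem:inverseSerreforsmooth}), and verifying that the two representing objects $\on{id}_\C^*(X)$ and $\on{id}_\C^!(X)$ are mutually inverse is essentially the statement you are trying to prove, so step (a) as written is unjustified. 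The paper sidesteps all of this: it takes right adjoints of the triangle identity $\on{id}_\C\simeq(\on{id}_\C\otimes\on{ev}_\C)\circ(\on{coev}_\C\otimes\on{id}_\C)$ and substitutes $\on{radj}(\on{ev}_\C)\simeq(\on{id}_{\C^\vee}\otimes\on{id}_\C^*)\circ\on{coev}_{\C^\vee}$ and $\on{radj}(\on{coev}_\C)\simeq\on{ev}_{\C^\vee}\circ(\on{id}_\C^!\otimes\on{id}_{\C^\vee})$ (the latter from \Cref{lem:leftdualasradj}) to obtain $\on{id}_\C^*\circ\on{id}_\C^!\simeq\on{id}_\C$, and then takes left adjoints instead to obtain $\on{id}_\C^!\circ\on{id}_\C^*\simeq\on{id}_\C$, so no separate invertibility input is needed. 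Your step (b) is gesturing at the right computation --- though the identity used is the triangle identity of the duality datum $(\on{ev}_\C,\on{coev}_\C)$ read after passing to adjoints, not the triangle identities of the chain $\on{ladj}(\on{ev}_\C)\dashv\on{ev}_\C\dashv\on{radj}(\on{ev}_\C)$ --- and you should carry it out for both orders of composition and drop step (a) entirely.
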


\begin{remark}
Part (1) of \Cref{lem:serreinv} is stated without proof in \cite[11.1.5.2]{SAG}.
\end{remark}

\begin{proof}[Proof of \Cref{lem:serreinv}]
We begin by proving part (1). We denote by 
\[ \widehat{\on{Mor}}_{\C}(\mhyphen,\mhyphen)\colon \C^\vee\times \C \to \on{RMod}_R\]
the functor obtained by passing to $\on{Ind}$-completions from the restriction of $\on{Mor}_{\C}(\mhyphen,\mhyphen)$ to $\C^{\on{c},\on{op}}\times \C^{\on{c}}$. Let $X\in \C^{\on{c}}$ and consider the adjunction
\[ (\mhyphen)\otimes \widehat{\on{Mor}}_{\C}(X,\mhyphen)\colon \on{RMod}_R\longleftrightarrow \on{Lin}_R(\C,\on{RMod}_R)\noloc \on{Mor}_{\on{Lin}_R(\C,\on{RMod}_R)}( \widehat{\on{Mor}}_{\C}(X,\mhyphen),\mhyphen)\,.\]
Using that $\widehat{\on{Mor}}_{\C}(X,\mhyphen)\simeq \widehat{\on{Mor}}_{\C^\vee}(\mhyphen,X)$ and the fully faithfulness of the $R$-linear Yoneda embedding of $\C^\vee$, we find that the right adjoint is given by evaluation at $X$, i.e.
\[ \on{Mor}_{\on{Lin}_R(\C,\on{RMod}_R)}( \widehat{\on{Mor}}_{\C}(X,\mhyphen),\mhyphen) \simeq \on{ev}_X\,.\]
Using the identification 
\[\on{Lin}_R(\C,\on{RMod}_R)\otimes \on{Lin}_R(\C^\vee,\on{RMod}_R)\simeq \on{Lin}_R(\C\otimes \C^\vee,\on{RMod}_R)\]
we define the functor 
\[ \on{ev}_X'\colon \on{Lin}_R(\C\otimes \C^\vee,\on{RMod}_R)\xlongrightarrow{\on{ev}_X\otimes \on{id}_{ \on{Lin}_R(\C^\vee,\on{RMod}_R)}} \on{Lin}_R(\C^\vee,\on{RMod}_R)\]
with left adjoint
\[ \on{ladj}(\on{ev}_X')\colon \on{Lin}_R(\C^\vee,\on{RMod}_R)\xlongrightarrow{\left((\mhyphen)\otimes \widehat{\on{Mor}}_{\C}(X,\mhyphen)\right)\otimes \on{id}} \on{Lin}_R(\C\otimes \C^\vee,\on{RMod}_R)\,.\]
Informally, the left adjoint $\on{ladj}(\on{ev}_X')$ is given by 
\[ (c'\mapsto F(c'))\mapsto \left((c\otimes c')\mapsto \widehat{\on{Mor}}_{\C}(X,c)\otimes F(c')\right)\,.\]

The right adjoint of the evaluation functor $\on{ev}_\C\colon \C^\vee\otimes \C\to \on{RMod}_R$ is equivalent to $(\on{id}_{\C^\vee}\otimes \on{id}_{\C}^*)\circ \on{coev}_{\C^\vee}$. Using the description the the Yoneda embedding in \eqref{eq:yonedaCCvee}, it follows that the right adjoint of the functor 
\[ \widetilde{\on{ev}}\colon\on{Lin}_R(\C\otimes \C^\vee,\on{RMod}_R)\simeq \C^\vee\otimes \C\xlongrightarrow{\on{ev}_\C}\on{RMod}_R\]
is equivalent to $(\mhyphen)\otimes \on{ev}_{\C^\vee} \circ (\on{id}^*_\C\otimes \on{id}_{\C^\vee})$. 

In total, we obtain that the right adjoint of 
\[ \on{Lin}_R(\C^\vee,\on{RMod}_R)\xlongrightarrow{\on{ladj}(\on{ev}_X')} \on{Lin}_R(\C\otimes \C^\vee,\on{RMod}_R)\xlongrightarrow{\widetilde{\on{ev}}}\on{RMod}_R\]
is given by $(\mhyphen)\otimes \widehat{\on{Mor}}_{\C}(\mhyphen,\on{id}_{\C}^*(X))$. Using the above adjunctions, the fully faithfulness of the $R$-linear Yoenda embedding and \Cref{lem:trifunctorial}, we find the following equivalences in $\on{RMod}_R$, functorial in $(X,Y)\in \C^{\on{c}}\times \C^{\on{c},\on{op}}$:
\begin{align*}
\on{Mor}_{\C}(X,Y)^* =&\on{Mor}_{\on{RMod}_R}(\on{Mor}_{\C}(X,Y),R)\\
\simeq & \on{Mor}_{\on{RMod}_R}(\widetilde{\on{ev}}\circ \on{ladj}(\on{ev}_X')(\widehat{\on{Mor}}_{\C}(\mhyphen,Y)),R)\\
\simeq &\on{Mor}_{\on{Lin}(\C^\vee,\on{RMod}_R)}(\widehat{\on{Mor}}_{\C}(\mhyphen,Y),\widehat{\on{Mor}}_{\C}(\mhyphen,\on{id}_{\C}^*(X)))\\
\simeq & \on{Mor}_{\C}(Y,\on{id}_{\C}^*(X))\,.
\end{align*}
This shows that that $\on{id}_{\C}^*$ is indeed a Serre functor.  

We proceed with proving part (2). We have 
\begin{equation}\label{eq:triangleA} \on{id}_\C\simeq (\on{id}_\C\otimes \on{ev}_\C)\circ (\on{coev}_\C\otimes \on{id}_\C)\end{equation}
and passing to the right adjoint yields
\[ \on{id}_\C\simeq (\on{radj}(\on{coev}_\C)\otimes \on{id}_\C)\circ (\on{id}_\C\otimes\on{radj}(\on{ev}_\C))\,.\]
We have
\[ \on{radj}(\on{ev}_\C)\simeq  (\on{id}_{\C^\vee}\otimes\on{id}_\C^*)\circ \on{coev}_{\C^\vee}\] 
and by \Cref{prop:leftdualasradj} further
\[ \on{radj}(\on{coev}_\C)\simeq \on{ev}_{\C^\vee}\circ (\on{id}_\C^!\otimes \on{id}_{\C^\vee})\,.\]
Combining the above equivalences yields $\on{id}_\C^*\circ \on{id}_\C^!\simeq \on{id}_\C$. The identity $\on{id}_\C^!\circ \on{id}_\C^*\simeq \on{id}_\C$ arises from a similar argument by passing to the left adjoint of \eqref{eq:triangleA}.
\end{proof}

\begin{definition}\label{def:Cfin}
Given a compactly generated $R$-linear $\infty$-category $\C$, we denote by $\C^{\on{fin}}\subset \C$ the full subcategory of objects $Y$, satisfying that $\on{Mor}_{\C}(X,Y)\in \on{RMod}_R$ is compact for all $X\in \C^{\on{c}}$. We also refer to the objects of $\C^{\on{fin}}$ as finite.
\end{definition}

The following lemma provides the analog of part (1) of \Cref{lem:serreinv} for smooth, but not necessarily proper, $R$-linear $\infty$-categories.

\begin{lemma}\label{lem:inverseSerreforsmooth}
Let $\C$ be a compactly generated and smooth $R$-linear $\infty$-category. Then 
\[ \on{Mor}_{\C}(X,Y)^*\simeq \on{Mor}_{\C}(\on{id}^!(Y),X)\,,\]
functorial in $X\in \C^{\on{c}}$ and $Y\in (\C^{\on{fin}})^{\on{op}}$. 

In particular, this shows that if $\on{id}_\C^!$ is an equivalence, then $\C^{\on{fin}}\subset \C^{\on{c}}$.
\end{lemma}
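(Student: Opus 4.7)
The plan is to adapt the proof of \Cref{lem:serreinv}(1) to the smooth setting, with the roles of left and right adjoints swapped throughout. In that earlier proof, properness of $\C$ was used through the existence of an $R$-linear right adjoint to $\on{ev}_\C\simeq \Theta_\C^{-1}(\on{id}_\C)$, identified explicitly as $(\on{id}_{\C^\vee}\otimes \on{id}_\C^*)\circ \on{coev}_{\C^\vee}$. Here smoothness of $\C$ is by definition the dual assertion that $\on{ev}_\C$ admits an $R$-linear \emph{left} adjoint; combining \Cref{dual!*def} with the explicit formula for $\Xi_\C^{-1}$ from \Cref{lem:ThetaXi} yields the analogous identification
\[ \on{ladj}(\on{ev}_\C)\simeq (\on{id}_{\C^\vee}\otimes \on{id}_\C^!)\circ \on{coev}_{\C^\vee}\,. \]

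With this formula as input, I would essentially repeat the chain of Yoneda adjunctions from the proof of \Cref{lem:serreinv}(1). Under the $R$-linear Yoneda embedding $\mathcal{Y}\colon \C^c\hookrightarrow \C^\vee$, one has $\on{Mor}_\C(X,Y)\simeq \on{ev}_\C(\mathcal{Y}(X),Y)$. The hypothesis $Y\in \C^{\on{fin}}$ ensures this $R$-module is compact, so the dualization $(\mhyphen)^*=\on{Mor}_{\on{RMod}_R}(\mhyphen,R)$ interacts well with adjunctions applied to it. Dualizing and transporting through $\on{ladj}(\on{ev}_\C)\dashv \on{ev}_\C$, together with the formula above and \Cref{lem:trifunctorial}, converts $\on{Mor}_\C(X,Y)^*$ into a mapping space in $\C^\vee \otimes \C$ that, after identification back through the Yoneda embedding, becomes $\on{Mor}_\C(\on{id}_\C^!(Y),X)$. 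Functoriality in $X\in \C^c$ and $Y\in (\C^{\on{fin}})^{\on{op}}$ is automatic from the construction.

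The chief subtlety is bookkeeping the tensor-factor permutations $\tau$ embedded in the definitions of $\Xi_\C$, $\Theta_\C$ and their inverses; tracing these carefully is what explains why $\on{id}_\C^!$ lands attached to the source variable $Y$ here, as opposed to $\on{id}_\C^*$ landing attached to the target variable $X$ in the proper statement of \Cref{lem:serreinv}(1). For the concluding assertion, if $\on{id}_\C^!$ is an equivalence then, since mapping-object compactness is invariant under autoequivalences of $\C$, $\on{id}_\C^!$ restricts to an auto-equivalence of $\C^{\on{fin}}$. Applying the main equivalence a second time to $(\on{id}_\C^!)^{-1}(Y)\in \C^{\on{fin}}$ and double-dualizing then identifies $\on{Mor}_\C(Y,X)\simeq \on{Mor}_\C(X,(\on{id}_\C^!)^{-1}(Y))^*$ for $X\in \C^c$, exhibiting $\on{Mor}_\C(Y,\mhyphen)|_{\C^c}$ as a compact-valued functor that extends via $\on{Ind}$-completion to a colimit-preserving functor on $\C\simeq \on{Ind}(\C^c)$; this extension agrees with $\on{Mor}_\C(Y,\mhyphen)$ on compacts and hence everywhere by Yoneda, proving $Y\in \C^c$.
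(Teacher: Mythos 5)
Your plan --- run the proof of \Cref{lem:serreinv}(1) again ``with the roles of left and right adjoints swapped'' --- does not go through, and the reason is the variance of the dualization. In the proper case, the step that produces the dual is of the form
\[
\on{Mor}_{\on{RMod}_R}\bigl(\widetilde{\on{ev}}\circ \on{ladj}(\on{ev}_X')(\widehat{\on{Mor}}_\C(\mhyphen,Y)),\,R\bigr)\simeq \on{Mor}\bigl(\widehat{\on{Mor}}_\C(\mhyphen,Y),\,\on{radj}(\cdots)(R)\bigr)\,,
\]
i.e.\ the object $\on{Mor}_\C(X,Y)$ sits in the \emph{contravariant} slot of $(\mhyphen)^*=\on{Mor}_{\on{RMod}_R}(\mhyphen,R)$, so moving the evaluation-type functor to the other side requires its \emph{right} adjoint. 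Smoothness supplies only the left adjoint $\on{ladj}(\on{ev}_\C)\simeq(\on{id}_{\C^\vee}\otimes\on{id}_\C^!)\circ\on{coev}_{\C^\vee}$, and the adjunction $\on{ladj}(\on{ev}_\C)\dashv\on{ev}_\C$ identifies $\on{Mor}(R,\on{ev}_\C(X\otimes Y))$ --- the underlying module of $\on{Mor}_\C(X,Y)$, not its dual --- with a mapping object in $\C^\vee\otimes\C$. Your remark that compactness of $\on{Mor}_\C(X,Y)$ makes ``dualization interact well with adjunctions'' points at the right hypothesis but omits the actual mechanism. The paper's proof is not a formal dualization: it first restricts the evaluation functor to $\on{ev}_\C^{\on{fin}}\colon\C^\vee\otimes\on{Ind}\C^{\on{fin}}\to\on{RMod}_R$, observes that this restricted functor preserves compacts (this is exactly what $Y\in\C^{\on{fin}}$ buys) and hence admits an $R$-linear \emph{right} adjoint, and uses it to build a partial Serre functor $U\colon\C\to\on{Ind}\C^{\on{fin}}$; the argument of \Cref{lem:serreinv}(1) then runs verbatim to give $\on{Mor}_\C(X,Y)^*\simeq\on{Mor}_\C(Y,U(X))$. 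Smoothness enters only at the end, identifying the left adjoint of $U$ with $\on{id}_\C^!$ (composed with $\on{Ind}\C^{\on{fin}}\to\C$) and converting to $\on{Mor}_\C(\on{id}_\C^!(Y),X)$ by one more adjunction. This two-stage structure --- a right-adjoint input localized at $\C^{\on{fin}}$, followed by the smoothness adjunction --- is the missing idea in your sketch.

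Separately, your proof of the ``in particular'' clause is circular at the last step. Having produced a colimit-preserving extension of $\on{Mor}_\C(Y,\mhyphen)|_{\C^{\on{c}}}$ to $\C$, you assert it agrees with $\on{Mor}_\C(Y,\mhyphen)$ ``everywhere by Yoneda''; but two functors agreeing on $\C^{\on{c}}$ agree on $\C\simeq\on{Ind}(\C^{\on{c}})$ only if both preserve filtered colimits, and $\on{Mor}_\C(Y,\mhyphen)$ preserving filtered colimits is precisely the compactness of $Y$ that you are trying to establish.
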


\begin{proof}[Proof of \Cref{lem:inverseSerreforsmooth}.]
The exact inclusion $\C^{\on{fin}}\subset \C$ gives rise to an $R$-linear functor $\on{Ind}\C^{\on{fin}}\to \C$. The $R$-linear functor
\[ \on{ev}_\C^{\on{fin}}\colon \C^\vee \otimes \on{Ind}\C^{\on{fin}}\longrightarrow  \C^\vee\otimes \C\xlongrightarrow{\on{ev}_\C} \on{RMod}_R \]
preserves compact objects by the definition of $\C^{\on{fin}}$ and thus admits an $R$-linear right adjoint $\on{radj}(\on{ev}_\C^{\on{fin}})$. We define the $R$-linear functor $U\colon \C\to \on{Ind}\C^{\on{fin}}$ as the composite 
\[ \C\xlongrightarrow{\on{id}_\C\otimes \on{radj}(\on{ev}_\C^{\on{fin}})}\C\otimes \C^\vee \otimes \on{Ind}\C^{\on{fin}}\xlongrightarrow{\on{ev}_\C\otimes \on{id}_{\on{Ind}\C^{\on{fin}}}} \on{Ind}\C^{\on{fin}}\,.\]
The functor $U$ admits a left adjoint, given by the composite 
\[ \on{Ind}\C^{\on{fin}}\xlongrightarrow{\on{ladj}(\on{ev}_\C)\otimes \on{id}_{\on{Ind}\C^{\on{fin}}}} \C\otimes \C^\vee \otimes \on{Ind}\C^{\on{fin}} \xlongrightarrow{\on{id}_{\C}\otimes \on{ev}_\C^{\on{fin}}} \C\,,\]
which describes the composite of $\on{id}_{\C}^!$ with the $R$-linear functor $\on{Ind}\C^{\on{fin}}\to \C$.

The proof of part (1) of \Cref{lem:serreinv} adapts with minimal changes to this setting and shows that 
\begin{equation}\label{eq:MorYUX} \on{Mor}_{\C}(X,Y)^*\simeq \on{Mor}_{\C}(Y,U(X))\,,\end{equation}
functorial in $Y\in \C^{\on{fin},\on{op}}$ and $X\in \C^{\on{c}}$. By the above adjunction, we have 
\[ \on{Mor}_{\C}(Y,U(X))\simeq \on{Mor}_{\C}(\on{id}_{\C}^!(Y),X),\]
which combined with \eqref{eq:MorYUX} yields the desired equivalence.
\end{proof}

\subsection{Traces}

We fix an $\mathbb{E}_\infty$-ring spectrum $R$.

\begin{definition}
Let $\C$ be a dualizable $R$-linear $\infty$-category. Let $E\colon \C\to\C$ be an $R$-linear endomorphism. The trace $\on{Tr}(E)$ of $E$ is defined as the $R$-linear endomorphism
\[ \on{RMod}_R \xrightarrow{\on{coev}_\C} \C\otimes \C^\vee \xrightarrow{E\otimes \on{id}_{\C^\vee}} \C\otimes \C^\vee \xrightarrow{\tau} \C^\vee \otimes \C\xrightarrow{\on{ev}_\C} \on{RMod}_R\,.\]
\end{definition}

In the following, we recall the construction of Hoyois-Scherotzke-Sibilla \cite{HSS17} of the $S^1$-equivariant $(\infty,1)$-categorical trace functor. This functor will also give rise to the Hochschild homology functor.  

As a model for $(\infty,2)$-categories, we use Barwick's complete $2$-fold Segal spaces. We let $\bm{C}$ be a symmetric monoidal $(\infty,2)$-category. We will primarily be interested in the case where $\bm{C}={\bf LinCat}_R$ is the $(\infty,2)$-category of $R$-linear stable and presentable $\infty$-categories, see \cite[Section 4.4]{HSS17}, where the notation ${\bf Pr}^L(\on{RMod}_R)$ is used. We have an associated complete ($1$-fold) Segal space (which models an $(\infty,1)$-category) of symmetric
monoidal oplax transfors \cite{JFS17} 
\[ \on{End}(\bm{C})=\on{Fun}^{\on{oplax}}_{\otimes}(\on{Fr}^{\on{rig}}(B\mathbb{N}),\bm{C})\,,\]
where $\on{Fr}^{\on{rig}}(B\mathbb{N})$ denotes the free rigid\footnote{By rigid, we mean that all objects are dualizable.} $(\infty,2)$-category generated by $B\mathbb{N}$, see also \cite[Def.~2.2]{HSS17}. The objects and morphisms in $\on{End}(\bm{C})$ can be concretely described as follows:
\begin{itemize}
\item Objects are given by pairs $(\C,E)$ with $\C\in \bm{C}$ dualizable and $E\colon \C\to \C$ an endomorphism in $\bm{C}$.
\item A morphism $(F,\alpha)\colon (\C,E)\to (\D,E')$ corresponds to a right dualizable morphism $F\colon \C\to \D$ in $\bm{C}$ together with an oplax-commutative square
\[
\begin{tikzcd}
\C \arrow[r, "E"] \arrow[d, "F"'] & \C \arrow[d, "F"] \arrow[ld, "\alpha"', Rightarrow] \\
\D \arrow[r, "E'"']               & \D                                                 
\end{tikzcd}
\]
meaning a $2$-morphism $\alpha\colon FE\Rightarrow E'F$ in $\bm{C}$.
\end{itemize}
We also consider the complete Segal space $\Omega \bm{C}$ of endomorphisms of the monoidal unit of $\bm{C}$. The trace defines by \cite[Def.~2.9,2.11]{HSS17} a symmetric monoidal functor between complete Segal spaces
\[ \Tr\colon \on{End}(\bm{C})\to \Omega \bm{C}\,,\]
which is natural in $\bm{C}$. Given a morphism $(F,\alpha)\colon (\C,E)\to (\D,E')$ in $\on{End}({\bf LinCat}_R)$, the morphism 
\[ \on{Tr}(F,\alpha)\colon \on{Tr}(E)\to \on{Tr}(E')\]  
in $\on{Map}_{\on{LinCat}_R}(\on{RMod}_R,\on{RMod}_R)$ can be identified with the composition of the following natural transformations.
\[
\begin{tikzcd}[column sep=large, row sep=large]
\on{RMod}_R \arrow[r, "\on{coev}_{\C}"] \arrow[d, "\on{id}"'] & \C\otimes \C^\vee \arrow[d, "F\otimes F^\vee" description] \arrow[r, "E\otimes \on{id}_{\C^\vee}"] \arrow[ld, Rightarrow, "\nu"] & \C\otimes \C^\vee \arrow[d, "F\otimes F^\vee" description] \arrow[ld, "\alpha\otimes \on{id}" description, Rightarrow] \arrow[r, "\tau"] & \C^\vee\otimes \C \arrow[d, "F^\vee\otimes F" description] \arrow[ld, "\simeq"', Rightarrow] \arrow[r, "\on{ev}_\C"] & \on{RMod}_R \arrow[d, "\on{id}"] \arrow[ld, Rightarrow, "\epsilon"] \\
\on{RMod}_R \arrow[r, "\on{coev}_{\D}"']                      & \D\otimes \D^\vee \arrow[r, "E'\otimes \on{id}_{\D^\vee}"']                                                               & \D\otimes \D^\vee \arrow[r, "\tau"']                                                                                                     & \D^\vee\otimes \D \arrow[r, "\on{ev}_\D"']                                                                           & \on{RMod}_R                                            
\end{tikzcd}
\]
Using the triangle identities, the natural transformation $\nu$ above is defined as the composite of
\begin{align*}
&(F\otimes F^\vee)\circ \on{coev}_\C\\
&\simeq(F\otimes \on{id}_{\D^\vee})\circ (\on{id}_{\C}\otimes\on{ev}_{\C^\vee}\otimes \on{id}_{\D^\vee})\circ (\on{id}_{\C}\otimes\on{id}_{\C^\vee}\otimes G\otimes \on{id}_{\D^\vee})\circ (\on{id}_{\C}\otimes \on{id}_{\C^\vee} \otimes \on{coev}_{\D})\circ \on{coev}_\C\\
&\simeq (FG\otimes \on{id}_{\D^\vee})\circ \on{coev}_\D
\end{align*}
and
\[
(FG\otimes \on{id}_{\D^\vee})\circ \on{coev}_\D\xlongrightarrow{(\counit \otimes \on{id}_{\D^\vee})\circ \on{coev}_\D} \on{coev}_\D\,,
\]
where $G$ denotes the right adjoint of $F$ and $\counit$ the counit. The natural transformation $\epsilon$ is defined similarly. Further, $\on{Tr}(F,\alpha)$ is also equivalent to the composite 
\begin{equation}\label{eq:trace} \on{Tr}(E)\xrightarrow{\on{Tr}(E\unit)} \on{Tr}(EGF)\xrightarrow{\beta} \on{Tr}(FEG) \xrightarrow{\on{Tr}(\on{id}_\D,\alpha G)}\on{Tr}(E'FG)\xrightarrow{\on{Tr}(E'\counit)}\on{Tr}(E')\,,\end{equation}
where $\unit$ denotes the unit and $\beta$ is the apparent morphism, see \cite[Lemma 4.1]{BD21}.

Finally, we turn to the $S^1$-functoriality of the trace functor. For this, consider the complete Segal space
$\on{Aut}(\bm{C})=\on{Fun}^{\on{oplax}}_{\otimes}(\on{Fr}^{\on{rig}}(B\mathbb{Z}),\bm{C})$, which comes with a symmetric monoidal inclusion $\on{Aut}(\bm{C})\subset \on{End}(\bm{C})$. The objects of $\on{Aut}(\bm{C})$ are given by pairs $(\C,E)$ with $\C\in \bm{C}$ dualizable and $E\colon \C\to \C$ an equivalence. The self-action of the circle group $S^1=B\mathbb{Z}$ induces an action on $\on{Aut}(\bm{C})$, which in turn induces an action on the space $\on{Map}(\on{Aut}(\bm{C}),\Omega(\bm{C}))$ of functors, natural in $\bm{C}$. The trace can be exhibited as a homotopy fixed point of the $S^1$-action on $\on{Map}(\on{Aut}(\bm{C}),\Omega(\bm{C}))$, see \cite[Thm.~2.14]{HSS17}. This $S^1$-invariance datum results in an $S^1$-action on the trace of any pair $(\C,\on{id}_\C)\in \on{End}(\bm{C})$ as well as an $S^1$-equivariant map $\on{Tr}(F,\on{id}_F)$ for any morphism $(F,\on{id}_F)\colon (\C,\on{id}_{\C})\to (\D,\on{id}_\D)$ in $\on{End}(\bm{C})$. 

\begin{remark}\label{rem:traceofunit}
The datum of the $S^1$-invariance of the trace functor is natural in the symmetric monoidal $(\infty,2)$-category $\bm{C}$. Thus, given a symmetric monoidal functor $F\colon \bm{C}\to \bm{D}$ and a dualizable object $\mathcal{C}\in \bm{C}$, we have an $S^1$-equivariant equivalence $F(\on{Tr}(\mathcal{C},\on{id}_\mathcal{C}))\simeq \on{Tr}(F(\mathcal{C}),\on{id}_{F(\mathcal{C})})$. 

We can specialize this to the symmetric monoidal functor $\ast \to \bm{C}$ from the terminal symmetric monoidal $(\infty,2)$-category, mapping the unique object of $\ast$ to the unit object $1_C$ of $\bm{C}$. Since the $S^1$-action on the trace $\on{Tr}(\ast,\on{id}_\ast)$ is necessarily trivial, this shows that the $S^1$-action on $\on{Tr}(1_C,\on{id}_{1_C})$ is also trivial. In particular, this holds for $\bm{C}={\bf LinCat}_R$ with $1_C=\on{RMod}_R$ and $\on{Tr}(\on{RMod}_R,\on{id}_{\on{RMod}_R})\simeq R$.
\end{remark}

\subsection{Hochschild homology}

We fix an $\mathbb{E}_\infty$-ring spectrum $R$. Given a dualizable  $R$-linear $\infty$-category $\C$, its $R$-linear Hochschild homology is defined as the value of the trace at $R$:
\[
	\HH(\C) \coloneqq \on{ev}_{\C} \circ \ \tau \circ \on{coev}_{\C}(R)=\on{Tr}(\on{id}_\C)(R)\in \on{RMod}_R\,.
\]
Note that $\on{Tr}(\on{id}_\C)$ is an $R$-linear functor and thus fully determined by its value $\HH(\C)$.  When $R$ is the sphere spectrum, $\HH(\C)$ is also called topological Hochschild homology. When $R=k$ is a commutative ring, $\HH(\C)$ describes the usual $k$-linear Hochschild homology. 

The dual Hochschild homology is defined as 
\[ \HH(\C)^\ast\coloneqq \on{Mor}_{\on{RMod}_R}(\HH(\C),R)\,.\]

The fixed points (limit over $BS^1$) of the $S^1$-action on $\HH(\C)$ are denoted by $\HH(\C)^{S^1}$. We will refer to $\HH(\C)^{S^1}$ as the negative cyclic homology of $\C$, as $\HH(\C)^{S^1}$ recovers the usual negative cyclic homology when $R=k$ is a commutative ring. The orbits (colimit over $BS^1$) of the $S^1$-action on $\HH(\C)$ are denoted $\HH(\C)_{S^1}$, and we similarly call $\HH(\C)_{S^1}$ the cyclic homology. The dual cyclic homology is given by $\HH(\C)_{S^1}^*=\on{Mor}_{\on{RMod}_R}(\HH(\C)_{S^1},R)$. There are natural maps $\HH(\C)^{S^1}\to \HH(\C)$ and $\HH(\C)_{S^1}^*\to \HH(\C)^*$.

\begin{notation}
Let $F\colon \C\to \D$ be a morphism in $\on{LinCat}_R^{\on{dual}}$. We denote by  
\begin{enumerate}[(1)]
\item $\HH(F)\colon \HH(\C)\to \HH(\D)$ the evaluation at $R$ of $\on{Tr}(F,\on{id}_F)$. We further define $\HH(\D,\C)=\cof\HH(F)$.
\item $\HH(F)^{S^1}\colon \HH(\C)^{S^1}\to \HH(\D)^{S^1}$ the induced map. We similarly define $\HH(\D,\C)^{S^1}=\on{cof}(\HH(F)^{S^1})$.
\end{enumerate}
Let $F\colon \D\to \C$ be a morphism in $\on{LinCat}_R^{\on{dual}}$. We denote by  
\begin{enumerate}
\item[(3)] $\HH(F)^\ast\colon \HH(\C)^\ast\to \HH(\D)^\ast$ the dual map obtained by precomposition with $\HH(F)$. We further define $\HH(\D,\C)^*=\on{cof}\HH(F)^*$.
\item[(4)] $\HH(F)_{S^1}^*\colon \HH(\C)_{S^1}^*\to \HH(\D)_{S^1}^*$ the induced map. We similarly define $\HH(\D,\C)^*_{S^1}=\on{cof}(\HH(F)_{S^1}^*)$.
\end{enumerate}
The notation of (1) and (2) will usually be used when discussing left Calabi--Yau structures on $F$, whereas the notation of (3) and (4) will usually be used when discussing right Calabi--Yau structures on $F$. This is why we swap the roles of $\C$ (the 'Calabi--Yau-boundary') and $\D$ (the relative Calabi--Yau category) as domain and target. 
\end{notation}

\begin{lemma}\label{lem:HHvsHom}
Let $\C$ be a dualizable $R$-linear $\infty$-category and let $E\colon \C\to \C$ be $R$-linear.
\begin{enumerate}[(1)]
\item If $\C$ is smooth, then $\on{Tr}(E)(R)$ is canonically equivalent to 
\[ \on{Mor}_{\on{Lin}_R(\C,\C)}(\on{id}_\C^!,E)\,.\] 
In particular, we have $\on{HH}(\C)\simeq \on{Mor}_{\on{Lin}_R(\C,\C)}(\on{id}_\C^!,\on{id}_\C)$.
\item If $\C$ is proper, then 
\[ \on{Tr}(E)(R)^*\coloneqq \on{Mor}_{\on{RMod}_R}(\on{Tr}(E)(R),R)\] 
is canonically equivalent to 
\[ \on{Mor}_{\on{Lin}_R(\C,\C)}(E,\on{id}_\C^*)\,.\] 
In particular, we have $\on{HH}(\C)^*\simeq \on{Mor}_{\on{Lin}_R(\C,\C)}(\on{id}_\C,\on{id}_\C^*)$.
\end{enumerate}
\end{lemma}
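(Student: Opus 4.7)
The strategy is to rewrite the trace $\on{Tr}(E)(R)$ as a value of $\on{ev}_\C$, apply the left or right adjoint of $\on{ev}_\C$ furnished by the smoothness or properness hypothesis, and then recognise the resulting morphism object in $\C\otimes\C^\vee$ as a mapping space in $\on{Lin}_R(\C,\C)$ via the equivalence $\Xi_\C$ of \Cref{lem:ThetaXi}.

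First I would rewrite the trace. Unwinding the definition of $\on{Tr}(E)$ and of the inverse of $\Xi_\C$ from \Cref{lem:ThetaXi}, I observe that
\[
\on{Tr}(E)\;\simeq\;\on{ev}_\C\circ\tau\circ \Xi_\C^{-1}(E)\,,
\]
since $\Xi_\C^{-1}(E)=(E\otimes \on{id}_{\C^\vee})\circ\on{coev}_\C$. Evaluating at $R$, we get $\on{Tr}(E)(R)\simeq\on{ev}_\C\bigl(\tau(\Xi_\C^{-1}(E)(R))\bigr)$. In parallel, unwinding \Cref{dual!*def} gives
\[
\Xi_\C^{-1}(\on{id}_\C^!)\;\simeq\;\tau\circ\on{ladj}(\on{ev}_\C)\qquad\text{and}\qquad
\Xi_\C^{-1}(\on{id}_\C^*)\;\simeq\;\tau\circ\on{radj}(\on{ev}_\C)\,,
\]
whenever the respective adjoints exist, where $\Theta_\C^{-1}(\on{id}_\C)=\on{ev}_\C$ has been used.

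For part (1), smoothness of $\C$ means $\on{ev}_\C$ admits an $R$-linear left adjoint $\on{ladj}(\on{ev}_\C)$. Applying the adjunction $\on{ladj}(\on{ev}_\C)\dashv\on{ev}_\C$ to the formula above yields
\[
\on{Tr}(E)(R)\;\simeq\;\on{Mor}_{\C^\vee\otimes\C}\bigl(\on{ladj}(\on{ev}_\C)(R),\,\tau(\Xi_\C^{-1}(E)(R))\bigr)\,,
\]
and using that $\tau\colon \C^\vee\otimes\C\simeq\C\otimes\C^\vee$ is an equivalence, this rewrites as
\[
\on{Mor}_{\C\otimes\C^\vee}\bigl(\Xi_\C^{-1}(\on{id}_\C^!)(R),\,\Xi_\C^{-1}(E)(R)\bigr)\,.
\]
Finally, since any $R$-linear functor out of $\on{RMod}_R$ is determined by its value at $R$ and $\Xi_\C^{-1}$ is an equivalence of $\infty$-categories, the last expression is canonically equivalent to $\on{Mor}_{\on{Lin}_R(\C,\C)}(\on{id}_\C^!,E)$. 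Specializing $E=\on{id}_\C$ yields the statement for $\HH(\C)$.

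For part (2), properness of $\C$ supplies an $R$-linear right adjoint $\on{radj}(\on{ev}_\C)$; the argument then runs in exactly the same way but with the dual adjunction. Dualizing yields
\[
\on{Tr}(E)(R)^*\;=\;\on{Mor}_{\on{RMod}_R}\bigl(\on{ev}_\C(\tau(\Xi_\C^{-1}(E)(R))),R\bigr)\;\simeq\;\on{Mor}_{\C^\vee\otimes\C}\bigl(\tau(\Xi_\C^{-1}(E)(R)),\,\on{radj}(\on{ev}_\C)(R)\bigr),
\]
and applying $\tau^{-1}$ and the identification $\Xi_\C^{-1}(\on{id}_\C^*)\simeq\tau\circ\on{radj}(\on{ev}_\C)$ plus the equivalence induced by $\Xi_\C^{-1}$ gives the claimed equivalence with $\on{Mor}_{\on{Lin}_R(\C,\C)}(E,\on{id}_\C^*)$. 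The specialization $E=\on{id}_\C$ then recovers the stated formula for $\HH(\C)^*$. The only real subtlety is checking that all equivalences are canonical (i.e.\ natural in $E$), which follows because each step is induced by a natural transformation of functors in $E$ (the adjunction unit/counit, the swap $\tau$, and the equivalence $\Xi_\C$).
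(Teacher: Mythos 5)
Your proposal is correct and follows essentially the same route as the paper's proof: identify $\on{Tr}(E)$ with $\on{ev}_\C\circ\tau\circ\Xi_\C^{-1}(E)$, use the $R$-linear adjunction $\on{ladj}(\on{ev}_\C)\dashv\on{ev}_\C$ (resp.\ $\on{ev}_\C\dashv\on{radj}(\on{ev}_\C)$) supplied by smoothness (resp.\ properness), and transport the resulting morphism object back through the equivalence $\Xi_\C$. The only cosmetic difference is that you evaluate at $R$ and work in $\C\otimes\C^\vee$, whereas the paper runs the same adjunction at the level of the functor categories $\on{Lin}_R(\on{RMod}_R,-)$; these are interchangeable via the evaluation equivalence.
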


\begin{proof}
Suppose that $\C$ is smooth. Then we have an adjunction 
\[ \on{ladj}(\on{ev}_\C)\circ (\mhyphen)\colon \on{Lin}_R(\on{RMod}_R,\C^\vee\otimes \C) \longleftrightarrow  \on{Lin}_R(\on{RMod}_R,\on{RMod}_R) \noloc \on{ev}_\C\circ (\mhyphen)\,,\]
whose unit is given by precomposition with the unit of $\on{ladj}(\on{ev}_\C)\dashv \on{ev}_\C$. It follows that 
\begin{align*} \on{Mor}_{\on{Lin}_R(\C,\C)}(\on{id}_\C^!,E)\simeq & \on{Mor}_{\on{Lin}_R(\on{RMod}_R,\C\otimes \C^\vee)}(\tau\circ\on{ladj}(\on{ev}_\C),(E\otimes \on{id}_{\C^\vee})\circ \on{coev}_\C) \\
\simeq & \on{Mor}_{\on{Lin}_R(\on{RMod}_R,\on{RMod}_R)}(\on{id}_{\on{RMod}_R},\on{ev}_\C\circ\tau\circ (E\otimes \on{id}_{\C^\vee}) \circ \on{coev}_\C)\\
\simeq & \on{Tr}(E)(R)\,.
\end{align*}
If $\C$ is proper, a similar argument applies.
\end{proof}

\begin{remark}
Suppose the dualizable $R$-linear $\infty$-category $\C$ is smooth. If we make two different choices of left duals/adjoints 
\[ \on{id}_\C^!=\Xi_\C(\on{ladj}(\on{ev}_\C))\,,\quad(\on{id}_\C^!)'=\Xi_\C(\on{ladj}(\on{ev}_\C)')\] 
and two choices of units, there is a contractible space of equivalences $\alpha\colon (\on{id}_\C^!)'\simeq \on{id}_\C^!$, compatible with the unit, see \cite[Prop.~6.1.9]{Cis}. Any such equivalence $\alpha$ assembles with the equivalences from \Cref{lem:HHvsHom} into to a commutative diagram as follows:
\[
\begin{tikzcd}
{\on{Mor}_{\on{Lin}_R(\C,\C)}(\on{id}_\C^!,E)} \arrow[rd, "\simeq"'] \arrow[rr, "{\on{Mor}_{\on{Lin}_R(\C,\C)}(\alpha,E)}"] &             & {\on{Mor}_{\on{Lin}_R(\C,\C)}((\on{id}_\C^!)',E)} \arrow[ld, "\simeq"] \\
                                                                                                  & \on{Tr}(E)(R) &                                                                                
\end{tikzcd}
\] 
Stated differently, this means that the equivalence in part (1) of \Cref{lem:HHvsHom} is independent of the choice of left dual. A similar statement holds for the equivalence in part (2).
\end{remark}

\begin{construction}\label{constr:unitcounit}
Let $\C,\D$ be dualizable $R$-linear $\infty$-categories.

\noindent {\bf Case 1:} Suppose that $\C,\D$ are smooth. 

Let $F\colon \C\to \D$ be a morphism in $\on{LinCat}_R^{\on{dual}}$ and $G$ the $R$-linear right adjoint of $F$. We denote by
\[ F_!(\mhyphen)=F\circ (\mhyphen)\circ G\colon \on{Lin}_R(\C,\C)\longrightarrow \on{Lin}_R(\D,\D) \]
the functor from \Cref{lem:pullingbackendofunctors}, and by $\counit\colon F_!(\on{id}_\C)\to \on{id}_\D$ the counit transformation of $F\dashv G$. We define the \textit{unit} $\tilde{\unit}\colon \on{id}_\D^!\to F_!(\on{id}_\C^!)$ as the image under $\Xi_\D$ of the natural transformation
\begin{align*} 
\on{ladj}(\on{ev}_{\D})&\rightarrow \on{ladj}(\on{ev}_{\D})\circ \on{ev}_{\C}\circ \on{ladj}(\on{ev}_{\C})\\ 
&\rightarrow \on{ladj}(\on{ev}_{\D})\circ \on{ev}_{\D}\circ (F^\vee\otimes F)\circ \on{ladj}(\on{ev}_{\C})\\
&\rightarrow (F^\vee\otimes F)\circ \on{ladj}(\on{ev}_{\C})
\end{align*}
composed with the equivalence 
\[ \Xi_\D((F^\vee\otimes F)\circ \on{ladj}(\on{ev}_{\C}))\simeq F_!(\on{id}_\C^!)\]
from \Cref{lem:pullingbackendofunctors}. The transformation $\tilde{\unit}$ is indeed a unit if $F$ admits a left adjoint, see \Cref{lem:unitisunit}.

\noindent {\bf Case 2:} Suppose that $\C,\D$ are proper.

Let $F\colon \D\to \C$ be a morphism in $\on{LinCat}_R^{\on{dual}}$ and $G$ the $R$-linear right adjoint of $F$. Consider the functor 
\[ F^*(\mhyphen)=G\circ (\mhyphen)\circ F\colon \on{Lin}_R(\C,\C)\longrightarrow \on{Lin}_R(\D,\D)\]
from \Cref{lem:pullingbackendofunctors} and denote by $\unit\colon \on{id}_\D\to F^*(\on{id}_\C)$ the unit of $F\dashv G$. Let $E^\vee$ denote the right adjoint of $F^\vee$. Applying $\Xi_{\C^\vee}^{-1}$ to the counit $F^\vee E^\vee\to \on{id}_\C$ defines a natural transformation 
\[ (F^\vee\otimes F)\circ \on{coev}_{\D^\vee}\longrightarrow \on{coev}_{\C^\vee}\,.\]  
We use this to define the \textit{counit} $\tilde{\counit}\colon F^*(\on{id}_\C^*)\to \on{id}_\D^*$ as the image under $\Theta_\D$ of the natural transformation
\begin{align*}
\on{ladj}(\on{coev}_{\C^\vee})\circ (F^\vee\otimes F) & \to \on{ladj}(\on{coev}_{\C^\vee})\circ (F^\vee\otimes F)\circ \on{coev}_{\D^\vee} \circ \on{ladj}(\on{coev}_{\D^\vee})\\
& \to \on{ladj}(\on{coev}_{\C^\vee})\circ \on{coev}_{\C^\vee} \circ \on{ladj}(\on{coev}_{\D^\vee})\\
& \to \on{ladj}(\on{coev}_{\D^\vee})
\end{align*}
composed with the identification $\Theta_{\D}(\on{ladj}(\on{coev}_{\C^\vee})\circ (F^\vee\otimes F))\simeq F^*(\on{id}_\C^*)$ from \Cref{lem:pullingbackendofunctors}.  
\end{construction}

The following proposition describes the Hochschild homology functor in terms of the corresponding morphisms between bimodules.

\begin{proposition}[$\!\!${\cite[Prop.~4.4]{BD21}}]\label{prop:smoothHHfun}
Let $F\colon\C\to \D$ be a map in $\on{LinCat}_R^{\on{dual}}$ with $\C$ smooth. Consider a morphism $\xi\colon R[i]\to \on{Tr}(\on{id}_\C)$, corresponding via \Cref{lem:HHvsHom} to a natural transformation $\alpha\colon \on{id}_\C^![i]\to \on{id}_\C$. 
\begin{enumerate}[(1)]
\item The morphism
\begin{equation}\label{eq:Tr} R[i]\xlongrightarrow{\xi} \on{Tr}(\on{id}_\C)\xlongrightarrow{\on{Tr}(F,\on{id}_F)}\on{Tr}(\on{id}_\D)\end{equation}
is equivalent to the composite 
\begin{align*} 
R[i]\longrightarrow \on{Tr}(\on{id}_\C^!)[i]\xrightarrow{\on{Tr}(\on{id}_\C,\on{id}_\C^!\unit)[i]} &\on{Tr}(\on{id}_\C^!GF)[i]
 \longrightarrow \on{Tr}(F\on{id}_\C^!G)[i]\\ \xrightarrow{\on{Tr}(\on{id}_\D,F\alpha G)}& \on{Tr}(FG)\xrightarrow{\on{Tr}(\on{id}_\D,\counit)}\on{Tr}(\on{id}_\D)\,.
\end{align*}
\item Suppose that $\D$ is smooth. Then the morphism \eqref{eq:Tr} can be identified with
\[ \counit\circ F_!(\alpha)\circ \tilde{\unit}\in \on{Ext}^{-i}_{\on{Lin}_R(\D,\D)}(\on{id}_\D^!,\on{id}_\D)\,.\] 
\end{enumerate}
\end{proposition}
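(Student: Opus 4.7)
The plan is to establish part (1) by decomposing $\xi$ into two canonical pieces and then invoking the trace composition formula \eqref{eq:trace} together with the functoriality of $\on{Tr}$ under vertical composition in $\on{End}({\bf LinCat}_R)$. Under \Cref{lem:HHvsHom}, the identity endomorphism of $\on{id}_\C^!$ determines a canonical class $\unit_\C\colon R\to \on{Tr}(\on{id}_\C^!)(R)$, and the naturality of the equivalence of \Cref{lem:HHvsHom} in the endomorphism variable shows that $\xi\simeq \on{Tr}(\on{id}_\C,\alpha)\circ \unit_\C[i]$. Next observe that in $\on{End}({\bf LinCat}_R)$ the vertical composition $(F,\on{id}_F)\circ (\on{id}_\C,\alpha)$ equals $(F,F\alpha)\colon (\C,\on{id}_\C^![i])\to (\D,\on{id}_\D)$, with oplax $2$-cell $F\alpha\colon F\on{id}_\C^![i]\Rightarrow \on{id}_\D F$. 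Hence the composite \eqref{eq:Tr} is $\on{Tr}(F,F\alpha)\circ \unit_\C[i]$, and expanding $\on{Tr}(F,F\alpha)$ via \eqref{eq:trace} with $E=\on{id}_\C^![i]$ and $E'=\on{id}_\D$ reproduces the five-term chain in part~(1) on the nose.

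For part (2), with $\D$ also smooth, we transport the chain of part (1) through the equivalence $\HH(\D)\simeq \on{Mor}_{\on{Lin}_R(\D,\D)}(\on{id}_\D^!,\on{id}_\D)$ of \Cref{lem:HHvsHom}. Naturality of this equivalence in the endomorphism slot translates the last two maps $\on{Tr}(\on{id}_\D, F\alpha G)$ and $\on{Tr}(\on{id}_\D,\counit)$ of the chain into postcomposition with $F_!(\alpha)\colon F\on{id}_\C^![i]G\to FG$ and with the counit $\counit\colon FG\to \on{id}_\D$, respectively. It remains to match the composite of the first three maps $R[i]\to \on{Tr}(\on{id}_\C^!)[i]\to \on{Tr}(\on{id}_\C^!GF)[i]\to \on{Tr}(F\on{id}_\C^!G)[i]$, after translation via \Cref{lem:HHvsHom}, with the shifted unit $\tilde{\unit}[i]\colon \on{id}_\D^![i]\to F\on{id}_\C^!G[i]$ from \Cref{constr:unitcounit}. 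The approach is to unwind both sides in terms of the adjunction units of $\on{ev}_\C\dashv \on{ladj}(\on{ev}_\C)$, $\on{ev}_\D\dashv \on{ladj}(\on{ev}_\D)$ and $F\dashv G$ and to observe that the resulting zig-zag is precisely the one used to define $\tilde{\unit}$.

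The principal obstacle is this last compatibility in part~(2). A clean way to organize it, which I would follow, is to transport both the trace chain and the definition of $\tilde{\unit}$ along the equivalence $\Xi_\D$ of \Cref{lem:ThetaXi} so that they become morphisms in $\on{Lin}_R(\on{RMod}_R,\D\otimes\D^\vee)$, and then to verify their agreement by a direct diagram chase using the triangle identities for $\on{ev}_\C,\on{coev}_\C,\on{ev}_\D,\on{coev}_\D$ linked through the unit and counit of $F\dashv G$. This is conceptually routine but requires careful bookkeeping, since one must track how the canonical unit $\unit_\C$ produced by \Cref{lem:HHvsHom} interacts with the reindexing step $\on{Tr}(\on{id}_\C^!)\to \on{Tr}(F\on{id}_\C^!G)$ coming from the initial segment of \eqref{eq:trace}; once this is done, the rest of the identification is formal.
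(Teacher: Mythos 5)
Your proposal is correct and follows essentially the same route as the paper: part (1) rests on the decomposition $\xi\simeq\on{Tr}(\on{id}_\C,\alpha)\circ\unit_\C[i]$ combined with the expansion \eqref{eq:trace} of $\on{Tr}(F,F\alpha)$ (exactly the mechanism the paper uses for the dual statement in \Cref{prop:properHHfun}, part (1) itself being cited from \cite{BD21}), and part (2) reduces, as in the paper, to identifying the initial segment $R[i]\to\on{Tr}(F\on{id}_\C^!G)[i]$ with $\tilde{\unit}$ under the identification of \Cref{lem:HHvsHom}. The bookkeeping you defer for that last identification — transporting through $\Xi$ and comparing the two zig-zags of units via the triangle identities for $\on{ladj}(\on{ev})\dashv\on{ev}$ and $F\dashv G$ — is precisely the computation the paper carries out, so there is no gap in the approach.
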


\begin{proof}
The proof of part (1) can be found in \cite[Prop.~4.4]{BD21}, we also spell out its dual version in the proof of \Cref{prop:properHHfun} below. Part (2) is stated in loc.~cit., we spell the proof out for convenience of the reader. Using part (1), it suffices to show that the composite
\[R[i]\longrightarrow \on{Tr}(\on{id}_\C^!)[i]\xrightarrow{\on{Tr}(\on{id}_\C,\on{id}_\C^!\unit)[i]} \on{Tr}(\on{id}_\C^!GF)[i]
 \longrightarrow \on{Tr}(F\on{id}_\C^!G)[i]\]
maps $1\in R[i]$ to $\tilde{\unit}$ after evaluating the trace at $R$ and using the identification of \Cref{lem:HHvsHom}. 

The image under $\Xi_\C^{-1}$ of the unit $\tilde{\unit}\colon \on{id}_\D^!\to F_!(\on{id}_\D^!)$ corresponds via the adjunction 
\[ \on{ladj}(\on{ev}_\D)\circ (\mhyphen)\colon  \on{Lin}_R(\on{RMod}_R,\on{RMod}_R) \longleftrightarrow\on{Lin}_R(\on{RMod}_R,\D^\vee \otimes \D)\noloc \on{ev}_\D\circ (\mhyphen)\] 
to the morphism
\begin{align*}
\on{id}_{\on{RMod}_R} & \to \on{ev}_\D\circ \on{ladj}(\on{ev}_\D)\circ \on{ev}_\C\circ \on{ladj}(\on{ev}_\C)\\
& \to \on{ev}_\D\circ \on{ladj}(\on{ev}_\D)\circ \on{ev}_\D\circ (F^\vee\otimes F)\circ \on{ladj}(\on{ev}_\C)\\
& \to \on{ev}_\D\circ (F^\vee\otimes F)\circ \on{ladj}(\on{ev}_\C)\,,
\end{align*}
which is by the triangle identity the same as 
\[ \on{id}_{\on{RMod}_R} \to \on{ev}_\C\circ \on{ladj}(\on{ev}_\C) \to \on{ev}_\D\circ (F^\vee\otimes F)\circ \on{ladj}(\on{ev}_\C)\,.\]

The map coming from the trace is given by
\begin{align*}
\on{id}_{\on{RMod}_R} & \to \on{ev}_\C\circ \on{ladj}(\on{ev}_\C)\\
& \to \on{ev}_\C\circ ( \on{id}_{\C^\vee}\otimes GF)\circ \on{ladj}(\on{ev}_\C)\\
& \to \on{ev}_\D\circ (F^\vee\otimes FGF)\circ \on{ladj}(\on{ev}_\C)\\
& \to \on{ev}_\D\circ (F^\vee \otimes F)\circ \on{ladj}(\on{ev}_\C)\,.
\end{align*}
Applying the triangle identity of the adjunction $F\dashv G$, we find that this agrees with the above.
\end{proof}

Given $(\C,E)\in {\bf End}({\bf LinCat}_R)$, we denote by $\on{Tr}(\C,E)^*$ the composite of $\on{Tr}(\C,E)$ and $\on{Mor}_{\on{RMod}_R}(\mhyphen,R)\colon \on{RMod}_R\to\on{RMod}_R$. The analogue of \Cref{prop:smoothHHfun} for the dual Hochschild homology is as follows.

\begin{proposition}\label{prop:properHHfun}
Let $F\colon \D \to \C$ be a map in $\on{LinCat}_R^{\on{dual}}$ with $\D$ proper. Consider a morphism $\xi\colon R[i]\to \on{Tr}(\on{id}_\C)^*$, corresponding via \Cref{lem:HHvsHom} to a natural transformation $\alpha\colon \on{id}_\C\to \on{id}_\C^*[-i]$. 
\begin{enumerate}[(1)]
\item The morphism
\begin{equation}\label{eq:dualTr} 
 R[i] \xlongrightarrow{\xi} \on{Tr}(\on{id}_\C)^*\xlongrightarrow{\on{Tr}(F,\on{id}_F)^*}\on{Tr}(\on{id}_\D)^*\end{equation}
is equivalent to the composite 
\begin{align*} 
R[i]\longrightarrow \on{Tr}(\on{id}_\C^*)^*[i]\xrightarrow{\on{Tr}(\on{id}_\C,\on{id}_\C^*\counit)^*[i]} &\on{Tr}(\on{id}_\C^*FG)^*[i]
 \longrightarrow \on{Tr}(G\on{id}_\C^*F)^*[i]\\ \xrightarrow{\on{Tr}(\on{id}_\D,G\alpha F)^*}& \on{Tr}(GF)^*\xrightarrow{\on{Tr}(\on{id}_\D,\unit)^*}\on{Tr}(\on{id}_\D)^*\,.
\end{align*}
\item Suppose that $\C$ is proper. Then the morphism \eqref{eq:dualTr} can be identified with
\[ \tilde{\counit}\circ F^*(\alpha)\circ \unit\in \on{Ext}^{-i}_{\on{Lin}_R(\D,\D)}(\on{id}_\D,\on{id}_\D^*)\,.\]
\end{enumerate}
\end{proposition}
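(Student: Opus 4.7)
The plan is to mirror the proof of Proposition~\ref{prop:smoothHHfun}, whose part~(1) is \cite[Prop.~4.4]{BD21}, in the proper rather than the smooth setting. The central observation for part~(1) is that $\xi$ factors as
\[ R[i]\xrightarrow{\eta} \on{Tr}(\on{id}_\C^*)^*[i] \xrightarrow{\on{Tr}(\on{id}_\C,\alpha)^*} \on{Tr}(\on{id}_\C)^*, \]
where $\eta$ classifies the identity natural transformation of $\on{id}_\C^*$ under the equivalence $\on{Tr}(\on{id}_\C^*)(R)^*\simeq \on{Mor}(\on{id}_\C^*,\on{id}_\C^*)$ of \Cref{lem:HHvsHom}(2). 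That this composite recovers $\xi$ is immediate from the construction of that identification. Using functoriality of the trace in $\on{End}({\bf LinCat}_R)$, the composition $(\on{id}_\C,\alpha)\circ (F,\on{id}_F)$ in $\on{End}({\bf LinCat}_R)$ equals the morphism $(F,\alpha F)\colon (\D,\on{id}_\D)\to (\C,\on{id}_\C^*[-i])$, where $\alpha F$ is the horizontal composite 2-cell; consequently
\[\on{Tr}(F,\on{id}_F)^*\circ \on{Tr}(\on{id}_\C,\alpha)^*\simeq \on{Tr}(F,\alpha F)^*.\]

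For part (1), I would then apply formula~\eqref{eq:trace} to $(F,\alpha F)$ and dualize. The outcome reads
\[\on{Tr}(\on{id}_\C^*)^*[i]\xrightarrow{\on{Tr}(\on{id}_\C^*\counit)^*[i]}\on{Tr}(\on{id}_\C^* FG)^*[i]\xrightarrow{\on{Tr}(\on{id}_\C,\alpha FG)^*}\on{Tr}(FG)^*\xrightarrow{\beta^*}\on{Tr}(GF)^*\xrightarrow{\on{Tr}(\on{id}_\D,\unit)^*}\on{Tr}(\on{id}_\D)^*,\]
which differs from the claim only by an exchange of the two middle maps. I would bridge the two forms via the naturality square
\[
\begin{tikzcd}
\on{Tr}(FG) \arrow[r,"\beta"] \arrow[d,"\on{Tr}(\on{id}_\C{,}\,\alpha FG)"'] & \on{Tr}(GF) \arrow[d,"\on{Tr}(\on{id}_\D{,}\,G\alpha F)"]\\
\on{Tr}(\on{id}_\C^* FG){[-i]} \arrow[r,"\beta"'] & \on{Tr}(G\on{id}_\C^* F){[-i]}
\end{tikzcd}
\]
for the cyclic permutation $\beta$; dualizing this square effects the required swap.

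For part (2), I would combine part~(1) with the Yoneda-type identifications $\on{Tr}(E)(R)^*\simeq \on{Mor}(E,\on{id}_\D^*)$ of \Cref{lem:HHvsHom}(2). Under these, the last two arrows $\on{Tr}(\on{id}_\D,G\alpha F)^*$ and $\on{Tr}(\on{id}_\D,\unit)^*$ correspond, by naturality in $E$, to precomposition with $F^*(\alpha)=G\alpha F$ and with $\unit\colon \on{id}_\D\to GF$ respectively. The remaining and hardest step is to verify that the initial segment
\[ R[i]\longrightarrow \on{Tr}(\on{id}_\C^*)^*[i]\xrightarrow{\on{Tr}(\on{id}_\C,\on{id}_\C^*\counit)^*[i]} \on{Tr}(\on{id}_\C^* FG)^*[i]\xrightarrow{\beta^*} \on{Tr}(G\on{id}_\C^* F)^*[i] \]
is classified under \Cref{lem:HHvsHom}(2) by $\tilde{\counit}\colon G\on{id}_\C^* F\to \on{id}_\D^*$. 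I would carry this out by unpacking \Cref{constr:unitcounit}, Case~2, which presents $\tilde{\counit}$ as $\Theta_\D$ applied to an explicit zigzag built from $\on{coev}_{\D^\vee}$, $\on{coev}_{\C^\vee}$ and the counit of $F^\vee\dashv E^\vee$, and comparing term-by-term with the trace-level zigzag above. The match is purely formal but requires simultaneous use of the triangle identities for the three adjunctions $F\dashv G$, $F^\vee\dashv E^\vee$, and $\on{ladj}(\on{coev}_{\C^\vee})\dashv \on{coev}_{\C^\vee}$, in direct analogy with the concluding diagram chase in the proof of Proposition~\ref{prop:smoothHHfun}(2); this is the step I expect to be the main obstacle.
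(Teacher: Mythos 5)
Your proposal is correct and follows essentially the same route as the paper: the paper's proof of part (1) is exactly the factorization of $\xi$ through $\on{Tr}(\on{id}_\C^*)^*[i]$ via the class of $\on{id}_{\on{id}_\C^*}$, combined with the dualization of \eqref{eq:trace} for $(F,\on{id}_F)$ and the two naturality squares of the trace (one of which is your $\beta$-square swapping the middle maps), all assembled into a single commutative diagram. For part (2) the paper likewise reduces to the dual of the diagram chase in \Cref{prop:smoothHHfun}(2), i.e.\ the identification of the initial segment with $\tilde{\counit}$ via \Cref{constr:unitcounit} and the triangle identities, which is precisely the step you single out.
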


\begin{proof}
The proof of part (1) is dual to the proof of part (1) of \Cref{prop:smoothHHfun}. Using \Cref{lem:HHvsHom}, the identity on $\on{id}_\C^*$ gives rise to a morphism $ \mu \colon R[i]\to \on{Tr}(\on{id}_\C^*)[i]$. The statement now follows from the following commutative diagram.
\[
\begin{tikzcd}[column sep=46]
{R[i]} \arrow[r, "\mu"] \arrow[rr, "\xi", bend left=15] & {\on{Tr}(\on{id}_\C^*)^*[i]} \arrow[r, "{\on{Tr}(\on{id}_\C,\alpha)^*}"'] \arrow[d, "{\on{Tr}(\on{id}_\C,\on{id}_\C^*\counit)^*[i]}"'] & \on{Tr}(\on{id}_\C)^* \arrow[d, "{\on{Tr}(\on{id}_\C,\counit)^*}"'] \arrow[rdd, "{\on{Tr}(F,\on{id}_F)^*}"] &                       \\
                                              & {\on{Tr}(\on{id}_\C^*FG)^*[i]} \arrow[d] \arrow[r, "{\on{Tr}(\on{id}_\C,\alpha FG)^*}"]                                                 & \on{Tr}(FG)^* \arrow[d]                                                                                     &                       \\
                                              & {\on{Tr}(G\on{id}_\C^*F)^*[i]} \arrow[r, "{\on{Tr}(\on{id}_\D,G\alpha F)^*}"]                                                           & \on{Tr}(GF)^* \arrow[r, "{\on{Tr}(\on{id}_\D,\unit)^*}"]                                                    & \on{Tr}(\on{id}_\D)^*
\end{tikzcd}
\]
Note that the commutativity of the rightmost triangle follows from dualizing \Cref{eq:trace}.

Part (2) can be shown as in the proof of \Cref{prop:smoothHHfun}.
\end{proof}

\begin{lemma}\label{lem:unitisunit}
Let $\C,\D\in \on{LinCat}_R^{\on{dual}}$.
\begin{enumerate}[(1)]
\item Suppose that $\C,\D$ are smooth. Let $F\colon \C\to \D$ be a map in $\on{LinCat}_R^{\on{dual}}$ which admits a left adjoint $E$. Then there exists a natural equivalence $F_!(\on{id}_\C^!)\simeq FE\on{id}_{\D}^!$, such that the composite of 
\[ \tilde{\unit}\colon \on{id}_{\D}^!\to F_!(\on{id}_\C^!)\] 
with this equivalence describes a unit of $E\dashv F$ composed with $\on{id}_\D^!$.
\item Suppose that $\C,\D$ are proper. Let $F\colon \D\to \C$ be a map in $\on{LinCat}_R^{\on{dual}}$ which admits a left adjoint $E$. Then there exists a natural equivalence $F^*(\on{id}_\C^*)\simeq \on{id}_{\D}^*EF$, such that the composite of 
\[ \tilde{\counit}\colon F^*(\on{id}_\C^*)\to \on{id}_\D^*\] 
with this equivalence describes a counit of $E\dashv F$ composed with $\on{id}_\D^*$.
\end{enumerate}
\end{lemma}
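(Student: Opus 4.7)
My plan is to prove part (1) in detail; part (2) will follow by a dual argument.

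The first step is to construct the equivalence $F_!(\on{id}_\C^!)\simeq FE\on{id}_\D^!$. The left adjoint $E\colon \D\to \C$ determines a dual functor $E^\vee\colon \D^\vee\to \C^\vee$, characterized analogously to $F^\vee$ but using $F$ in place of $G$, so that $\on{ev}_\C\circ(E^\vee\otimes \on{id}_\C)\simeq \on{ev}_\D\circ(\on{id}_{\D^\vee}\otimes F)$. The adjunction $E\dashv F$ induces a dual adjunction $F^\vee\dashv E^\vee$, as is most easily seen by restricting to compacts and applying $\on{Ind}$ in the compactly generated case (the general case following by a duality argument). Taking left adjoints of the displayed equivalence, using $F^\vee\dashv E^\vee$ on the left factor and $E\dashv F$ on the right factor, yields
\[
(F^\vee\otimes \on{id}_\C)\circ \on{ladj}(\on{ev}_\C)\simeq (\on{id}_{\D^\vee}\otimes E)\circ \on{ladj}(\on{ev}_\D)\,.
\]
Whiskering by $\on{id}_{\D^\vee}\otimes F$ on the left, then applying $\tau$ and $\Xi_\D$ together with \Cref{lem:comdiag}(1), produces the claimed equivalence $F_!(\on{id}_\C^!)\simeq FE\on{id}_\D^!$; on the pre-$\Xi_\D$ level this reads $(F^\vee\otimes F)\on{ladj}(\on{ev}_\C)\simeq (\on{id}_{\D^\vee}\otimes FE)\on{ladj}(\on{ev}_\D)$.

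The second step is to identify $\tilde{\unit}$ with the whiskered unit of $E\dashv F$ under this equivalence. By \Cref{constr:unitcounit}, the transformation $\Xi_\D^{-1}(\tilde{\unit})\colon \on{ladj}(\on{ev}_\D)\to (F^\vee\otimes F)\on{ladj}(\on{ev}_\C)$ is the Beck--Chevalley morphism associated to $\on{ev}_\C\simeq \on{ev}_\D\circ(F^\vee\otimes F)$, assembled from the unit of $\on{ladj}(\on{ev}_\C)\dashv \on{ev}_\C$, the equality itself, and the counit of $\on{ladj}(\on{ev}_\D)\dashv \on{ev}_\D$. Postcomposing with the Step 1 equivalence rewrites its target as $(\on{id}_{\D^\vee}\otimes FE)\on{ladj}(\on{ev}_\D)$. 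A diagram chase, using the triangle identities for $E\dashv F$ and for $\on{ladj}(\on{ev}_?)\dashv \on{ev}_?$, together with the compatibility between the defining equivalences of $F^\vee$ and $E^\vee$ that underlies Step 1, collapses this composite to $(\on{id}_{\D^\vee}\otimes \eta)\star \on{ladj}(\on{ev}_\D)$, where $\eta\colon \on{id}_\D\to FE$ is the unit of $E\dashv F$. Applying $\Xi_\D$ yields precisely $\eta\star \on{id}_{\on{id}_\D^!}\colon \on{id}_\D^!\to FE\on{id}_\D^!$.

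I expect the main technical difficulty to lie in the diagram chase of the second step: the Beck--Chevalley composite defining $\tilde{\unit}$ contains units and counits for two distinct adjunctions, and the required cancellations with the Step 1 equivalence (itself obtained from the defining equivalences of $F^\vee$ and $E^\vee$ via adjunction) demand careful bookkeeping with triangle identities. Part (2) is obtained by dualizing throughout: in the roles of $\on{ladj}(\on{ev}_?)$, $\Xi$, and $\on{id}^!$ one substitutes $\on{ladj}(\on{coev}_?)$ (cf.\ \Cref{lem:leftdualasradj}), $\Theta$, and $\on{id}^*$, and units are swapped with counits. The analogue of Step 1 yields the equivalence $G\on{id}_\C^*\simeq \on{id}_\D^* E$ of functors $\C\to \D$, hence $F^*(\on{id}_\C^*)\simeq \on{id}_\D^* EF$, and the analogous diagram chase identifies $\tilde{\counit}$ with the counit of $E\dashv F$ whiskered with $\on{id}_\D^*$.
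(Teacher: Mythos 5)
Your proposal takes essentially the same route as the paper's proof: the equivalence is obtained by passing to left adjoints in $\on{ev}_\C\circ(E^\vee\otimes\on{id}_\C)\simeq\on{ev}_\D\circ(\on{id}_{\D^\vee}\otimes F)$, and the identification of $\tilde{\unit}$ then follows by collapsing the defining composite using naturality and the triangle identity for $\on{ladj}(\on{ev}_\D)\dashv\on{ev}_\D$. The only imprecision is that the middle map in the composite defining $\tilde{\unit}$ is not an equivalence $\on{ev}_\C\simeq\on{ev}_\D\circ(F^\vee\otimes F)$ but the natural transformation induced by the unit of $F\dashv G$ (equivalently, of $F^\vee\dashv E^\vee$); this does not affect the argument you outline.
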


\begin{proof}
We only prove part (1), part (2) is similar. We have
\[\on{ev}_\C\circ (E^\vee\otimes \on{id}_\C)\simeq \on{ev}_\D\circ (\on{id}_{\D^\vee}\otimes F)\,.\]
The units of the adjunctions  
\[(F^\vee\otimes \on{id}_\C)\circ \on{ladj}(\on{ev}_\C)\dashv \on{ev}_\C\circ ( E^\vee\otimes\on{id}_\C)\] 
and 
\[(\on{id}_{\D^\vee}\otimes E)\circ \on{ladj}(\on{ev}_\D) \dashv \on{ev}_\D\circ (\on{id}_{\D^\vee}\otimes F)\]
are therefore equivalent. This gives rise to the following commutative diagram:
\[
\begin{tikzcd}[column sep=small]
\on{id}_{\on{RMod}_R} \arrow[d] \arrow[r] & \on{ev}_\C\circ \on{ladj}(\on{ev}_\C) \arrow[r]                                   & \on{ev}_\C \circ (E^\vee F^\vee\otimes\on{id}_\C)\circ\on{ladj}(\on{ev}_\C) \arrow[d, "\simeq"] \\
 \on{ev}_\D\circ \on{ladj}(\on{ev}_\D) \arrow[r]           &  \on{ev}_\D\circ (\on{id}_{\D^\vee}\otimes FE)\circ \on{ladj}(\on{ev}_\D) \arrow[r, "\simeq"] & \on{ev}_\D \circ(F^\vee\otimes F)\circ\on{ladj}(\on{ev}_\C)                                        
\end{tikzcd}
\]
The naturality of the unit $\on{id}_{\D^\vee}\otimes \on{id}_{\D}\to \on{id}_{\D^\vee}\otimes FE $ and the counit $\on{id}_{\D^\vee}\otimes\on{ladj}(\on{ev}_\D)\on{ev}_\D\to \on{id}_{\D^\vee}\otimes \on{id}_{\D}$ gives rise to the following commutative diagram:
\[
\begin{tikzcd}
\on{ladj}(\on{ev}_\D) \on{ev}_\D \on{ladj}(\on{ev}_\D) \arrow[d] \arrow[r]               & \on{ladj}(\on{ev}_\D) \on{ev}_\D (\on{id}_{\D^\vee}\otimes FE)\on{ladj}(\on{ev}_\D)\arrow[d]               \\
 \on{ladj}(\on{ev}_\D) \arrow[r] & (\on{id}_{\D^\vee}\otimes FE)\on{ladj}(\on{ev}_\D)
\end{tikzcd}
\]
Postcomposing the upper diagram with $\on{ladj}(\on{ev}_\D)$ and combining it with the lower diagram, we see that the definition of the natural transformation $\tilde{\unit}\colon \on{id}_\D^!\to F_!(\on{id}_\C^!)$ in \Cref{constr:unitcounit}
is equivalent to the image under $\Xi_\D$ of the natural transformation
\begin{align*}
\on{ladj}(\on{ev}_{\D})&\to \on{ladj}(\on{ev}_{\D})\on{ev}_{\D}\on{ladj}(\on{ev}_{\D})\\
&\to \on{ladj}(\on{ev}_{\D})\\
&\to  (\on{id}_{\D^\vee}\otimes FE)\on{ladj}(\on{ev}_{\D})\\
&\simeq (F^\vee\otimes F)\on{ladj}(\on{ev}_{\C})\,.
\end{align*}
The desired description of $\tilde{\unit}$ now follows via the triangle identity for the adjunction $\on{ladj}(\on{ev}_{\D})\dashv \on{ev}_{\D}$.
\end{proof}

The following lemma shows that the natural transformations $\tilde{\unit}$ and $\tilde{\counit}$ can also be seen as the adjoints of the counit and unit of $F\dashv G$.

\begin{lemma}\label{lem:dualcounit}~
\begin{enumerate}[(1)]
\item Let $F\colon \C\to\D$ be a morphism in $\on{LinCat}_R^{\on{dual}}$ with $\C,\D$ smooth. Denote the right adjoint of $F$ by $G$. The image under $\Xi_\C^{-1}$ of the unit $\tilde{\unit}\colon \on{id}_\D^!\to F\on{id}_\C^! G$ is left adjoint to the counit $\on{ev}_\D\circ (\on{id}_{\D^\vee} \otimes FG)\to \on{ev}_\D$. 
\item Let $F\colon \D\to\C$ be a morphism in $\on{LinCat}_R^{\on{dual}}$ with $\C,\D$ proper. Denote the right adjoint of $F$ by $G$. The image under $\Theta_\C^{-1}$ of the counit $\tilde{\counit}\colon G\on{id}_\C^* F\to \on{id}_\C^*$ is left adjoint to the unit $\on{ev}_\D\to \on{ev}_\D\circ (\on{id}_{\D^\vee}\otimes GF)$.
\end{enumerate}
\end{lemma}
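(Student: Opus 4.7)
The plan is to recognize each of the natural transformations produced by \Cref{constr:unitcounit} as the \emph{mate} of the counit (respectively unit) of $F\dashv G$ under an appropriate pair of adjunctions. The lemma is then immediate from the tautology that mates are exactly the natural transformations between left adjoints corresponding to given natural transformations between right adjoints.

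For part (1), I first set up the pair of adjunctions. By the defining construction of $F^\vee$, one has the canonical equivalence $\on{ev}_\D\circ (F^\vee\otimes \on{id}_\D)\simeq \on{ev}_\C\circ (\on{id}_{\C^\vee}\otimes G)$. Post-composing both sides with $\on{id}_{\D^\vee}\otimes F$ and passing to the corresponding left adjoints, one deduces that $\on{ev}_\D\circ (\on{id}_{\D^\vee}\otimes FG)$ is right adjoint to $(F^\vee\otimes F)\circ \on{ladj}(\on{ev}_\C)$. Together with the tautological adjunction $\on{ladj}(\on{ev}_\D)\dashv \on{ev}_\D$, this is the pair of adjunctions with respect to which the mate is to be formed.

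Next, I unpack the three-step composite defining the image of $\tilde{\unit}$ under $\Xi_\D^{-1}$ in \Cref{constr:unitcounit} and compare it to the standard mate formula
\[ \on{ladj}(\on{ev}_\D)\longrightarrow \on{ladj}(\on{ev}_\D)\,R'\,L'\longrightarrow \on{ladj}(\on{ev}_\D)\,\on{ev}_\D\,L' \longrightarrow L'\,, \]
with $L' = (F^\vee\otimes F)\circ \on{ladj}(\on{ev}_\C)$, $R' = \on{ev}_\D\circ (\on{id}_{\D^\vee}\otimes FG)$, and the middle arrow induced by the counit $FG\to \on{id}_\D$. The first and third steps of \Cref{constr:unitcounit} coincide with the unit of $L'\dashv R'$ and the counit of $\on{ladj}(\on{ev}_\D)\dashv \on{ev}_\D$, respectively. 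It remains to verify that the middle step, namely the natural transformation $\on{ev}_\C\to \on{ev}_\D\circ (F^\vee\otimes F)$ induced by the unit $\on{id}_\C\to GF$ of $F\dashv G$, matches the natural transformation between $R'$ and $\on{ev}_\D$ induced by the counit $FG\to \on{id}_\D$ under the identification $\on{ev}_\D\circ (F^\vee\otimes \on{id}_\D)\simeq \on{ev}_\C\circ (\on{id}_{\C^\vee}\otimes G)$. This is a direct manipulation using the triangle identities for $F\dashv G$.

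Part (2) is proven by the dual argument, in which coevaluation functors play the role of evaluation functors, the roles of unit and counit are interchanged, and the identification used in the middle step is the dual compatibility $\on{coev}_{\C^\vee}\circ F^\vee$ vs.~$(\on{id}_{\D^\vee}\otimes F)\circ \on{coev}_{\D^\vee}$ built into the construction of $F^\vee$. The principal obstacle in both parts is the bookkeeping needed to translate natural transformations of endofunctors of $\D$ into natural transformations between the corresponding functors $\on{RMod}_R\to \D\otimes \D^\vee$ or $\D^\vee\otimes \D\to \on{RMod}_R$ via $\Xi_\D$ and $\Theta_\D$; once these translations are in place, the mate formula emerges transparently from \Cref{constr:unitcounit}.
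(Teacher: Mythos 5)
Your proposal is correct and is essentially the paper's argument run in the opposite direction: the paper passes from $\Xi^{-1}(\tilde{\unit})$ to the induced transformation on right adjoints and identifies that with the counit $\on{ev}_\D\circ(\on{id}_{\D^\vee}\otimes FG)\to\on{ev}_\D$, while you form the mate of the counit and identify it with $\Xi^{-1}(\tilde{\unit})$; both reduce to the same two ingredients, namely the identification $\on{radj}\bigl((F^\vee\otimes F)\circ\on{ladj}(\on{ev}_\C)\bigr)\simeq\on{ev}_\C\circ(\on{radj}(F^\vee)\otimes G)\simeq\on{ev}_\D\circ(\on{id}_{\D^\vee}\otimes FG)$ and triangle identities for $F\dashv G$ and $F^\vee\dashv\on{radj}(F^\vee)$. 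One small inaccuracy: the first arrow in \Cref{constr:unitcounit} is the unit of $\on{ladj}(\on{ev}_\C)\dashv\on{ev}_\C$, not the unit of $L'\dashv R'$ (the latter additionally incorporates the units of $F\dashv G$ and $F^\vee\dashv\on{radj}(F^\vee)$), so the step-by-step match with the mate formula only holds after combining your first two steps and applying a triangle identity — which is precisely the ``direct manipulation'' you defer, so the argument still goes through.
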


\begin{proof}
We only prove part (1), part (2) can be proven similarly. Denote the right adjoint of $F^\vee$ by $E^\vee$. Upon passing to right adjoints, the natural transformation $\Xi_\C^{-1}(\tilde{\unit})\colon \on{ladj}(\on{ev}_\D)\to (F^\vee \otimes F)\circ \on{ladj}(\on{ev}_\C)$ induces the following natural transformation:
\begin{align*}
\on{ev}_{\C} \circ (E^\vee\otimes G) & \to  \on{ev}_{\C} \circ(E^\vee\otimes G)\circ\on{radj}(\on{ev}_{\D})\circ \on{ev}_{\D}\\
&\to \on{ev}_{\C}\circ \on{radj}(\on{ev}_{\C}) \circ \on{ev}_{\D}\\
&\to \on{ev}_{\D}
\end{align*}
Unraveling the definition of the above natural transformation, one sees that it is equivalent to the composite\footnote{Hint: given a natural transformation $F\to F'$, the induced natural transformation on the right adjoints $G'\to G$ is obtained as the composite $G'\to GFG'\to GF'G'\to G$.}
\begin{align*}
\on{ev}_{\C} \circ (E^\vee\otimes G) & \to  \on{ev}_{\C} \circ(E^\vee\otimes G)\circ\on{radj}(\on{ev}_{\D})\circ \on{ev}_{\D}\\
&\to \on{ev}_{\D} \circ(F^\vee E^\vee\otimes FG)\circ\on{radj}(\on{ev}_{\D})\circ \on{ev}_{\D}\\
&\to  \on{ev}_{\D}\circ \on{radj}(\on{ev}_{\D}) \circ \on{ev}_{\D}\\
&\to \on{ev}_{\D}\,,
\end{align*}
where the third natural transformation uses the counits of $F\dashv G$ and $F^\vee\dashv E^\vee$. This natural transformation fits into the following commutative diagram,
\[
\begin{tikzcd}[column sep=small]
\on{ev}_\D\circ (\on{id}_{\D^\vee}\otimes FG) \arrow[d] \arrow[r, "\simeq"] & \on{ev}_{\C}\circ (E^\vee\otimes G) \arrow[r] \arrow[d]       & \on{ev}_{\C}\circ (E^\vee\otimes G)\circ \on{radj}(\on{ev}_\D)\circ \on{ev}_\D \arrow[d]         \\
\on{ev}_\D\circ (\on{id}_{\D^\vee}\otimes FGFG) \arrow[r, "\simeq"]         & \on{ev}_\D\circ (F^\vee E^\vee\otimes FG) \arrow[r] \arrow[d] & \on{ev}_{\D}\circ (F^\vee E^\vee\otimes FG)\circ \on{radj}(\on{ev}_\D)\circ \on{ev}_\D \arrow[d] \\
                                                                            & \on{ev}_\D \arrow[r, leftrightarrow]                                          & \on{ev}_{\D}\circ \on{radj}(\on{ev}_\D)\circ \on{ev}_\D                               
\end{tikzcd}
\]
exhibiting it as the counit $\on{ev}_\D\circ (\on{id}_{\D^\vee}\otimes FG)\to \on{ev}_\D$ by further triangle identities.
\end{proof}

\section{Relative Calabi--Yau structures}\label{sec:relCYstr}

The goal of this section is to introduce $R$-linear relative Calabi--Yau structures and describe essential features of their theory. We begin in \Cref{subsec:leftCYstructures,subsec:rightCYstructures} with their definitions. After the short \Cref{subsec:tensorCY} on the behavior of Calabi--Yau structures under tensor products, we generalize the gluing properties of relative Calabi--Yau structures of \cite{BD19} to the $R$-linear setting in \Cref{subsec:CYglue}. For the entire section, we fix an $\mathbb{E}_\infty$-ring spectrum $R$. 

\subsection{Left Calabi--Yau structures}\label{subsec:leftCYstructures}

Let $F\colon\C\rightarrow \D$ be a dualizable $R$-linear functor between dualizable and smooth $R$-linear $\infty$-categories. Part (2) of \Cref{prop:smoothHHfun} shows that an $R$-linear relative Hochschild homology class $\sigma\colon R[n]\to \HH(\D,\C)$ amounts to a diagram in $\on{Lin}_R(\D,\D)$
\[
\begin{tikzcd}
{\on{id}_{\D}^!} \arrow[r, "\tilde{\unit}"] & {F_!(\on{id}_{\C}^!)} \arrow[d]               &                       \\
                                         & {F_!(\on{id}_{\C})[1-n]}\arrow[r, "{\counit[1-n]}"] & {\on{id}_{\D}[1-n]}
\end{tikzcd}
\]
together with a choice of null-homotopy of the composite $\on{id}_\D^!\to \on{id}_\D[1-n]$. The null-homotopy allows us to extend the diagram to a diagram with horizontal fiber and cofiber sequences as follows:
\begin{equation}\label{eq:leftCYfibcof}
\begin{tikzcd}
{\on{id}_{\D}^!} \arrow[r, "\tilde{\unit}"] \arrow[d] & {F_!(\on{id}_{\C}^!)} \arrow[d] \arrow[r] & \cof \arrow[d] \\
\fib \arrow[r]                       & {F_!(\on{id}_{\C})[1-n]} \arrow[r, "{\counit[1-n]}"]                    & {\on{id}_{\D}[1-n]}       
\end{tikzcd}
\end{equation}
We call the Hochschild homology class $\sigma$ {\em non-degenerate} if all the vertical maps in the diagram \eqref{eq:leftCYfibcof} are equivalences. 

\begin{definition}
\begin{enumerate}[(1)]
\item A weak left $n$-Calabi--Yau structure on the functor $F$ consists of a non-degenerate Hochschild homology class $\sigma\colon R[n]\to \HH(\D,\C)$.  If $F=0$, we also say that $\mathcal{D}$ carries a weak left $n$-Calabi--Yau structure. 
\item A left $n$-Calabi--Yau structure on the functor $F$ consists of a negative cyclic homology class $\eta\colon R[n]\to \HH^{S^1}(\D,\C)$, whose composition with $\HH^{S^1}(\D,\C)\to \HH(\D,\C)$ defines a non-degenerate Hochschild homology class. If $F=0$, we also say that $\mathcal{D}$ carries a left $n$-Calabi--Yau structure. 
\end{enumerate}
\end{definition}

Weak left $n$-Calabi--Yau structures are also sometimes called bimodule $n$-Calabi--Yau structures.

\begin{remark}\label{rem:independenceCY}
The notion of weak left Calabi--Yau structure on a functor $F$ only depends on the functor and the relative Hochschild class and not on any further choices made in its definition. This includes choices of adjoints and (co)units. For example, we make a choice of right adjoint of $F$ together with the counit, the space of such choices is however contractible. Inspecting the definition one finds that making a different choice yields an equivalent diagram in \eqref{eq:leftCYfibcof} and thus the same condition of the Hochschild class being non-degenerate.
\end{remark}

\subsection{Right Calabi--Yau structures}\label{subsec:rightCYstructures}

Let $F\colon\D\rightarrow \C$ be a dualizable $R$-linear functor between dualizable and proper $R$-linear $\infty$-categories. Part (2) of \Cref{prop:properHHfun} shows that an $R$-linear dual relative Hochschild homology class $\sigma\colon R[n]\to \HH(\D,\C)^*$ amounts to a diagram in $\on{Lin}_R(\D,\D)$
\[
\begin{tikzcd}
{\on{id}_{\mathcal{D}}} \arrow[r, "{\unit}"] & {F^*(\on{id}_\C)} \arrow[d]       &                         \\
                                                       & F^*(\on{id}_\C^*)[1-n] \arrow[r, "{\tilde{\counit}[1-n]}"] & \on{id}_{\mathcal{D}}^*[1-n]
\end{tikzcd}
\]
together with a choice of null-homotopy of the composite $\on{id}_\D\to \on{id}_\D^*[1-n]$. We extend the diagram to a diagram with horizontal fiber and cofiber sequences as follows:
\begin{equation}\label{eq:defrightCY}
\begin{tikzcd}
{\on{id}_{\mathcal{D}}} \arrow[r, "{\unit}"] \arrow[d] & {F^*(\on{id}_\C)} \arrow[d] \arrow[r] & \on{cof} \arrow[d]      \\
\on{fib} \arrow[r]                                               & F^*(\on{id}_\C^*)[1-n] \arrow[r, "{\tilde{\counit}[1-n]}"]     & \on{id}_{\mathcal{D}}^*[1-n]
\end{tikzcd}
\end{equation}
As in the smooth case, we call the dual Hochschild homology class $\sigma$ {\em non-degenerate} if all the vertical maps in the above diagram are equivalences. 

\begin{definition}~
\begin{enumerate}[(1)]
\item A weak right $n$-Calabi--Yau structure on the functor $F$ consists of a non-degenerate dual Hochschild homology class $\sigma\colon R[n]\to \HH(\D,\C)^*$. If $\C=0$, we also say that $\D$ carries a weak right $n$-Calabi--Yau structure. 
\item A right $n$-Calabi--Yau structure on the functor $F$ consists of a dual cyclic homology class $\eta\colon R[n]\to \HH(\D,\C)^*_{S^1}$, whose composition with $\HH(\D,\C)^*_{S^1}\to \HH(\D,\C)^*$ defines a non-degenerate dual Hochschild homology class. If $\C=0$, we also say that $\D$ carries a right $n$-Calabi--Yau structure. 
\end{enumerate}
\end{definition}

\begin{remark}
Assume that $\D$ is compactly generated. A weak right $n$-Calabi--Yau structure on $\D$ equivalently consists of an equivalence in $\on{RMod}_R$ 
\begin{equation}\label{eq:CYduality} \on{Mor}_{\mathcal{D}}(X,Y)\simeq \on{Mor}_{\mathcal{D}}(Y,X[n])^*\,,\end{equation}
bifunctorial in $X,Y\in \mathcal{D}^{\on{c}}$. 

In good situations, a relative right $n$-Calabi--Yau structure on $\D$ gives rise to a duality of a subfunctor of $\on{Mor}_\D(\mhyphen,\mhyphen)$, generalizing the equivalence \eqref{eq:CYduality}, see \cite{Chr22b}.
\end{remark}

\begin{remark}
It is also possible to make sense of relative right Calabi--Yau structures on some non-proper $k$-linear $\infty$-categories, namely those arising as the derived $\infty$-category of a dg algebra whose homology is finite dimensional in each degree. We refer to the recent work \cite{KL23} for this notion and the construction of many examples related to cluster categories.
\end{remark}

Finally, we comment on the relation with dg categorical left and right Calabi--Yau structures.

Recall that the passage to derived $\infty$-categories defines a functor $\D(\mhyphen)\colon N(\on{dgCat}_k)\to \on{LinCat}_k$, with $\on{dgCat}_k$ the $1$-category of $k$-linear dg categories.

\begin{lemma}\label{lem:dg_CY_to_infty_CY}
Let $f\colon A\to B$ be a dg functor. Then $F\coloneqq \D(f)\colon \D(A)\to \D(B)$ is a colimit preserving functor between compactly generated $k$-linear $\infty$-categories.
\begin{enumerate}[(1)]
\item Suppose that $A$ and $B$ are smooth. Then $\D(A),\D(B)$ are smooth as well. Further, any weak left $n$-Calabi--Yau structure of $f$ induces a weak left $n$-Calabi--Yau structure of $F$ and vice versa. 
\item Suppose that $A$ and $B$ are proper. Then $\D(A),\D(B)$ are proper as well. Further, any weak right $n$-Calabi--Yau structure of $f$ induces a weak right $n$-Calabi--Yau structure of $F$ and vice versa. 
\end{enumerate}
\end{lemma}

\begin{proof}
We may assume that $A$ and $B$ are cofibrant dg categories (with respect to the quasi-equivalence model structure). We first show that $\D(A\otimes A^{\on{op}})\simeq \D(A)\otimes \D(A^{\on{op}})$, where the former tensor product is of dg categories and the latter in $\on{LinCat}_k$. A similar argument shows that $\D(B\otimes B^{\on{op}})\simeq \D(B)\otimes \D(B^{\on{op}})$. We can compute $A\otimes A^{\on{op}}$ as the colimit of the $2$-sided bar construction, which is a diagram $\Delta^{\on{op}}\to \on{dgCat}_k$, mapping $[n]$ to $A\times \on{dgMod}_k^{\times n} \times B$. This diagram is cofibrant with respect to the Reedy model structure on $\on{Fun}(\Delta^{\on{op}},\on{dgCat}_k)$, meaning that all latching morphisms are cofibrations \cite[Def.~15.3.3]{Hir03}. Thus, the colimit of the $2$-sided bar construction computes its homotopy colimit, see also \cite[Thm.~15.10.8]{Hir03}. The image under $\D(\mhyphen)$ of the the homotopy colimit can further be identified with the colimit of the $\infty$-categorical $2$-sided bar construction \cite[Constr.~4.4.2.7]{HA}, computing $\D(A)\otimes \D(A^{\on{op}})\in \on{LinCat}_k$. 

To the authors knowledge, it has not been shown that that there is a symmetric monoidal functor of $(\infty,2)$-categories lifting $\D(\mhyphen)$. However, the above shows that the $\infty$-functor $\D(\mhyphen)\colon N(\on{dgCat}_k)\to \on{LinCat}_k$ preserves the tensor product of two cofibrant dg categories, and thus maps the evaluation and coevaluation bimodules to the evaluation and coevaluation functors. The functor thus (non-coherently) identifies the two traces, and hence (non-coherently) the functoriality of Hochschild homology. To identify the $S^1$-actions on the traces, a coherent identifications of the traces would be necessary. 

To obtain the statement from the Lemma, all that remains to note is that the non-degeneracy conditions on relative (possibly dual) Hochschild classes of $f$ and $F$ coincide, which follows from \Cref{prop:smoothHHfun} and \cite[Prop.~4.4]{BD21} in the smooth case and a similar argument in the proper case.
\end{proof}

\subsection{Behavior under tensor products}\label{subsec:tensorCY}

\begin{lemma}\label{lem:tensorHH}
Let $\mathcal{C},\mathcal{D}$ be dualizable $R$-linear $\infty$-categories. There is a canonical equivalence
\[ \on{HH}(\mathcal{C})\otimes \on{HH}(\mathcal{D})\simeq \on{HH}(\mathcal{C}\otimes \mathcal{D})\,.\]
\end{lemma}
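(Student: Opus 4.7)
The cleanest route is to invoke the symmetric monoidal structure of the trace functor. Recall from the discussion preceding Remark~\ref{rem:traceofunit} that the trace
\[
\Tr\colon \on{End}({\bf LinCat}_R)\longrightarrow \Omega({\bf LinCat}_R)\simeq \on{Lin}_R(\on{RMod}_R,\on{RMod}_R)
\]
is a symmetric monoidal functor. Given dualizable $\C,\D\in \on{LinCat}_R$, the pair $(\C\otimes\D,\on{id}_{\C\otimes\D})$ is (canonically equivalent to) the monoidal product of $(\C,\on{id}_\C)$ and $(\D,\on{id}_\D)$ in $\on{End}({\bf LinCat}_R)$, so applying $\Tr$ gives an equivalence
\[
\Tr(\on{id}_{\C\otimes\D})\simeq \Tr(\on{id}_\C)\otimes \Tr(\on{id}_\D)
\]
of $R$-linear endofunctors of $\on{RMod}_R$ (where on the right we use the symmetric monoidal structure of $\on{Lin}_R(\on{RMod}_R,\on{RMod}_R)$).

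Next I would evaluate both sides at the unit $R\in \on{RMod}_R$. Any $R$-linear endofunctor of $\on{RMod}_R$ is, by $R$-linearity, equivalent to tensoring with its value at $R$; hence the tensor product of two such endofunctors evaluated at $R$ yields the tensor product of their values at $R$. Combined with the definition $\HH(\C)=\Tr(\on{id}_\C)(R)$, this gives
\[
\HH(\C\otimes\D)=\Tr(\on{id}_{\C\otimes\D})(R)\simeq \HH(\C)\otimes \HH(\D),
\]
as claimed.

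If one prefers to avoid the $(\infty,2)$-categorical machinery, the same statement can be proved directly by unpacking $(\C\otimes\D)^\vee\simeq \C^\vee\otimes\D^\vee$ together with the factorizations $\on{coev}_{\C\otimes\D}\simeq (\on{id}_\C\otimes\tau\otimes\on{id}_{\D^\vee})\circ(\on{coev}_\C\otimes\on{coev}_\D)$ and $\on{ev}_{\C\otimes\D}\simeq (\on{ev}_\C\otimes\on{ev}_\D)\circ(\on{id}_{\C^\vee}\otimes\tau\otimes\on{id}_{\D})$, and then matching the resulting composite against $\on{ev}_\C\circ\tau\circ\on{coev}_\C$ tensored with its $\D$-analogue. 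The only mild obstacle is book-keeping the symmetry isomorphisms in $\on{LinCat}_R$ that permute the four tensor factors $\C,\C^\vee,\D,\D^\vee$, which is purely formal once one fixes a presentation of the symmetric monoidal $\infty$-category $\on{LinCat}_R$; the symmetric monoidal trace approach above avoids this bookkeeping entirely, which is why I would prefer it.
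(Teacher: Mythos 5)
Your proposal is correct, but your preferred route is genuinely different from the one the paper takes. The paper's proof is exactly your fallback sketch: it directly uses the identifications $\on{ev}_{\C\otimes \D}\simeq \on{ev}_{\C}\otimes \on{ev}_{\D}$ and $\on{coev}_{\C\otimes \D}\simeq \on{coev}_{\C}\otimes \on{coev}_{\D}$, commutes the permutation $\tau$ past the tensor factors, and reads off $\HH(\C\otimes\D)\simeq \HH(\C)\otimes\HH(\D)$ by evaluating at $R$ --- the tensor-factor bookkeeping you were wary of is simply left implicit. Your primary argument via the symmetric monoidality of $\Tr\colon \on{End}({\bf LinCat}_R)\to \Omega({\bf LinCat}_R)$ is sound: the paper does record that the trace functor of Hoyois--Scherotzke--Sibilla is symmetric monoidal, $(\C\otimes\D,\on{id}_{\C\otimes\D})$ is the monoidal product of $(\C,\on{id}_\C)$ and $(\D,\on{id}_\D)$ in $\on{End}({\bf LinCat}_R)$, and evaluation at $R$ identifies $\on{Lin}_R(\on{RMod}_R,\on{RMod}_R)$ with $\on{RMod}_R$ monoidally. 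What your route buys is that all coherence is absorbed into the cited symmetric monoidal structure, and it makes the $S^1$-equivariance of the equivalence (which the paper asserts in a one-line remark immediately after the lemma) essentially automatic, since the trace is exhibited as a homotopy fixed point of the $S^1$-action compatibly with the monoidal structure. What the paper's route buys is self-containedness: it needs nothing beyond the definition of $\HH$ in terms of evaluation and coevaluation, at the cost of leaving the permutation of the four tensor factors to the reader.
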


\begin{proof}
Using that $\on{ev}_{\C\otimes \D}\simeq \on{ev}_{\C}\otimes \on{ev}_{\D}$ and $\on{coev}_{\C\otimes \D}\simeq \on{coev}_{\C}\otimes \on{coev}_{\D}$, we find 
\begin{align*}
 \HH(\C\otimes \D)& =\on{ev}_{\C\otimes \D}\circ \tau \circ \on{coev}_{\C\otimes \D}(R)\\
 &\simeq (\on{ev}_{\C}\otimes \on{ev}_\D)\circ \tau \circ (\on{coev}_{\C}\otimes \on{coev}_\D)(R)\\
 &\simeq (\on{ev}_{\C}\circ \tau \circ \on{coev}_{\C})(R)\otimes (\on{ev}_\D \circ \tau \circ \on{coev}_\D)(R)\\
 & = \HH(\C)\otimes \HH(D)\,.
\end{align*}
\end{proof}

We note that the equivalence in \Cref{lem:tensorHH} is $S^1$-equivariant.

\begin{remark}
If $\C,\D$ are smooth, then $\C\otimes \D$ is also smooth and we have $\on{id}_{\C\otimes \D}^!\simeq \on{id}_{\C}^!\otimes \on{id}_\D^!$. Similarly, if $\C,\D$ are proper then $\C\otimes \D$ is proper and $\on{id}_{\C\otimes \D}^*\simeq \on{id}_{\C}^*\otimes \on{id}_{\D}^*$. 

In the smooth case, a pair of morphisms  $\alpha\colon \on{id}_\C^!\to \on{id}_\C[-n]$, $\beta\colon \on{id}_\D^!\to \on{id}_\D[-m]$ gives under the identifications from \Cref{lem:HHvsHom,lem:tensorHH} rise to 
the morphism
\[ \on{id}_{\C\otimes \D}^!\simeq \on{id}_\C^!\otimes \on{id}_\D^!\xrightarrow{\alpha\otimes \beta} \on{id}_{\C}[-n]\otimes \on{id}_\D[-m]\simeq \on{id}_{\C\otimes \D}[-n-m]\,.\]
A similar assertion holds in the proper case.
\end{remark}

The following proposition shows that the tensor product of a Calabi--Yau functor with a Calabi--Yau category is again Calabi--Yau. A similar statement is proven in \cite[Prop.~6.4]{BD21}.

\begin{proposition}\label{lem:cytp}
Let $\mathcal{C},\mathcal{D},\mathcal{E}$ be dualizable $R$-linear $\infty$-categories. 
\begin{enumerate}[(1)]
\item Assume that $\C,\D,\E$ are smooth. Let $F\colon \mathcal{C}\rightarrow \mathcal{D}$ be a dualizable $R$-linear functor and let $\eta\colon R[n]\to \on{HH}(\mathcal{D},\mathcal{C})^{S^1}$ and $\eta'\colon R[m]\to \on{HH}(\mathcal{E})^{S^1}$ be left Calabi--Yau-structures on $F$ and $\mathcal{E}$, respectively. The class 
\[
R[n+m]\simeq R[n]\otimes R[m]\xlongrightarrow{\eta\otimes\eta'}\HH(\D,\C)^{S^1}\otimes \HH(\mathcal{E})^{S^1}\simeq \HH(\D\otimes \mathcal{E},\C\otimes \mathcal{E})^{S^1} 
\]
defines a left $(n+m)$-Calabi--Yau structure on 
\[F\otimes \mathcal{E}\colon \C\otimes \mathcal{E}\rightarrow \D\otimes \mathcal{E}\,.\]
\item Assume that $\C,\D,\E$ are proper. Let $F\colon \D\rightarrow \C$ be a dualizable $R$-linear functor and let $\eta\colon R[n]\to \on{HH}(\mathcal{D},\mathcal{C})^*_{S^1}$ and $\eta'\colon R[m]\to \on{HH}(\mathcal{E})^*_{S^1}$ be right Calabi--Yau-structures on $F$ and $\mathcal{E}$, respectively. The class 
\[
R[n+m]\simeq R[n]\otimes R[m]\xlongrightarrow{\eta\otimes\eta'}\HH(\D,\C)_{S^1}^*\otimes \HH(\mathcal{E})^*_{S^1}\simeq \HH(\D\otimes \mathcal{E},\C\otimes \mathcal{E})^*_{S^1}
\]
defines a right $(n+m)$-Calabi--Yau structure on 
\[F\otimes \mathcal{E}\colon \mathcal{D}\otimes \mathcal{E}\rightarrow \mathcal{C}\otimes \mathcal{E}\,.\]
\end{enumerate}
\end{proposition}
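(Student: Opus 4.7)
The plan is to show that the diagram \eqref{eq:leftCYfibcof} associated with the class $\eta\otimes\eta'$ for the functor $F\otimes\E$ is canonically the levelwise exterior tensor product of the diagrams for $\eta$ on $F$ and $\eta'$ on $\E$; non-degeneracy then follows because the tensor product on $\on{LinCat}_R$ preserves equivalences.

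First I would set up the class itself. The equivalence of \Cref{lem:tensorHH} is $S^1$-equivariant, and cofibers in $\on{RMod}_R$ commute with tensor products, so one obtains a canonical $S^1$-equivariant equivalence
\[
\HH(\D\otimes\E,\C\otimes\E)\simeq \HH(\D,\C)\otimes \HH(\E)
\]
together with the analogous equivalences for negative cyclic, dual Hochschild, and dual cyclic homology. Under these, $\eta\otimes\eta'$ (resp.\ the analogous class in part (2)) defines the desired relative class.

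Next I would identify the diagrams. In the smooth case, using $\on{id}_{\C\otimes\E}^!\simeq \on{id}_{\C}^!\otimes\on{id}_{\E}^!$ and that the right adjoint of $F\otimes\on{id}_{\E}$ is $G\otimes \on{id}_{\E}$, one gets $(F\otimes\on{id}_{\E})_!(X\otimes Y)\simeq F_!(X)\otimes Y$ for endofunctors $X$ of $\C$ and $Y$ of $\E$. Inspection of \Cref{constr:unitcounit} shows that the unit $\tilde{\unit}_{F\otimes\on{id}_{\E}}$ decomposes as $\tilde{\unit}_F\otimes\on{id}_{\on{id}_{\E}^!}$, and the counit of $F\otimes\on{id}_{\E}\dashv G\otimes\on{id}_{\E}$ is manifestly $\counit_F\otimes\on{id}_{\on{id}_{\E}}$. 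Combined with \Cref{prop:smoothHHfun}(2), which describes the vertical maps of \eqref{eq:leftCYfibcof} purely in terms of these structural maps and of the natural transformation $\on{id}_{\C}^!\to\on{id}_{\C}[-n]$ associated with $\eta$, this realizes the diagram for $F\otimes\E$ attached to $\eta\otimes\eta'$ as the levelwise tensor product of the diagram for $F$ attached to $\eta$ with the diagram for $\E$ attached to $\eta'$. Since $(-)\otimes(-)$ on $\on{LinCat}_R$ preserves equivalences, the vertical maps in the tensor product diagram are equivalences, which yields non-degeneracy.

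Part (2) is entirely dual: replace $(-)^!$ by $(-)^*$, the unit $\tilde{\unit}$ by the counit $\tilde{\counit}$ of \Cref{constr:unitcounit}, and \Cref{prop:smoothHHfun} by \Cref{prop:properHHfun}. The only real obstacle is the verification that $\tilde{\unit}$ (resp.\ $\tilde{\counit}$) is compatible with tensoring by the identity functor of a smooth (resp.\ proper) $R$-linear $\infty$-category; this is a straightforward unwinding of the definition in \Cref{constr:unitcounit} using that $\on{ev}_{\C\otimes\E}\simeq \on{ev}_{\C}\otimes\on{ev}_{\E}$ and $\on{coev}_{\C\otimes\E}\simeq\on{coev}_{\C}\otimes\on{coev}_{\E}$, together with the fact that left/right adjoints of functors of the form $H\otimes\on{id}_{\E}$ are computed factorwise.
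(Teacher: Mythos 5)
Your proposal is correct and follows essentially the same route as the paper: the paper likewise identifies the non-degeneracy diagram for $\eta\otimes\eta'$ with the levelwise tensor product of the diagram for $\eta$ on $F$ with the (co)fiber sequence $\on{id}_{\E}^!\to\on{id}_{\E}[-m]$ coming from $\eta'$, and concludes since tensoring preserves fiber/cofiber sequences and equivalences. Your additional unwinding of the compatibility of $\tilde{\unit}$, $\tilde{\counit}$ and $(-)_!$ with $(-)\otimes\on{id}_{\E}$ just makes explicit what the paper's "up to equivalence" leaves implicit.
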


\begin{proof}
We only prove part (1), part (2) is analogous.  
The Hochschild homology class $R[n+m]\to \on{HH}(\mathcal{D}\otimes \mathcal{E},\mathcal{C}\otimes \mathcal{E})^{S^1}\to  \on{HH}(\mathcal{D}\otimes \mathcal{E},\mathcal{C}\otimes \mathcal{E})$ gives rise to the following diagram in $\on{Lin}_R(\mathcal{D}\otimes \mathcal{E},\mathcal{C}\otimes \mathcal{E})$, up to equivalence.
\[
\begin{tikzcd}
\on{id}_{\mathcal{D}}^!\otimes \on{id}_{\mathcal{E}}^! \arrow[d] \arrow[r] & F_!(\on{id}_\C^!)\otimes \on{id}_{\mathcal{E}}^! \arrow[d] \arrow[r] & \on{cof}\otimes \on{id}_{\mathcal{E}}^! \arrow[d]             \\
{\on{fib}\otimes \on{id}_{\mathcal{E}}[-m]} \arrow[r]                       & {F_!(\on{id}_\C)[1-n]\otimes \on{id}_{\mathcal{E}}[-m]} \arrow[r]       & {\on{id}_{\mathcal{D}}[1-n]\otimes \on{id}_{\mathcal{E}}[-m]}
\end{tikzcd}
\]
The horizontal sequences in the above diagram are fiber and cofiber sequences as tensor products of such with $\on{id}_{\mathcal{E}}^!$ or $\on{id}_{\mathcal{E}}[m]$. The vertical maps are equivalences as tensor products of equivalences, showing the non-degeneracy of the Hochschild homology class.
\end{proof}

\subsection{Gluing Calabi--Yau structures}\label{subsec:CYglue}

In this section, we discuss a generalization of the gluing theorem for left Calabi--Yau structures on $k$-linear dg categories, see \cite[Theorem 6.1]{BD19}, to left and right Calabi--Yau structures on dualizable $R$-linear $\infty$-categories, see \Cref{thm:leftCYglue,thm:rightCYglue}. The gluing theorems boil down to a simple description of objects in pullbacks/pushouts of stable, presentable $\infty$-categories in terms of their restrictions given in \Cref{lem:obj}. 

Consider the simplicial set $Z=\Delta^{\{0,1\}}\amalg_{\Delta^{\{0\}}}\Delta^{\{0,1'\}}$ describing a span with objects $0,1,1'$ and two non-degenerate $1$-simplices $0\to 1,1'$. We fix a diagram $D\colon Z\rightarrow \on{LinCat}_R$ with colimit $\mathcal{C}$, satisfying that $D$ maps each morphism to a functor admitting an $R$-linear right adjoint. For $z\in Z$, we denote $\mathcal{C}_z=D(z)$ and can depict the colimit diagram of $D$ as follows:
\[
\begin{tikzcd}
\C_0 \arrow[r] \arrow[d] \arrow[rd, "i_0"] & \C_1 \arrow[d, "i_1"] \\
\C_{1'} \arrow[r, "i_{1'}"]                & \C                   
\end{tikzcd}
\] 
Above $i_z\colon \mathcal{C}_z\rightarrow \mathcal{C}$ denotes the functor from the colimit cone. We further denote by $j_z=\on{radj}(i_z)$ the right adjoint, and $\counit_z$ the counit of $i_z\dashv j_z$, for $z\in Z$.

The fact that counits compose to counits provides us with a commutative square \[ \phi_D\colon Z^\triangleright\rightarrow  \on{Lin}_R(\mathcal{C},\mathcal{C})\] 
which can be depicted as follows:
\[
\begin{tikzcd}
i_0j_0 \arrow[d] \arrow[r] \arrow[rd, "\counit_0"] & i_1j_1 \arrow[d, "\counit_1"] \\
i_{1'}j_{1'} \arrow[r, "\counit_{1'}"]             & \on{id}_{\C}                 
\end{tikzcd}
\]

\begin{proposition}\label{prop:phiD1}
The square $\phi_D\colon Z^\triangleright \rightarrow \on{Lin}_R(\mathcal{C},\mathcal{C})$ is biCartesian. 
\end{proposition}

\begin{proof}
Using that the forgetful functor $\on{Lin}_R(\mathcal{C},\mathcal{C})\rightarrow \on{Fun}(\mathcal{C},\mathcal{C})$ reflects finite colimits, see \Cref{lem:internalHomexact}, and that colimits in functor categories are computed pointwise, see \cite[5.1.2.3]{HTT}, the statement reduces to \Cref{lem:obj}.
\end{proof}

\begin{lemma}\label{lem:obj}
Let $c\in \mathcal{C}$ and consider the diagram
\[ \phi_c\colon Z^\triangleright\rightarrow \mathcal{C}\,,\]
given by evaluating $\phi_D$ at $c$. Then $\phi_c$ describes a biCartesian square in $\C$.
\end{lemma}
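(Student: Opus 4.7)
Since every $R$-linear $\infty$-category is stable, a square in $\C$ is Cartesian if and only if it is coCartesian, so it suffices to test the coCartesian property by mapping out of $\phi_c$ into an arbitrary object $d\in\C$. The strategy is to exploit the pushout/pullback duality $\mathcal{P}r^L\simeq (\mathcal{P}r^R)^{\on{op}}$ together with the limit-preservation of $\mathcal{P}r^R\to\widehat{\on{Cat}}_\infty$ (as recalled in \Cref{rem:cptgencolimits}), in order to identify mapping spaces in $\C$ with pullbacks of mapping spaces in the $\C_z$.

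More concretely, first I would pass from the pushout diagram $D\colon Z\to\on{LinCat}_R$ to the diagram of right adjoints $j_z$, obtaining a pullback cone in $\mathcal{P}r^R$, and then forget down to $\widehat{\on{Cat}}_\infty$, where it remains a pullback cone. Writing $r_1,r_{1'}$ for the right adjoints of the structure maps $\C_0\to \C_1,\C_{1'}$, this yields $j_0\simeq r_1 j_1\simeq r_{1'}j_{1'}$ as well as a pullback description
\[
\on{Map}_\C(x,y)\simeq \on{Map}_{\C_1}(j_1 x,j_1 y)\times_{\on{Map}_{\C_0}(j_0 x,j_0 y)}\on{Map}_{\C_{1'}}(j_{1'}x,j_{1'}y)
\]
for all $x,y\in\C$, with the structure maps of the pullback induced by $r_1$ and $r_{1'}$. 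Next, applying the adjunctions $i_z\dashv j_z$ to the corners of $\phi_c$ gives
\[
\on{Map}_\C(i_z j_z(c),d)\simeq \on{Map}_{\C_z}(j_z(c),j_z(d))
\]
for $z\in\{0,1,1'\}$, and one checks that the maps in the resulting cube are induced precisely by the counits, the unit of $r_z\dashv (\mhyphen)$ (i.e.\ the identification $j_0\simeq r_z j_z$), and the structure maps of the original diagram $D$. Hence the square of mapping spaces obtained from $\phi_c$ by mapping into $d$ is exactly the pullback square above, which is Cartesian; by the Yoneda lemma, $\phi_c$ is coCartesian, and by stability also Cartesian.

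\textbf{Main obstacle.} The conceptual content is light — essentially Yoneda plus the pushout/pullback duality — but the care point is to verify that the maps in the induced square of mapping spaces are genuinely the ones appearing in the pullback formula. This amounts to tracking the naturality of counits: the horizontal map $i_0 j_0(c)\to i_1 j_1(c)$ comes from the counit $f_1 r_1\to \on{id}_{\C_1}$ composed with $j_1$ (via $j_0\simeq r_1 j_1$ and $i_0\simeq i_1 f_1$), and under the adjunction $i_1\dashv j_1$ this corresponds to postcomposition with $r_1\colon\on{Map}_{\C_1}(j_1 c,j_1 d)\to \on{Map}_{\C_0}(j_0 c,j_0 d)$. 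Once this identification (and its analogue for $1'$) is made, the square of mapping spaces matches the pullback description on the nose and the proof concludes.
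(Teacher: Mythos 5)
Your argument is correct, but it takes a genuinely different route from the paper. The paper works at the level of objects: it models $\C$ as the coCartesian sections of the Grothendieck construction of the right-adjoint diagram, embeds $\C$ into the $\infty$-category $\mathcal{L}$ of all sections, computes the objects $\tilde{i}_zj_z(c)$ explicitly via $\pi$-relative left Kan extension (they are the sections agreeing with $c$ on the corner(s) below $z$ and vanishing elsewhere), checks that the resulting square is a componentwise limit in $\mathcal{L}$, and then applies the exact localization $\zeta\colon\mathcal{L}\to\C$. You instead work at the level of mapping spaces: the duality $\mathcal{P}r^L\simeq(\mathcal{P}r^R)^{\on{op}}$ identifies $\C$ with the pullback $\C_1\times_{\C_0}\C_{1'}$ along the right adjoints, whose mapping spaces are the corresponding pullbacks of mapping spaces, and then Yoneda plus stability finishes the argument once one checks that mapping $\phi_c$ into an arbitrary $d$ reproduces exactly that defining pullback square. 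Your identification of the edge maps is the right one and does go through: under the composite adjunction $i_0=i_1f_1\dashv r_1j_1=j_0$, precomposition with the counit of $f_1\dashv r_1$ corresponds to applying $r_1$, and precomposition with $\counit_z$ corresponds to applying $j_z$ by the triangle identity. The trade-off is that the paper's object-level computation yields explicit formulas for $i_zj_z(c)$ that are reused later (e.g.\ in the proof of \Cref{prop:cptobjcolimit} and in the properness lemma), whereas your Yoneda argument is more formal and avoids the Grothendieck-construction bookkeeping at the cost of the counit-tracking you correctly flag as the delicate step.
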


\begin{proof}
We can identify the colimit $\mathcal{C}$ of $D$ with the $\infty$-category of coCartesian sections of the Grothendieck construction $\pi\colon \Gamma(\on{radj}(D))\rightarrow Z^{\on{op}}$ of the diagram obtained from $D$ by passing to right adjoint functors, i.e.~the coCartesian fibration classified by $\on{radj}(D)$, see 
\cite[\href{https://kerodon.net/tag/05RX}{Prop.~05RX}]{Ker}. Denote by $\mathcal{L}$ the $\infty$-category of all sections of $\pi$ and by $\kappa\colon \mathcal{C}\hookrightarrow \mathcal{L}$ the fully faithful inclusion with left adjoint $\zeta$. For $z\in Z$, we denote by $\tilde{j}_z\colon \mathcal{L}\to \C_z$ the evaluation functor at $z$, satisfying that $\tilde{j}_z\circ \kappa \simeq j_z$. The left adjoint of $\tilde{j}_z$ is denoted $\tilde{i}_z$, it satisfies $\zeta\circ \tilde{i}_z\simeq i_z$. An object $c\in \mathcal{C}$ corresponding to a coCartesian section of $\pi$ is of the form
\[
\begin{tikzcd}
c_1 \arrow[rd] &   & c_{1'} \arrow[ld] \\
             & c_{0} &             
\end{tikzcd}
\]
with $c_i\in \C_i$. By \cite[4.3.2.17]{HTT}, $\tilde{i}_z$ is given by the $\pi$-relative left Kan extension functor and the objects $\tilde{i}_zj_z(c)$, $z\in Z$ are hence given as follows.
\[\tilde{i}_0{j}_0(c)\simeq \begin{tikzcd}
0 \arrow[rd] &     & 0 \arrow[ld] \\
               & c_0 &             
\end{tikzcd}\]
\[ \tilde{i}_1{j}_1(c)\simeq
\begin{tikzcd}
c_1 \arrow[rd] &     & 0 \arrow[ld] \\
               & c_0 &             
\end{tikzcd}
\quad\quad \tilde{i}_{1'}{j}_{1'}(c)\simeq 
\begin{tikzcd}
0 \arrow[rd] &     & c_{1'} \arrow[ld] \\
               & c_0 &             
\end{tikzcd}
\]

These assemble into a square $\tilde{\phi}_c$ in $\mathcal{L}$ of the form
\[
\begin{tikzcd}
\tilde{i}_0j_0(c) \arrow[d] \arrow[r] & \tilde{i}_1j_1(c) \arrow[d] \\
\tilde{i}_{1'}j_{1'}(c) \arrow[r]                                    & c                          
\end{tikzcd}
\]
which restricts at $0\in Z$ to the constant diagram with value $c_0$, up to equivalence, and at $i=1,1'\in Z$ to the fiber sequence of the map $c_i\rightarrow 0$ in $\C_i$. Using that limits in the $\infty$-category $\mathcal{L}$ of sections of the Grothendieck construction are computed componentwise in $Z$, it follows that $\tilde{\phi}_c$ is a limit diagram in $\mathcal{L}$. Using that $\zeta\colon \mathcal{L}\rightarrow \mathcal{C}$ is exact, we conclude that $\phi_c\simeq \zeta\circ \tilde{\phi}_c$ is a limit diagram as well.
\end{proof}

\Cref{prop:phiD1} implies that $R$-linear smoothness is preserved under finite colimits along dualizable functors. A variant of this observation for $k$-linear $\infty$-categories appears in \cite[Lemma 8.21]{ST16}.

\begin{corollary}\label{cor:colimissmooth}
Let $W$ be a finite simplicial set and $D\colon W\rightarrow \on{LinCat}_R^{\on{dual}}$ a functor taking values in smooth $R$-linear $\infty$-categories. Then the colimit $\C$ of $D$ in $\on{LinCat}_R^{\on{dual}}$ is also smooth.
\end{corollary}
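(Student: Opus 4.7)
The goal is to verify that $\on{id}_\C$ is a compact object of $\on{Lin}_R(\C,\C)$; by \Cref{dualfunrem}, this is equivalent to smoothness of $\C$. Observe that by \Cref{rem:cptgencolimits}, the colimit $\C$ computed in $\on{LinCat}_R^{\on{cpt-gen}}$ agrees with the underlying colimit in $\on{LinCat}_R$ (and in $\mathcal{P}r^L$), so \Cref{objcor} is directly applicable to pushout subdiagrams arising in $D$.

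The plan is to reduce the claim to a single pushout step by induction on the cells of $W$. Finite colimits in a cocomplete $\infty$-category are built from the initial object and iterated pushouts; the initial object of $\on{LinCat}_R^{\on{cpt-gen}}$ is the zero category $0$, which is trivially smooth. At each inductive step one attaches a cell via a pushout, and the intermediate colimits remain compactly generated with compact-object-preserving structure functors by \Cref{rem:cptgencolimits}, so the inductive hypothesis is preserved. This reduces the claim to the following assertion: if $\C$ is the pushout of smooth, compactly generated $R$-linear $\infty$-categories $\C_1 \leftarrow \C_0 \rightarrow \C_{1'}$ along compact-object-preserving functors, then $\C$ is smooth.

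To handle the pushout case, apply \Cref{objcor} to the cospan $Z$ with $D(0)=\C_0$, $D(1)=\C_1$, $D(1')=\C_{1'}$; this exhibits $\on{id}_\C$ as the pushout of $i_0 j_0 \to i_1 j_1$ and $i_0 j_0 \to i_{1'} j_{1'}$ in $\on{Lin}_R(\C,\C)$, where $i_z \dashv j_z$ are the coprojections and their $R$-linear right adjoints. Each $i_z\colon \C_z \to \C$ preserves compact objects, so by \Cref{lem:comdiag}(1) the functor $(i_z)_! = i_z \circ (\mhyphen) \circ j_z\colon \on{Lin}_R(\C_z,\C_z) \to \on{Lin}_R(\C,\C)$ also preserves compact objects. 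Since $\C_z$ is smooth, $\on{id}_{\C_z}$ is compact in $\on{Lin}_R(\C_z,\C_z)$ by \Cref{dualfunrem}, and therefore $i_z j_z = (i_z)_!(\on{id}_{\C_z})$ is compact in $\on{Lin}_R(\C,\C)$. It follows that $\on{id}_\C$ is a finite colimit of compact objects in $\on{Lin}_R(\C,\C)$, hence itself compact.

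No serious obstacle is expected: the argument is a direct combination of \Cref{objcor}, which presents $\on{id}_\C$ as an explicit pushout, with \Cref{lem:comdiag}, which controls compactness of functors under conjugation by adjoint pairs. The only mildly delicate point is the cell-by-cell reduction, which requires checking that each partial colimit stays in $\on{LinCat}_R^{\on{cpt-gen}}$ with compact-object-preserving structure functors; this is precisely what \Cref{rem:cptgencolimits} provides.
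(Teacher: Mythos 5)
Your proposal is correct and follows essentially the same route as the paper: reduce to the pushout case (the paper splits off finite coproducts explicitly, you use a cell-by-cell induction, but these are the same standard reduction), then combine \Cref{objcor}/\Cref{lem:obj} with part (1) of \Cref{lem:comdiag} and the identification of smoothness with compactness of $\on{id}_\C$ from \Cref{dualfunrem} to exhibit $\on{id}_\C$ as a pushout of compact objects. No gaps.
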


\begin{proof}
Any finite colimit can be computed in terms of pushouts and finite coproducts. Smoothness is clearly preserved under finite coproducts. It thus suffices to check that the pushout of a span of compactly generated, smooth $R$-linear $\infty$-categories along compact objects preserving functors is again smooth. This follows from combining the the fact that the forgetful functor $\on{LinCat}_R^{\on{dual}}\to \on{LinCat}_R$ preserves colimits, see \cite[Prop.~1.65]{Efi24}, with \Cref{lem:obj}, part (1) of \Cref{lem:pullingbackendofunctors} and the fact that pushouts of compact objects are again compact.
\end{proof}

We next discuss a dual version of \Cref{prop:phiD1} for limit diagrams of dualizable
$\infty$-categories. Fix a cospan $D\colon Z^{\on{op}}\to \on{LinCat}_R^{\on{dual}}$. For $z\in Z^{\on{op}}$, we denote $\C_z=D(z)$. Let $\tilde{D}\colon (Z^{\on{op}})^\triangleleft\to \on{LinCat}_R^{\on{dual}}$ be a cone over $D$, with tip denoted $\C$. We depict this cone as follows:
\begin{equation}\label{eq:Cpbdiag}
\begin{tikzcd}
\C \arrow[r, "j_1"] \arrow[d, "j_{1'}"] \arrow[rd, "j_0"] & \C_1 \arrow[d] \\
\C_{1'} \arrow[r]                                         & \C_0          
\end{tikzcd}
\end{equation}
We denote by $k_z$ the right adjoint of $j_z$ for $z\in Z^{\on{op}}$. The units of these adjunctions assemble into the diagram $\phi^D\colon (Z^{\on{op}})^{\triangleleft}\to \on{Lin}_R(\C,\C)$, depicted as follows.
\[\begin{tikzcd}
\on{id}_{\C} \arrow[d, "\unit_{1'}"] \arrow[r, "\unit_1"] \arrow[rd, "\unit_0"] & k_1j_1 \arrow[d] \\
k_{1'}j_{1'} \arrow[r]                                                 & k_0j_0          
\end{tikzcd}
\]

\begin{proposition}\label{prop:phiD2}
If $\tilde{D}$ is a limit cone in $\on{LinCat}_R^{\on{dual}}$, then $\phi^D$ is a biCartesian square. 
\end{proposition}

\begin{proof}
We prove below that if the composition of $\tilde{D}$ with the forgetful functor $\on{LinCat}_R^{\on{dual}}\to \on{LinCat}_R$ defines a limit cone, then $\phi^D$ is a biCartesian square. Note that the pullback $\C_1$ of $D$ in $\on{LinCat}_R^{\on{dual}}$ embeds fully faithfully into the pullback $\C_2$ of $D$ in $\on{LinCat}_R$ via an $R$-linear functor $\iota\colon \C_1\hookrightarrow \C_2$ admitting an $R$-linear right adjoint, see \cite[Prop.~1.87]{Efi24}. Hence the square $\phi^D$ in $\on{Lin}_R(\C_1,\C_1)$ arises from applying to the the biCartesian square $\on{Lin}_R(\C_2,\C_2)$ the exact functor $\on{radj}(\iota)\circ (\mhyphen)\circ \iota$, which shows that $\phi^D$ is biCartesian. 

We thus suppose that $\tilde{D}$ expresses $\C$ as the pullback of $D$ in $\on{LinCat}_R$. Using \Cref{lem:internalHomexact}, and that colimits in functor categories are computed pointwise, see \cite[5.1.2.3]{HTT}, it suffices to show that for any $c\in \C$ the diagram $\phi^c\coloneqq \phi^D(c)$ is biCartesian in $\D$. By passing to opposite $\infty$-categories (which exchanges left and right adjoints, as well as unit and counit maps), the argument from the proof of \Cref{lem:obj} directly applies to show that $\phi^c$ is biCartesian.
\end{proof}

\begin{corollary}\label{cor:limitisproper}
Let $W$ be a finite simplicial set and $D\colon W\rightarrow \on{LinCat}_R^{\on{dual}}$ a functor taking values in proper $R$-linear $\infty$-categories. Then the limit $\C$ of $D$ in $\on{LinCat}_R^{\on{dual}}$ is also proper.
\end{corollary}

\begin{proof}
Since finite limits are generated by products (for which the statement is clear) and pullbacks, the statement follows from combining \Cref{prop:phiD2} and part (2) of \Cref{lem:pullingbackendofunctors}.
\end{proof}

The above discussion provides us with the tools needed for proving the gluing results for Calabi--Yau structures. We begin with the gluing of left Calabi--Yau structures. For this, fix a colimit diagram in $\on{LinCat}_R^{\on{dual}}$, valued in smooth $\infty$-categories of the following form:
\begin{equation}\label{eq:composeCYcospans}
\begin{tikzcd}
                        &                                                                    & \B_3 \arrow[d, "F_{3,2}"] \\
                        & \B_2 \arrow[rd, "\ulcorner", phantom] \arrow[r, "F_{2,2}"] \arrow[d, "F_{2,1}"'] & \C_2 \arrow[d, "F_2"] \\
\B_1 \arrow[r, "F_{1,1}"] & \C_1 \arrow[r, "F_1"]                                            & \D         
\end{tikzcd}
\end{equation}
We form the following diagram in $\on{RMod}_R$: 
\[
\begin{tikzcd}[column sep=60]
X \arrow[d] \arrow[r] \arrow[rd, "\square", phantom]                                                                                    & \HH(\B_2)^{S^1} \arrow[d, "{(\on{id},-\on{id})}"] \arrow[r] \arrow[rd, "\square", phantom] & 0 \arrow[d]                      \\
{{\bigoplus_{i=1,2} \HH(\C_{i},\B_i\times \B_{i+1})^{S^1}[-1]}} \arrow[d] \arrow[r] \arrow[rd, "\square", phantom] & (\on{HH}(B_2)^{S^1})^{\oplus 2} \arrow[d, "{\HH(F_{2,1})^{S^1}\oplus \HH(F_{2,2})^{S^1}}"] \arrow[r, "{(\on{id},\on{id})}"]                     & \HH(\B_2)^{S^1} \arrow[d] \\
\HH(\B_1)^{S^1}\oplus \on{HH}(\B_3)^{S^1} \arrow[r, "{\HH(F_{1,1})^{S^1}\oplus \HH(F_{3,2})^{S^1}}"]                                                                           & \HH(\C_1)^{S^1}\oplus \HH(\C_2)^{S^1} \arrow[r, "{\left(\HH(F_1)^{S^1}, \HH(F_2)^{S^1}\right)}"']                              & \HH(\D)^{S^1}            
\end{tikzcd}
\]
The outer square of the above diagram, though not necessarily biCartesian, induces a morphism $X\to \HH(\D,\B_1\oplus \B_3)^{S^1}[-1]$. A class $R[n]\to X[1]$ corresponds to two classes $R[n]\to \HH(\C_1,\B_1\times \B_2)^{S^1},\HH(\C_2,\B_2\times \B_3)^{S^1}$, whose restrictions to $\HH(B_2)^{S^1}[1]$ are \textit{not} identical, but differ exactly by a reversal of the sign, i.e.~composition with $-\HH(\on{id}_{\B_2})^{S^1}$. In this case, we say that the restrictions of the classes to $\HH(B_2)^{S^1}[1]$ are compatible.

\begin{theorem}\label{thm:leftCYglue}
Consider two classes $\eta_i\colon R[n]\to \on{HH}(\C_i,\B_i\times \B_{i+1})^{S^1}$, with $i=1,2$, whose restrictions to $\HH(\B_2)^{S^1}[1]$ are compatible and let $\eta\colon R[n]\to \HH(\D,\B_1\times \B_3)^{S^1}$ be the arising class. If $\eta_1$ and $\eta_2$ define left $n$-Calabi--Yau structures on the functors 
\[ \B_i\times \B_{i+1}\longrightarrow \C_i\,,~i=1,2\,,\] 
then $\eta$ defines a left $n$-Calabi--Yau structure on the functor 
\[ \B_1\times \B_3\longrightarrow \D\,.\]
\end{theorem}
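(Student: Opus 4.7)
The plan is to reduce the non-degeneracy of $\eta$ to that of $\eta_1$ and $\eta_2$ via two biCartesian decompositions of the endofunctors $\on{id}_\D$ and $\on{id}_\D^!$ coming from the pushout structure of $\D$.

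First I would realize $\D$ as the pushout in $\on{LinCat}_R^{\on{cpt-gen}}$ of the cospan $\C_1 \xleftarrow{F_{2,1}} \B_2 \xrightarrow{F_{2,2}} \C_2$; it is then automatically smooth by \Cref{cor:colimissmooth}. Applying \Cref{objcor} to this cospan produces a biCartesian square in $\on{Lin}_R(\D,\D)$ of the form
\[
\begin{tikzcd}
(F_1 F_{2,1})_!(\on{id}_{\B_2}) \arrow[d] \arrow[r] & (F_1)_!(\on{id}_{\C_1}) \arrow[d, "\counit_1"] \\
(F_2)_!(\on{id}_{\C_2}) \arrow[r, "\counit_2"] & \on{id}_\D
\end{tikzcd}
\]
with $\on{id}_\D$ in the pushout corner, assembled from the counits of $F_i \dashv G_i$. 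The key second ingredient is a dual biCartesian square with $\on{id}_\D^!$ in the pullback corner, built from the units $\tilde{\unit}_{F_z}\colon \on{id}_\D^! \to (F_z)_!(\on{id}_{\C_z}^!)$ of \Cref{constr:unitcounit}. To establish this dual square, I would use \Cref{lem:unitisunit} to rewrite $(F_z)_!(\on{id}_{\C_z}^!) \simeq F_z E_z \on{id}_\D^!$ with $E_z = \on{ladj}(F_z)$, which reduces the problem to the biCartesian-ness of a square in $\on{Lin}_R(\D,\D)$ of the form $\on{id}_\D \to F_z E_z$. This is proved by the same Grothendieck-construction argument as \Cref{lem:obj}, now applied to Kan extension along left adjoints, which are available because of smoothness (both $F_z$ and $E_z$ preserve compact objects).

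Next, I would translate each Calabi--Yau class via \Cref{prop:smoothHHfun}(2) into a natural transformation $\alpha_{\C_i}\colon \on{id}_{\C_i}^! \to \on{id}_{\C_i}[1-n]$ together with null-homotopies of its restrictions to $\B_i$ and $\B_{i+1}$, the non-degeneracy of $\eta_i$ being encoded as equivalences in the corresponding fiber/cofiber diagram \eqref{eq:leftCYfibcof}. Applying the exact functors $(F_i)_!$ transports this data to the $\D$-level, and the compatibility of the two classes at $\B_2$ (with the sign reversal built into the definition) is exactly what is needed to glue the two maps $(F_i)_!(\alpha_{\C_i})$ into a well-defined morphism from the $\on{id}_\D^!$-biCartesian square to the $\on{id}_\D[1-n]$-biCartesian square. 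By the universal property of the pushout (resp.\ pullback) corners, this produces a map $\alpha_\D\colon \on{id}_\D^! \to \on{id}_\D[1-n]$ together with a null-homotopy on $F_!(\on{id}_{\B_1 \times \B_3})[1-n]$; a direct check using \Cref{prop:smoothHHfun} and the descriptions of $\tilde\unit$ and $\counit$ in \Cref{constr:unitcounit} shows that this coincides with the datum induced by the glued class $\eta$.

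The non-degeneracy of $\eta$ then follows from pushout stability: the vertical comparison maps in the $\D$-level diagram \eqref{eq:leftCYfibcof} are the pushouts of the corresponding equivalences over the vertices $\C_1$, $\C_2$ and $\B_2$, hence themselves equivalences. Lifting to the cyclic level requires no further work since the glued class $\eta$ already lives in $\HH(\D,\B_1\oplus\B_3)^{S^1}$ by construction. The main obstacle will be the careful verification of the dual biCartesian square for $\on{id}_\D^!$, together with the bookkeeping needed to confirm that the $(\on{id},-\on{id})$-compatibility of the restrictions to $\HH(\B_2)^{S^1}$ upgrades to a genuine morphism of squares with coherent null-homotopies at $\B_2$, rather than just a pointwise identification of the two constituent Calabi--Yau transformations.
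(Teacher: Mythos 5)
Your overall architecture coincides with the paper's: realize $\D$ as the pushout, extract the biCartesian counit square from \Cref{objcor}, produce a dual biCartesian square for $\on{id}_\D^!$ built from the units $\tilde{\unit}$, transport $\eta_1,\eta_2$ along $(F_i)_!$, use the sign-compatibility at $\B_2$ to glue the two equivalences across the (paper's $3\times 3$ version of the) diagram, and conclude non-degeneracy because equivalences are stable under the remaining biCartesian squares. The one place where you genuinely diverge is the mechanism for the dual square, and that is also where your argument is weakest. The paper's route is: all objects in the counit diagram \eqref{eq:CYgluethm} are compact in $\on{Lin}_R(\D,\D)$ (by smoothness, \Cref{lem:comdiag}(1), and closure of compacts under finite colimits), the contravariant functor $(\mhyphen)^!$ is exact on this subcategory by \Cref{dualfunrem}, and \Cref{lem:dualcounit} identifies $(\counit_{\mathcal{X}})^!$ with $\tilde{\unit}$; the dual biCartesian square is then automatic, with no left adjoints of the cone functors ever invoked. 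Your route via \Cref{lem:unitisunit} instead presupposes that the cone functors $i_{\C_z}\colon \C_z\to\D$ admit left adjoints $E_z$. This is true here, but it is not "available because $F_z$ and $E_z$ preserve compact objects" as you assert (that is circular); it needs the argument of \Cref{dualfunrem} applied to $\on{Lin}_R(\C_z,\D)$, using that smoothness of $\C_z$ makes the bimodule of a compact-object-preserving functor compact. More seriously, your claim that the square $\on{id}_\D\to F_zE_z$ is biCartesian "by the same Grothendieck-construction argument as \Cref{lem:obj}, now applied to Kan extension along left adjoints" is not a formal dualization: the identification of $\D$ with coCartesian sections in \Cref{lem:obj} is tied to the right-adjoint diagram, and there is no analogous section description adapted to the $E_z$. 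You would either have to supply a genuinely new argument here or, more efficiently, deduce your square from the counit square by passing to mates — which is exactly what the paper's $(\mhyphen)^!$-plus-\Cref{lem:dualcounit} step accomplishes. With that step repaired, the rest of your outline (including your correct observation that the $S^1$-lift is automatic since $\eta$ is constructed at the cyclic level) matches the paper's proof.
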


\begin{proof}
For $\mathcal{X}=\B_1,\B_2,\B_3,\C_1,\C_2$, denote by $i_{\mathcal{X}}\colon \mathcal{X}\to \D$ the functor from \eqref{eq:composeCYcospans}. Let $j_{\mathcal{X}}$ be the right adjoint of $i_{\mathcal{X}}$. Since the restriction of \eqref{eq:composeCYcospans} to $\B_2,\C_1,\C_2$ and $\D$ is a pushout diagram, we find by \Cref{prop:phiD1} a biCartesian square in $\on{Lin}_R(\D,\D)$, which is depicted as follows.
\[ 
\begin{tikzcd}
i_{\B_2}j_{\B_2} \arrow[r, "\alpha_1"] \arrow[d, "\alpha_2"'] \arrow[rd, "\square", phantom] & i_{\C_1}j_{\C_1} \arrow[d, "\beta_1"] \\
i_{\C_2}j_{\C_2} \arrow[r, "\beta_2"]                                                        & \on{id}_{\D}                                  
\end{tikzcd}
\]
The sequence
\[i_{\B_2}j_{\B_2}\xlongrightarrow{(\alpha_1,-\alpha_2)}  i_{\C_1}j_{\C_1}\oplus  i_{\C_2}j_{\C_2} \xlongrightarrow{(\beta_1,\beta_2)}\on{id}_{\D}\]
is hence a fiber and cofiber sequence.

Using the pasting law for biCartesian squares, this gives rise to the following  commutative diagram in $\on{Lin}_R(\D,\D)$, where all squares are biCartesian and all objects are compact.
\begin{equation}\label{eq:CYgluethm}
\begin{tikzcd}
V \arrow[r] \arrow[d] \arrow[rd, "\square", phantom]            & i_{\B_2}j_{\B_2} \arrow[d, "{(\on{id},-\on{id})}"] \arrow[r] \arrow[rd, "\square", phantom]                                          & 0 \arrow[d]                                  \\
U \arrow[d] \arrow[r] \arrow[rd, "\square", phantom]            & i_{\B_2}j_{\B_2}\oplus i_{\B_2}j_{\B_2} \arrow[r, "{(\on{id},\on{id})}"] \arrow[d, "\alpha_1\oplus \alpha_2"] \arrow[rd, "\square", phantom] & i_{\B_2}j_{\B_2} \arrow[d] \\
{i_{\B_1}j_{\B_1}\oplus i_{\B_3}j_{\B_3}} \arrow[r] & {i_{\C_1}j_{\C_1}\oplus i_{\C_2}j_{\C_2}} \arrow[r, "\beta_1\oplus \beta_2"]                                                                                 & \on{id}_{\D}             
\end{tikzcd}
\end{equation}
The image under $(\mhyphen)^!$ of the counit $\counit_{\mathcal{X}}$ is given by the unit $\tilde{\unit}\colon \on{id}_{\mathcal{D}}^!\to i_{\mathcal{X}}\on{id}_{\mathcal{X}}^!j_{\mathcal{X}}$, see \Cref{lem:dualcounit}. Applying the exact contravariant functor $(\mhyphen)^!$ to \eqref{eq:CYgluethm} yields the following diagram, up to equivalence.
\[
\begin{tikzcd}
\on{id}_{\D}^! \arrow[r] \arrow[d] \arrow[rd, "\square", phantom]                                                                                              & i_{\B_2}\on{id}_{\B_2}^!j_{\B_2} \arrow[d, "{(\on{id},-\on{id})}"] \arrow[r] \arrow[rd, "\square", phantom]                                                             & 0 \arrow[d]                                              \\
i_{\C_1}\on{id}_{\C_1}^!j_{\C_1}\oplus i_{\C_2}\on{id}_{\C_2}^!j_{\C_2} \arrow[d] \arrow[r] \arrow[rd, "\square", phantom] & i_{\B_2}\on{id}_{\B_2}^!j_{\B_2}\oplus i_{\B_2}\on{id}_{\B_2}^!j_{\B_2} \arrow[r, "{(\on{id},\on{id})}"] \arrow[d] \arrow[rd, "\square", phantom] & i_{\B_2}\on{id}_{\B_2}^!j_{\B_2} \arrow[d] \\
i_{\B_1}\on{id}_{\B_1}^!j_{\B_1}\oplus i_{\B_3}\on{id}_{\B_3}^!j_{\B_3} \arrow[r]                                          & U^! \arrow[r]                                                                                                                                                                         & V^!                                                     
\end{tikzcd}
\]
The classes $\sigma_1,\sigma_2$ define an equivalence between the lower left squares and upper right squares of the lower diagram and the $(1-n)$-th suspension of the upper diagram. These equivalences extend to an equivalence of the entire diagrams by using that the lower right and upper left squares are biCartesian. Restricting the equivalence to the outer biCartesian squares provides us with a diagram in $\on{Lin}_R(\D,\D)$
\[
\begin{tikzcd}
\id_{\D}^! \arrow[r, "\tilde{\unit}"] \arrow[d, "\simeq"] & (i_{\B_1}j_{\B_1})^!\oplus (i_{\B_3}j_{\B_3})^! \arrow[r] \arrow[d, "\simeq"] & V^! \arrow[d, "\simeq"] \\
{V[n-1]} \arrow[r]                                & {i_{\B_1}j_{\B_1}[1-n]\oplus i_{\B_3}j_{\B_3}[1-n]} \arrow[r, "\counit"]      & {\id_{\D}[1-n]}        
\end{tikzcd}
\]
with horizontal fiber and cofiber sequences. This diagram arises from the class 
\[R[n]\xrightarrow{\eta}\HH(\D,\B_1\times \B_3)^{S^1}\to \HH(\D,\B_1\times \B_3)\,,\]
thus showing that $\eta$ defines a left $n$-Calabi--Yau structure.
\end{proof}

We next describe the gluing properties of right Calabi--Yau structures along pullbacks. For this, we consider a limit diagram in $\on{LinCat}_R^{\on{dual}}$ valued in proper $R$-linear $\infty$-categories of the following form:
\[
\begin{tikzcd}
\D \arrow[d, "F_1"] \arrow[r, "F_2"] \arrow[rd, "\lrcorner", phantom] & \C_2 \arrow[d, "F_{2,2}"] \arrow[r, "F_{2,3}"] & \B_3 \\
\C_1 \arrow[d, "F_{1,1}"] \arrow[r, "F_{1,2}"]                                & \B_2                     &               \\
\B_1                                                    &                                   &              
\end{tikzcd}
\]

We form the following diagram in $\on{RMod}_R$:
\begin{equation}\label{eq:XHH}
\begin{tikzcd}[column sep=huge]
X \arrow[d] \arrow[r] \arrow[rd, "\square", phantom]                                                                                    & \on{HH}(\B_2)_{S^1}^* \arrow[d, "{(\on{id},-\on{id})}"] \arrow[r] \arrow[rd, "\square", phantom] & 0 \arrow[d]                      \\
{{\bigoplus_{i=1,2} \HH(\C_{i},\B_i\times \B_{i+1})_{S^1}^*[-1]}} \arrow[d] \arrow[r] \arrow[rd, "\square", phantom] & (\HH(\B_2)_{S^1}^*)^{\oplus 2} \arrow[d, "{\HH(F_{2,1})_{S^1}^*\oplus \HH(F_{2,2})_{S^1}^*}"] \arrow[r, "{(\on{id},\on{id})}"]                     & \HH(\B_2)_{S^1}^* \arrow[d] \\
\HH(\B_1)^*\oplus \HH(\B_3)_{S^1}^* \arrow[r, "{\HH(F_{1,1})_{S^1}^*\oplus \HH(F_{3,2})_{S^1}^*}"]                                                                           & \HH(\C_1)^*\oplus \HH(\C_2)^* \arrow[r, "{\left(\HH(F_1)_{S^1}^*, \HH(F_2)_{S^1}^*\right)}"']                              & \HH(\D)_{S^1}^*          
\end{tikzcd}
\end{equation}

Similar to the smooth case, a class in $X$ consists of classes in $\on{HH}(\C_{i},\B_i\times \B_{i+1})^*_{S^1}[-1]$, with $i=1,2$, whose restrictions to $\on{HH}(\B_2)_{S^1}^*$ differ by sign, and we again call such classes compatible. 

\begin{theorem}\label{thm:rightCYglue} 
Consider two classes $\eta_i\colon R[n]\to\on{HH}(\C_{i},\B_i\times \B_{i+1})_{S^1}^*$, with $i=1,2$, whose restrictions to $\HH(\B_2)_{S^1}^*[1]$ are compatible and let $\eta \colon R[n]\to \HH(\D,\B_1\times \B_3)_{S^1}^*$ be the arising class. If $\eta_1$ and $\eta_2$ define right $n$-Calabi--Yau structures on the functors 
\[ \C_i\longrightarrow \B_i\times \B_{i+1}\,,~i=1,2\,,\] 
then $\eta$ also defines a right $n$-Calabi--Yau structure on the functor 
\[ \D\longrightarrow \B_1\times \B_3\,.\]
\end{theorem}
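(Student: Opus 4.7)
The plan is to mirror the proof of \Cref{thm:leftCYglue}, replacing \Cref{objcor} by its limit-theoretic counterpart \Cref{prop:cptobjcolimit} and swapping counits, $(-)^!$, and $\tilde{\unit}$ for units, $(-)^*$, and $\tilde{\counit}$ throughout. For each $\mathcal{X}\in\{\B_1,\B_2,\B_3,\C_1,\C_2\}$, write $j_\mathcal{X}\colon\D\to\mathcal{X}$ for the composite of the appropriate arrows in \eqref{eq:cba}, $k_\mathcal{X}$ for its $R$-linear right adjoint, and $\unit_\mathcal{X}\colon\on{id}_\D\to k_\mathcal{X}j_\mathcal{X}$ for the unit.

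First I would apply \Cref{prop:cptobjcolimit} to the pullback $\D=\C_1\times_{\B_2}\C_2$ to extract in $\on{Lin}_R(\D,\D)$ a biCartesian square assembled from the units, equivalent to a fiber-cofiber sequence
\[ \on{id}_\D \longrightarrow k_{\C_1}j_{\C_1}\oplus k_{\C_2}j_{\C_2} \longrightarrow k_{\B_2}j_{\B_2}. \]
Using the pasting law, I would expand this into a $3\times 3$ grid of biCartesian squares in $\on{Lin}_R(\D,\D)$ dual to \eqref{eq:CYgluethm}, with $\on{id}_\D$ in one outer corner and $k_{\B_1}j_{\B_1}\oplus k_{\B_3}j_{\B_3}$ in the opposite outer row.

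Next I would apply the contravariant exact operation $(-)^*$ on right-dualizable endofunctors to the entire grid. Just as the identification $(i_\mathcal{X}j_\mathcal{X})^!\simeq i_\mathcal{X}\on{id}_\mathcal{X}^!j_\mathcal{X}$ of the smooth case flows from \Cref{lem:comdiag}(1), the proper analogue \Cref{lem:comdiag}(2) yields $(k_\mathcal{X}j_\mathcal{X})^*\simeq k_\mathcal{X}\on{id}_\mathcal{X}^*j_\mathcal{X}$, and \Cref{lem:dualcounit}(2) identifies the image of $\unit_\mathcal{X}$ under $(-)^*$ with the counit $\tilde{\counit}\colon k_\mathcal{X}\on{id}_\mathcal{X}^*j_\mathcal{X}\to\on{id}_\D^*$ of \Cref{constr:unitcounit}. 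This produces a dualized $3\times 3$ grid of biCartesian squares with $\on{id}_\D^*$ in the opposite outer corner.

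Finally, by \Cref{prop:properHHfun}(2), the classes $\eta_1,\eta_2$ supply equivalences between appropriate sub-biCartesian-squares of the dualized grid and the $(1-n)$-fold shifts of the corresponding sub-squares of the original grid. The compatibility at $\B_2$ ensures these local equivalences agree on their common $\B_2$-face, and biCartesianness of the remaining squares then forces them to extend to an equivalence of the full $3\times 3$ grids. Restricting the resulting equivalence to the outer biCartesian squares yields exactly the fiber-cofiber diagram \eqref{eq:defrightCY} for the functor $\D\to\B_1\times\B_3$, witnessing non-degeneracy of the class $\eta$ obtained by pasting in \eqref{eq:XHH}; the lift of $\eta$ to dual cyclic homology descends automatically from the cyclic lifts of $\eta_1,\eta_2$. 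The principal subtlety, as in the smooth case, is the bookkeeping of the dualization: ensuring that $(-)^*$ converts the outer fiber sequence into an outer cofiber sequence with the correct orientation, and verifying that the sign reversal in the compatibility condition at $\B_2$ produces the cancellation needed for the pasted class $\eta$ to be well-defined.
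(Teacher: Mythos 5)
Your proposal is correct and is essentially the paper's own proof: the paper proves \Cref{thm:rightCYglue} by noting that, given \Cref{prop:cptobjcolimit}, the argument of \Cref{thm:leftCYglue} dualizes verbatim, which is exactly the unit/counit, $(-)^*$/$(-)^!$ swap you carry out. Your more explicit bookkeeping of the dualized $3\times 3$ grid and of \Cref{lem:dualcounit}(2) is a faithful spelling-out of that same route.
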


\begin{proof}
Using \Cref{prop:phiD2}, this can be proven in the same way as \Cref{thm:leftCYglue}.
\end{proof}

\begin{remark}
The analogues of \Cref{thm:leftCYglue,thm:rightCYglue} for weak relative Calabi--Yau structures clearly hold as well.
\end{remark}

\section{Perverse schobers on surfaces with boundary}

\subsection{Surfaces, ribbon graphs and line fields}

\begin{definition}\label{def:surf}
By a surface, we will mean a smooth, connected, oriented surface ${\bf S}$ with non-empty boundary $\partial {\bf S}$ and interior ${\bf S}^\circ$. We will also assume that ${\bf S}$ is compact, unless stated otherwise. Note that if ${\bf S}$ is compact, the boundary $\partial {\bf S}$ consists of a disjoint union of circles.

A marked surface $({\bf S},M)$ consists of a surface and a non-empty finite set $M\subset \partial {\bf S}$ of marked points, lying on the boundary of ${\bf S}$. We do not require that each connected component of $\partial {\bf S}$ contains at least one marked point. 
\end{definition}

\begin{definition}
\begin{itemize}
\item A graph ${\rgraph}$ consists of two finite sets ${\rgraph}_0$ of vertices and $\on{H}_{\rgraph}$ of halfedges (mostly simply denoted by $\on{H}$) together with an involution $\tau\colon \on{H}\rightarrow \on{H}$ and a map $\sigma\colon \on{H}\rightarrow \rgraph_0$.
\item Let $\rgraph$ be a graph. An edge of $\rgraph$ is defined to be an orbit of $\tau$. The set of edges is denoted by $\rgraph_1$. An edge is called internal if the orbit contains two elements and called external if the orbit contains a single element. An internal edge is called a loop at a vertex $v\in \rgraph_0$ if it consists of two halfedges both being mapped under $\sigma$ to $v$. We denote the set of external edges of $\rgraph$ by $\rgraph^\partial_1$.
\item A ribbon graph consists of a graph $\rgraph$ together with a choice of a cyclic order on the set $\on{H}(v)$ of halfedges incident to each vertex $v$.
\end{itemize}
We will always assume graphs to be connected.
\end{definition}

\begin{definition}
Let $\rgraph$ be a graph. We define the exit path category $\on{Exit}(\rgraph)$ of $\rgraph$ to be the nerve of the $1$-category with
\begin{itemize}
\item objects the vertices and edges of $\rgraph$ and
\item a non-identity morphism of the form $v\rightarrow e$ for every vertex $v$ and incident edge $e$. If $e$ is a loop at $v$, then there are two morphisms $v\rightarrow e$.
\end{itemize}
The geometric realization $|\rgraph|$ of $\rgraph$ is defined as the geometric realization $|\on{Exit}(\rgraph)|$ of $\on{Exit}(\rgraph)$. 
\end{definition}

\begin{remark}
A graph $\rgraph$ whose geometric realization $|\rgraph|$ is embedded into an oriented surface ${\bf S}$ inherits a canonical ribbon graph structure by requiring the halfedges at any vertex to be ordered in the counterclockwise direction.
\end{remark}

\begin{definition}\label{def:sg}
Let ${\bf S}$ be a marked surface. A spanning graph for ${\bf S}$ consists of a graph $\rgraph$ together with an embedding $i:|\rgraph|\subset {\bf S}\backslash M$ satisfying that 
\begin{itemize}
\item $i$ is a homotopy equivalence,
\item $i$ maps $\partial |\rgraph|$ to $\partial {\bf S}$, and
\item the restriction $\partial |\rgraph|\rightarrow \partial {\bf S}\backslash M$ gives a homotopy equivalence with the boundary components which do not contain marked points.
\end{itemize}
We consider a spanning graph of ${\bf S}$ as endowed with the canonical ribbon graph structure arising from the embedding into ${\bf S}$.
\end{definition}

We now turn to line fields and framings on surfaces.

\begin{definition}
Let $\Sigma$ be a possibly non-compact surface.
\begin{enumerate}[(1)]
\item A line field $\nu$ on $\Sigma$ is a section of the projectivized tangent bundle $\mathbb{P}T\Sigma$.
\item  Assume that $\Sigma$ is equipped with a line field $\nu$ and let $\gamma\colon S^1\simeq [0,1]/(0\sim 1)\to \Sigma$ be a smooth, immersed loop. We denote by $W(\gamma)\in \mathbb{Z}\simeq \pi_1\mathbb{P}T_{\gamma(0)}\Sigma$ the winding number of $\gamma$ with respect to $\nu$.

For the equivalence $\mathbb{Z}\simeq \pi_1\mathbb{P}T_{\gamma(0)}\Sigma$, we use the convention that $1$ corresponds to a counterclockwise half-turn.
\end{enumerate}
We let $\on{Fr}(\Sigma)\to \Sigma$ denote the principal $\on{Gl}(2,\mathbb{R})$-bundle of frames (i.e.~ordered bases) of the tangent bundle $T\Sigma$.
\begin{enumerate}
\item[(3)] A framing $\xi$ on $\Sigma$ is a section of $(T\Sigma\backslash \{0\})/\mathbb{R}_+$.
\end{enumerate}
\end{definition}

The winding number $W(\gamma)$ of an immersed loop $\gamma$ can be obtained as follows: one chooses any homotopy in $\mathbb{P}T_{\gamma(0)}\Sigma$ from the tangent vector $\dot{\gamma}(0)$ to $\nu(\gamma(0))$, extends this homotopy to a homotopy of sections $\Gamma([0,1];\gamma^*\mathbb{P}T\Sigma)$ from $\dot{\gamma}$ to $\gamma^*\nu$, and then composes the two homotopies between $\dot{\gamma}(1)=\dot{\gamma}(0)$ and $\nu(\gamma(0))=\nu(\gamma(1))$ to obtain a loop in $\mathbb{P}T_{\gamma(0)}\Sigma$. Informally, this counts the number of half-rotations of the tangent field along $\gamma$ with respect to the line field. 

\begin{remark} 
By projecting onto the first element of the ordered basis, we obtain a map of fiber bundles $\on{Fr}(\Sigma)\to T\Sigma\to \mathbb{P}T\Sigma$. By composing with this map, any framing gives rise to a line field, all of whose winding numbers are even. Conversely, any line field with even winding numbers arises from a framing, see for instance \cite[Lem.~1.1.4]{LP20}.

The set of homotopy classes of line fields is a $H^1(\Sigma,\mathbb{Z})$-torsor, see \cite{LP20}.
\end{remark}

Finally, we describe how a choice of spanning ribbon graph induces a line field on the complement of its vertices.

\begin{example}\label{ex:linefield}
Let ${\bf S}$ be a marked surface with a spanning graph $\rgraph$. Then ${\bf S}\backslash \rgraph_0$ inherits a canonical (homotopy class of a) line field $\nu_\rgraph$, which we can depict locally near a vertex (of valency $5$ for concreteness) of $\rgraph$ as follows:
\begin{center}
    \begin{tikzpicture}
        \foreach \n in {0,...,4}
        {
            \coordinate (v) at (360/5*\n:3);
            \coordinate (w) at (360/5*\n+360/5:3);
            \foreach \m in {0,.5,1,1.5,2}
            {
                \coordinate (u) at (360/5*\n+36:\m);
                \draw[gray] (v) .. controls (u) and (u) .. (w);
            }
            \coordinate (u) at (360/5*\n+36:3.3);
            \draw[gray] (0,0) -- (v);
            
            \draw[ao , very thick] (v) -- (w);

            \draw[very thick, black] (0,0) -- (360/5*\n:3);
		 }
        
        \draw[black] (0,0) \ww;
    \end{tikzpicture}
\end{center}
The winding number with respect to $\nu_{\rgraph}$ of an embedded loop wrapping clockwise around a vertex of valency $m$ is thus given by $-m$.

Suppose that $\rgraph\to \rgraph'$ is a contraction between spanning graphs of ${\bf S}$, contracting a set $E$ of edges of $\rgraph$. Then the corresponding line fields are closely related: the winding numbers of immersed loops not intersecting the edges in $E$ are not affected by the contraction.  
\end{example}

\subsection{Perverse schobers}\label{sec3.2}\label{subsec:schobers}

In this section, we recall the definition of perverse schober parametrized by a ribbon graph. For more background on perverse schobers on surfaces, see \cite[Section 3]{CHQ23}, which refines the treatment in \cite[Sections 3,4]{Chr22}.

Given $n\in\mathbb{N}_{\geq 1}$, we denote by $\rgraph_{n}$ the ribbon graph with a single vertex $v$ and $n$ incident external edges. We call $\rgraph_n$ the $n$-spider. 

\begin{definition}\label{def:schberngon}
Let $n\geq 1$. An $R$-linear perverse schober parametrized by the $n$-spider, or on the $n$-spider for short, consists of the following data: 
\begin{enumerate}
\item[(1)] If $n=1$, an $R$-linear spherical adjunction
\[ F\colon \mathcal{V}\longleftrightarrow \mathcal{N}\cocolon G\,,\]
i.e.~an adjunction whose twist functor\footnote{There is a different much used convention for the definition of the twist and cotwist functors in the literature, see for instance \cite{ST01,AL17}. This convention differs from the one considered in this paper in two ways: Firstly, instead of forming the cone of the unit, the other convention considers the cocone of the unit and similarly the cone of the counit. Furthermore, the labels of the twist and cotwist are swapped. 

Let us illustrate our convention in the case of a spherical object: Thinking of $\mathcal{V}$ as the categorified vanishing cycles, this category is then given by $\mathcal{V}=\D(k)$. The twist functor acts on $\D(k)$ as a shift functor. The cotwist is the arising autoequivalence $T_\mathcal{N}$ of the category $\mathcal{N}$ containing the spherical object.} $T_{\mathcal{V}}=\on{cof}(\on{id}_{\mathcal{V}}\xrightarrow{\on{unit}}GF)\in \on{Fun}(\mathcal{V},\mathcal{V})$ and cotwist functor $T_{\mathcal{N}}=\on{fib}(FG\xrightarrow{\on{counit}}\on{id}_{\mathcal{N}})\in \on{Fun}(\mathcal{N},\mathcal{N})$ are equivalences. Such functors $F$ and $G$ are also called spherical functors, see \cite{AL17}. We also point out, that such spherical functors $F,G$ automatically admit all left and right adjoints, given by compositions of $F$ and $G$ with powers of the (co)twist functors, see \cite{DKSS21}.
\item[(2)] If $n\geq 2$, a collection of $R$-linear adjunctions
\[ (F_i\colon \mathcal{V}^n\longleftrightarrow \mathcal{N}_i\cocolon G_i)_{i\in \mathbb{Z}/n\mathbb{Z}}\]
satisfying that
\begin{enumerate}
    \item $G_i$ is fully faithful, i.e.~$F_iG_i\simeq \on{id}_{\mathcal{N}_i}$ via the counit,
    \item $F_{i}\circ G_{i+1}$ is an equivalence of $\infty$-categories,
    \item $F_i\circ G_j\simeq 0$ if $j\neq i,i+1$,
    \item $G_i$ admits a right adjoint $\on{radj}(G_i)$, and
    \item $\on{fib}(\on{radj}(G_{i+1}))=\on{fib}(F_{i})$ as full subcategories of $\mathcal{V}^n$.
\end{enumerate}
\end{enumerate}
We will also consider a collection of functors $(F_i\colon \mathcal{V}^n\rightarrow \mathcal{N}_i)_{i\in \mathbb{Z}/n\mathbb{Z}}$ as determining a perverse schober on the $n$-spider, or as a perverse schober on the $n$-spider for short, if there exist adjunctions $(F_i\dashv \on{radj}(F_i))_{i\in \mathbb{Z}/n\mathbb{Z}}$ which define a perverse schober on the $n$-spider. Such a collection of functors $(F_i)_{i\in \mathbb{Z}/n\mathbb{Z}}$ can be equivalently encoded as a functor $\on{Fun}(\on{Exit}(\rgraph_n),\on{LinCat}_R)$.
\end{definition}

One can show that the datum of a perverse schober on the $n$-spider is for any $n\geq 1$ equivalent to the datum of a perverse schober on the $1$-spider. This gives rise to an explicit model of perverse schobers on the $n$-spider, which we describe in the following. This construction is related to Dyckerhoff's categorified Dold-Kan correspondence \cite{Dyc17} and the Waldhausen $S_\bullet$-construction, see also the discussion in \cite{Chr22}. Let $F\colon \mathcal{V}\to \mathcal{N}$ be an $R$-linear spherical functor. We denote by $\mathcal{V}^n_F$ the pullback in $\on{LinCat}_R$ of the following diagram:
\[
\begin{tikzcd}
                  & {\on{Fun}(\Delta^{n-1},\N)} \arrow[d, "\on{ev}_0"] \\
\V \arrow[r, "F"] & \N                                                
\end{tikzcd}
\]
Explicitly, the $\infty$-category $\mathcal{V}^n_F$ consists of diagram
\begin{equation}\label{eq:objVnF}a\rightarrow b_1\rightarrow \dots \rightarrow b_{n-1}\end{equation}
where $a\in \mathcal{V}$, $b_i\in \mathcal{N}$ and the morphism $a\rightarrow b_1$ lies in the Grothendieck construction of $F$. We define 
\[ \varrho_1\colon \mathcal{V}^n_F\to \mathcal{N}\] as the projection functor to $b_{n-1}$. For $1<i<n$, we define recursively define $\varrho_i$ as the doubly left adjoint of  $\varrho_{i-1}$, which is the functor 
\[ \varrho_i= \on{cof}_{n-i+1,n-i+2}[i-2]\colon \mathcal{V}^n_F\to \mathcal{N}\] 
given by a suspension of the composite of the projection functor to $b_{n-i}\to b_{n-i+1}$ with the cofiber functor. We define $\varrho_{n}$ as the doubly left adjoint of $\varrho_{n-1}$, which maps an object \eqref{eq:objVnF} to the cofiber $\on{cof}(F(a)\xrightarrow{\alpha} b_1)[m-2]$. We further define for $1\leq i\leq n$ the functor $\varsigma_i$ as the right adjoint of $\varrho_i$. We thus have a sequence of adjunctions 
\begin{equation}\label{eq:rhosigmaadj} \varrho_n\dashv \varsigma_n\dashv \varrho_{n-1}\dashv \dots \dashv \varsigma_2\dashv \varrho_1 \dashv \varsigma_1\,.\end{equation}
There is a further adjunction
\begin{equation}\label{eq:rhosigmaadj2} 
\varsigma_1T_{\mathcal{N}}^{-1}[1-n]\dashv \varrho_n\,,
\end{equation}
where $T_{\mathcal{N}}$ denotes the cotwist functor of the adjunction $F\dashv G$, see \cite[Lem.~3.8]{Chr22}.

\begin{proposition}[$\!\!$\cite{Chr22,CHQ23}]\label{prop:localmodel}
Let $F\colon \mathcal{V}\to \mathcal{N}$ be a spherical functor. The collection of adjunctions
\[(\varrho_i\colon \mathcal{V}^n_F\longleftrightarrow\mathcal{N}\noloc \varsigma_i)_{i\in \mathbb{Z}/n\mathbb{Z}} \]
define an $R$-linear perverse schober on the $n$-spider, denoted $\mathcal{F}_n(F)$. Furthermore, for every $R$-linear perverse schober on the $n$-spider $\mathcal{F}$, there exists such a spherical functor $F$ and an equivalence 
\[ \mathcal{F}\simeq \mathcal{F}_n(F)\in \on{Fun}(\on{Exit}(\rgraph_n),\on{LinCat}_R)\,.\] 
We call any choice of such $F$ the spherical functor underlying $\mathcal{F}$.
\end{proposition}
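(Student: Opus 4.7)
The proof splits into two parts: (1) checking that $\mathcal{F}_n(F)$ satisfies the axioms of \Cref{def:schberngon}, and (2) showing every perverse schober on the $n$-spider arises from this construction. For part (1), existence of all required adjoints (condition (d)) is immediate from the chains \eqref{eq:rhosigmaadj} and \eqref{eq:rhosigmaadj2}. Condition (e) is tautological, since $\on{radj}(\varsigma_{i+1}) = \varrho_i$ directly from \eqref{eq:rhosigmaadj}, so $\on{fib}(\on{radj}(\varsigma_{i+1})) = \on{fib}(\varrho_i)$ holds on the nose. Condition (a), the fully faithfulness of $\varsigma_i$ (equivalently, the counit $\varrho_i\varsigma_i \to \on{id}_\mathcal{N}$ is an equivalence), I would check from the explicit pullback description of $\mathcal{V}_F^n$: concretely identify $\varsigma_i$ on objects of $\mathcal{N}$ and verify $\varrho_i\varsigma_i \simeq \on{id}_\mathcal{N}$ by inspection. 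Condition (b), that $\varrho_i\varsigma_{i+1}$ is an equivalence, then follows formally: by \eqref{eq:rhosigmaadj} we have $\varsigma_{i+1} \dashv \varrho_i$, and since $\varsigma_{i+1}$ is fully faithful (part of condition (a) applied at index $i+1$), the unit $\on{id}_\mathcal{N} \to \varrho_i\varsigma_{i+1}$ is an equivalence. Condition (c), the orthogonality $\varrho_i \varsigma_j \simeq 0$ for $j \neq i, i+1$, is a direct computation: using the explicit formula for $\varrho_i$ as a (shifted) cofiber of successive maps in the diagram \eqref{eq:objVnF}, one sees that $\varsigma_j$ produces filtered objects with zeros in the relevant positions, making the cofiber vanish.

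For part (2), given a perverse schober $\mathcal{F} = (F_i\colon \mathcal{V}^n \leftrightarrow \mathcal{N}_i \colon G_i)_{i \in \mathbb{Z}/n\mathbb{Z}}$, the plan is to extract a spherical adjunction $F \dashv G$ and construct an equivalence $\mathcal{V}^n \simeq \mathcal{V}_F^n$ in $\on{Fun}(\on{Exit}(\rgraph_n), \on{LinCat}_R)$. One takes $\mathcal{N} \coloneqq \mathcal{N}_1$, uses the equivalences $F_i G_{i+1}$ from condition (b) to identify all $\mathcal{N}_i$ with $\mathcal{N}$ coherently, and defines $\mathcal{V}$ as the common fiber $\on{fib}(F_n) = \on{fib}(\on{radj}(G_1))$ guaranteed by condition (e). The spherical functor $F\colon \mathcal{V} \to \mathcal{N}$ is the composite of the inclusion $\mathcal{V} \hookrightarrow \mathcal{V}^n$ with $F_1$ (postcomposed with the identification $\mathcal{N}_1 \simeq \mathcal{N}$). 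Conditions (a)--(c) and (e) together provide a semi-orthogonal decomposition of $\mathcal{V}^n$ into the images of $G_1, \ldots, G_n$, and by inductively "unfolding" this decomposition along the cofiber sequences glued by the units and counits, one reconstructs $\mathcal{V}^n$ as the iterated pullback describing $\mathcal{V}_F^n$. The desired compatibility with the $\varrho_i, \varsigma_i$ functors then follows from the characterization of these as the iterated left/right adjoints of the projection $\varrho_1$.

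The main obstacle is part (2), specifically showing that the functor $F$ produced is spherical and that the equivalence $\mathcal{V}^n \simeq \mathcal{V}_F^n$ is compatible with all the adjunction data simultaneously. The sphericality of $F$ should be extracted from the cyclic symmetry of the axioms (especially condition (e)): the twist $T_\mathcal{V}$ and cotwist $T_\mathcal{N}$ get identified with the monodromy autoequivalences obtained by cycling once through the sequence $G_1, F_1, G_2, F_2, \ldots$, and invertibility of these monodromies is encoded in the interlocking adjunctions. I would carry this out by verifying, using \eqref{eq:rhosigmaadj2}, that the required cyclic identification $\varsigma_1 T_{\mathcal{N}}^{-1}[1-n] \dashv \varrho_n$ matches the analogous cyclic identification constructed from the abstract axioms of $\mathcal{F}$, which pins down the cotwist of the extracted $F\dashv G$ and so forces sphericality.
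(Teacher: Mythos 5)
The paper does not actually prove this proposition — it is imported from \cite{Chr22,CHQ23} — so there is no internal argument to compare against; I can only assess your sketch against the explicit construction of $\mathcal{V}^n_F$ given just before the statement. Your part (1) is essentially the right verification, with one caveat: for the wrap-around index $i=n$, conditions (b) and (e) are \emph{not} tautological consequences of the chain \eqref{eq:rhosigmaadj}. One needs \eqref{eq:rhosigmaadj2}: for instance $\varrho_n\varsigma_1\simeq T_{\mathcal{N}}[n-1]$, which is an equivalence precisely because $F$ is spherical, and $\on{radj}(\varsigma_1)\simeq T_{\mathcal{N}}^{-1}[1-n]\varrho_n$ rather than $\varrho_n$ itself. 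This is the one place where sphericality of $F$ enters part (1), so it should not be elided.

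The genuine gap is in part (2): for $n\geq 3$ the subcategory $\on{fib}(F_n)=\on{fib}(\on{radj}(G_1))$ is \emph{not} the vanishing-cycle category. Test this in the model: $\varrho_n$ sends $a\to b_1\to\cdots\to b_{n-1}$ to $\on{cof}(F(a)\to b_1)[n-2]$, so $\on{fib}(\varrho_n)$ consists of the objects with $F(a)\simeq b_1$ and is equivalent to $\mathcal{V}^{n-1}_F$; it still contains $n-2$ copies of $\mathcal{N}$. Consequently your candidate spherical functor $F_1|_{\on{fib}(F_n)}$ is the "$\varrho_1$" of $\mathcal{V}^{n-1}_F$, whose source is too large, and a single adjunction $\varrho_1\dashv\varsigma_1$ is in general not even spherical (e.g.\ for the zero functor $0\to\mathcal{N}$ and $n=3$ its twist is $(a\to b)\mapsto(\on{cof}(a\to b)\to 0)$ on $\on{Fun}(\Delta^1,\mathcal{N})$, which is not invertible). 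The correct identification is $\mathcal{V}\simeq\bigcap_{i=1}^{n-1}\on{fib}(F_i)$, an intersection of $n-1$ of the $n$ fibers: in the model this carves out exactly the objects $(a\to 0\to\cdots\to 0)$, and the underlying spherical functor is then a shift of $F_n$ restricted to this intersection — note that your $F_1$ vanishes identically there. Your single-fiber observation can be salvaged as an inductive step ($\on{fib}(F_n)$ with the restricted $F_1,\dots,F_{n-1}$ should be a perverse schober on the $(n-1)$-spider, and one peels off one copy of $\mathcal{N}$ at a time down to the $1$-spider), but the remainder of part (2) — building the equivalence $\mathcal{V}^n\simeq\mathcal{V}^n_F$ compatibly with all the adjunction data and extracting sphericality — has to be rebuilt on this corrected foundation.
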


Let $v$ be a vertex of valency $n$ of a ribbon graph $\rgraph$. Let $\on{Exit}(\rgraph)_{v/}$ be the undercategory, which has $n+1$ objects, which can be identified with $v$ and its $n$ incident halfedges and non-identify morphisms going from $v$ to these halfedges. There is a functor $\on{Exit}(\rgraph)_{v/}\to \on{Exit}(\rgraph)$, which is fully faithful if $\rgraph$ has no loops incident to $v$.

\begin{definition}\label{def:schober}
Let $\rgraph$ be a ribbon graph. A functor $\mathcal{F}\colon \on{Exit}(\rgraph)\rightarrow \on{LinCat}_R$ is called an $R$-linear $\rgraph$-parametrized perverse schober if for each vertex $v$ of $\rgraph$, the restriction of $\mathcal{F}$ to $\on{Exit}(\rgraph)_{v/}$ determines a perverse schober parametrized by the $n$-spider in the sense of \Cref{def:schberngon}.

We also call a functor $\on{Exit}(\rgraph)\rightarrow \on{LinCat}_R^{\on{dual}}$ or $\on{Exit}(\rgraph)\rightarrow\on{LinCat}_R^{\on{cpt-gen}}$ a $\rgraph$-parametrized perverse schober if its composite with $\on{LinCat}_R^{\on{dual}},\on{LinCat}_R^{\on{cpt-gen}}\to \on{LinCat}_R$ is a $\rgraph$-parametrized perverse schober.
\end{definition}

\begin{remark}
A $\rgraph$-parametrized perverse schober $\mathcal{F}$ assigns to each edge of $\rgraph$ an equivalent stable $\infty$-category, referred to as the generic stalk of $\mathcal{F}$, and usually denoted by $\N$.
\end{remark}

\begin{definition}\label{def:schobersingularity}
Let $\mathcal{F}$ be an $R$-linear $\rgraph$-parametrized perverse schober. For a vertex $v$ of $\rgraph$, consider a choice of spherical functor $F_v\colon \mathcal{V}_v\to \mathcal{N}$ underlying the restriction of $\mathcal{F}$ to $\on{Exit}(\rgraph)_{v/}$ in the sense of \Cref{prop:localmodel}. We call $v$ a singularity of $\mathcal{F}$ if $\mathcal{V}_v\not\simeq 0$. 
\end{definition}

\begin{definition}\label{def:sections}
Let $\mathcal{F}$ be an $R$-linear $\rgraph$-parametrized perverse schober.
\begin{enumerate}[(1)]
\item  We denote by $\glsec(\rgraph,\mathcal{F})=\on{lim}_{\on{LinCat}_R}(\mathcal{F})$ the limit of $\mathcal{F}$ in the $\infty$-category $\on{LinCat}_R$ of $R$-linear $\infty$-categories. We call $\glsec(\rgraph,\mathcal{F})$ the $\infty$-\textit{category of global sections} of $\mathcal{F}$. 
\item Suppose that $\mathcal{F}$ takes values in compactly generated $R$-linear $\infty$-categories. We denote by $\cptglsec(\rgraph,\mathcal{F})= \on{lim}_{\on{LinCat}_R^{\on{cpt-gen}}}(\mathcal{F})$ the limit of $\mathcal{F}$ in the $\infty$-category $\on{LinCat}_R^{\on{cpt-gen}}$. We call $\cptglsec(\rgraph,\mathcal{F})$ the $\infty$-\textit{category of locally compact global sections}. 
\end{enumerate}
\end{definition}

\begin{remark}
Recall that $\glsec(\rgraph,\mathcal{F})$ agrees with the limit of $\mathcal{F}$ in $\on{Cat}_\infty$ and can thus be identified with the $\infty$-category of coCartesian sections of the Grothendieck construction of $\mathcal{F}$, i.e.~the coCartesian fibration classified by $\mathcal{F}$, see \cite[\href{https://kerodon.net/tag/05RX}{Prop.~05RX}]{Ker}.

Similarly, the $\infty$-category of locally compact global sections $\cptglsec(\rgraph,\mathcal{F})$ can be identified with the $\on{Ind}$-completion of the limit in $\on{Cat}_\infty$ of the pointwise restriction of $\mathcal{F}$ to the subcategories of compact objects. Hence a compact object $Y\in \cptglsec(\rgraph,\mathcal{F})^{\on{c}}$ consists of a pointwise compact coCartesian section of the Grothendieck construction.
\end{remark}

\begin{definition}\label{def:restrfunctor}
Given an $R$-linear $\rgraph$-parametrized perverse schober $\mathcal{F}$ and an edge $e$ of $\rgraph$, we denote by $\on{ev}_e\colon \glsec(\rgraph,\mathcal{F})\rightarrow \mathcal{F}(e)$ the evaluation functor, which maps a coCartesian section of the Grothendieck construction to its value at $e$.
\end{definition}

The functor $\on{ev}_e$ is $R$-linear and preserves limits and colimits, which can be proven using that limits and colimits in the functor $\infty$-category of sections of the Grothendieck construction are computed pointwise.

We will often consider the product of the evaluation functors at the external edges
\[\prod_{e\in \rgraph_1^\partial} \on{ev}_e\colon \glsec(\rgraph,\mathcal{F})\rightarrow  \prod_{e\in \rgraph_1^\partial} \mathcal{F}(e)\,,\]
By the $\infty$-categorical adjoint functor theorem, $\prod_{e\in \rgraph_1^\partial}\on{ev}$ admits a right adjoint, which we denoted by
\begin{equation}\label{eq:cupfunctor} 
\partial \mathcal{F}\colon \prod_{e\in \rgraph_1^\partial} \mathcal{F}(e)\rightarrow \glsec(\rgraph,\mathcal{F})\,.
\end{equation}

The adjunction $\prod_{e\in \rgraph_1^\partial} \on{ev}_e\dashv \partial \mathcal{F}$ is spherical, see \cite[Thm.~5.2.5]{CDW23} for a (sketch of) proof.

Next, we briefly discuss how to relate perverse schober parametrized by different ribbon graphs. Given a ribbon graph $\rgraph$ and an edge $e$ of $\rgraph$ which is not a loop, we can contract $e$ to create a new ribbon graph $\rgraph'$. The two vertices incident to $e$ are identified in $\rgraph'$, the edge $e$ removed, but otherwise $\rgraph'$ is the same as $\rgraph$. More generally, we say that a ribbon graph $\rgraph'$ is a contraction of $\rgraph$ if $\rgraph'$ can be obtained by contracting (automatically finitely many) edges of $\rgraph$. In this case, we write $c\colon \rgraph\to \rgraph'$. 

\begin{proposition}[$\!\!${\cite[Prop.~4.28]{Chr21b}}]\label{prop:constraction}
Let $\mathcal{F}$ be an $R$-linear $\rgraph$-parametrized perverse schober and $c\colon \rgraph\to \rgraph'$ a contraction which collapses no edge of $\rgraph$ which is incident to two singularities of $\mathcal{F}$. Then there exists a canonical $\rgraph'$-parametrized perverse schober $c_*(\mathcal{F})$ together with an $R$-linear equivalence of $\infty$-categories 
\[ \glsec(\rgraph,\mathcal{F})\simeq \glsec(\rgraph',c_*(\mathcal{F}))\,.\]
\end{proposition}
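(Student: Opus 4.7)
The plan is to construct $c_*(\mathcal{F})$ locally over each vertex of $\rgraph'$, and then verify the equivalence of global sections by reorganizing the underlying limit computation.

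For each $w\in \rgraph'_0$, I would let $T_w\subset \rgraph$ denote the connected subgraph consisting of the vertices of $\rgraph$ collapsed by $c$ to $w$, together with the (necessarily contracted) edges of $\rgraph$ joining them; in the relevant case of spanning graph contractions, $T_w$ is a tree. The hypothesis on $c$ guarantees $T_w$ contains at most one singular vertex of $\mathcal{F}$; call it $v_*$ when it exists. Using \Cref{prop:localmodel}, I would choose a spherical functor $F\colon \V\to \N$ underlying $\mathcal{F}$ at $v_*$ (or take $F$ to be $0\to \N$ if no $v_*$ exists), and take the zero spherical functor $0\to \N$ underlying $\mathcal{F}$ at every other vertex of $T_w$. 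Letting $n$ denote the valency of $w$ in $\rgraph'$, I would define the local perverse schober of $c_*(\mathcal{F})$ at $w$ to be $\mathcal{F}_n(F)$, with external edges identified via the ribbon structure inherited from $\rgraph'$. Away from contracted vertices, $c_*(\mathcal{F})$ would agree with $\mathcal{F}$.

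The central step is to show that this local construction is canonical and induces the claimed equivalence of global sections. Since $\glsec(\rgraph,\mathcal{F})$ is the limit of $\mathcal{F}\colon \on{Exit}(\rgraph)\to \on{LinCat}_R$, the equivalence reduces, locally at each $w$, to an identification of the limit of $\mathcal{F}$ over the subdiagram spanned by $T_w$ and its incident external edges with $\mathcal{F}_n(F)(v_w)$, compatibly with the evaluation functors at the external edges. I would prove this by induction on the number of edges of $T_w$, contracting at each step an edge incident to a non-singular leaf of $T_w$; this is always possible because $T_w$ has at most one singular vertex, and it reduces the proof to the base case where $T_w$ is a single non-loop edge $e$ joining $v_*$ (of valency $n_1$) to a non-singular vertex $v_2$ (of valency $n_2$), so that $n=n_1+n_2-2$.

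The hard part will be this single-edge base case: a concrete computation showing that the pullback $\V^{n_1}_F\times_{\N} 0^{n_2}_0$ in $\on{LinCat}_R$ along the projection functors $\varrho_i$ associated with $e$ is canonically equivalent to $\V^{n_1+n_2-2}_F$, compatibly with the evaluations at the remaining external edges. I would unfold the Waldhausen-type definition of $\V^n_F$ via the adjunction chains \eqref{eq:rhosigmaadj} and \eqref{eq:rhosigmaadj2}, identify $0^{n_2}_0$ with the full subcategory of $\on{Fun}(\Delta^{n_2-1},\N)$ of diagrams vanishing at $0$, and then verify the claimed equivalence componentwise. The perverse schober axioms of \Cref{def:schberngon} for $c_*(\mathcal{F})$ at $w$ should then transfer from those of $\mathcal{F}$ at $v_*$, and canonicity in the non-singular case follows because the zero spherical functor is unique up to contractible choice.
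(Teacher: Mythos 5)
The paper does not prove this proposition itself but cites \cite[Prop.~4.28]{Chr21b}, and your outline matches the strategy of that reference: reduce to single-edge contractions and use the pullback identity $\V^{n_1}_F\times_{\N}\V^{n_2}_{0_{\N}}\simeq \V^{n_1+n_2-2}_F$, which is precisely the content of \cite[Lemma 4.26]{Chr21b} that the present paper itself invokes in the proof of \Cref{prop:loccy}. Your proposal is correct in approach and identifies the right key computation.
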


We note that if $\mathcal{F}$ above takes values in $\on{LinCat}_R^{\on{cpt-gen}}$, then
\[\cptglsec(\rgraph,\mathcal{F})\simeq \cptglsec(\rgraph',c_*(\mathcal{F}))\]
holds as well.

We end this section by describing the relationship between global sections and locally compact global sections of perverse schobers in typical situations.

\begin{lemma}\label{lem:loccpt=Indfin}
Let $\rgraph$ be a ribbon graph and $\mathcal{F}\colon \on{Exit}(\rgraph)\to \on{LinCat}_R^{\on{cpt-gen}}$ be an $R$-linear $\rgraph$-parametrized perverse schober. Suppose that for every vertex $v$ of $\rgraph$, the spherical functor $F_v\colon \V_v\to \N$ underlying $\mathcal{F}$ near $v$ is conservative. 
\begin{enumerate}[(1)]
\item A global section $Y\in\glsec(\rgraph,\mathcal{F})$ is finite in the sense of \Cref{def:Cfin} if and only if $\on{ev}_e(Y)\in \mathcal{F}(e)$ is finite for all edges $e\in \rgraph_1$. 
\item Suppose the generic stalk $\N$ is proper and that $F_v$ reflects compact objects for all vertices $v$ of $\rgraph$. Then the locally compact global sections of $\mathcal{F}$ coincide with the $\on{Ind}$-completion of the finite global sections, i.e.
\[ \cptglsec(\rgraph,\mathcal{F})\simeq \on{Ind}\glsec(\rgraph,\mathcal{F})^{\on{fin}}\,.\]
\end{enumerate}
\end{lemma}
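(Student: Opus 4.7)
The plan is to analyze mapping spaces in $\glsec(\rgraph,\mathcal{F})$ by exploiting the local Waldhausen-type description of $\mathcal{F}$ near each vertex from \Cref{prop:localmodel}. For the forward direction of part (1), I use that $\on{ev}_e$ preserves colimits (since colimits of sections of the Grothendieck construction are pointwise), so it admits a left adjoint $e_!$; moreover $e_!$ preserves compact objects because $\on{ev}_e$ preserves filtered colimits. The adjunction equivalence
\[
\on{Mor}_{\mathcal{F}(e)}(X',\on{ev}_e(Y)) \simeq \on{Mor}_{\glsec(\rgraph,\mathcal{F})}(e_!(X'),Y)
\]
then makes the left side compact for every $X'\in\mathcal{F}(e)^{\on{c}}$ whenever $Y$ is finite in $\glsec(\rgraph,\mathcal{F})$.

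For the backward direction, my strategy is to show that the collection $\{e_!(X') : e\in\rgraph_1,\,X'\in\mathcal{F}(e)^{\on{c}}\}$ compactly generates $\glsec(\rgraph,\mathcal{F})$. The nontrivial point is that this family jointly detects zero, which is where conservativity of the $F_v$ enters. If $Z$ is right-orthogonal to every $e_!(X')$, then $\on{ev}_e(Z)\simeq 0$ for every edge $e$; writing $Z_v\in\mathcal{F}(v)$ as a diagram $a\to b_1\to\cdots\to b_{n-1}$ with $a\in\V_v$ and $b_i\in\N$ per \Cref{prop:localmodel}, the identities $\varrho_1(Z_v)=b_{n-1}$, $\varrho_i(Z_v)=\on{cof}(b_{n-i}\to b_{n-i+1})[i-2]$ for $1<i<n$, and $\varrho_n(Z_v)=\on{cof}(F_v(a)\to b_1)[n-2]$, combined with the vanishing of each $\varrho_i(Z_v)=\on{ev}_{e_i}(Z)$, force the $b_i$'s and then $F_v(a)$ to be zero. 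Conservativity of $F_v$ yields $a\simeq 0$, so $Z_v\simeq 0$ and hence $Z\simeq 0$. Once $\glsec(\rgraph,\mathcal{F})^{\on{c}}$ is identified with the thick closure of the $e_!(X')$'s, any compact $X$ is a retract of a finite colimit of such, so $\on{Mor}_{\glsec(\rgraph,\mathcal{F})}(X,Y)$ is a retract of a finite limit of the compact $R$-modules $\on{Mor}_{\mathcal{F}(e)}(X',\on{ev}_e(Y))$, hence compact.

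For part (2), I combine part (1) with the extra hypotheses to identify $\glsec(\rgraph,\mathcal{F})^{\on{fin}}$ with the pointwise compact coCartesian sections, which form $\cptglsec(\rgraph,\mathcal{F})^{\on{c}}$ by \Cref{rem:cptgencolimits}. Since $\N$ is compactly generated and proper, $\N^{\on{fin}}=\N^{\on{c}}$, so by part (1) a global section $Y$ is finite exactly when each $Y_e$ lies in $\N^{\on{c}}$. For such $Y$, the same iterated-fiber analysis gives $b_i\in\N^{\on{c}}$ and $F_v(a)\in\N^{\on{c}}$; the assumption that $F_v$ reflects compact objects then upgrades this to $a\in\V_v^{\on{c}}$, so $Y_v\in\mathcal{F}(v)^{\on{c}}$. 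Taking $\on{Ind}$-completions of the resulting equivalence $\glsec(\rgraph,\mathcal{F})^{\on{fin}}\simeq\cptglsec(\rgraph,\mathcal{F})^{\on{c}}$ yields the claim. The step I expect to be most delicate is the compact-generation assertion in part (1): the argument above produces a zero-detecting family, but genuinely concluding that $\glsec(\rgraph,\mathcal{F})$ is compactly generated by the $e_!(X')$'s needs separate justification, which should follow from the finiteness of $\on{Exit}(\rgraph)$ and the compact generation of each $\mathcal{F}(x)$.
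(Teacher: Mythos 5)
Your proposal is correct and follows essentially the same route as the paper: both establish that the left adjoints of the evaluation functors applied to compact generators of the stalks compactly generate $\glsec(\rgraph,\mathcal{F})$ (using conservativity of the $F_v$ for zero-detection), deduce that finiteness can be tested edgewise, and then in part (2) upgrade pointwise finiteness to pointwise compactness via properness of $\N$ and reflection of compacts by $F_v$. The explicit iterated-cofiber computation you carry out for zero-detection, and the compact-generation step you flag as delicate, are exactly the points the paper dispatches as "a straightforward computation" and as an immediate consequence of the detection property, respectively.
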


\begin{proof}
We begin by proving part (1). For an edge $e\in \rgraph_1$, denote the left adjoint of the evaluation functor
$\on{ev}_e\colon \glsec(\rgraph,\mathcal{F})\rightarrow \mathcal{F}(e)\simeq \N$ by $\on{ev}_e^*$. A straightforward computation shows that the assumption that $F_v\colon \V_v\to \N$ is conservative for all vertices $v$ with incident edges $e_1,\dots,e_m$ is equivalent to the assertion that 
\[ \prod_{i=1}^m\mathcal{F}(v\to e_i)\colon \mathcal{F}(v)\longrightarrow \prod_{i=1}^m\mathcal{F}(e_i)\]
is conservative. Thus, a global section $Y$ of $\mathcal{F}$ vanishes if and only if $\on{ev}_e(Y)\simeq 0$ for all $e\in \rgraph_1$. 

Let $\{X_i\}_{i\in I}$ be a collection of compact generators of the generic stalk $\N$ of $\mathcal{F}$, meaning that an object $N\in \N$ vanishes if and only if $\on{Mor}_\N(X_i,N)\simeq 0$ for all $i\in I$. 
The collection of objects $\{\on{ev}_e^*(X_i)\}_{i\in I, e\in \rgraph_1}$ compactly generates $\glsec(\rgraph,\mathcal{F})$, as follows from the equivalence
\[ \on{Mor}_{\glsec(\rgraph,\mathcal{F})}(\on{ev}_e^*(X_i),Y)\simeq \on{Mor}_\N(X_i,\on{ev}_e(Y))\,.\]
Hence, the objects $\on{ev}_e^*(X_i)$ suffice to test the finiteness of global sections. A global section $Y\in \glsec(\rgraph,\mathcal{F})$ is thus finite if and only if $\on{ev}_e(Y)\in \mathcal{F}(e)\simeq \N$ is finite for all $e\in \rgraph_1$.

We proceed with proving part (2). Let $Y\in \glsec(\rgraph,\mathcal{F})$ with $\on{ev}_e(Y)\in \mathcal{F}(e)^{\on{c}}$ for all $e\in \rgraph_1$. The assertion that $F_v$ reflects compact objects for a vertex $v$, with incident edges $e_1,\dots,e_m$, is equivalent to the assertion that the functor $\prod_{i=1}^m\mathcal{F}(v\to e_i)$ reflects compact objects. It follows that $Y$ is a pointwise compact coCartesian section of the Grothendieck construction, i.e.~lies in $\cptglsec(\rgraph,\mathcal{F})^{\on{c}}$.

Since $\N$ is proper, we have $\N^{\on{fin}}\simeq \N^{\on{c}}$. Combining part (1) with the above thus shows that the finite global sections are the locally compact global sections, i.e.~$\glsec(\rgraph,\mathcal{F})^{\on{fin}}\simeq \cptglsec(\rgraph,\mathcal{F})^{\on{c}}$. Passing to $\on{Ind}$-completions shows part (2).
\end{proof}

\begin{remark}\label{rem:cptglsec=glsec}
Let $\rgraph$ be a spanning graph of a marked surface ${\bf S}$ and let $\mathcal{F}\colon \on{Exit}(\rgraph)\to \on{LinCat}_R^{\on{cpt-gen}}$ be a $\rgraph$-parametrized perverse schober with non-vanishing generic stalk. One can show that $\cptglsec(\rgraph,\mathcal{F})\simeq \glsec(\rgraph,\mathcal{F})$ if and only if each boundary component of the marked surface contains at least one marked point.
\end{remark}

\subsection{Monodromy of perverse schobers}\label{subsec:monodromy}

Let $\rgraph$ be a spanning graph of a marked surface ${\bf S}$ and $\mathcal{F}$ an $R$-linear $\rgraph$-parametrized perverse schober. We denote by $P$ the set of vertices of $\rgraph$ which are singularities of $\mathcal{F}$, see \Cref{def:schobersingularity}. In analogy with perverse sheaves on ${\bf S}$, which restrict to a local system of abelian groups or vector spaces on ${\bf S}\backslash P$, one may wish to associate a monodromy local system of $R$-linear $\infty$-categories to $\mathcal{F}$. It turns out that there is indeed a reasonable notion of monodromy of a perverse schober, but it does not \textit{canonically} assemble into a local system on ${\bf S}\backslash P$. Instead, to define the local system ${\bf S}\backslash P$, one needs to input a choice of framing of ${\bf S}\backslash P$.
Applying $K_0$, we obtain the usual local system of abelian groups of the underlying perverse sheaf; the choice of framing does not affect this local system of abelian groups.

To define the monodromy local system, we will in a first step define an auxiliary local system of transports on ${\bf S}\backslash \rgraph_0$. From this, we will obtain a local system on the projectivized tangent bundle $\mathbb{P}T({\bf S}\backslash \rgraph_0)$ with monodromy $[1]$ on the circle fiber. We then pull back the local system to the frame bundle $\on{Fr}_{{\bf S}\backslash \rgraph_0}$. Any choice of framing then allows to further pull back this local system to a local system on ${\bf S}\backslash \rgraph_0$ and this local system  extends to ${\bf S}\backslash P$, as desired.

To obtain the transport along a loop, we compose local transports. We define these in \Cref{constr:transport}. For technical convenience, we will replace in the following the surface ${\bf S}$ with spanning graph $\rgraph$ by the homotopic non-compact surface $\Sigma_{\rgraph}$ described in \Cref{ssurfrem}. 

\begin{remark}\label{ssurfrem}
Let $\rgraph$ be a ribbon graph. To each vertex $v$ of $\rgraph$ of valency $n$ we associate a (non-compact) surface, denoted $\Sigma_v$, or also $\Sigma_n$, with an embedding of $v$ and its $n$ incident halfedges. We depict $\Sigma_v$ as follows (in green). The dotted lines correspond to open ends, whereas the solid lines indicate the boundary. 
\begin{center}
\begin{tikzpicture}
 \draw[color=ao][very thick][densely dotted] plot [smooth] coordinates {(0.5,2) (0.7,0.7) (2,0.5)};
 \draw[color=ao][very thick][densely dotted] plot [smooth] coordinates {(0.5,-2) (0.7,-0.7) (2,-0.5)};
 \draw[color=ao][very thick][densely dotted] plot [smooth] coordinates {(-0.5,-2) (-0.7,-0.7) (-1,-0.6)};
 \draw[color=ao][very thick][densely dotted] plot [smooth] coordinates {(-0.5,2) (-0.7,0.7) (-1,0.6)};
 \draw[color=ao][very thick] plot [smooth] coordinates {(2,-0.5) (2,0.5)};
 \draw[color=ao][very thick] plot [smooth] coordinates {(-0.5,2) (0.5,2)};
 \draw[color=ao][very thick] plot [smooth] coordinates {(-0.5,-2) (0.5,-2)}; 
  \node (0) at (0,0){};  
  \node (1) at (-1.4,0.14){\vdots};
  \node (2) at (-0.4,0){\small $v$};
  \fill (0) circle (0.1);
  \draw[very thick]
  (0,1.98) -- (0,0)
  (0,-1.98) -- (0,0)
  (0,0) -- (1.98,0); 
\end{tikzpicture}
\end{center}
We define the thickening of $\rgraph$ to be the surface $\Sigma_\rgraph$, obtained from gluing the surfaces $\Sigma_v$, whenever two vertices are incident to the same edge, at their boundary components corresponding to the edge. The surface $\Sigma_{\rgraph}$ comes with an embedding of $|\rgraph|$, which is also a homotopy equivalence. We define the subset ${\bf B}\subset \Sigma_{\rgraph}$ as the union of the images of the boundaries $\partial \Sigma_v$ for all vertices $v$. Note that each edge $e\in \rgraph_1$ of $\rgraph$ intersects exactly one connected component of ${\bf B}$ exactly once, we denote this component by $B_e\subset {\bf B}$.
\end{remark}

\begin{construction}\label{constr:transport}
Let $n\geq 2$, the case $n=1$ is addressed at the end. The $n$-spider $\rgraph_n$ is embedded in $\Sigma_n=\Sigma_v$, we denote its central vertex by $v$. Consider an embedded curve $\delta\colon[0,1]\to \Sigma_v\backslash \{v\}$ satisfying that $\delta(0),\delta(1)\in \partial \Sigma_v$ and that the boundary component $B_{e_1}$ of $\rgraph_n$ containing $\delta(1)$ lies one step in the counterclockwise direction of the boundary component $B_{e_0}$ containing $\delta(0)$. We can depict this setup as follows:
\begin{center}
\begin{tikzpicture}[decoration={markings, 
	mark= at position 0.5 with {\arrow{stealth}}}]
 \draw[color=ao][very thick][densely dotted] plot [smooth] coordinates {(0.5,2) (0.7,0.7) (2,0.5)};
 \draw[color=ao][very thick][densely dotted] plot [smooth] coordinates {(0.5,-2) (0.7,-0.7) (2,-0.5)};
 \draw[color=ao][very thick][densely dotted] plot [smooth] coordinates {(-0.5,-2) (-0.7,-0.7) (-1,-0.6)};
 \draw[color=ao][very thick][densely dotted] plot [smooth] coordinates {(-0.5,2) (-0.7,0.7) (-1,0.6)};
 \draw[color=ao][very thick] plot [smooth] coordinates {(2,-0.5) (2,0.5)};
 \draw[color=ao][very thick] plot [smooth] coordinates {(-0.5,2) (0.5,2)};
 \draw[color=ao][very thick] plot [smooth] coordinates {(-0.5,-2) (0.5,-2)}; 
  \node (0) at (0,0){};  
  \node (1) at (-1.4,0.14){\vdots};
  \node (2) at (-0.4,0){\small $v$};
  \node (3) at (0.54,0.54){\small $\delta$};
  \node (4) at (0,2.2){\small $B_{e_1}$};
  \node (4) at (-0.24,1.2){\small $e_1$};
  \node (5) at (2.3,0){\small $B_{e_0}$};
  \node (5) at (1.2,-0.2){\small $e_0$};
  \fill (0) circle (0.1);
  \draw[very thick]
  (0,1.98) -- (0,0)
  (0,-1.98) -- (0,0)
  (0,0) -- (1.98,0); 
   \draw[color=blue][very thick][postaction={decorate}] plot [smooth] coordinates {(2, 0.1) (0.7, 0.2) (0.2, 0.7) (0.1,2)};
\end{tikzpicture}
\end{center}
Given an $R$-linear $\rgraph_n$-parametrized perverse schober $\mathcal{F}$, we define the transport $\mathcal{F}^{\rightarrow}(\delta)$ of $\mathcal{F}$ along $\delta$ as the $R$-linear equivalence
\[ \mathcal{F}(e_0)\xrightarrow{\on{ladj}(\mathcal{F}(v\to e_0))}\mathcal{F}(v)\xrightarrow{\mathcal{F}(v\to e_1)}\mathcal{F}(e_1)\,.\] 

Reversing the orientation of $\delta$ yields a path $\delta^{\on{rev}}$ going one step in the clockwise direction around $v$ and the transport of $\mathcal{F}$ along $\delta^{\on{rev}}$ is defined as 
\[ \mathcal{F}^{\rightarrow}(\delta^{\on{rev}})\coloneqq \mathcal{F}(v\to e_0)\circ \on{radj}(\mathcal{F}(v\to e_1))\colon \mathcal{F}(e_1)\longrightarrow \mathcal{F}(e_0)\,.
\]
We thus have
$\mathcal{F}^{\rightarrow}(\delta^{\on{rev}})\simeq \left(\mathcal{F}^{\rightarrow}(\delta)\right)^{-1}$.

Consider now an arbitrary curve $\delta\colon[0,1]\to \Sigma_v\backslash \{v\}$ satisfying that $\delta(0),{\delta}(1)\in \partial \Sigma_v$. Then $\delta$ is homotopic relative to its endpoints either to a curve contained in the boundary $\partial \Sigma_v$, in which case we set $i=0$, or there exists $i\in \mathbb{Z}\backslash \{0\}$ such that $\delta$ is homotopic relative to its endpoints to the composite $\delta_{|i|}\ast\dots\ast \delta_1$ of $|i|$ embedded paths $\delta_1,\dots,\delta_{|i|}$ as above, each wrapping one step counterclockwise if $i>0$ and one step clockwise if $i<0$. Thus $\delta$ goes $i\in \mathbb{Z}$ steps counterclockwise. We define the transport along $\delta$ as 
\begin{equation} \mathcal{F}^{\rightarrow}(\delta)\coloneqq \begin{cases} \on{id}_{\mathcal{F}(e_0)} & i=0\,,\\
\mathcal{F}^{\rightarrow}(\delta_{|i|})\circ \dots \circ \mathcal{F}^{\rightarrow}(\delta_1)& i\neq 0\,. \end{cases}
\end{equation}
If $i=n$, i.e.~$\delta$ is a loop wrapping once counterclockwise around $v$, it follows from the adjunctions \eqref{eq:rhosigmaadj},\eqref{eq:rhosigmaadj2} that $\mathcal{F}^{\rightarrow}(\delta)\simeq T_{\N}^{-1}[1-n]$, with $T_{\mathcal{N}}^{-1}$ the inverse cotwist of the spherical adjunction underlying $\mathcal{F}$ at $v$.

We conclude with the case $n=1$. Consider a perverse schober $\mathcal{F}$ parametrized by the $1$-spider, with vertex $v$ and edge $e$, i.e.~a spherical functor $F\colon \mathcal{V}=\mathcal{F}(v)\to \mathcal{N}=\mathcal{F}(e)$. Let $\delta\colon [0,1]\to \Sigma_1\backslash \{v\}$ be a curve with $\delta(0),\delta(1)\in \partial\Sigma_1$. If $\delta$ wraps $i\in \mathbb{Z}$ times counterclockwise around $v$, we define the transport as
\[ \mathcal{F}^{\rightarrow}(\delta)\coloneqq \begin{cases} \on{id}_{\mathcal{N}} & i=0\,,\\ T_{\mathcal{N}}^i & i<0\,,\\
T_{\mathcal{N}}^{-i} & i>0\,,\end{cases}\]
where $T_{\mathcal{N}}$ denotes the cotwist functor of the adjunction $F\dashv \on{radj}(F)$.
\end{construction} 

\begin{example}\label{ex:transport}
Consider an $R$-linear perverse schober $\mathcal{F}$ on the $m$-spider $\rgraph_m$ for some $m\geq 2$, with central vertex $v$, generic stalk $\mathcal{N}$ and no singularity at $v$. Such a perverse schober categorifies a perverse sheaf without singularities (i.e.~a local system) on the disc which automatically has trivial monodromy. 

The perverse schober $\mathcal{F}$ is up to equivalence described by the adjunctions
\[
\left( \varrho_i[-i]\colon \on{Fun}(\Delta^{m-1},\N) \longleftrightarrow \N \noloc \varsigma_i[i]\right)_{1\leq i\leq m}\,.
\] 
The transport functors along paths wrapping one step clockwise around $v$ from the $(i+1)$-th edge of $\rgraph_m$ to the $i$-th edge are given by 
\[ \varrho_{i}[-i]\circ \varsigma_{i+1}[i+1]\simeq [1]\,,\]
for $1\leq i\leq m-1$, as follows from the adjunction \eqref{eq:rhosigmaadj}. For $i=m$, the transport is given by
\[ \varrho_m[-m]\circ \varsigma_1[1]\simeq [-1]\,,\]
as follows from the adjunction \eqref{eq:rhosigmaadj2} using that the cotwist functor of the adjunction $0\leftrightarrow \N$ is given by $T_\mathcal{N}\simeq [-1]$.  The transport of an embedded full loop enclosing $v$ going in the clockwise direction is thus given by $[m-2]$. 
\end{example}

\begin{construction}\label{constr:transport2}
Let $\rgraph$ be a ribbon graph and consider a curve $\eta\colon[0,1]\to \Sigma_{\rgraph}\backslash \rgraph_0$ going from $B_{e_0}$ to $B_{e_1}$ for some $e_0,e_1\in \rgraph_1$. 
We can write $\eta$ uniquely as the composite of a minimal number of curves $\delta_1,\dots,\delta_m$ with endpoints in ${\bf B}$, such that each $\delta_i$ is contained in $\Sigma_{v_i}\subset \Sigma_{\rgraph}$ for some vertex $v_i\in \rgraph_0$. Concretely, the paths $\delta_i$ near a given vertex $v$ are obtained as the components of the intersection of $\gamma$ with $\Sigma_v\subset \Sigma_{\rgraph}$.

Let $\mathcal{F}$ be an $R$-linear $\rgraph$-parametrized perverse schober. We define the transport $\mathcal{F}^{\rightarrow}(\eta)$ of $\mathcal{F}$ along $\eta$ as the $R$-linear equivalence
\[ 
\mathcal{F}^{\rightarrow}(\delta_m)\circ \dots \circ \mathcal{F}^{\rightarrow}(\delta_1)\colon \mathcal{F}(e_0)\longrightarrow \mathcal{F}(e_1) \,.
\]
\end{construction}

The following lemma collects some properties of the transport functors.

\begin{lemma}\label{lem:transport}
Let $\mathcal{F}$ be an $R$-linear $\rgraph$-parametrized perverse schober.
\begin{enumerate}[(1)]
\item Consider two curves $\eta,\eta'\colon [0,1]\to \Sigma_{\rgraph}\backslash \rgraph_0$ with $\eta_1(0)=\eta_2(0)\in B_{e_0},\eta_1(1)=\eta_2(1)\in B_{e_1}$, for $e_0,e_1\in \rgraph_1$. If $\eta,\eta'$ are homotopic in $\Sigma_{\rgraph}\backslash \rgraph_0$ with the homotopy fixing endpoints, then 
\[ \mathcal{F}^\rightarrow(\eta_1)\simeq \mathcal{F}^\rightarrow(\eta_2)\,.\]
\item Let $\eta,\eta'\colon [0,1]\to \Sigma_{\rgraph}\backslash \rgraph_0$ be curves with endpoints in ${\bf B}$ and such that $\eta(1)=\eta'(0)$. Denote their composite by $\eta'\ast \eta$. Then 
\begin{equation}\label{eq:comptransport} \mathcal{F}^\rightarrow(\eta'\ast \eta)\simeq \mathcal{F}^\rightarrow(\eta')\circ \mathcal{F}^\rightarrow(\eta)\,.\end{equation}
\item Let $c\colon \rgraph\to \rgraph'$ be a contraction of ribbon graphs contracting a set $E\subset \rgraph_1$ of edges, each not connecting two singularities of $\mathcal{F}$. Let $\eta\colon [0,1]\to \Sigma_{\rgraph}\backslash \rgraph_0$ be a curve with endpoints in ${\bf B}\backslash \cup_{e\in E} B_e$ not intersecting any edges in $E$. Choose a smooth map $C\colon \Sigma_{\rgraph}\to \Sigma_{\rgraph'}$ that realizes the contraction $|\rgraph|\to |\rgraph'|$ by contracting small neighborhoods of the edges in $E$ and that restricts on the complement of these neighborhoods to a diffeomorphism. Then $C\circ \eta$ is a curve in $\Sigma_{\rgraph'}\backslash \rgraph_0'$ and
\[ \mathcal{F}^\rightarrow(\eta)\simeq c_*(\mathcal{F})^\rightarrow(C\circ \eta)\,.\] 
\end{enumerate}
\end{lemma}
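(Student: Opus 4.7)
The plan is to address the three parts in the order (2), (1), (3), since each builds on the previous.

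For part (2), the composition law is nearly tautological from \Cref{constr:transport2}. Given $\eta$ and $\eta'$ meeting at a point $p = \eta(1) = \eta'(0) \in B_e$, the canonical decomposition of $\eta' \ast \eta$ into pieces $\delta_i$ lying in the various $\Sigma_{v_i}$ is obtained by concatenating the canonical decompositions of $\eta$ and $\eta'$, possibly merging a final piece of $\eta$ and initial piece of $\eta'$ when both lie in the same $\Sigma_v$. The statement then reduces to the sub-claim that within a single $\Sigma_v$, the transport is additive in winding numbers: if $\delta_a$ winds $i_a$ counterclockwise steps and $\delta_b$ winds $i_b$ counterclockwise steps with compatible endpoints, then $\mathcal{F}^\rightarrow(\delta_b \ast \delta_a) \simeq \mathcal{F}^\rightarrow(\delta_b) \circ \mathcal{F}^\rightarrow(\delta_a)$. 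This follows directly from \Cref{constr:transport}, with the case of canceling consecutive counterclockwise and clockwise one-step transports handled by the identity $\mathcal{F}^\rightarrow(\delta^{\on{rev}}) \simeq (\mathcal{F}^\rightarrow(\delta))^{-1}$ already recorded there.

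For part (1), I would use a standard subdivision argument. By compactness, any homotopy $H\colon [0,1]\times [0,1] \to \Sigma_\rgraph \setminus \rgraph_0$ from $\eta_1$ to $\eta_2$ fixing endpoints admits a finite subdivision into rectangles, each mapped into a single $\Sigma_v \setminus \{v\}$. Subdividing the source paths accordingly and invoking part (2), the problem reduces to local invariance: for two curves $\delta, \delta'$ in $\Sigma_v \setminus \{v\}$ with endpoints on the same two boundary components of $\Sigma_v$, isotopic relative to endpoints, one has $\mathcal{F}^\rightarrow(\delta) \simeq \mathcal{F}^\rightarrow(\delta')$. Since two such curves are isotopic rel endpoints if and only if they have the same winding number $i$ around $v$, and the definition in \Cref{constr:transport} depends only on $i$, this is immediate. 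A minor subtlety is that the canonical piecewise decomposition of a curve can change discontinuously as the curve crosses $\mathbf{B}$; this is absorbed by the fact that the extra pieces so produced have winding number zero and hence contribute identity transports, consistent with part (2).

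For part (3), I would first reduce via iteration and part (2) to the case of contracting a single edge $e \in E$, connecting vertices $v_1, v_2 \in \rgraph_0$, and via part (2) again to the case where $\eta$ consists of a single piece $\delta$ traversing $e$ transversely once. The contracted perverse schober $c_\ast(\mathcal{F})$ from \Cref{prop:constraction} is, at the merged vertex $v$ of $\rgraph'$, modeled on a spherical functor built from those underlying $\mathcal{F}$ at $v_1$ and $v_2$ in such a way that the structure functors of $c_\ast(\mathcal{F})$ at the pair of edges of $\rgraph'$ touched by $C \circ \delta$ factor through the relevant structure functors at $v_1$ and $v_2$. Consequently, the adjoints producing $c_\ast(\mathcal{F})^\rightarrow(C \circ \delta)$ agree with the composite of adjoints producing $\mathcal{F}^\rightarrow(\delta)$. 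The anticipated main obstacle lies here: one needs to unpack the construction of $c_\ast(\mathcal{F})$ sufficiently to identify these transport functors, which is essentially a local verification at the merged vertex using the explicit local model $\mathcal{V}^n_F$ of \Cref{prop:localmodel} together with the gluing procedure referenced in \Cref{prop:constraction}; the hypothesis that $e$ is not incident to two singularities ensures that at least one of $\mathcal{V}_{v_1}, \mathcal{V}_{v_2}$ is trivial, so that the merged local model is precisely the one needed to make the identification.
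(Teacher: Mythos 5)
Your proposal is correct and follows essentially the same route as the paper, which disposes of parts (1) and (2) as immediate from the definitions (homotopy invariance of winding numbers within each $\Sigma_v$ plus concatenation of decompositions) and proves part (3) by reducing to the contraction of a single edge and verifying the claim locally via the model $\mathcal{V}^n_F$ of \Cref{prop:localmodel} together with the local description of $c_*(\mathcal{F})$; your write-up is in fact more detailed than the paper's. One small imprecision in part (3): since the hypothesis is that $\eta$ does \emph{not} intersect any edge in $E$, the local case you reduce to should be a piece of $\eta$ crossing the boundary circle $B_e$ from $\Sigma_{v_1}$ into $\Sigma_{v_2}$ (two consecutive pieces of the decomposition, whose composite transport must be matched against the single local transport at the merged vertex), not a piece "traversing $e$ transversely once" — but this does not affect the substance of the argument.
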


\begin{proof}
Parts (1) and (2) are immediate from the definition of transport. For part (3), it suffices to consider the case that $c$ contracts a single edge $e$, since any contraction is a finite composition of such contractions. As $\mathcal{F}$ and $c_*(\mathcal{F})$ are identical away from a neighborhood of $e$, it suffices to show that the transports near $e$ of $\mathcal{F}$ and $c_*(\mathcal{F})$ coincide. Using the local model for $\mathcal{F}$ from \Cref{prop:localmodel} and the local model for $c_*(\mathcal{F})$ described in \cite[Lemma 4.26]{Chr21b}, this is straightforward to verify.
\end{proof} 

For the following, we fix a ribbon graph $\rgraph$ and we choose once and for all an edge $e\in \rgraph_1$. Let $x\in e\cap B_e$ be the intersection point.

\begin{lemma}\label{lem:locsystem}
Let $\mathcal{F}$ be an $R$-linear $\rgraph$-parametrized perverse schober. The assignment $\gamma\mapsto \mathcal{F}^\rightarrow(\gamma)$ on based loops at $x$ defines a group homomorphism
\[
\mathcal{F}^{\rightarrow} \colon \pi_1(\Sigma_{\rgraph}\backslash \rgraph_0,x)\longrightarrow \pi_0\on{Aut}(\mathcal{F}(e))\,,
\]
i.e.~a local system on $\Sigma_{\rgraph}\backslash \rgraph_0$, with values in the group of equivalence classes of $R$-linear autoequivalences of $\mathcal{F}(e)$.
\end{lemma}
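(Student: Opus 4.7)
The plan is to show that the assignment $\gamma\mapsto\mathcal{F}^{\rightarrow}(\gamma)$ descends from free loops to $\pi_1(\Sigma_{\rgraph}\backslash \rgraph_0,x)$ and respects concatenation, essentially by packaging the content of \Cref{lem:transport}. First, one notes that for any loop $\gamma\colon[0,1]\to \Sigma_{\rgraph}\backslash \rgraph_0$ based at $x$, the endpoints $\gamma(0)=\gamma(1)=x$ lie in $B_e\subset {\bf B}$, so \Cref{constr:transport2} produces a well-defined $R$-linear equivalence $\mathcal{F}^{\rightarrow}(\gamma)\colon \mathcal{F}(e)\to \mathcal{F}(e)$; we pass to its equivalence class in $\pi_0\on{Aut}(\mathcal{F}(e))$.

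Next, one verifies that this assignment is homotopy invariant. If $\gamma\simeq\gamma'$ in $\pi_1(\Sigma_{\rgraph}\backslash \rgraph_0,x)$, then in particular they are homotopic rel their (common) endpoints in $B_e$, so \Cref{lem:transport}(1) yields an equivalence $\mathcal{F}^{\rightarrow}(\gamma)\simeq \mathcal{F}^{\rightarrow}(\gamma')$ of $R$-linear functors, hence equality in $\pi_0\on{Aut}(\mathcal{F}(e))$.

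For the group homomorphism property, given two based loops $\gamma_1,\gamma_2$ at $x$, their concatenation $\gamma_2\ast\gamma_1$ is again a loop with endpoints in ${\bf B}$, and \Cref{lem:transport}(2) gives $\mathcal{F}^{\rightarrow}(\gamma_2\ast\gamma_1)\simeq \mathcal{F}^{\rightarrow}(\gamma_2)\circ \mathcal{F}^{\rightarrow}(\gamma_1)$, which is the required multiplicativity in $\pi_0\on{Aut}(\mathcal{F}(e))$. The constant loop at $x$ decomposes into zero pieces and is therefore sent to $\on{id}_{\mathcal{F}(e)}$ by \Cref{constr:transport} (the $i=0$ case), giving the unit axiom; inverses are then automatic in a group, or alternatively one can note that reversal of orientation sends each piece $\delta_i$ to $\delta_i^{\on{rev}}$, whose transport was defined in \Cref{constr:transport} to be the inverse equivalence.

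There is no real obstacle here: the statement is essentially a corollary of \Cref{lem:transport}, and the only thing to double-check is the compatibility between the decomposition of a concatenation of two loops and the concatenation of their individual decompositions into pieces lying in the subsurfaces $\Sigma_v$. Since both endpoints lie in ${\bf B}$, the composite decomposition is obtained simply by juxtaposing the decompositions of $\gamma_1$ and $\gamma_2$, so the composition formula \eqref{eq:comptransport} of \Cref{lem:transport}(2) applies directly.
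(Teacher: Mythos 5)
Your proof is correct and follows exactly the paper's argument: well-definedness from \Cref{lem:transport}(1) and multiplicativity from \Cref{lem:transport}(2). The extra remarks on the constant loop, inverses, and compatibility of decompositions under concatenation are fine but are left implicit in the paper's one-line proof.
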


\begin{proof}
This assignment is well defined by part (1) of \Cref{lem:transport} and a group homomorphism by part (2) of \Cref{lem:transport}.
\end{proof}

The main issue with the local system from \Cref{lem:locsystem} is that it does not extend to $\Sigma_{\rgraph}\backslash P$, where $P$ denotes the set of singularities of $\mathcal{F}$, see \Cref{ex:transport}. This is general not even true on $K_0$.

Let $x'\in \mathbb{P}T(\Sigma_{\rgraph}\backslash \rgraph_0)$ be an inverse image of $x$ under the bundle projection $\mathbb{P}T(\Sigma_{\rgraph}\backslash \rgraph_0) \to \Sigma_{\rgraph}\backslash \rgraph_0$. Consider the short exact sequence of groups:
\[
1\to \pi_1(\mathbb{P}T_x\Sigma_{\rgraph}\backslash \rgraph_0,x') \to \pi_1(\mathbb{P}T\Sigma_{\rgraph}\backslash \rgraph_0,x') \to \pi_1(\Sigma_{\rgraph}\backslash \rgraph_0,x)\to 1\,,
\]
where 
$\pi_1(\mathbb{P}T_x\Sigma_{\rgraph}\backslash \rgraph_0)\simeq \mathbb{Z}$ (with $1$ corresponding to the counterclockwise half-turn). The line field $\nu_{\rgraph}$ from \Cref{ex:linefield} defines a splitting
\[ \pi_1(\mathbb{P}T(\Sigma_{\rgraph}\backslash \rgraph_0))\simeq \pi_1(\Sigma_{\rgraph}\backslash \rgraph_0,x)\times \pi_1(\mathbb{P}T_x(\Sigma_{\rgraph}\backslash \rgraph_0))\simeq \pi_1(\Sigma_{\rgraph}\backslash \rgraph_0,x)\times \mathbb{Z}\,.\]

\begin{definition}
Let $\mathcal{F}$ be an $R$-linear $\rgraph$-parametrized perverse schober.
\begin{enumerate}[(1)]
\item We define the local system $\mathcal{L}_{\mathbb{P}T}\mathcal{F}$ on $\mathbb{P}T(\Sigma_{\rgraph}\backslash \rgraph_0)$ by the assignment
\[
\pi_1(\mathbb{P}T(\Sigma_{\rgraph}\backslash \rgraph_0))\simeq \pi_1(\Sigma_{\rgraph}\backslash \rgraph_0,x)\times \mathbb{Z}\longrightarrow \pi_0\on{Aut}(\mathcal{F}(e))\,,~(\gamma,i)\mapsto \mathcal{F}^\rightarrow(\gamma)[i]\,.
\]
\item We define the local system $\mathcal{L}\mathcal{F}$ on $\on{Fr}(\Sigma_{\rgraph}\backslash \rgraph_0)$ as the pullback of $\mathcal{L}_{\mathbb{P}T}\mathcal{F}$ along the map $\on{Fr}(\Sigma_{\rgraph}\backslash \rgraph_0)\to \mathbb{P}T(\Sigma_{\rgraph}\backslash \rgraph_0)$. 
\end{enumerate}
\end{definition}

\begin{remark}
Let $v$ be a vertex of $\rgraph$ of valency $m$. Near $v$, the local system $\mathcal{L}\mathcal{F}$ can be described as follows. Since $\Sigma_v$ is diffeomorphic to a subset of $\mathbb{R}^2$, there exist canonical splittings $\mathbb{P}T\Sigma_v\simeq \Sigma_v\times S^1$ and $\on{Fr}(\Sigma_v)\simeq \Sigma_v\times \on{GL}(2,\mathbb{R})$, which restrict to splittings 
\[ \mathbb{P}T(\Sigma_v\backslash \{v\}) \simeq (\Sigma_v\backslash \{v\})\times S^1\] and 
\[ \on{Fr}(\Sigma_v\backslash \{v\})\simeq \Sigma_v\backslash \{v\}\times \on{GL}(2,\mathbb{R})\,.\]
Let $\gamma\colon S^1\to \Sigma_v\backslash \{v\}$ be an embedded loop mapping the basepoint to $\partial \Sigma_v$ and wrapping one time clockwise around $v$. Then $\gamma$ defines loops $\gamma'$ in $(\Sigma_v\backslash \{v\})\times S^1$ and $\gamma''$ in $\Sigma_v\backslash \{v\}\times \on{GL}(2,\mathbb{R})$ which are both constant in the second component. We have
\[
\mathcal{L}\mathcal{F}(\gamma'')\simeq \mathcal{L}_{\mathbb{P}T}\mathcal{F}(\gamma')\simeq \mathcal{F}^\rightarrow(\gamma)[W(\gamma)]=\mathcal{F}^\rightarrow(\gamma)[-m]\,.
\]
By \Cref{ex:transport}, if $v$ is not a singularity of $\mathcal{F}$, then 
\begin{equation}\label{eq:monodromy-2}
\mathcal{L}\mathcal{F}(\gamma'')\simeq [-2]\,.
\end{equation}
\end{remark}

\begin{proposition}\label{prop:monodromy}
Let $\mathcal{F}$ be an $R$-linear $\rgraph$-parametrized perverse schober with singularities at $P$ and let $\xi$ be a framing on $\Sigma_{\rgraph}\backslash P$. The pullback local system $\xi^*\mathcal{L}\mathcal{F}$ then extends to a local system
\[
\xi^*\mathcal{L}\mathcal{F}\colon \pi_1(\Sigma_{\rgraph}\backslash P,x)\longrightarrow \pi_0\on{Aut}(\mathcal{F}(e))\,.
\]
We call this local system the monodromy of $\mathcal{F}$ with respect to the framing $\xi$.
\end{proposition}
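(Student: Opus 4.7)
The plan is to reduce the statement to a local monodromy computation at each non-singular vertex and verify triviality via a winding-number identity.

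First I would apply van Kampen: $\Sigma_{\rgraph}\backslash P$ is obtained from $\Sigma_{\rgraph}\backslash \rgraph_0$ by filling in each non-singular vertex $v \in \rgraph_0\backslash P$, so $\pi_1(\Sigma_{\rgraph}\backslash P,x)$ is the quotient of $\pi_1(\Sigma_{\rgraph}\backslash \rgraph_0, x)$ by the normal subgroup generated by the homotopy classes of small loops $\gamma_v$ around each such $v$. Extendability of $\xi^*\mathcal{L}\mathcal{F}$ thus reduces to showing $\xi^*\mathcal{L}\mathcal{F}(\gamma_v) \simeq \on{id}_{\mathcal{F}(e)}$ for every non-singular vertex $v$, with independence on the choice of representative (and basepoint path) already supplied by the homotopy invariance in \Cref{lem:transport}.

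Next, I would fix such a $v$ of valency $m$ and take $\gamma \subset \Sigma_v\backslash\{v\}$ a small clockwise loop around $v$. Unwinding the definitions, $\xi^*\mathcal{L}\mathcal{F}(\gamma) \simeq \mathcal{L}\mathcal{F}(\xi\circ\gamma)$, and since $\mathcal{L}\mathcal{F}$ pulls back from $\mathcal{L}_{\mathbb{P}T}\mathcal{F}$ along $\on{Fr}\to\mathbb{P}T$, this equals $\mathcal{L}_{\mathbb{P}T}\mathcal{F}(\nu_\xi\circ\gamma)$, where $\nu_\xi$ is the line field underlying $\xi$. In the $\nu_\rgraph$-induced splitting $\pi_1(\mathbb{P}T(\Sigma_v\backslash\{v\}))\simeq \pi_1(\Sigma_v\backslash\{v\})\times\mathbb{Z}$, the class $[\nu_\xi\circ\gamma]$ corresponds to $(\gamma,k)$, where $k$ records the relative rotation of $\nu_\xi$ with respect to $\nu_\rgraph$ along $\gamma$; by the change-of-reference identity for line-field winding numbers, $k = W_{\nu_\rgraph}(\gamma)-W_{\nu_\xi}(\gamma)$.

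The heart of the computation is the identity $W_{\nu_\xi}(\gamma) = -2$: because $v\notin P$, the framing $\xi$, and so the induced line field $\nu_\xi$, extends across $v$, and any line field on a disc containing $v$ has winding $-2$ on a small clockwise loop around $v$ (standard Poincaré--Hopf/contractibility argument). Combined with $W_{\nu_\rgraph}(\gamma)=-m$ from \Cref{ex:linefield} and $\mathcal{F}^\rightarrow(\gamma) \simeq [m-2]$ from \Cref{ex:transport}, the defining formula $\mathcal{L}_{\mathbb{P}T}\mathcal{F}((\gamma,i))=\mathcal{F}^\rightarrow(\gamma)[i]$ yields
\[
\xi^*\mathcal{L}\mathcal{F}(\gamma) \simeq \mathcal{F}^\rightarrow(\gamma)[k] \simeq [m-2][2-m] \simeq \on{id}_{\mathcal{F}(e)}\,,
\]
and since $v$ was arbitrary the map descends along $\pi_1(\Sigma_{\rgraph}\backslash \rgraph_0,x)\twoheadrightarrow \pi_1(\Sigma_{\rgraph}\backslash P,x)$.

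The principal obstacle I anticipate is purely bookkeeping: aligning the sign conventions so that the $\nu_\rgraph$-splitting of $\pi_1(\mathbb{P}T)$, the winding numbers $W_{\nu_\rgraph}$ and $W_{\nu_\xi}$, and the shift $[i]$ in the definition of $\mathcal{L}_{\mathbb{P}T}\mathcal{F}$ are consistently oriented so that the cancellation $[m-2][2-m]\simeq\on{id}$ holds as stated. Once the signs are pinned down, the argument is uniform in $v$ and produces the desired local system on $\pi_1(\Sigma_{\rgraph}\backslash P, x)$.
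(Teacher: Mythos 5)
Your proposal is correct and follows essentially the same route as the paper: the paper's proof likewise reduces to showing triviality of the monodromy on a clockwise loop around each non-singular $m$-valent vertex, using that the framing has winding number $-2$ there while the transport is $[m-2]$ and the reference line field $\nu_{\rgraph}$ has winding $-m$, so the shift $[2-m]$ cancels $[m-2]$. Your version merely makes explicit the van Kampen reduction and the relative-winding formula $k=W_{\nu_{\rgraph}}(\gamma)-W_{\nu_\xi}(\gamma)$ that the paper packages via its preceding remark on $\mathcal{L}\mathcal{F}(\gamma'')$.
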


\begin{proof}
Consider the restriction $\xi|_{\Sigma_{\rgraph}\backslash \rgraph_0}$. Then the winding number of a clockwise embedded loop wrapping one time around a non-singular vertex $v\in \rgraph_0\backslash P$ is given by $-2$. Comparing with \eqref{eq:monodromy-2}, we see that the monodromy of $\xi|_{\Sigma_{\rgraph}\backslash \rgraph_0}^*\mathcal{L}\mathcal{F}$ along this loop is trivial. It follows that $\xi|_{\Sigma_{\rgraph}\backslash \rgraph_0}^*\mathcal{L}\mathcal{F}$ extends to the desired local system on $\Sigma_{\rgraph}\backslash P$.
\end{proof}

\begin{remark}
The above constructions can be seen as implementing the following observation: the group $H^1(\Sigma_{\rgraph}\backslash \rgraph_0,\mathbb{Z})$ acts on the $\infty$-category of local systems on $\Sigma_{\rgraph}\backslash \rgraph_0$. The line field $\nu_{\rgraph}$ from \Cref{ex:linefield} gives a base point of the $H^1(\Sigma_{\rgraph}\backslash \rgraph_0,\mathbb{Z})$-torsor of homotopy classes of line fields. Thus the line field arising from a framing $\xi$ acts on the local system $\mathcal{F}^\rightarrow$ to produce a new local system, given by $\xi^*\mathcal{L}\mathcal{F}$. Contrary to $\mathcal{F}^\rightarrow$, the local system $\xi^*\mathcal{L}\mathcal{F}$ extends to $\Sigma_{\rgraph}\backslash P$.
\end{remark}

\begin{example}
Consider a perverse schober $\mathcal{F}$ on the disc $D$ with one boundary marked point, parametrized by the $1$-spider $\rgraph_1$ with vertex $v$. Thus, $\mathcal{F}$ consists of a spherical functor $F\colon \V \to \N$. Let $G$ be the right adjoint of $F$ and $T_{\mathcal{N}}$ the cotwist functor of $F\dashv G$. Choosing any framing on $D$ and restricting it to $D\backslash\{v\}$, the corresponding clockwise monodromy of $\mathcal{F}$ around $v$ is given by $T_{\mathcal{N}}[1]$. Passing to Grothendieck groups, i.e.~applying $K_0(\mhyphen)$, we obtain a perverse sheaf on $D$ with (at most) one single singularity. We have 
\[ K_0(T_{\mathcal{N}})=K_0(F)K_0(G)-K_0(\on{id}_\mathcal{N})\,.\] 
The automorphism of $K_0(\mathcal{N})$ 
\[ K_0(T_{\mathcal{N}}[1])=-K_0(T_{\mathcal{N}})=K_0(\on{id}_\mathcal{N})-K_0(F)K_0(G)\]
describes the usual monodromy of this perverse sheaf. 
\end{example}

\begin{remark}\label{rem:spinstructure}
Let $\mathcal{F}$ be an $R$-linear $\rgraph$-parametrized perverse schober and suppose that $\mathcal{F}(e)$ is $2n$-periodic for some $n\geq 1$, in the sense that $[2n]\simeq \on{id}_{\mathcal{F}(e)}$. Let $x''\in \on{Fr}(\Sigma_{\rgraph}\backslash \rgraph_0)$ be an inverse image of $x\in \Sigma_{\rgraph}\backslash \rgraph_0$. The fiberwise monodromy of $\mathcal{L}\mathcal{F}$, i.e.~the action of  $1\in \mathbb{Z}\simeq \pi_1(\on{Fr}_x(\Sigma_{\rgraph}\backslash \rgraph_0),x'')$ is given by $[2]$. 

Suppose first that $n=1$, i.e.~$\mathcal{F}(e)$ is $2$-periodic. Then the local system 
\[ \mathcal{L}\mathcal{F}\colon \pi_1(\on{Fr}(\Sigma_{\rgraph}\backslash \rgraph_0),x'')\longrightarrow \pi_0\on{Aut}(\mathcal{F}(e))
\] factors through the quotient 
\[
\pi_1(\on{Fr}(\Sigma_{\rgraph}\backslash \rgraph_0),x'')/\pi_1(\on{Fr}_x(\Sigma_{\rgraph}\backslash \rgraph_0),x'')\simeq \pi_1(\Sigma_{\rgraph}\backslash \rgraph_0,x)
\]
and thus canonically defines a local system on $\Sigma_{\rgraph}\backslash \rgraph_0$ (that even extends to $\Sigma_\rgraph\backslash P$), without a choice of framing.

Now suppose that $n>1$. An $n$-spin structure on $\Sigma_{\rgraph}\backslash \rgraph_0$ amounts to an $n$-fold connected covering of $\on{Fr}(\Sigma_{\rgraph}\backslash \rgraph_0)$. Pulling back $\mathcal{L}\mathcal{F}$ along this covering yields a local system with trivial monodromy on the fiber over $\Sigma_\rgraph\backslash \rgraph_0$ that hence restricts as in the case $n=1$ to a local system on $\Sigma_\rgraph\backslash P$.
\end{remark}

\begin{remark}\label{rem:monodromyHH}
Let $\mathcal{F}$ be an $R$-linear $\rgraph$-parametrized perverse schober with singularities at $P\subset \rgraph_0$. Applying any additive invariant $E$, such as Hochschild homology, to the local system $\mathcal{L}\mathcal{F}$ on $\on{Fr}(\Sigma_{\rgraph}\backslash \rgraph_0)$ yields a local system with trivial monodromy on the fiber, since $E([2])\simeq \on{id}$. By the same argument as in \Cref{rem:spinstructure}, $E(\mathcal{L}\mathcal{F})$ thus defines a local system on $\Sigma_{\rgraph}\backslash P$.  
\end{remark}

The monodromy of a perverse schober is independent on the choice of ribbon graph in the appropriate sense:

\begin{lemma}\label{lem:invcontraction}
Let $\mathcal{F}$ be an $R$-linear $\rgraph$-parametrized perverse schober with singularities at $P$. Let $c\colon \rgraph \to \rgraph'$ be a contraction of ribbon graphs not contracting the edge $e$ and no edges connecting any two vertices in $P$ and choose a smooth map $C\colon \Sigma_{\rgraph}\to \Sigma_{\rgraph'}$ as in \Cref{lem:transport}. Let $\xi'$ be a framing on $\Sigma_{\rgraph'}\backslash P$ and let $\xi=C^*(\xi')$ be the corresponding framing  on $\Sigma_{\rgraph}\backslash P$. 
There exists an equivalence between $\xi^*\mathcal{L}\mathcal{F}$ and the local system
\[ 
\pi_1(\Sigma_{\rgraph}\backslash P,x)\xlongrightarrow{\pi_1(C)} \pi_1(\Sigma_{\rgraph'}\backslash P,C(x))\xlongrightarrow{(\xi')^*\mathcal{L}c_*(\mathcal{F})}\pi_0\on{Aut}((c_*\mathcal{F})(e))= \pi_0\on{Aut}(\mathcal{F}(e))\,.
\] 
\end{lemma}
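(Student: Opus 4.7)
The plan is to verify that the two local systems agree on every element of $\pi_1(\Sigma_{\rgraph}\backslash P,x)$. Since both are group homomorphisms into $\pi_0\on{Aut}(\mathcal{F}(e))$, we need only compare their values on an arbitrary based loop. I will represent such a loop by a convenient representative and then match the two values piecewise.

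First I would reduce to a convenient geometric representative. Given a loop $\gamma$ in $\Sigma_{\rgraph}\backslash P$ based at $x$, I would homotope it (within $\Sigma_{\rgraph}\backslash P$) to a smooth loop $\tilde{\gamma}$ that (a) avoids the non-singular vertex set $\rgraph_0\backslash P$ and (b) is disjoint from every edge contracted by $c$. Condition (a) is possible because $\rgraph_0\backslash P$ is a finite set. Condition (b) is the delicate point, but it can be achieved by the assumption that every edge $e'\in E$ contracted by $c$ has at least one endpoint $v\in \rgraph_0\backslash P$: any transverse intersection of $\tilde{\gamma}$ with $e'$ can be pushed off $e'$ by an isotopy sliding across a small neighborhood of $v$, which stays inside $\Sigma_{\rgraph}\backslash P$. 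After this, $\tilde{\gamma}$ lies in $\Sigma_{\rgraph}\backslash \rgraph_0$, so its transport $\mathcal{F}^{\rightarrow}(\tilde{\gamma})$ and frame winding number $W_{\xi}(\tilde{\gamma})$ are defined, and by construction of the extension in \Cref{prop:monodromy} we have
\[
  \xi^*\mathcal{L}\mathcal{F}(\gamma)\simeq \mathcal{F}^{\rightarrow}(\tilde{\gamma})[W_{\xi}(\tilde{\gamma})].
\]
The analogous formula holds for $(\xi')^*\mathcal{L}c_*(\mathcal{F})$ applied to $C\circ\tilde{\gamma}$, which is a loop in $\Sigma_{\rgraph'}\backslash \rgraph'_0$ avoiding the images of the contracted edges (here $x$ is fixed by $C$ up to choice of basepoint on $B_e$ in $\Sigma_{\rgraph'}$).

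Next, I would match the two sides separately for the transport and for the winding shift. For the transport, part (3) of \Cref{lem:transport} applies to $\tilde{\gamma}$ (and to each of the pieces $\delta_i\subset \Sigma_{v_i}$ into which $\tilde{\gamma}$ decomposes as in \Cref{constr:transport2}), giving
\[
  \mathcal{F}^{\rightarrow}(\tilde{\gamma})\simeq c_*(\mathcal{F})^{\rightarrow}(C\circ\tilde{\gamma}).
\]
For the frame-winding shift, the map $C$ was chosen to be a diffeomorphism away from the contracted edges, and $\tilde{\gamma}$ avoids these edges, so $C$ is a local diffeomorphism on a neighborhood of $\tilde{\gamma}$. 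Combined with $\xi=C^*(\xi')$, this gives $W_{\xi}(\tilde{\gamma})=W_{C^*\xi'}(\tilde{\gamma})=W_{\xi'}(C\circ\tilde{\gamma})$. Combining the two identifications yields the desired equivalence on $\gamma$, naturally in $\gamma$; because $\pi_1(C)$ is itself a group isomorphism (it is induced by a homotopy equivalence once $P$ is removed), this assembles into an equivalence of local systems.

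The main obstacle is the homotopy step (b), showing that any loop in $\Sigma_{\rgraph}\backslash P$ can be represented by one disjoint from the contracted edges. Everything else is a formal combination of Lemmas~\ref{lem:transport} and~\ref{prop:monodromy} with the definition of the framing pullback. The argument for step (b) is purely topological, but it is the place where the hypothesis "no contracted edge connects two singularities" is used in an essential way: a contracted edge connecting two vertices of $P$ would be a loop obstruction in $\Sigma_{\rgraph}\backslash P$ that cannot be removed, and then neither the transport comparison of \Cref{lem:transport}(3) nor the equality of winding numbers would hold across that edge.
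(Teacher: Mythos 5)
Your overall structure matches the paper's (very terse) proof: reduce to loops avoiding the contracted edges, use part (3) of \Cref{lem:transport} for the transport, and handle the degree shift separately. The reduction step and the transport comparison are fine. The gap is in the winding-number step. The shift appearing in $\xi^*\mathcal{L}\mathcal{F}(\gamma)$ is \emph{not} the winding number $W_\xi(\tilde\gamma)$ of the tangent of $\tilde\gamma$ relative to $\xi$: by construction, $\mathcal{L}_{\mathbb{P}T}\mathcal{F}$ is defined via the splitting of $\pi_1(\mathbb{P}T(\Sigma_\rgraph\backslash\rgraph_0))$ determined by the reference line field $\nu_\rgraph$ of \Cref{ex:linefield}, so the shift is the \emph{relative} winding of (the line field underlying) $\xi$ against $\nu_\rgraph$ along $\tilde\gamma$, i.e.\ $W_{\nu_\rgraph}(\tilde\gamma)-W_\xi(\tilde\gamma)$ in the paper's conventions (this is what makes the computation in the proof of \Cref{prop:monodromy} come out right). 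Your argument that $C$ is a diffeomorphism near $\tilde\gamma$ and $\xi=C^*(\xi')$ only controls the $W_\xi$ term. You still need to compare the two reference line fields, i.e.\ to know that $W_{\nu_\rgraph}(\tilde\gamma)=W_{\nu_{\rgraph'}}(C\circ\tilde\gamma)$ for loops disjoint from the contracted edges. This is exactly the second ingredient the paper invokes — the observation at the end of \Cref{ex:linefield} that contracting edges does not change the winding numbers, with respect to $\nu_\rgraph$ versus $\nu_{\rgraph'}$, of immersed loops avoiding those edges. Without this input the two shifts need not agree, so the step as written does not close.

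A smaller remark: the hypothesis that no contracted edge joins two singularities is indeed what lets you push loops off the contracted edges, but it is also needed earlier, simply for $c_*(\mathcal{F})$ to exist (\Cref{prop:constraction}) and for \Cref{lem:transport}(3) to apply; it is not used only in your step (b).
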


\begin{proof}
This follows from \Cref{lem:transport} and the relation between the line fields $\nu_{\rgraph}$ and $\nu_{\rgraph'}$ described in \Cref{ex:linefield}.
\end{proof}

Next, we prove that a perverse schober without singularities is completely determined by its generic stalk and monodromy.

\begin{proposition}\label{prop:schobersfrommonodromy}
Let $\mathcal{F}_1,\mathcal{F}_2$ be two $R$-linear $\rgraph$-parametrized perverse schobers without singularities. Let $\xi$ be a framing of $\Sigma_{\rgraph}$. Assume that $\mathcal{F}_1(e)=\mathcal{F}_2(e)$. The following two are equivalent:
\begin{enumerate}[1)]
\item There exists an equivalence of $\rgraph$-parametrized perverse schobers $\mathcal{F}_1\simeq \mathcal{F}_2$.
\item The monodromy local systems $\xi^*\mathcal{L}\mathcal{F}_1$ and $\xi^*\mathcal{L}\mathcal{F}_2$ are equivalent.
\end{enumerate}
\end{proposition}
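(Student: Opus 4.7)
The direction (1) $\Rightarrow$ (2) is immediate, since both $\mathcal{F}\mapsto \mathcal{L}\mathcal{F}$ and the pullback along $\xi$ are natural in equivalences of $\rgraph$-parametrized perverse schobers.

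For the converse (2) $\Rightarrow$ (1), the strategy is to show that a non-singular $\rgraph$-parametrized perverse schober with prescribed generic stalk $\mathcal{N}$ is reconstructible from the transport local system $\mathcal{F}^\rightarrow$ of \Cref{lem:locsystem}, and then to transfer an equivalence of framed monodromy local systems back to an equivalence of these transport local systems. The key observation is that by the non-singularity assumption and \Cref{prop:localmodel}, for each vertex $v$ of valency $m_v$ the restriction of $\mathcal{F}_i$ to $\on{Exit}(\rgraph)_{v/}$ is equivalent to the standard local model $\mathcal{F}_{m_v}(0\to \mathcal{N})$, whose underlying $\infty$-category is $\on{Fun}(\Delta^{m_v-1},\mathcal{N})$ with structure functors $\varrho_j,\varsigma_j$. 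In particular, the local data at every vertex depends only on $\mathcal{N}$ and not on any further schober-specific data.

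Concretely, I would first choose a spanning tree $T\subset \rgraph$ containing the distinguished edge $e$. For each $i\in\{1,2\}$, transport along paths in $T$ starting from $e$ provides a coherent choice of equivalences $\mathcal{F}_i(e')\simeq \mathcal{N}$ for every edge $e'$, and simultaneously trivializes $\mathcal{F}_i$ on the subgraph $T$ via the standard models $\mathcal{F}_{m_v}(0\to \mathcal{N})$. The residual data needed to extend from $T$ to all of $\rgraph$ consists of the transport equivalences along the non-tree edges, and these equivalences are precisely the values of $\mathcal{F}_i^\rightarrow$ on a generating set of $\pi_1(\Sigma_\rgraph\backslash \rgraph_0,x)$ dual to the non-tree edges. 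Conversely, given any such collection of equivalences in $\pi_0\on{Aut}(\mathcal{N})$, the data can be glued back together using the adjunctions \eqref{eq:rhosigmaadj}, \eqref{eq:rhosigmaadj2} at each vertex to produce a non-singular $\rgraph$-parametrized perverse schober. Thus equivalence classes of such schobers with generic stalk $\mathcal{N}$ correspond bijectively to equivalence classes of local systems $\pi_1(\Sigma_\rgraph\backslash \rgraph_0,x)\to \pi_0\on{Aut}(\mathcal{N})$.

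Finally, I would verify that the passage from $\mathcal{F}_i^\rightarrow$ to $\xi^*\mathcal{L}\mathcal{F}_i$ is the same for $i=1,2$: by construction, $\xi^*\mathcal{L}\mathcal{F}_i$ is obtained from $\mathcal{F}_i^\rightarrow$ by twisting each loop $\gamma$ with the shift $[W_\xi(\gamma)-W_{\nu_\rgraph}(\gamma)]$, which depends only on $\xi$ and on the graph $\rgraph$, not on $i$. Hence an equivalence $\xi^*\mathcal{L}\mathcal{F}_1\simeq \xi^*\mathcal{L}\mathcal{F}_2$ yields an equivalence $\mathcal{F}_1^\rightarrow\simeq \mathcal{F}_2^\rightarrow$, from which the reconstruction above delivers the desired equivalence $\mathcal{F}_1\simeq \mathcal{F}_2$. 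The main obstacle is making the reconstruction step precise at the level of functors $\on{Exit}(\rgraph)\to \on{LinCat}_R$ rather than just at the level of $\pi_0$: one must check that the choices made on non-tree edges genuinely assemble, together with the standard local models, into a coherent functor out of $\on{Exit}(\rgraph)$, and that different choices lead to equivalent schobers. For the stated proposition—an existence statement for an equivalence—this coherence reduces to a finite combinatorial check on each vertex patch and on each non-tree edge.
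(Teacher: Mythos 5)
Your proof follows essentially the same strategy as the paper's: reduce each schober to a normal form in which every vertex carries the standard local model of \Cref{prop:localmodel} and the only residual data is a collection of autoequivalences of $\N$ attached to edges, observe that the framed monodromy determines the differences of these autoequivalences (after correcting by the winding numbers of $\xi$ against $\nu_\rgraph$), and then normalize. The one substantive difference is how the two arguments dispose of what you correctly identify as the main obstacle, namely assembling the local trivializations into a coherent functor out of $\on{Exit}(\rgraph)$. You propose to trivialize along a spanning tree of $\rgraph$ and leave the coherence of that trivialization as "a finite combinatorial check"; the paper instead \emph{contracts} a spanning tree, using \Cref{prop:constraction} to replace $\mathcal{F}_1,\mathcal{F}_2$ by equivalent schobers on a one-vertex ribbon graph and part (3) of \Cref{lem:transport} together with \Cref{lem:invcontraction} to carry the transport and framed monodromy local systems along. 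On a one-vertex graph there is nothing left to glue: $\on{Exit}(\rgraph')$ has a single vertex whose outgoing morphisms hit the edges, so by \Cref{prop:localmodel} the schober is literally the $m$-spider model with structure functors $S_{i,j}\circ\varrho_i[-i]$, and the comparison of $S_{i,1}$ with $S_{i,2}$ is a direct computation with the adjunctions \eqref{eq:rhosigmaadj}, \eqref{eq:rhosigmaadj2}. If you want to close the gap in your version without redoing that coherence check by hand, invoking the contraction invariance is the cleanest route; otherwise your tree-based trivialization does require an explicit verification that the shifts $[\pm 1]$ appearing in the local transports (cf.\ \Cref{ex:transport}) and the orderings of halfedges at the two endpoints of each tree edge match up, which is precisely the bookkeeping the paper's $[i-i'+W(\gamma)]$ terms perform. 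One further small caution: your claimed bijection between equivalence classes of non-singular schobers and equivalence classes of local systems is the content of the proposition itself, so stating it is not an argument; the actual work is the normalization step showing that schobers whose edge autoequivalences have the same differences are equivalent, which both you and the paper carry out by absorbing a common autoequivalence into the identification of one edge's stalk.
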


\begin{proof}
It is clear that 1) implies 2). We next show that 2) implies 1). Denote by $\N=\mathcal{F}_1(e)$ the generic stalk. Choose a contraction $c\colon\rgraph\to\rgraph'$, such that $e$ is not contracted and $\rgraph'$ has only a single vertex. Part (3) of \Cref{lem:transport} implies that the local systems of transport of $c_*(\mathcal{F}_1)$ and $c_*(\mathcal{F}_2)$ are equivalent.

We choose a total order of the $m$ halfedges incident to the vertex $v$ of $\rgraph'$, compatible with their given (counterclockwise) cyclic order. We denote the $i$-th halfedge by $a_i$, and its corresponding edge by $e_i$. We can replace $c_*(\mathcal{F}_1)$ and $c_*(\mathcal{F}_2)$ by equivalent $\rgraph'$-parametrized perverse schobers, denoted $\mathcal{G}_1$ and $\mathcal{G}_2$, such that for $j=1,2$ and $1\leq i\leq m$
\[ \mathcal{G}_j(v)=\mathcal{V}^m_{0_{\N}}=\on{Fun}(\Delta^{m-2},\N)\,,\]
\[ \mathcal{G}_j(e_i)=\N\,,\]
and
\[ \mathcal{G}_j(v\xrightarrow{a_i}e_i)= S_{i,j}\circ \varrho_i[-i]\,,\]
where $S_{i,j}$ is some autoequivalence of $\N$. The monodromy relative $\xi$ along a path $\gamma$ starting at $e_i$ and going around a given loop of $\rgraph'$, composed of halfedges $a_i,a_{i'}$ of $\mathcal{G}_j$, is given by $S_{i',j}\circ S_{i,j}^{-1}[i-i'+W(\gamma)]$ if $i'>i$ and by $S_{i',j}\circ S_{i,j}^{-1}[i-i'-m+2+W(\gamma)]$ if $i'<i$. We thus find $S_{i',1}S_{i,1}^{-1}\simeq S_{i',2}S_{i,2}^{-1}$. We can additionally assume that $S_{i',2}=S_{i',1}=\on{id}_{\mathcal{G}(e)}$, by replacing $\mathcal{G}_1,\mathcal{G}_2$ by equivalent perverse schobers once more. We thus conclude $S_{i,1}\simeq S_{i,2}$ as well. Performing this argument for all loops of $\rgraph'$ shows that $\mathcal{G}_1\simeq \mathcal{G}_2$, concluding the proof. 
\end{proof}

The next example illustrates that a perverse schober consists of more data than its singularity data and monodromy data.

\begin{example}\label{ex:monodromy}
Consider the following spanning graph $\rgraph$ of the $1$-gon:
\[
\begin{tikzcd}
\times \arrow[rd, no head] &                                & \times \arrow[ld, no head] \\
                           & \circ                          &                            \\
                           & {} \arrow[u, no head] &                           
\end{tikzcd}
\]
Consider a $\rgraph$-parametrized perverse schober $\mathcal{F}$ with two singularities at the vertices labeled $\times$ and no singularity at the vertices labeled $\circ$. Let $F_1\colon \V_1\to \N$ and $F_2\colon \V_2\to \N$ be the underlying spherical adjunctions at the two vertices. These are determined only up to pre- and postcomposition with equivalences of $\infty$-categories. The perverse schober $\mathcal{F}$ thus corresponds, up to equivalence, to a diagram
\[
\begin{tikzcd}
\mathcal{V}_1 \arrow[d, "S_1\circ F_1"'] &                                                                                                            & \mathcal{V}_2 \arrow[d, "S_2\circ F_2"] \\
\mathcal{N}                              & {\on{Fun}(\Delta^1,\mathcal{N})} \arrow[r, "{\on{ev}_1}"'] \arrow[l, "{\on{ev}_0[1]}"] \arrow[d, "\on{cof}"] & \mathcal{N}                             \\
                                         & \mathcal{N}                                                                                                &                                        
\end{tikzcd}
\]
with $S_1,S_2$ autoequivalences of $\mathcal{N}$. Note that $\on{Fun}(\Delta^1,\mathcal{N})\simeq \mathcal{V}^3_{0_{\mathcal{N}}}$, with $0_{\mathcal{N}}\colon 0 \to \mathcal{N}$ the spherical zero functor. Composing with the autoequivalence $\on{Fun}(\Delta^1,S_2^{-1})$ of $\on{Fun}(\Delta^1,\mathcal{N})$ replaces $S_1$ by $S_2^{-1}S_1$ and $S_2$ by $\on{id}_{\mathcal{N}}$. Up to equivalence of perverse schobers, we may thus assume that $S_2=\on{id}_{\mathcal{N}}$. In total, the equivalence class of the perverse schober $\mathcal{F}$ is determined by the functors $S_1,F_1$ and $F_2$, up to natural equivalence of these functors and precomposition of $F_1,F_2$ with autoequivalences of $\V_1,\V_2$.  

We equip the $1$-gon with any framing (it is unique up to homotopy) and denote its restriction to the complement of the two singular vertices by $\xi$. The monodromy of $\mathcal{F}$ with respect to $\xi$ along a clockwise loop wrapping once around the left singularity is given by $S_1T_{\mathcal{N}}S_1^{-1}[1]$, with $T_{\mathcal{N}}$ the cotwist of $F_1\dashv \on{radj}(F_1)$, which is the suspension of the cotwist functor of the adjunction $S_1 F_1 \dashv \on{radj}(F_1)S_1^{-1}$. The monodromy of the clockwise loop wrapping once around the right singularity is similarly given by the cotwist functor of the adjunction $F_2\dashv \on{radj}(F_2)$. 

Since the choice of $S_1$ affects the monodromy of $\mathcal{F}$ only by conjugation with $S_1$, we find that given the functors $F_1,F_2$ and the monodromy local system, one cannot in general recover the autoequivalence $S_1$, and thus also not recover the equivalence class of the perverse schober. One can make this failure explicit by choosing for instance $F_1=F_2=\phi^*$ the functor from \Cref{subsec:relGinzburg} for $n=3$, where $T_\N=[3]$, and $S_1=\varphi^*$ is the functor from the proof of \Cref{thm:ginzburgcy}.
\end{example}

\section{Calabi--Yau structures and perverse schobers}\label{sec:CYschobers}

We begin with the local picture in \Cref{subsec:locschoberCY} by constructing relative Calabi--Yau structures on the sections of perverse schobers parametrized by the $n$-spider. In \Cref{subsec:schoberCYglobal}, we discuss the construction of relative Calabi--Yau structures on the global sections of perverse schobers.
Finally, in \Cref{subsec:sphericalCY}, we return to the case of perverse schobers parametrized by the $1$-spider, i.e.~spherical adjunctions, and give a novel and easy to verify criterion for the existence of a weak right Calabi--Yau structure.

\subsection{Calabi--Yau structures locally}\label{subsec:locschoberCY}

Consider a perverse schober on the $m$-spider $\mathcal{F}$. \Cref{prop:localmodel} states that $\mathcal{F}$ arises from a spherical adjunction $F\colon \D\leftrightarrow \C\noloc G$, in the sense that $\mathcal{F}$ is given, up to equivalence, by the collection of adjunctions
\[
\left( \varrho_i\colon \mathcal{V}^m_F \longleftrightarrow \mathcal{C} \noloc \varsigma_i\right)_{1\leq i\leq m}\,.
\] 
We can combine these adjunctions into the single adjunction
\[
R=(\varrho_1,\dots,\varrho_m)\colon \mathcal{V}^m_F\longleftrightarrow \mathcal{C}^{\times m}\noloc S=(\varsigma_1,\dots,\varsigma_m)\,.
\]
As it turns out, this adjunction is spherical and its twist functor describes the change in the perverse schober on the $m$-gon when rotating it by an angle of $\frac{2\pi}{m}$, see \Cref{prop:rotationaltwist} below. Note that this is not a genuine symmetry, a full rotation does not return the perverse schober to itself, but instead changes it by the monodromy around the singularity, i.e.~the suspension of the cotwist functor of $F\dashv G$.

\begin{proposition}[{$\!\!$\cite[Propositions~3.8,\ 3.11]{Chr22}}]\label{prop:rotationaltwist}
The adjunction $R\dashv S$ is spherical. Let $T_{\mathcal{V}^m_F}$ be its twist functor and $T_{\mathcal{C}}$ the cotwist functor of the adjunction $F\dashv \on{radj}(F)$. There exist equivalences of functors 
\[
\varrho_i\circ T_{\mathcal{V}^m_F}\simeq \begin{cases} \varrho_{i+1} & \text{for }1\leq i\leq m-1\,, \\
T_{\mathcal{C}}[m-1]\circ\varrho_1& \text{for }i=m\,.
\end{cases}
\]
\end{proposition}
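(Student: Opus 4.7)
The plan is to work directly with the combined adjunction $R \dashv S$ using the explicit combinatorial description of $\V^m_F$ and the schober axioms from \Cref{def:schberngon}(2). Since products in a stable $\infty$-category are finite coproducts, the right adjoint of $R = (\varrho_1,\dots,\varrho_m)$ is $S(Y_1,\dots,Y_m) \simeq \bigoplus_k \varsigma_k(Y_k)$, so $SR \simeq \bigoplus_k \varsigma_k\varrho_k$ and the unit $\eta \colon \id_{\V^m_F}\to SR$ has $k$th component the unit $\eta_k \colon \id \to \varsigma_k\varrho_k$ of $\varrho_k \dashv \varsigma_k$.

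Next, I would postcompose $\eta$ with each $\varrho_i$ and apply the local axioms: $\varrho_i\varsigma_i \simeq \id_\C$ via the counit (full faithfulness of $\varsigma_i$), $\varrho_i\varsigma_k \simeq 0$ for $k \neq i,i+1$ (indices mod $m$), and $\varrho_i\varsigma_{i+1}$ is an equivalence. This yields a decomposition
\[
\varrho_i\circ SR \;\simeq\; \varrho_i \;\oplus\; \varrho_i\varsigma_{i+1}\varrho_{i+1}\,,
\]
in which $\varrho_i\eta$ is the identity on the first summand (by the triangle identity) and zero on the second. Taking cofibers gives $\varrho_i T_{\V^m_F} \simeq \varrho_i\varsigma_{i+1}\varrho_{i+1}$. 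It then remains to identify the equivalence $\varrho_i\varsigma_{i+1}\colon \C \to \C$. For $1\leq i \leq m-1$, one verifies in the explicit model that $\varrho_i\varsigma_{i+1} \simeq \id_\C$: this is a combinatorial fact about the recursively defined double-left-adjoints $\varrho_j$ and their right adjoints $\varsigma_j$ using the chain \eqref{eq:rhosigmaadj}. For $i=m$, the additional adjunction \eqref{eq:rhosigmaadj2} identifies $\ladj(\varrho_m) = \varsigma_1 T_\C^{-1}[1-m]$, so $\varrho_m\varsigma_1 \simeq T_\C[m-1]$, yielding the claimed $\varrho_m T_{\V^m_F} \simeq T_\C[m-1]\circ \varrho_1$.

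With the formulas for $\varrho_i T_{\V^m_F}$ in hand, sphericity of $R\dashv S$ follows: the family of functors $\{\varrho_1,\dots,\varrho_m\}$ is jointly conservative on $\V^m_F$ (one verifies this directly from the presentation), and the above computation shows $T_{\V^m_F}$ acts as a cyclic shift on this family, twisted by the equivalence $T_\C[m-1]$ after a full rotation. Hence $T_{\V^m_F}$ is an equivalence. The cotwist $T_{\C^{\times m}}$ is handled symmetrically: dually, $RS$ has a matrix-like decomposition governed by the same axioms, and the counit $RS\to\id$ has fiber an analogous cyclic rotation, hence an equivalence.

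The main technical obstacle is the careful bookkeeping required to produce the canonical equivalences $\varrho_i\varsigma_{i+1} \simeq \id_\C$ from the adjoint chain \eqref{eq:rhosigmaadj}, since these involve unwinding the recursive double-adjoint definitions of $\varrho_i$ and $\varsigma_i$ and matching them against the explicit cofiber formulas for the $\varrho_i$. Once this identification is in place, the rest of the argument is essentially forced by the adjunctions and the spherical condition on $F\dashv G$.
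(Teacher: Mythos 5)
The paper does not prove this proposition; it is imported from \cite[Prop.~3.8, 3.11]{Chr22}, so your argument can only be judged on its own merits. The first half of your proposal — the derivation of the displayed formulas — is essentially correct. From $S(Y_\bullet)\simeq\bigoplus_k\varsigma_k(Y_k)$ and the orthogonality relations $\varrho_i\varsigma_j\simeq 0$ ($j\neq i,i+1$), $\varrho_i\varsigma_i\simeq\id$, you get $\varrho_iSR\simeq\varrho_i\oplus\varrho_i\varsigma_{i+1}\varrho_{i+1}$, and since the first component of $\varrho_i\unit$ is an equivalence (triangle identity), the cofiber is $\varrho_i\varsigma_{i+1}\varrho_{i+1}$. (Your claim that the second component $\varrho_i\eta_{i+1}$ is zero is neither obvious nor needed; the cofiber identification only requires the first component to be invertible.) The step you flag as the "main technical obstacle" is in fact immediate: by \eqref{eq:rhosigmaadj} we have $\varsigma_{i+1}\dashv\varrho_i$, and $\varsigma_{i+1}$ is fully faithful, so the unit $\id_\C\to\varrho_i\varsigma_{i+1}$ of that adjunction is an equivalence; similarly \eqref{eq:rhosigmaadj2} gives $\varrho_m\varsigma_1\simeq T_\C[m-1]$. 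Your treatment of the cotwist is also fine: the counit of $R\dashv S$ kills the off-diagonal summands, so $\fib(RS\to\id)$ is the cyclic permutation matrix of equivalences $\varrho_i\varsigma_{i+1}$, hence an equivalence.

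The genuine gap is in your proof that the \emph{twist} $T_{\V^m_F}$ is an equivalence. First, the family $\{\varrho_1,\dots,\varrho_m\}$ is \emph{not} jointly conservative in general: if all $\varrho_i(a\to b_1\to\dots\to b_{m-1})$ vanish then all $b_j\simeq 0$ and $F(a)\simeq 0$, which forces $a\simeq 0$ only when $F$ is conservative (the paper's \Cref{lem:loccpt=Indfin} records exactly this equivalence). Second, even granting conservativity, the inference "$\varrho_iT\simeq\varrho_{i+1}$ for all $i$, hence $T$ is an equivalence" is not valid: conservativity lets you upgrade a morphism whose image is invertible, not a functor whose composites with the $\varrho_i$ are permuted. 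The telltale sign is that your argument never uses invertibility of the twist $T_\V=\cof(\id_\V\to GF)$ of $F\dashv G$, only of its cotwist (via \eqref{eq:rhosigmaadj2}); but that input is indispensable — for $m=2$ the functor $T_{\V^2_F}$ is the relative suspension $(d,c)\mapsto(\cof(d\to G(c)),\cof(F(d)\to c))$ of \cite{DKSS21} (compare the formula in the proof of \Cref{lem:SerrefunV3F}), whose invertibility is exactly where sphericity of $F$ enters. To close the gap one should either identify $T_{\V^m_F}$ with this explicit functor and invert it using sphericity of $F\dashv G$ (the route of \cite{Chr22} and of \Cref{lem:SerrefunV3F}), or reduce to $m=1$ by the pullback decompositions of $\V^m_F$ used in the proof of \Cref{prop:loccy}.
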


The goal of this section is to prove the following proposition, roughly stating that the functors $R,S$ can be equipped with Calabi--Yau structures, provided that $F,G$ are already equipped with relative Calabi--Yau structures. In this case, the Serre or inverse dualizing functors $\on{id}_{\mathcal{V}^m_F}^*,\on{id}_{\mathcal{V}^m_F}^!$ thus describe the effect of partial rotation of the $m$-gon. 

To simplify the signs, we change $R$ and $S$ in the following proposition by composition with componentwise suspension functors.

\begin{proposition}\label{prop:loccy}
Let $F\colon \D\leftrightarrow \C\noloc G$ be a spherical adjunction of dualizable $R$-linear $\infty$-categories and $m\geq 1$. Consider the spherical adjunction
\begin{equation}\label{eq:varsig} 
\mathrm{R}_F^m\coloneqq (\varrho_1[-1],\varrho_2[-2],\dots,\varrho_m[-m])\colon \mathcal{V}^m_{F}\longleftrightarrow \C^{\times m}\noloc \mathrm{S}_F^m\coloneqq (\varsigma_1[1],\varsigma_2[2],\dots,\varsigma_m[m])\,.
\end{equation}
\begin{enumerate}[(1)]
\item Suppose that $\mathcal{C},\mathcal{D}$ are smooth and that there exists a class $\eta_{G}\colon R[n]\to \on{HH}(\D,\C)^{S^1}$ defining a left $n$-Calabi--Yau structure on $G$, which restricts to a left $(n-1)$--Calabi--Yau structure $\eta_{\D}\colon R[n-1]\to \HH(\C)^{S^1}$ on $\C$. Then the functor $\mathrm{S}_F^m$ admits a canonical left $n$-Calabi--Yau structure, which restricts on $\C^{\times m}$ to
\[ \eta_{\C}^{\times m}\colon R[n-1]\longrightarrow \HH(\C^{\times m})^{S^1}\simeq \bigoplus_m \HH(\C)^{S^1}\,.\]
\item Suppose that $\C,\D$ are proper and that there exists a class $\eta_F\colon R[n]\to \HH(\D,\C)^*_{S^1}$ defining a right $n$-Calabi--Yau structure on $F$, which restricts to a right $(n-1)$--Calabi--Yau structure $\eta_{\C}\colon R[n-1]\to \HH(\C)^*_{S^1}$ on $\C$. Then the functor $\mathrm{R}_F^m$ admits a canonical right $n$-Calabi--Yau structure, which restricts on $\C^{\times m}$ to 
\[ \eta_{\C}^{\times m}\colon R[n-1]\longrightarrow \HH(\C^{\times m})^*_{S^1}\simeq \bigoplus_m \HH(\C)^*_{S^1}\,.\]
\end{enumerate}
\end{proposition}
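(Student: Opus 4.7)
The plan is to prove both parts by induction on $m \geq 1$, using the gluing theorems \Cref{thm:leftCYglue,thm:rightCYglue}. The base case $m=1$ is essentially immediate: $\V_F^1 \simeq \V$, $\mathrm{R}_F^1 \simeq F[-1]$, and $\mathrm{S}_F^1 \simeq G[1]$, so the hypothesized Calabi--Yau class on $F$ (resp.~$G$) transfers to $\mathrm{R}_F^1$ (resp.~$\mathrm{S}_F^1$) with no essential change, the internal shifts being absorbed in the grading of the Hochschild class and the corresponding restriction to $\eta_\C$ on $\C$ being unchanged.

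For the inductive step $m \to m+1$, I would exploit the canonical pullback description
\[
\V_F^{m+1} \simeq \V_F^m \times_\C \on{Fun}(\Delta^1,\C),
\]
obtained by separating off the last of the $m+1$ outgoing edges. Here the left functor is a shifted component of $\mathrm{R}_F^m$, while the right functor is an analogously shifted evaluation. By \Cref{prop:cptobjcolimit}, this pullback in $\on{LinCat}_R^{\on{cpt-gen}}$ is simultaneously a pushout along the corresponding left adjoints, so both \Cref{thm:leftCYglue} (for part (1)) and \Cref{thm:rightCYglue} (for part (2)) apply. The new boundary $\C^{\times(m+1)}$ of $\V_F^{m+1}$ arises by gluing $\C^{\times m}$ (the boundary of $\V_F^m$) and $\C \times \C$ (the boundary of $\on{Fun}(\Delta^1,\C)$) along a single common copy of $\C$.

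For part (1), the inductive hypothesis supplies a left $n$-CY structure on $\mathrm{S}_F^m$ restricting to $\eta_\C^{\times m}$ on $\C^{\times m}$. For the complementary piece I would construct a canonical left $n$-CY structure on the right adjoint $\on{Fun}(\Delta^1,\C) \to \C \times \C$ directly from $\eta_\C$, using the identification $\on{Fun}(\Delta^1,\C) \simeq \V^2_{\on{id}_\C}$ and the cofiber sequence $a \to b \to \on{cof}(a \to b)$; this corresponds to a non-singular perverse schober on the $2$-spider, whose CY structure is essentially determined by $\eta_\C$ on the generic stalk. Applying \Cref{thm:leftCYglue} then produces the desired CY structure on $\mathrm{S}_F^{m+1}$, restricting to $\eta_\C^{\times(m+1)}$; part (2) is strictly dual via \Cref{thm:rightCYglue}.

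The main obstacle will be verifying the sign compatibility of the two CY classes on the shared $\HH(\C)^{S^1}$ (respectively its dual): the gluing theorem requires these restrictions to differ by a sign, which is precisely the role of the shifts $[-i]$ and $[i]$ appearing in $\mathrm{R}_F^m$ and $\mathrm{S}_F^m$. A $[1]$-shift of the underlying functor flips the sign of its induced class on Hochschild homology, and the shifts on adjacent edges differ by exactly $1$. Propagating this bookkeeping through the induction — in particular, checking that $\varsigma_{m+1}[m+1]$ differs from $\varsigma_m[m]$ by precisely the shift producing the sign needed at the gluing interface — is the principal technical step of the argument.
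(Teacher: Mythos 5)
Your overall strategy -- decompose $\V^m_F$ as an iterated pullback of the form $\V^{m'}_{F'}\times_{\C}\on{Fun}(\Delta^1,\C)$ and apply \Cref{thm:leftCYglue,thm:rightCYglue} -- is essentially the paper's strategy (the paper first splits off the singularity via $\V^m_F\simeq \D\times_\C\V^{m+1}_{0_\C}$ and then peels off copies of $\V^3_{0_\C}\simeq\on{Fun}(\Delta^1,\C)$, whereas you keep $F$ throughout and peel off one edge at a time; both organizations are fine). The genuine gap is in the atomic piece. You assert that the relative Calabi--Yau structure on the boundary restriction functor of $\on{Fun}(\Delta^1,\C)$ is ``essentially determined by $\eta_\C$'' and can be read off from the cofiber sequence $a\to b\to\on{cof}(a\to b)$. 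This is precisely the hard step, and it is not formal: one must produce a \emph{specific} relative (dual) Hochschild/cyclic class restricting correctly to the boundary copies of $\C$ and then verify its non-degeneracy, i.e.\ that the induced map $\on{id}^!\to\fib(\counit)$ (resp.\ the dual) is an equivalence \emph{for that class}, not merely that some abstract cofiber sequence of the right shape exists. In the paper this occupies \Cref{lem:1cy}: after reducing to $\C=\on{RMod}_R$ (the general case then follows by tensoring, \Cref{lem:cytp}), one identifies $\on{id}^!_{\on{Fun}(\Delta^1,\C)}$ with the cotwist of $S^L\dashv S$ via an explicit Serre functor computation (\Cref{utconstr}, \Cref{lem:serreofDelta1C}) and then runs a delicate argument on $\on{Map}(SS^R,SS^R)$ to show that the unit map appearing in the twist cofiber sequence agrees, up to a correctable autoequivalence, with the map induced by the chosen Hochschild class. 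None of this is supplied or replaced by an alternative argument in your proposal. (Note also that you cannot appeal to \Cref{thm:FukayaCY} here, since that theorem is itself proved using this proposition.)

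A secondary but concrete error: you treat $\on{Fun}(\Delta^1,\C)$ as a $2$-spider with boundary $\C\times\C$ (via $\V^2_{\on{id}_\C}$). With that bookkeeping the glued boundary is $(m-1)+1=m$ copies of $\C$, not $m+1$, so the induction does not produce $\mathrm{R}^{m+1}_F\colon\V^{m+1}_F\to\C^{\times(m+1)}$. The correct local piece is the non-singular $3$-spider $\V^3_{0_\C}\simeq\on{Fun}(\Delta^1,\C)$ with \emph{three} boundary functors $(\varrho_1[-1],\varrho_2[-2],\varrho_3[-3])$ (essentially $\on{ev}_0,\on{ev}_1,\on{cof}$ with shifts); one of these is consumed by the gluing over $\C$ (it must be $\on{ev}_0$ up to shift, which is not a boundary functor of the $2$-spider structure) and the remaining two contribute the new boundary, giving $(m-1)+2=m+1$. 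Your remarks on the base case $m=1$ and on the sign bookkeeping coming from the shifts $[\pm i]$ are correct in spirit and match what the paper does.
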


Using the gluing properties of Calabi--Yau structures, we will reduce the proof of \Cref{prop:loccy} to the case $m=3$ and $\D=0$. This case is then first solved for $\C=\on{RMod}_R$, which admits canonical left and right $0$-Calabi--Yau structures, and then for arbitrary $\C$ by tensoring, see \Cref{lem:cytp}.

\begin{construction}\label{constr:U}
Let $\C=\on{RMod}_R$, considered as an $R$-linear smooth and proper $\infty$-category. We construct two inverse equivalences $U,U^{-1}\colon \on{Fun}(\Delta^1,\C)\rightarrow  \on{Fun}(\Delta^1,\C)$ via Kan extension. 

Consider the $\infty$-category $\mathcal{X}$ of diagrams in $\C$ of the following form, where all squares are biCartesian, i.e.~both pushout and pullback squares.
\[
\begin{tikzcd}
a' \arrow[r] \arrow[d] \arrow[rd, "\square", phantom] & b' \arrow[d] \arrow[r] \arrow[rd, "\square", phantom] & 0 \arrow[d]                                          &             \\
0 \arrow[r]                                           & a \arrow[d] \arrow[r] \arrow[rd, "\square", phantom]  & b \arrow[d] \arrow[r] \arrow[rd, "\square", phantom] & 0 \arrow[d] \\
                                                      & 0 \arrow[r]                                           & a'' \arrow[r]                                        & b''        
\end{tikzcd}
\]
One can formally characterize the $\infty$-category $\mathcal{X}$ as consisting of diagrams which are repeated Kan extensions of their restriction to $a\rightarrow b$. The restriction functor to $a\rightarrow b$ thus defines by \cite[4.3.2.15]{HTT} a trivial fibration $\phi\colon \mathcal{X}\rightarrow \on{Fun}(\Delta^1,\C)$. It hence admits an inverse, unique up to contractible space of choices. We denote by $\psi'\colon \mathcal{X}\rightarrow \on{Fun}(\Delta^1,\C)$ the restriction functor to $a'\rightarrow b'$ and by $\psi''\colon \mathcal{X}\rightarrow \on{Fun}(\Delta^1,\C)$ the restriction functor to $a''\rightarrow b''$. 

The functor $U$ is defined as the suspension of the composite functor 
\[ \on{Fun}(\Delta^1,\C)\xrightarrow{\phi^{-1}}\mathcal{X}\xrightarrow{\psi'}\on{Fun}(\Delta^1,\C)\,,
\]
and the functor $U^{-1}$ is defined as the looping of the composite functor 
\[\on{Fun}(\Delta^1,\C)\xrightarrow{\phi^{-1}}\mathcal{X}\xrightarrow{\psi''}\on{Fun}(\Delta^1,\C)\,.
\]
\end{construction}

\begin{remark}\label{rem:Serreexplicit}
An alternative description of the functor $U$ from \Cref{constr:U} is as follows. Let again $\C=\on{RMod}_R$. We have functors $\on{ev}_1,\on{cof}\colon \on{Fun}(\Delta^1,\C)\to \C$, together with a natural transformation $\on{ev}_1\to \on{cof}$, as well as the further fully faithful functors $\iota_0,\iota_1\colon \C\rightarrow \on{Fun}(\Delta^1,\C)$, given informally by $\iota_0\colon c\mapsto (c\rightarrow 0)$ and $\iota_2\colon c\mapsto (0\rightarrow c)$, together with a natural transformation $\iota_1[-1]\to \iota_2$. Composing these functors, we have an induced natural transformation $\iota_1\on{ev}_1[-1]\to \iota_2\on{cof}$, whose cofiber describes an endofunctor of $\on{Fun}(\Delta^1,\C)$, given by the assignment
\[\big(a\longrightarrow b\big) \quad \mapsto \quad\big(b\longrightarrow \on{cof}(a\rightarrow b)\big)\,.
\]
This functor is equivalent to the endofunctor $U$, as follows from the universal property of the $R$-linear $\infty$-category $\on{Fun}(\Delta^1,\C)$, as the lax limit of the functor $\on{id}_{\C}\colon \C\to \C$, considered as a $\Delta^1$-indexed diagram in the $(\infty,2)$-category of $R$-linear $\infty$-categories.
\end{remark}

\begin{lemma}\label{lem:serreofDelta1C}
Let $\C=\on{RMod}_R$. The $R$-linear $\infty$-category $\on{Fun}(\Delta^1,\C)$ is smooth and proper and the functor $U$ from \Cref{constr:U} is a Serre functor of $\on{Fun}(\Delta^1,\C)$.
\end{lemma}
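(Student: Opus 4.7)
The plan is to split the proof into verifying smoothness and properness first, then the Serre duality property of $U$.

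For smoothness and properness, I would identify $\on{Fun}(\Delta^1,\C)$ with $\on{RMod}_A$, where $A$ is the endomorphism algebra spectrum of the two compact projective generators $P_1 = (R\xrightarrow{\id}R)$ and $P_2 = (0\to R)$; equivalently $A = R\otimes_{\mathbb{S}}\mathbb{S}[A_2]$ is the path algebra of the $A_2$-quiver over $R$. Since $A$ is free of rank $3$ as a right $R$-module, it is perfect as a right $A$-module, yielding properness, and admits a standard two-term projective bimodule resolution over $A\otimes_R A^{\on{op}}$ (the path algebra resolution for a Dynkin quiver), yielding smoothness. Alternatively, one can invoke the semi-orthogonal decomposition $\on{Fun}(\Delta^1,\C)\simeq \langle\C,\C\rangle$ induced by the full embeddings $c\mapsto (c\to 0)$ and $c\mapsto (0\to c)$, and observe that smoothness and properness of $\C$ are inherited under such finite gluings.

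For the Serre duality $\on{Mor}(X,Y)\simeq \on{Mor}(Y,U(X))^\ast$, I would use the universal description $U(a\to b)\simeq (b\to \on{cof}(a\to b))$ from \Cref{rem:Serreexplicit} and reduce to verifying the duality with $X\in\{P_1,P_2\}$ and $Y = (c\to d)$ arbitrary compact, extending to all compact $X$ by bilinearity and closure under colimits in the thick subcategory generated by $P_1,P_2$. The pullback description of the mapping space shows $\on{Mor}(P_1,Y)\simeq c$ and $\on{Mor}(Y,U(P_1))^\ast = \on{Mor}(Y,(R\to 0))^\ast \simeq (c^\ast)^\ast \simeq c$, using the Serre duality of $\C=\on{RMod}_R$ whose Serre functor is the identity (since $\on{Mor}_\C(X,Y)\simeq Y\otimes_R X^\vee$ and dualizing gives back $Y\otimes_R X^\vee$ for compact $Y$). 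The analogous computation handles $X=P_2$, with $U(P_2)\simeq P_1$ and both sides evaluating to $d$.

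The main obstacle is to package these pointwise matchings into a natural equivalence of bifunctors $\on{Fun}(\Delta^1,\C)^{\on{c},\on{op}}\times \on{Fun}(\Delta^1,\C)^{\on{c}}\to \on{RMod}_R^{\on{c}}$, so that \Cref{lem:Serreunique} identifies $U$ with the Serre functor of $\on{Fun}(\Delta^1,\C)$. Functoriality in $Y$ is automatic from the pullback computation, while functoriality in $X$ follows from the description of $U$ in \Cref{rem:Serreexplicit} as the cofiber of a canonical natural transformation $\iota_1\on{ev}_1[-1]\to \iota_2\on{cof}$; this naturality data canonically intertwines with the cofiber sequence $a\to b\to \on{cof}(a\to b)$ that governs both sides, delivering the required natural equivalence.
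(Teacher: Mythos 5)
Your route is genuinely different from the paper's. The paper never reduces to generators: it writes an arbitrary object $a\to b$ as the cofiber of $(a[-1]\to 0)\to(0\to b)$, deduces the natural formula $\on{Mor}_{\on{Fun}(\Delta^1,\C)}(a\shortrightarrow b,a'\shortrightarrow b')\simeq \on{fib}\big(\on{Mor}_\C(b,b')\to\on{Mor}_\C(a,\on{cof}(a'\to b'))\big)$, and then runs a chain of equivalences using that $\on{id}_\C$ is a Serre functor of $\on{RMod}_R$; because every link in the chain is a natural equivalence of bifunctors, bifunctoriality is carried along for free. Your pointwise computations are correct ($\on{Mor}(P_1,Y)\simeq c\simeq\on{Mor}(Y,(R\to 0))^*$, $U(P_2)\simeq P_1$, both sides giving $d$), and your treatment of smoothness and properness via the path algebra of $A_2$ or the semi-orthogonal decomposition is fine — indeed more explicit than the paper's "straightforward to check".

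However, the step you yourself flag as the main obstacle is where the argument has a genuine gap. A Serre functor in the sense of \Cref{def:serre} requires a natural equivalence of bifunctors on $\on{Fun}(\Delta^1,\C)^{\on{c},\on{op}}\times\on{Fun}(\Delta^1,\C)^{\on{c}}$, and there is no general principle by which two exact bifunctors that agree objectwise on a set of generators are naturally equivalent: exactness lets you extend a \emph{natural transformation} from generators to the thick subcategory they generate, but it does not manufacture the transformation. To make your reduction work you would first have to exhibit a global comparison map — e.g.\ a trace pairing $\on{Mor}(X,Y)\otimes\on{Mor}(Y,U(X))\to R$, or an identification of the $A$-$A$-bimodules underlying $U$ and $\on{id}^*$ including the left action encoded by $\on{Mor}(P_2,P_1)\simeq R$ — and only then check non-degeneracy on $P_1,P_2$. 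The appeal to "the naturality data canonically intertwines with the cofiber sequence" does not supply this map, and invoking \Cref{lem:Serreunique} is of no help until the bifunctorial equivalence (hence the fact that $U$ \emph{is} a Serre functor) is already in hand. The paper's cofiber-presentation argument is precisely a way of sidestepping this issue, and I would recommend either adopting it or supplying the missing comparison map explicitly.
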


\begin{proof}
It is straightforward to check that $\on{Fun}(\Delta^1,\C)$ is smooth and proper. 
Any object $a\shortrightarrow b$ in $\on{Fun}(\Delta^1,\C)$ is given as the cofiber of a map $(a[-1]\shortrightarrow 0)\rightarrow (0\shortrightarrow b)$. By the exactness of $\on{Mor}_{\on{Fun}(\Delta^1,\C)}(\mhyphen,\mhyphen)$ in the second entry, it follows that 
\begin{equation}\label{eq:Morviacof} \on{Mor}_{\on{Fun}(\Delta^1,\C)}(a\shortrightarrow b,a'\shortrightarrow b')\simeq \on{fib}\big(\on{Mor}_{\C}(b,b')\rightarrow {\on{Mor}_{\C}(a,\on{cof}(a'\shortrightarrow b'))}\big)\,.\end{equation}
Let $a\rightarrow b,a'\rightarrow b'\in \on{Fun}(\Delta^1,\C)$ be compact objects. Using that $\on{id}_{\C}$ is an $R$-linear Serre functor, we find equivalences in $\on{RMod}_R$
\begin{align*}
\on{Mor}_{\on{Fun}(\Delta^1,\C)}(a\shortrightarrow b,a'\shortrightarrow b')& \simeq \on{fib}\big(\on{Mor}_{\C}(b,b')\rightarrow {\on{Mor}_{\C}(a,\on{cof}(a'\shortrightarrow b'))}\big)\\
&\simeq \on{cof}\big({\on{Mor}_{\C}(a,\on{cof}(a'\shortrightarrow b'))^\ast \rightarrow \on{Mor}_{\C}(b,b')^\ast}\big)^\ast\\
&\simeq \on{cof}\big({\on{Mor}_{\C}(\on{cof}(a'\shortrightarrow b'),a) \rightarrow \on{Mor}_{\C}(b',b)}\big)^\ast\\
&\simeq \on{Mor}_{\on{Fun}(\Delta^1,\C)}(b'\shortrightarrow \on{cof}(a'\shortrightarrow b'),\on{cof}(a\rightarrow b)\shortrightarrow a[1])^\ast\\
&\simeq \on{Mor}_{\on{Fun}(\Delta^1,\C)}(a'\rightarrow b', U(a\rightarrow b))^\ast\,,
\end{align*}
which are functorial in $a\shortrightarrow b \in \on{Fun}(\Delta^1,\C)^{\on{c},\on{op}}$, $a'\shortrightarrow b' \in \on{Fun}(\Delta^1,\C)^{\on{c}}$. 
The second to last equivalence arises in the same way as the equivalence \eqref{eq:Morviacof}. The last equivalence uses that the sequences $a'\rightarrow b'\rightarrow \on{cof}(a'\rightarrow b')$ and $b\rightarrow \on{cof}(a\rightarrow b)\rightarrow a[1]$ are fiber and cofiber sequences and \Cref{rem:Serreexplicit}. This shows that $U$ is a Serre functor, which concludes the proof.
\end{proof}

\begin{lemma}\label{lem:1cy}
Let $\C=\on{RMod}_R$. We let $\eta_{\C}\colon R\rightarrow \HH(\C)^{S^1}$ and $\eta_{\C}^*\colon R\to \HH(\D)^*_{S^1}$ denote the lifts\footnote{Canonical lifts  exist by \Cref{rem:traceofunit}.} of the apparent classes in $\HH(\C)\simeq R$ and $\HH(\C)^*\simeq R$. Note that $\eta_\C$ and $\eta_\C^*$ describe left and right $0$-Calabi--Yau structures on $\C$, respectively. 
\begin{enumerate}[(1)]
\item The $R$-linear functor 
\begin{equation*}\label{eq:funsigma123}S\coloneqq (\varsigma_1[1],\varsigma_2[2],\varsigma_3[3])\colon \C^{\times 3}\longrightarrow \on{Fun}(\Delta^1,\C)\end{equation*}
admits a unique left $1$-Calabi--Yau structure which restricts to the left $0$-Calabi--Yau structure $\eta_{\C}^{\times 3}$ on $\C^{\times 3}$.
\item The $R$-linear functor 
\begin{equation*}\label{eq:funrho123}(\varrho_1[-1],\varrho_2[-2],\varrho_3[-3])\colon \on{Fun}(\Delta^1,\C)\longrightarrow \C^{\times 3}\end{equation*}
admits a unique right $1$-Calabi--Yau structure which restricts to the right $0$-Calabi--Yau structure $(\eta_{\C}^*)^{\times 3}$ on $\C^{\times 3}$.
\end{enumerate}
\end{lemma}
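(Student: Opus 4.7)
The approach is to reduce both claims to an explicit Hochschild-homology computation, exploiting that for $\C = \on{RMod}_R$ we have $\HH(\C) \simeq R$ with trivial $S^1$-action (\Cref{rem:traceofunit}) and that $\on{Fun}(\Delta^1, \C)$ admits a semi-orthogonal decomposition $\on{Fun}(\Delta^1, \C) = \langle \iota_0 \C,\, \iota_2 \C\rangle$ via the fully faithful inclusions $\iota_0(c) = (c \to 0)$ and $\iota_2(c) = (0 \to c)$ of \Cref{rem:Serreexplicit}, together with the explicit description of $U$ (and its inverse $T$) from \Cref{utconstr}.

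For part~(1), the first step is to identify the right adjoints explicitly as $\varsigma_1(c) = (c \xrightarrow{\on{id}} c)$, $\varsigma_2(c) = \iota_2(c)$ and $\varsigma_3(c) = \iota_0(c)[-1]$, and to combine Waldhausen additivity of $\HH$ along the semi-orthogonal decomposition with additivity along the cofiber sequence of functors $\iota_0 \to \varsigma_1 \to \iota_2$ and the rule $\HH([1]) = -\on{id}$. A direct calculation then yields the map $\HH(S)\colon R^{\oplus 3} \to R^{\oplus 2}$ as
\[
\begin{pmatrix} -1 & 0 & 1 \\ -1 & 1 & 0 \end{pmatrix},
\]
with rows corresponding to $\HH(\iota_0), \HH(\iota_2)$ and columns to $\HH(\varsigma_i[i])$. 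This is a split surjection with kernel spanned by the diagonal class $\eta_\C^{\times 3}$, so $\HH(S)^{S^1} \circ \eta_\C^{\times 3}$ admits a canonical null-homotopy, producing a canonical lift
\[
\eta_S\colon R[1] \longrightarrow \HH(\on{Fun}(\Delta^1, \C), \C^{\times 3})^{S^1}
\]
satisfying the sign compatibility of \Cref{thm:leftCYglue}. Uniqueness of the lift follows from the connectivity of $\HH(\on{Fun}(\Delta^1, \C))^{S^1}$, which forces the space of lifts to be contractible.

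The main obstacle is verifying that $\eta_S$ is non-degenerate. By part~(2) of \Cref{prop:smoothHHfun} combined with \Cref{lem:serreinv,lem:serreofDelta1C}, which identify the inverse dualizing bimodule $\on{id}_{\on{Fun}(\Delta^1,\C)}^!$ with $U^{-1} = T$, non-degeneracy amounts to exhibiting a cofiber sequence of $R$-linear endofunctors of $\on{Fun}(\Delta^1, \C)$ of the form
\[
T[-1] \longrightarrow \on{id}_{\on{Fun}(\Delta^1, \C)} \longrightarrow S \circ R',
\]
where $R' \coloneqq (\varrho_1[-1], \varrho_2[-2], \varrho_3[-3])$ denotes the functor of part~(2). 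This can be checked pointwise on an object $(b_1 \to b_2)$, where the value of $S \circ R'(b_1 \to b_2)$ assembles, from the three shifted outputs $\varsigma_i\varrho_i$ and their connecting maps, into a diagram whose cofiber with $T[-1]$ recovers $(b_1 \to b_2)$, via the explicit ``rotation of a triangle'' description $T(b_1 \to b_2) = (b_2 \to \on{cof}(b_1 \to b_2))$ from \Cref{utconstr}.

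Part~(2) follows by an entirely dual argument, with $\HH^*_{S^1}$ in place of $\HH^{S^1}$ and the Serre functor $U$ in place of $T$, yielding the unique right $1$-Calabi--Yau structure on $R'$ lifting $(\eta_\C^*)^{\times 3}$.
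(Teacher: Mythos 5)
Your computation of the relative Hochschild class is fine and matches the paper's: you use the semi-orthogonal decomposition $\langle\iota_0\C,\iota_2\C\rangle$ where the paper uses the split localization sequence $\C\xrightarrow{\varsigma_2[2]}\on{Fun}(\Delta^1,\C)\xrightarrow{\varrho_3[-3]}\C$, but the resulting matrix is the same up to reindexing, the kernel is the diagonal, and uniqueness of the lift is as routine as you say. The identifications $\varsigma_1(c)=(c\to c)$, $\varsigma_2(c)=\iota_2(c)$, $\varsigma_3(c)=\iota_0(c)[-1]$ are also correct.

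The gap is in the non-degeneracy step, and it is exactly the point the paper spends most of its proof on. First, a pointed issue: the sequence required by the definition of a left $1$-Calabi--Yau structure on $S$ (diagram \eqref{eq:leftCYfibcof} with $n=1$) is $\on{id}_{\on{Fun}(\Delta^1,\C)}^!\xrightarrow{\tilde{\unit}}S_!(\on{id}_{\C^{\times 3}})\xrightarrow{\counit}\on{id}$, i.e.\ it involves $SS^R$ with $S^R=\on{radj}(S)$, not $S\circ R'=SS^L$; your proposed sequence $T[-1]\to\on{id}\to S\circ R'$ is (after rotation) the assertion that the \emph{twist} of $S^L\dashv S$ equals $T$, which is a true but different statement. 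Second, and more seriously: non-degeneracy does not "amount to exhibiting a cofiber sequence of the form" $\on{id}^!\to SS^R\to\on{id}$. It requires that the \emph{specific} maps induced by the chosen Hochschild class $\sigma=(\sigma_\C,\sigma_\C,\sigma_\C)$ --- namely $\tilde{\unit}$ from \Cref{constr:unitcounit} followed by $S_!(\sigma_\C^{\times 3})$ and the counit --- form a fiber/cofiber sequence. Knowing that $T$ is the cotwist of $S\dashv S^R$ gives you \emph{some} cofiber sequence $T\to SS^R\to\on{id}$, and one can show its first map agrees with $\tilde{\unit}$ only up to composition with an autoequivalence of $SS^R$. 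Closing this gap is nontrivial: the paper computes $\on{Map}(SS^R,SS^R)\simeq R^{\oplus 3}$ using the splitting $S^LS\simeq\on{id}\oplus P$ and the vanishing of maps from $P,P^2$ to $\on{id}$, concludes that every such autoequivalence is absorbed by rescaling the Hochschild class of $\C^{\times 3}$, and then uses conservativity of $S_!$ to show the rescaling factor is a unit in $\pi_0(R)$, so that the original class $\sigma$ is already non-degenerate. A "pointwise check on an object $(b_1\to b_2)$" cannot substitute for this, both because the natural transformations have to be specified before anything can be checked pointwise, and because the issue is precisely which natural transformations occur, not whether a sequence of the right shape exists.
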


\begin{proof}
We only prove part (1), part (2) can be proven similarly. The split localization sequence 
\[ \C\xrightarrow{\varsigma_2[2]}\on{Fun}(\Delta^1,\C)\xrightarrow{\varrho_3[-3]}\C\] 
provides us with a splitting $\HH(\on{Fun}(\Delta^1,\C))^{S^1}\simeq \HH(\C)^{S^1}\oplus \HH(\C)^{S^1}$. Using the adjunctions $\varrho_3[-3]\dashv \varsigma_3[3]$ and $\varsigma_2[2]\dashv \varrho_1[-2]$, we find that $\on{HH}(\varrho_3[-3])^{S^1}$ and $\on{HH}(\varrho_1[-2])^{S^1}$ are the two projection maps $(\HH(\C)^{S^1})^{\oplus 2}\to \HH(\C)^{S^1}$ and $\HH(\varsigma_3[3])^{S^1},\HH(\varsigma_2[2])^{S^1}$ are the two inclusion maps $\HH(\C)^{S^1}\to (\HH(\C)^{S^1})^{\oplus 2}$ of the direct sum decomposition. 

With the above, we have 
\[ \on{HH}\left(\on{Fun}(\Delta^1,\C),\C^{\times 3}\right)^{S^1}\simeq \on{cof}((\HH(\C)^{S^1})^{\oplus 3}\xrightarrow{\mathscr{M}} (\HH(\C)^{S^1})^{\oplus 2})\simeq \HH(\C)^{S^1}[1]\,,\]
where $\mathscr{M}=\begin{pmatrix}-1 & 1 & 0  \\ -1&  0 & 1  \end{pmatrix}$. The formula for $\mathscr{M}$ follows from 
\[ \varrho_1[-2]\circ \varsigma_1[1]\simeq [-1]\] 
and 
\[ \varrho_3[-3]\circ \varsigma_1[1]\simeq [-1]\,.\]

We let $\eta$ be the the class $R[1]\xrightarrow{\eta_\C} \HH(\C)^{S^1}[1] \simeq \HH\left(\on{Fun}(\Delta^1,\C),\C^{\times 3}\right)^{S^1}$.
The observation that $\mathscr{M}(1,1,1)=0$ implies that $\eta$ indeed restricts to $\eta_{\C}^{\times 3}$ on $\C^{\times 3}$. Furthermore, $\eta$ is clearly unique with this property.

Let $\sigma_\C=1\in R \simeq \HH(\C)$ be the Hochschild class underlying $\eta_\C$ and $\sigma=(\sigma_\C,\sigma_\C,\sigma_\C)$ the Hochschild class underlying $\eta$.  To complete the proof, it remains to show that $\sigma$ is non-degenerate.  The class $\sigma$ determines the diagram
\[
\begin{tikzcd}
{\on{id}_{\on{Fun}(\Delta^1,\C)}^!} \arrow[r, "\tilde{\unit}"] & {S_!(\on{id}_{\C^{\times 3}}^!)} \arrow[d, "S_!(\sigma_{\C}^{\times 3})"] &                                           \\
                                                                & {S_!(\on{id}_{\C^{\times 3}})} \arrow[r, "\on{cu}"]                       & {\on{id}_{\on{Fun}(\Delta^1,\C)}}
\end{tikzcd}
\]
together with a null-homotopy of the composite functor $\on{id}_{\on{Fun}(\Delta^1,\C)}^!\to \on{id}_{\on{Fun}(\Delta^1,\C)}$. Composing the first two morphisms in the above diagram, we obtain the sequence 
\begin{equation}\label{eq:D!SDD}
\on{id}_{\on{Fun}(\Delta^1,\C)}^!\longrightarrow S_!(\on{id}_{\C^{\times 3}})\xlongrightarrow{\on{cu}} \on{id}_{\on{Fun}(\Delta^1,\C)}\,.
\end{equation}
The morphism $\tilde{\unit}$ is by \Cref{lem:unitisunit} equivalent to the composite of the unit of the adjunction $S^L\dashv S$ with $\on{id}^!_{\on{Fun}(\Delta^1,\C)}$. By \Cref{lem:Serreunique,lem:serreinv,{lem:serreofDelta1C}}, there exists an equivalence between $\on{id}_{\on{Fun}(\Delta^1,\C)}^!$ and the functor $U^{-1}$ from \Cref{constr:U}. 

One can check that the functor $T$ is furthermore equivalent to the cotwist functor of the adjunction $S\dashv S^R$. There thus exists a fiber and cofiber sequence 
\begin{equation}\label{eq:TSSid}
T\xlongrightarrow{\unit'} SS^R\xlongrightarrow{\counit} \on{id}_{\on{Fun}(\Delta^1,\C)}\,,
\end{equation}
where $\unit'$ is up to equivalence a unit of the adjunction $S^L\dashv S$ composed with $T$, see \cite{DKSS21} or \cite[Remark 2.10]{Chr20}. The respective counit maps in \eqref{eq:D!SDD} and \eqref{eq:TSSid} describe the same counit map. The respective unit maps are also equivalent, up to composition with an autoequivalence of $SS^R$. To show that this autoequivalence may be chosen trivially, we inspect the $R$-module of all possible autoequivalences of $SS^R= S_!(\on{id}_{\C^{\times 3}})$. We have 
\[ \on{Map}(S_!(\on{id}_{\C^{\times 3}}),S_!(\on{id}_{\C^{\times 3}}))\simeq  \on{Map}(\on{ladj}(S_!)S_!(\on{id}_{\mathcal{D}^{\times 3}}),\on{id}_{\C^{\times 3}})\,,\]
where $\on{ladj}(S_!)=S^L(\mhyphen)S^{RR}$, with $S^{RR}$ the right adjoint of $S^R$. Since $S^R\simeq S^L\circ T$, we have $S^{RR}\simeq T^{-1}\circ S$ and thus 
\[\on{ladj}(S_!)S_!(\on{id}_{\mathcal{D}^{\times 3}})\simeq S^LSS^RS^{RR}\simeq S^LSS^LS\,.\]

The functor $S^LS$ splits as 
\[ S^LS\simeq \on{id}_{\C^{\times 3}}\oplus P\,,\]
where $P$ is the twist functor of the adjunction $S^L\dashv S$. It acts via rotation, meaning that $P$ sends the $i$-th component of the direct sum to the $(i-1)$-th component of the direct sum for all $i\in \mathbb{Z}/3\mathbb{Z}$ and then acts with some suspensions on the three components. There are no non-zero natural transformations between $\on{id}_{\C^{\times 3}}$ and $P$ or $P^2$. 
It follows that the morphism
\[ \on{RMod}_R^{\oplus 3}\simeq \on{Map}_{\on{Lin}_R(\C^{\times 3},\C^{\times 3})}(\on{id}_{\C^{\times 3}},\on{id}_{\C^{\times 3}})\xrightarrow{S_!} \on{Map}_{\on{Lin}_R(\on{Fun}(\Delta^1,\C),\on{Fun}(\Delta^1,\C))}(SS^R,SS^R)\]
is an equivalence. We thus find that every possible autoequivalence of $SS^R$ can be accommodated for by choosing a different Hochschild class of $\C^{\times 3}$. We may hence conclude from the existence of the cofiber sequence \eqref{eq:TSSid} that there exists some choice of Hochschild class $\sigma'\in R\simeq \on{HH}\left(\on{Fun}(\Delta^1,\C),\C^{\times 3}\right)^{S^1}[-1]$ which restricts to the class $(\sigma')^{\times 3}\in R^{\oplus 3}\simeq \HH(\C^{\times 3})$, which turns \eqref{eq:D!SDD} into a cofiber sequence. Since $\on{ladj}(S_!)S_!$ contains the identity as a direct summand, we find that $S_!$ is a conservative functor. The fact that $\sigma'$ induces an equivalence $S_!(\on{id}^!_{\C^{\times 3}})\simeq S_!(\on{id}_{\C^{\times 3}})$ thus implies that $(\sigma')^{\times 3}\colon \on{id}^!_{\C^{\times 3}}\to \on{id}_{\C^{\times 3}}$ is already an equivalence. It follows that $\sigma'\in \pi_0(R)$ must be an invertible element. Upon composing $\sigma'$ with its inverse in the ring $\pi_0(R)$, the cofiber sequence \eqref{eq:D!SDD} clearly remains a cofiber sequence. This shows that $\eta$ already describes a left $1$-Calabi--Yau structure, concluding the proof.
\end{proof}

\begin{proof}[Proof of \Cref{prop:loccy}.]
We only prove part (2), the proof of part (1) is analogous. We suppose that $F\colon \D\to \C$ is a spherical functor with a right $n$-Calabi--Yau structure restricting to a right $(n-1)$-Calabi--Yau structure on $\C$. 

We first assume that $F=0_{\C}\colon 0\rightarrow \C$. As shown in \cite[Lemma 4.26]{Chr22}, there is a pullback diagram in $\on{LinCat}_R$
\[
\begin{tikzcd}
\mathcal{V}^{m}_{0_{\C}} \arrow[r] \arrow[d] \arrow[rd, "\lrcorner", phantom] & \mathcal{V}^{m-1}_{0_{\C}} \arrow[d, "{\varrho_1[-1]}"] \\
\mathcal{V}^{3}_{0_{\C}} \arrow[r, "{\varrho_3[-3]}"]                         & \C                                               
\end{tikzcd}
\]
such that the functor $\varrho_i[-i]\colon \mathcal{V}^m_{0_\C}\to \C$ factors for $i=1,2$ as 
\[ \mathcal{V}^m_{0_\C}\to \mathcal{V}^{3}_{0_{\C}} \xrightarrow{\varrho_i[-i]}  \C\]
and for $i=3,\dots,m$ as
\[ \mathcal{V}^m_{0_\C}\to \mathcal{V}^{m-1}_{0_{\C}} \xrightarrow{\varrho_{i-2}[2-i]}  \D\,.\]
To show that $\mathrm{R}_\C^m$ admits the desired right $n$-Calabi--Yau structure, it thus suffices by \Cref{thm:rightCYglue} to show this in the case $m=3$. This case follows from combining \Cref{lem:1cy} and \Cref{lem:cytp}.

Let now the above functor $F\colon \D \to \C$ again be arbitrary. Again, by \cite[Lemma 4.26]{Chr22}, there exists a pullback diagram in $\on{LinCat}_R$.
\[
\begin{tikzcd}[column sep=huge]
\mathcal{V}^m_{F} \arrow[d] \arrow[r] \arrow[rd, "\lrcorner", phantom] & \D \arrow[d, "{F}"] \\
\mathcal{V}^{m+1}_{0_\C} \arrow[r, "{\varrho_{m+1}[-m-1]}"]                                                      & \C                                                    
\end{tikzcd}
\]
By \Cref{thm:rightCYglue}, the above constructed right $n$-Calabi--Yau structure on $\mathrm{R}_{0_\D}^{m+1}$ glues with the $n$-Calabi--Yau structure on $F$ to the desired right $n$-Calabi--Yau structure on $\mathrm{R}_F^m$. 
\end{proof}

\subsection{Calabi--Yau structures on global sections}\label{subsec:schoberCYglobal}

We begin by recording a direct consequence of the gluing property of Calabi--Yau structures.

\begin{theorem}\label{thm:schobercy}
Let $\mathcal{F}\colon \on{Exit}(\rgraph)\rightarrow \on{LinCat}_R^{\on{dual}}$ be a $\rgraph$-parametrized perverse schober.
\begin{enumerate}[(i)]
\item Suppose that $\mathcal{F}$ takes values in smooth $R$-linear $\infty$-categories. Suppose further that:
\begin{itemize}
\item For each vertex $v$ of $\rgraph$ with incident halfedges $a_1,\dots,a_m$ and corresponding edges $e_1,\dots,e_m$, the functor 
\[\prod_{i=1}^m \on{ladj}(\mathcal{F}(v\xrightarrow{a_i}e_i))\colon \prod_{i=1}^m\mathcal{F}(e_i)\longrightarrow \mathcal{F}(v)\] 
carries a left $n$-Calabi--Yau structure 
\[ \eta_v\colon R[n]\to \HH(\mathcal{F}(v),\prod_{i=1}^m \mathcal{F}(e_i))^{S^1}\,.\]
We denote the restriction of $\eta_v$ along the functor $\on{ladj}(\mathcal{F}(v\xrightarrow{a_i}e_i))$ by
\[ \eta_{e,a_i}\colon R[n-1]\to \HH(\mathcal{F}(e_{i}))^{S^1}\,.\] 
\item For each internal edge $e$ of $\rgraph$ with incident halfedges $a\neq b$, we have $\eta_{e,a}\simeq -\eta_{e,b}$.
\end{itemize} 
Then the $R$-linear $\infty$-category of global sections $\glsec(\rgraph,\mathcal{F})$ is smooth and the functor from equation \eqref{eq:cupfunctor}
\[\partial \mathcal{F}\colon \prod_{e\in \rgraph_1^\partial}\mathcal{F}(e)\longrightarrow \glsec(\rgraph,\mathcal{F})\] 
admits a left $n$-Calabi--Yau structure.
\item  Suppose that $\mathcal{F}$ takes values in proper $R$-linear $\infty$-categories. Suppose further that: 
\begin{itemize}
\item For each vertex $v$ of $\rgraph$ with incident halfedges $a_1,\dots,a_m$ and corresponding edges $e_1,\dots,e_m$, the functor
\[\prod_{i=1}^m \mathcal{F}(v\xrightarrow{a_i}e_i)\colon \mathcal{F}(v)\longrightarrow \prod_{i=1}^m\mathcal{F}(e_i)\] 
carries a right $n$-Calabi--Yau structure
\[ \eta_v\colon R[n]\to \HH(\mathcal{F}(v),\prod_{i=1}^m \mathcal{F}(e_i))^*_{S^1}\,.\]
We denote the restriction of $\eta_v$ along the functor $\mathcal{F}(v\xrightarrow{a_i}e_i)$ by
\[ \eta_{e,a_i}\colon R[n-1]\to \HH(\mathcal{F}(e_{i}))^*_{S^1}\,.\] 
\item For each internal edge $e$ of $\rgraph$ with incident halfedges $a\neq b$, we have $\eta_{e,a}\simeq -\eta_{e,b}$. 
\end{itemize} 
Then the evaluation functor at the external edges 
\[\prod_{e\in \rgraph_1^\partial}\on{ev}_e\colon \cptglsec(\rgraph,\mathcal{F}) \longrightarrow \prod_{e\in \rgraph_1^\partial}\mathcal{F}(e)\]
admits a right $n$-Calabi--Yau structure.
\end{enumerate}
\end{theorem}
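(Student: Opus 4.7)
The plan is to prove both parts by induction on the number of internal edges of $\rgraph$, reducing to the local Calabi--Yau hypotheses at each vertex via repeated application of the gluing theorems \Cref{thm:leftCYglue} and \Cref{thm:rightCYglue}. I describe the approach for part (i); part (ii) is entirely analogous, with $\cptglsec(\rgraph, \mathcal{F})$ expressed as a pullback in $\on{LinCat}_R^{\on{cpt-gen}}$ and \Cref{thm:rightCYglue} invoked in place of \Cref{thm:leftCYglue}. The base case is a single-vertex ribbon graph with only external edges, where $\glsec(\rgraph, \mathcal{F}) \simeq \mathcal{F}(v)$ and the functor $\partial \mathcal{F}$ coincides with the local left adjoint $\prod_i \mathcal{F}(e_i) \to \mathcal{F}(v)$, which carries the assumed Calabi--Yau structure $\eta_v$ by hypothesis.

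For the inductive step, choose an internal edge $e$. If $e$ is a bridge, cutting along it decomposes $\rgraph$ into two connected ribbon graphs $\rgraph_1, \rgraph_2$ (in each of which $e$ becomes a new external edge), and one verifies, using that limits in $\on{LinCat}_R$ correspond to colimits of left adjoints in $\on{LinCat}_R^{\on{cpt-gen}}$ (cf.~\Cref{rem:cptgencolimits}), that
\[ \glsec(\rgraph, \mathcal{F}) \simeq \glsec(\rgraph_1, \mathcal{F}_1) \cup_{\mathcal{F}(e)} \glsec(\rgraph_2, \mathcal{F}_2)\,. \]
The inductive hypothesis provides left $n$-Calabi--Yau structures on $\partial \mathcal{F}_1$ and $\partial \mathcal{F}_2$ whose restrictions to the $\mathcal{F}(e)$-factor agree with $\eta_{e, a_1}$ and $\eta_{e, a_2}$, compatible by assumption. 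Applying \Cref{thm:leftCYglue} with $\B_2 = \mathcal{F}(e)$, $\C_i = \glsec(\rgraph_i, \mathcal{F}_i)$, $\D = \glsec(\rgraph, \mathcal{F})$, and $\B_1, \B_3$ the products of the remaining external stalks of $\rgraph_1$ and $\rgraph_2$, yields the desired Calabi--Yau structure on $\partial \mathcal{F}$.

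If $e$ is not a bridge, cutting produces a still-connected ribbon graph $\rgraph'$ with two additional external edges $e', e''$, and $\glsec(\rgraph, \mathcal{F})$ is obtained by a self-gluing that identifies these back together, which the stated gluing theorems do not treat directly. My reduction is to subdivide $e$ by inserting a dummy non-singular $2$-valent vertex along $e$, producing a refined spanning graph $\rgraph''$ with the same global sections (via \Cref{prop:constraction}) in which each half of $e$ is now a bridge. The required local Calabi--Yau structure at the dummy vertex is supplied by \Cref{prop:loccy} applied to the zero spherical adjunction $0 \to \mathcal{F}(e)$ with $m = 2$, and two successive applications of the bridge case complete the induction. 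The main obstacle is verifying the sign bookkeeping at the two new internal edges produced by the subdivision: one must check that the canonical Calabi--Yau class at the dummy vertex (explicitly described via \Cref{lem:1cy} and \Cref{lem:cytp}) combines with the inductive classes from $\rgraph'$ into a compatible pair at each new bridge, which reduces, via the hypothesized relation $\eta_{e, a_1} \simeq -\eta_{e, a_2}$, to matching the orientations assigned to the halfedges meeting at the dummy vertex.
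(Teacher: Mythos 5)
Your overall strategy — reducing to iterated applications of \Cref{thm:leftCYglue} and \Cref{thm:rightCYglue} over a (co)limit decomposition of the global sections — is exactly the paper's (very terse) argument, and your base case and bridge case are fine. The problems are in the non-bridge case. First, the claim that after subdividing $e$ by a $2$-valent dummy vertex $w$ ``each half of $e$ is now a bridge'' is false: if $e$ is not a bridge of $\rgraph$, then neither $e_1$ nor $e_2$ is a bridge of the subdivided graph $\rgraph''$ (removing $e_1$ leaves $w$ attached through $e_2$, and the rest stays connected). Consequently ``two successive applications of the bridge case'' is not available; after attaching $w$ along $e_1$ alone you would again face a self-gluing along $e_2$. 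The correct move is a \emph{single} application of the gluing theorem to the $2$-edge cut $\{e_1,e_2\}$ separating $\{w\}$ from the rest: take $\C_1=\glsec(\rgraph',\mathcal{F}')$ with $\rgraph'$ the graph cut open along $e$, $\C_2=\mathcal{F}''(w)\simeq\mathcal{V}^2_{0_{\mathcal{F}(e)}}$, $\B_2=\mathcal{F}(e_1)\times\mathcal{F}(e_2)$, $\B_1$ the remaining external stalks and $\B_3=0$. The theorems as stated permit $\B_2$ to be a product of edge categories, so this is legitimate, and it also handles loops (which are automatically non-bridges). With this repair your induction closes.

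Second, the sign issue you defer is genuine and is not resolved merely by ``matching orientations.'' The canonical structure supplied by \Cref{prop:loccy} on the dummy vertex restricts to $(\eta,\eta)$ on its two legs, while compatibility in \Cref{thm:rightCYglue} against the inductively obtained class, which restricts to $(\eta_{e,a},\eta_{e,b})=(\eta_{e,a},-\eta_{e,a})$, forces the dummy vertex's class to restrict to $(-\eta_{e,a},\eta_{e,a})$ — a pair with \emph{opposite} signs. So you must modify the $2$-spider model at $w$, e.g.\ compose one of its two leg functors with an odd shift (using $\HH([1])=-\on{id}$ to flip the sign of the restricted class on that leg) and check via \Cref{prop:constraction} that the resulting schober on $\rgraph''$ still has global sections $\glsec(\rgraph,\mathcal{F})$. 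This is precisely the kind of shift bookkeeping carried out in the proofs of \Cref{thm:FukayaCY} and \Cref{thm:ginzburgcy}; without it the compatibility hypothesis of the gluing theorem is simply not met and the argument does not go through.
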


\begin{proof}
Part (ii) follows from repeated application of \Cref{thm:rightCYglue}, by using that we can compute the limit over $\on{Exit}(\rgraph)$ via repeated pullbacks. Part i) follows by a similar argument from \Cref{thm:leftCYglue}, by using that $\glsec(\rgraph,\mathcal{F})$ is equivalent to the colimit in $\on{LinCat}_R$ of the left adjoint diagram of $\mathcal{F}$.
\end{proof}

Given a parametrized perverse schober without singularities in the sense of \Cref{def:schobersingularity}, also called a locally constant perverse schober, whose generic stalk admits a Calabi--Yau structure, the next \Cref{thm:FukayaCY} states that its global sections admit a Calabi--Yau structure if its monodromy with respect to any framing of the surface, see \Cref{subsec:monodromy}, acts trivially on the corresponding negative or dual cyclic homology class. Note that a direct variation on this result for arbitrary perverse schobers does not hold, as follows from a variant of \Cref{ex:monodromy}. \Cref{thm:FukayaCY} generalizes the construction of relative Calabi--Yau structures on the topological Fukaya categories of framed surfaces of \cite{BD19}.

\begin{theorem}\label{thm:FukayaCY}
Let $\mathcal{F}\colon \on{Exit}(\rgraph)\to\on{LinCat}_R^{\on{dual}}$ be a $\rgraph$-parametrized perverse schober without singularities. Fix an edge $e$ of $\rgraph$ and let $\mathcal{N}=\mathcal{F}(e)$ be the generic stalk of $\mathcal{F}$.
\begin{enumerate}[(i)]
\item Suppose that $\mathcal{N}$ is smooth and admits a left $(n-1)$-Calabi--Yau structure \[\eta\colon R[n-1]\to \HH(\mathcal{N})^{S^1}\,.\]
Suppose that the local system, see \Cref{rem:monodromyHH},
\[
\HH(\mathcal{L}\mathcal{F})^{S^1}\colon \pi_1(\Sigma_{\rgraph})\longrightarrow \pi_0\on{Aut}_{\on{RMod}_R}(\HH(\N)^{S^1})
\]
preserves $\eta$. Then the functor 
\begin{equation}\label{eq:evfuntobdry1}
\partial \mathcal{F}\colon \prod_{e'\in \rgraph_1^\partial}\mathcal{F}(e') \longrightarrow \glsec(\rgraph,\mathcal{F})
\end{equation}
admits a left $n$-Calabi--Yau structure. 
\item Suppose that $\mathcal{N}$ is proper and admits a right $(n-1)$-Calabi--Yau structure 
\[\eta\colon R[n-1]\to \HH(\mathcal{N})^*_{S^1}\,.\]
Suppose that the local system
\[
\HH(\mathcal{L}\mathcal{F})^*_{S^1}\colon \pi_1(\Sigma_{\rgraph})\longrightarrow \pi_0\on{Aut}_{\on{RMod}_R}(\HH(\N)^*_{S^1})
\]
preserves $\eta$. Then the functor
\[
\prod_{e'\in \rgraph_1^\partial}\on{ev}_{e'}\colon \cptglsec(\rgraph,\mathcal{F}) \longrightarrow \prod_{e'\in \rgraph_1^\partial}\mathcal{F}(e')
\]
admits a right $n$-Calabi--Yau structure. 
\end{enumerate}
\end{theorem}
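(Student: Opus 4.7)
My plan is to apply \Cref{thm:schobercy}, which reduces the problem to verifying two things at each vertex of the ribbon graph $\rgraph$: the existence of local relative Calabi--Yau structures, and sign compatibility along each internal edge. I focus on part (i); part (ii) and the variant for $\cptglsec$ are dual, using the second half of \Cref{prop:loccy} and \Cref{thm:schobercy}.

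Since $\mathcal{F}$ has no singularities, \Cref{prop:localmodel} identifies the local model near each vertex $v$ of valency $m_v$ with $\mathcal{F}_{m_v}(0_{\N_v})$, where $0_{\N_v}\colon 0\to \N_v$ is the zero spherical functor and $\N_v$ is a local copy of the generic stalk $\N$. Choosing an identification $\N_v \simeq \N$ and transporting $\eta$, \Cref{prop:loccy}(1) equips the functor $\mathrm{S}^{m_v}_{0_{\N_v}}\colon \N_v^{\times m_v} \to \mathcal{F}(v)$ with a left $n$-Calabi--Yau structure $\eta_v$, whose restriction to each factor coincides with the transport of $\eta$. This addresses the first hypothesis of \Cref{thm:schobercy}(i).

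The nontrivial part is verifying sign compatibility at internal edges. The local identifications $\N_v \simeq \N$ are not canonical; different choices alter $\eta_v$ by the action of autoequivalences of $\N$ on $\eta$. The framing $\xi$ selects, at each vertex, a contractible family of identifications determined by trivializing the local system $\mathcal{L}\mathcal{F}$ along $\xi$ near that vertex. At an internal edge $e$ with halfedges $a,b$ at vertices $v,v'$, the two framing-induced identifications $\mathcal{F}(e)\simeq \N$ differ by the transport functor around a half-turn; the shift structure $\mathrm{S}^m_F = (\varsigma_1[1],\dots,\varsigma_m[m])$ appearing in \Cref{prop:loccy} (together with the suspensions in the transport formula of \Cref{ex:transport}) produces a sign $-1$ on the class $\eta$ of the relevant degree, yielding the required sign flip between $\eta_{e,a}$ and $\eta_{e,b}$. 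The monodromy hypothesis $\HH(\mathcal{L}\mathcal{F})^{S^1}(\eta)=\eta$ is precisely what is needed for these local sign flips to compose consistently around every loop of $\rgraph$, i.e.~for the framing-induced identifications to exist globally rather than merely locally.

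With both hypotheses of \Cref{thm:schobercy}(i) verified, the theorem yields the desired left $n$-Calabi--Yau structure on $\partial \mathcal{F}$. The main obstacle is the sign analysis in the previous paragraph: one must carefully match the intrinsic shifts in \Cref{prop:loccy} with the framing-induced identifications and the cyclic ordering of halfedges, and then show that global sign consistency follows precisely from invariance of $\eta$ under $\HH(\mathcal{L}\mathcal{F})^{S^1}$. This generalizes the analogous sign bookkeeping of \cite{BD19} for the topological Fukaya category, where the monodromy condition is automatic by triviality; the new feature is the use of monodromy invariance to accommodate nontrivial local systems over an arbitrary base $\mathbb{E}_\infty$-ring spectrum.
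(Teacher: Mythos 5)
Your overall strategy is the paper's: reduce to \Cref{thm:schobercy} by producing local Calabi--Yau structures from \Cref{prop:loccy} applied to the zero spherical functor $0\to\N$, and then check the sign compatibility at internal edges using the framing and the monodromy hypothesis. Where you diverge is in how the compatibility check is actually carried out. The paper first contracts $\rgraph$ to a single vertex (\Cref{prop:constraction}, \Cref{lem:invcontraction}) — so that every internal edge becomes a loop, i.e.\ a generator of $\pi_1$ — and then invokes \Cref{prop:schobersfrommonodromy} to replace $\mathcal{F}$ by an \emph{explicit} model $\mathcal{G}$ whose edge functors are the standard $\varrho_i[-i]$ twisted by $\xi^*\mathcal{L}\mathcal{F}(h)[j-i-W(h)]$; the hypotheses of \Cref{thm:schobercy} are then read off directly from $\HH([1])=-\on{id}$ and the evenness of the winding numbers of a framing. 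Your vertex-by-vertex version amounts to the same spanning-tree/gauge-fixing reduction and can be made to work, but note that without \Cref{prop:schobersfrommonodromy} (or an explicit gauge-fixing along a spanning tree) you have not yet justified transporting the standard local Calabi--Yau datum of \Cref{prop:loccy} onto the \emph{actual} functors $\prod_i\mathcal{F}(e_i)\to\mathcal{F}(v)$ in a way whose edge restrictions at the two ends of an internal edge can be compared; the local identifications of \Cref{prop:localmodel} are only defined up to autoequivalences of $\N$, and reconciling them is exactly the content of that proposition. One claim in your last step is also off: invariance of $\eta$ under $\HH(\mathcal{L}\mathcal{F})^{S^1}$ does \emph{not} make the framing-induced identifications of categories exist globally — the local system $\xi^*\mathcal{L}\mathcal{F}$ may well be nontrivial. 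What the hypothesis gives is only that the induced identifications of the classes in $\HH(\N)^{S^1}$ are consistent around loops, which is all that \Cref{thm:schobercy} requires; you should phrase the conclusion at the level of Hochschild classes rather than of the categories themselves.
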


\begin{proof}
We only prove part (i), part (ii) is analogous. Using \Cref{prop:constraction} and \Cref{lem:invcontraction}, we may assume that $\rgraph$ has a single vertex $v$. We choose a framing $\xi$ on $\Sigma_{\rgraph}$. Let $m$ be the valency of $v$. We choose a total order of the halfedges incident to $v$. Applying \Cref{prop:loccy} to the spherical adjunction $F=0_{\mathcal{N}}\colon 0\leftrightarrow\mathcal{N}\noloc G$, with the left $n$-Calabi--Yau structure on $G$ arising from $\eta$, yields a left $n$-Calabi--Yau structure on $R_{0_{\mathcal{N}}}^m$, which restricts on $\mathcal{N}^{\times m}$ to $\eta^{\times m}$. The diagram $R_{0_{\mathcal{N}}}^m$ gives rise to a perverse schober $\mathcal{G}_v'$ on the $m$-spider $\rgraph_m$, assigning to the incidence of the $i$-th halfedge with $v$ the functor $\varrho_i[-i]$.  

Consider an internal edge $h$ of $\rgraph$, which is by assumption a loop. The loop consists of two halfedges which lie in positions $1\leq i<j\leq m$ and we orient $h$ so that it first traces along the $i$-th halfedge and then the $j$-th halfedge. We modify $\mathcal{G}'_v$ by composing the functor $\mathcal{G}_v'(v\xrightarrow{j}h)$ with the autoequivalence $\xi^*\mathcal{L}\mathcal{F}(h)[j-i-W(h)]\colon \mathcal{N}\to \mathcal{N}$. 

We do this for each such internal edge $h$ and denote the arising perverse schober on the $m$-spider by $\mathcal{G}_v$. We let $\mathcal{G}$ be the $\rgraph$-parametrized perverse schober which restricts along $\on{Exit}(\rgraph_m)\to \on{Exit}(\rgraph)$ to $\mathcal{G}_v$. We have defined $\mathcal{G}$, such that
$\mathcal{F}$ and $\mathcal{G}$ have equivalent monodromy local systems. It follows by \Cref{prop:schobersfrommonodromy} that $\mathcal{F}\simeq \mathcal{G}$.

Using the above relative Calabi--Yau structure on $\mathcal{G}_v'$, that the monodromy of $\mathcal{F}$ acts trivially on $\eta$, that the winding numbers of a framing are all even and that $\HH([1])=-\on{id}_{\HH(\mathcal{N})}$, we find that $\mathcal{F}$ satisfies the assumptions of \Cref{thm:schobercy}. It follows that the functor \eqref{eq:evfuntobdry1} admits the desired left $n$-Calabi--Yau structure.
\end{proof}

\subsection{Weak right Calabi--Yau structures on spherical functors}\label{subsec:sphericalCY}

Consider a dualizable $R$-linear functor $F\colon \D\to \C$ between proper $R$-linear $\infty$-categories with right adjoint $G$. If $F$ admits a weak right Calabi--Yau structure, the arising fiber and cofiber sequence 
\[ \on{id}_\D\xlongrightarrow{\unit} GF \longrightarrow \on{id}_\D^*[1-n]\]
exhibits the shifted Serre functor $\on{id}_\D^*[1-n]$ as the twist functor of the adjunction $F\dashv G$. If $\D$ is smooth, then $\on{id}_{\D}^*$ is an equivalence. To check that the adjunction $F\dashv G$ is spherical it thus suffices to show that the cotwist functor is also an equivalence, or alternatively that the unit of the adjunction $F\dashv G$ commutes with $\on{id}_{\D}^*$ and that $G$ admits a right adjoint $H$ such that $\on{Im}(F)\simeq \on{Im}(H)$, see \cite[Prop.~4.5]{Chr20}.

Conversely, suppose that $F\dashv G$ is a spherical adjunction, satisfying that the twist functor $T_{\mathcal{D}}$ is equivalent to $\on{id}_\D^*[1-n]$. The unit and counit maps of spherical adjunctions exhibit a rather special behavior: in the fiber and cofiber sequence 
\[\on{id}_\D\xlongrightarrow{\unit} GF \xlongrightarrow{\counit'} T_\D\] 
the map $\counit'$ is a counit map of the adjunction $E\dashv F$ composed with $T_\D$, up to composition with an autoequivalence $GF\simeq T_\D\circ EF$. By \Cref{lem:unitisunit}, this fiber and cofiber sequence looks very similar to the diagram \eqref{eq:defrightCY} appearing in the definition of a weak relative right Calabi--Yau structure on $F$. It is thus natural to ask whether $F$ already admits a weak right $n$-Calabi--Yau structure. In this section, we prove that $F$ can indeed be equipped with a weak right $n$-Calabi--Yau structure, under the assumption that $\C$ is weak right $(n-1)$-Calabi--Yau and $\C,\D$ are compactly generated, see \Cref{prop:sphericalrightCY}.

The proof is rather indirect and relies on first lifting the spherical adjunction $F\dashv G$ to a perverse schober on the $3$-spider, which might be thought of as kind of resolution, as it renders trivial certain commutativity problems of diagrams involved in checking the existence of the relative Calabi--Yau structure. We then use an explicitly description of the Serre functor on the global sections of this perverse schober that is only available in the proper setting. We get back to the original spherical adjunction by gluing with the Calabi--Yau structure of the zero functor $\C^{\times 2}\to 0$. We leave it as an interesting problem to find an alternative argument which applies in the smooth setting.

\begin{proposition}\label{prop:sphericalrightCY}
Let $F\colon \mathcal{D}\leftrightarrow \mathcal{C}\noloc G$ be a spherical adjunction of compactly generated, proper $R$-linear $\infty$-categories. Let $T_{\mathcal{D}}$ be the twist functor of $F\dashv G$. If there exists an equivalence $T_{\mathcal{D}}\simeq \on{id}_{\mathcal{D}}^*[1-n]$ and a weak right $(n-1)$-Calabi--Yau structure on $\mathcal{C}$, then $F$ admits a weak right $n$-Calabi--Yau structure.
\end{proposition}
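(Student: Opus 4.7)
The plan is to lift the spherical adjunction $F\dashv G$ to a perverse schober on the $3$-spider, promote its evaluation adjunction to a weak relative right $n$-Calabi--Yau structure, and then recover the desired structure on $F$ by gluing off the two superfluous boundary components against the given $(n-1)$-Calabi--Yau data on $\mathcal{C}$. By \Cref{prop:localmodel}, the adjunction $F\dashv G$ determines a perverse schober $\mathcal{F}_3(F)$ on the $3$-spider whose $\infty$-category of global sections is $\mathcal{V}^3_F=\mathcal{D}\times_\mathcal{C}\on{Fun}(\Delta^2,\mathcal{C})$; as a limit in $\on{LinCat}_R^{\on{cpt-gen}}$ of compactly generated proper $R$-linear $\infty$-categories, $\mathcal{V}^3_F$ is itself compactly generated and proper.

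The main step is to equip the spherical functor
\[
\mathrm{R}^3_F=(\varrho_1[-1],\varrho_2[-2],\varrho_3[-3])\colon \mathcal{V}^3_F\longrightarrow \mathcal{C}^{\times 3}
\]
with a weak right $n$-Calabi--Yau structure that restricts to the weak right $(n-1)$-Calabi--Yau data $\eta_\mathcal{C}^{\times 3}$ on $\mathcal{C}^{\times 3}$. The sphericity of the adjunction $\mathrm{R}^3_F\dashv \mathrm{S}^3_F$ yields a canonical cofiber sequence $\on{id}_{\mathcal{V}^3_F}\to \mathrm{S}^3_F\mathrm{R}^3_F\to T_{\mathcal{V}^3_F}$ playing the role of the right-hand column of diagram \eqref{eq:defrightCY}. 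Using the rotational relations of \Cref{prop:rotationaltwist}, the standard compatibility $T_\mathcal{C} F\simeq F T_\mathcal{D}[-1]$ between spherical twist and cotwist, the hypothesis $T_\mathcal{D}\simeq \on{id}_\mathcal{D}^*[1-n]$, and the identification $\on{id}_\mathcal{C}^*\simeq \on{id}_\mathcal{C}[n-2]$ encoded by the weak right $(n-1)$-Calabi--Yau structure on $\mathcal{C}$, one matches $T_{\mathcal{V}^3_F}$ componentwise with the shifted Serre functor $\on{id}_{\mathcal{V}^3_F}^*[1-n]$ through each $\varrho_i[-i]$; that the componentwise match globalizes relies on properness of $\mathcal{V}^3_F$ and the joint conservativity of the family $(\varrho_i[-i])_i$ on compact objects. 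The resulting diagram \eqref{eq:defrightCY} is then exhibited as arising from a Hochschild class $R[n]\to\HH(\mathcal{V}^3_F,\mathcal{C}^{\times 3})^*$ by invoking \Cref{prop:properHHfun}.

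Finally, view $\mathcal{D}$ as the pullback $\mathcal{V}^3_F\times_{\mathcal{C}^{\times 2}}0$, where $\mathcal{V}^3_F\to \mathcal{C}^{\times 2}$ is the product of the last two components of $\mathrm{R}^3_F$ and $0\to \mathcal{C}^{\times 2}$ is the zero functor. The induced map $\mathcal{D}\to\mathcal{C}$ is $F$ (up to an overall suspension absorbed by the shifts in $\mathrm{R}^3_F$). Applying \Cref{thm:rightCYglue} glues the class on $\mathrm{R}^3_F$ constructed in the main step with the weak right $n$-Calabi--Yau structure on the zero functor $0\to \mathcal{C}^{\times 2}$ corresponding to $-\eta_\mathcal{C}^{\times 2}$; the compatibility at the common boundary $\mathcal{C}^{\times 2}$ is automatic by construction, and the result is the desired weak right $n$-Calabi--Yau structure on $F$. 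The main obstacle is the identification in the main step: one needs $T_{\mathcal{V}^3_F}$ to agree with $\on{id}_{\mathcal{V}^3_F}^*[1-n]$ as a map promoting the cofiber sequence above to the precise form \eqref{eq:defrightCY} arising from a Hochschild class, rather than merely up to abstract equivalence. This is where both the ``resolution'' role of the $3$-spider---the coherent rotational symmetry of \Cref{prop:rotationaltwist} automatically supplying the compatibilities---and the ``proper'' setting---affording an explicit presentation of the Serre functor---are essential; neither is available in a direct attack on $F\dashv G$, nor in the smooth analogue.
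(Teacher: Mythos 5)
Your overall strategy coincides with the paper's: lift $F\dashv G$ to the perverse schober on the $3$-spider, identify the twist functor of $\mathrm{R}^3_F\dashv \mathrm{S}^3_F$ with the shifted Serre functor $\on{id}_{\mathcal{V}^3_F}^*[1-n]$, promote this to a weak right $n$-Calabi--Yau structure on $\mathrm{R}^3_F$ restricting to $\eta_{\C}^{\times 3}$, and finally glue against the zero functor $0\to \C^{\times 2}$ (with class $-\eta_\C^{\times 2}$) via \Cref{thm:rightCYglue}; that last gluing step is correct and matches the paper. The problem is that the two sub-steps you compress into the "main step" are exactly where the work lies, and both contain gaps as written. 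For the identification $T_{\mathcal{V}^3_F}\simeq \on{id}_{\mathcal{V}^3_F}^*[1-n]$, you propose a componentwise match through each $\varrho_i[-i]$ globalized by joint conservativity. Conservativity lets you test whether an \emph{already given} natural transformation is an equivalence; it does not manufacture the comparison map, and separate equivalences $\varrho_i\circ T_{\mathcal{V}^3_F}\simeq \varrho_i\circ \on{id}_{\mathcal{V}^3_F}^*[1-n]$ for each $i$ need not assemble (compare: on $\C^{\times 2}$ the swap functor agrees with the identity after each projection without being the identity). The paper instead proves this directly in \Cref{lem:SerrefunV3F} by an explicit bifunctorial chain of equivalences of morphism objects, using the relative suspension functor and the explicit Serre functor of $\on{Fun}(\Delta^1,\C)$, and then invokes uniqueness of Serre functors (\Cref{lem:Serreunique}).

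More seriously, you correctly name the central obstacle---the cofiber sequence $\on{id}_{\mathcal{V}^3_F}\to \mathrm{S}^3_F\mathrm{R}^3_F\to T_{\mathcal{V}^3_F}$ must be shown to \emph{arise from} a relative dual Hochschild class, i.e.\ its second map must be identified with the specific counit $\tilde{\counit}$ composed with $S^*(\alpha^{\times 3})$, not merely with some abstract equivalence onto $\on{id}_{\mathcal{V}^3_F}^*[1-n]$---but your proposed resolution, ``invoking \Cref{prop:properHHfun}'', runs in the wrong direction: that proposition describes the diagram that a given Hochschild class produces, not that a given diagram is produced by some class. The paper's actual resolution of this point (\Cref{lem:CYfromspherical}) has two ingredients you omit: (i) the observation, via \cite[Lemma 2.10]{Chr20} and the sphericalness of the adjunction, that the boundary map of the twist sequence is a counit of $S^{LL}\dashv S^L$ up to an autoequivalence of $S^*(\on{id}_{\C^{\times 3}}^*)$; and (ii) the rigidity computation that $\on{Map}(\on{id}_{\C^{\times 3}}^*,\on{id}_{\C^{\times 3}}^*)\to \on{Map}(S^*(\on{id}_{\C^{\times 3}}^*),S^*(\on{id}_{\C^{\times 3}}^*))$ is an equivalence, which follows from the splitting $S^LS\simeq \on{id}\oplus P$ with $P$ a cyclic permutation admitting no nonzero maps to the identity, so that the offending autoequivalence can be absorbed into the choice of weak Calabi--Yau structure on $\C$. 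Without (i) and (ii), or an equivalent argument, the proof is incomplete at its central point.
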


We will apply \Cref{prop:sphericalrightCY} to examples in \Cref{subsec:relGinzburg}
 
\begin{lemma}\label{lem:CYfromspherical}
Let $F\colon \mathcal{D}\leftrightarrow \mathcal{C}\noloc G$ be a spherical adjunction of compactly generated, proper $R$-linear $\infty$-categories. Let $T_{\mathcal{D}}$ be the twist functor of $F\dashv G$. Suppose that there exists an equivalence $T_{\mathcal{D}}\simeq \on{id}_{\mathcal{D}}^*[1-n]$ and that $\mathcal{C}$ admits a weak right $(n-1)$-Calabi--Yau structure. Consider the spherical adjunction, see \Cref{prop:rotationaltwist},
\begin{equation}\label{eq:SSL}
S^L\coloneqq (\varrho_1[-1],\varrho_2[-2],\varrho_3[-3])\colon \mathcal{V}^3_{F}\longleftrightarrow \mathcal{C}^{\times 3}\noloc S\coloneqq (\varsigma_1[1],\varsigma_2[2],\varsigma_3[3])\,.
\end{equation}
The functor $S^L$ admits a weak right $n$-Calabi--Yau structure.
\end{lemma}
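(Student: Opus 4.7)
To equip $S^L$ with a weak right $n$-Calabi--Yau structure, I would lift the weak $(n-1)$-Calabi--Yau class $(\eta_{\mathcal{C}})^{\times 3}\colon R[n-1]\to \HH(\mathcal{C}^{\times 3})^*$ coming from the weak CY structure on $\mathcal{C}$ to a class $\sigma\colon R[n]\to \HH(\mathcal{V}^3_F,\mathcal{C}^{\times 3})^*$ along the fiber sequence
\[
\HH(\mathcal{V}^3_F,\mathcal{C}^{\times 3})^*[-1]\to \HH(\mathcal{C}^{\times 3})^*\to \HH(\mathcal{V}^3_F)^*.
\]
By \Cref{prop:properHHfun}(2), such a lift is the datum of a null-homotopy of the composite natural transformation
\[
\beta\colon \on{id}_{\mathcal{V}^3_F}\xrightarrow{\unit}SS^L\xrightarrow{(S^L)^*(\alpha)}(S^L)^*(\on{id}_{\mathcal{C}^{\times 3}}^*)[1-n]\xrightarrow{\tilde{\counit}[1-n]}\on{id}_{\mathcal{V}^3_F}^*[1-n],
\]
where $\alpha\colon \on{id}_{\mathcal{C}^{\times 3}}\to \on{id}_{\mathcal{C}^{\times 3}}^*[1-n]$ corresponds via \Cref{lem:HHvsHom} to $(\eta_{\mathcal{C}})^{\times 3}$ and is an equivalence by the weak CY hypothesis on $\mathcal{C}$.

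The null-homotopy will come from the sphericality of $S^L\dashv S$. By \Cref{prop:rotationaltwist} this adjunction is spherical, so we have a canonical fiber-cofiber sequence
\[
\on{id}_{\mathcal{V}^3_F}\xrightarrow{\unit}SS^L\to T_{\mathcal{V}^3_F}
\]
whose composite carries a built-in null-homotopy. The plan is to produce an equivalence $\phi\colon T_{\mathcal{V}^3_F}\xrightarrow{\simeq}\on{id}_{\mathcal{V}^3_F}^*[1-n]$ identifying the cofiber map $SS^L\to T_{\mathcal{V}^3_F}$ with $\tilde{\counit}[1-n]\circ (S^L)^*(\alpha)$; then the spherical null-homotopy becomes the required null-homotopy of $\beta$ and defines $\sigma$. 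Non-degeneracy of $\sigma$ is then automatic: in the diagram \eqref{eq:defrightCY}, the cofiber of the top row becomes $T_{\mathcal{V}^3_F}$ and its induced map to $\on{id}_{\mathcal{V}^3_F}^*[1-n]$ is exactly $\phi$, hence an equivalence.

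The central technical task is thus constructing $\phi$, that is, giving an explicit description of the Serre functor of $\mathcal{V}^3_F$. \Cref{prop:rotationaltwist} yields $\varrho_i\circ T_{\mathcal{V}^3_F}\simeq \varrho_{i+1}$ for $i=1,2$ and $\varrho_3\circ T_{\mathcal{V}^3_F}\simeq T_{\mathcal{C}}[2]\circ \varrho_1$. On the other side, applying Serre duality $\on{Mor}(X,\on{id}^*(Y))\simeq \on{Mor}(Y,X)^*$ with $Y=\varsigma_i Z$, together with the adjunctions $\varsigma_i\dashv \varrho_{i-1}$ for $i=2,3$, the further adjunction $\varsigma_1 T_{\mathcal{C}}^{-1}[-2]\dashv \varrho_3$ from \eqref{eq:rhosigmaadj2}, and the weak $(n-1)$-CY structure on $\mathcal{C}$, yields matching identifications $\on{id}^*(\varsigma_i Z)\simeq T_{\mathcal{V}^3_F}[n-1](\varsigma_i Z)$ on these generators; the hypothesis $T_{\mathcal{D}}\simeq \on{id}_{\mathcal{D}}^*[1-n]$ then provides the corresponding matching on generators coming from $\mathcal{D}$. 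The principal obstacle will be to promote these pointwise equivalences to a coherent natural equivalence $\phi$ and to arrange that it intertwines the spherical cofiber map with $\tilde{\counit}$ — the ``commutativity problem'' alluded to in the introduction to this section. The properness of $\mathcal{C},\mathcal{D}$ is essential since the entire argument runs through Serre duality on morphism spectra, which is precisely why the author notes that this route does not readily adapt to the smooth setting.
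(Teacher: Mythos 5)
Your strategy is the same as the paper's: use sphericality of $S^L\dashv S$ to get the fiber--cofiber sequence $\on{id}_{\mathcal{V}^3_F}\to SS^L\to T_{\mathcal{V}^3_F}$, identify $T_{\mathcal{V}^3_F}$ with $\on{id}_{\mathcal{V}^3_F}^*[1-n]$, and identify the cofiber map with $\tilde{\counit}[1-n]\circ (S^L)^*(\alpha)$. However, you explicitly defer the two steps that constitute essentially the whole content of the proof, so as written this is a plan rather than a proof. First, the Serre functor identification: your proposed route (matching $\on{id}^*$ and $T_{\mathcal{V}^3_F}[n-1]$ on the generators $\varsigma_iZ$ and on objects from $\mathcal{D}$, then ``promoting'' to a natural equivalence) runs directly into the coherence problem you name, and gluing object-wise identifications into a natural transformation is not something the ambient formalism gives you for free. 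The paper avoids this entirely (\Cref{lem:SerrefunV3F}) by writing down an explicit formula for $T_{\mathcal{V}^3_F}$ and producing a single chain of \emph{bifunctorial} equivalences of morphism objects, using the relative suspension functor $\tau$ on $\mathcal{V}^2_F$ and the cofiber description $\on{Mor}((d,c_1,c_2),-)\simeq\on{cof}(\on{Mor}((d[-1],0,0),-)\to\on{Mor}((0,c_1,c_2),-))$; by uniqueness of Serre functors this yields the natural equivalence $\phi$.

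Second, and more seriously, the intertwining step. Even once $\phi$ exists, the cofiber map $SS^L\to\on{id}_{\mathcal{V}^3_F}^*[1-n]$ is a priori only identified with $\tilde{\counit}[1-n]\circ(S^L)^*(\alpha)$ up to composition with an unknown autoequivalence $\beta$ of $S^*(\on{id}_{\mathcal{C}^{\times 3}}^*)[1-n]$. The paper resolves this by computing that
$S\circ(\mhyphen)\circ S^L$ induces an equivalence $\on{Map}(\on{id}_{\mathcal{C}^{\times 3}}^*,\on{id}_{\mathcal{C}^{\times 3}}^*)\simeq\on{Map}(S^*(\on{id}_{\mathcal{C}^{\times 3}}^*),S^*(\on{id}_{\mathcal{C}^{\times 3}}^*))$, using the splitting $S^LS\simeq\on{id}_{\mathcal{C}^{\times 3}}\oplus P$ with $P$ the (cyclically permuting) cotwist and the vanishing of maps from $P,P^2$ to the identity; the discrepancy $\beta$ is then absorbed by replacing $\alpha$ with $\beta'\circ\alpha$. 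Note that this means the resulting Calabi--Yau structure restricts on $\mathcal{C}^{\times 3}$ to the \emph{modified} class $(\beta'\circ\alpha)^{\times 3}$, not necessarily to your chosen $(\eta_{\mathcal{C}})^{\times 3}$ --- the lemma only claims existence of some weak right $n$-Calabi--Yau structure, and this freedom to change $\alpha$ is essential to the argument. Your proposal, which fixes $(\eta_{\mathcal{C}})^{\times 3}$ at the outset, would need this extra flexibility (or a separate argument that $\beta$ is trivial) to close.
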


\begin{proof}
We show in \Cref{lem:SerrefunV3F} below, that the Serre functor $\on{id}_{\mathcal{V}^3_F}^*$ is equivalent to a suspension of the twist functor $T_{\mathcal{V}^3_F}$ of $S^L\dashv S$. The definition of $T_{\mathcal{V}^3_F}$ thus gives us a fiber and cofiber sequence of endofunctors of $\mathcal{V}^3_F$:
\begin{equation}\label{eq:deftwistV3} 
\on{id}_{\mathcal{V}^3_f}\xlongrightarrow{\unit} SS^L \xlongrightarrow{\eta} \on{id}_{\mathcal{V}^3_F}^*[1-n]
\end{equation}
By assumption, $\C$ admits a weak right $(n-1)$-Calabi--Yau structure, corresponding to an equivalence $\alpha\colon \on{id}_{\C}\simeq \on{id}_{\C}^*[1-n]$. This gives us an equivalence
\[ 
S\alpha^{\times 3}S^L\colon S^*(\on{id}_{\C^{\times 3}})=SS^L\simeq S \on{id}_{\C^{\times 3}}^*[1-n] S^L \simeq S^*(\on{id}_{\C^{\times 3}}^*)[1-n]\,.
\]
The fiber and cofiber sequence \eqref{eq:deftwistV3} gives rise to a commutative diagram in $\on{Lin}_R(\mathcal{V}^3_F,\mathcal{V}^3_F)$
\begin{equation}\label{eq:CYfromspherical}
\begin{tikzcd}
\on{id}_{\mathcal{V}^3_F} \arrow[r, "\unit"] \arrow[d, "\simeq"'] & S^*(\on{id}_{\C^{\times 3}}) \arrow[d, "S^*(\alpha^{\times 3})"'] \arrow[rd, "\eta", dotted] \arrow[r] & \on{cof} \arrow[d, "\simeq"]       \\
\on{fib} \arrow[r]                                                & {S^*(\on{id}_{\C^{\times 3}}^*)[1-n]} \arrow[r, "\nu"]                                                        & {\on{id}_{\mathcal{V}^3_F}^*[1-n]}
\end{tikzcd}
\end{equation}
with horizontal fiber and cofiber sequences and vertical equivalences. The map $\eta$ is up to composition with an equivalence $S^*(\on{id}_{\C^{\times 3}})\simeq \on{id}_{\mathcal{V}^3_F}^*S^{LL}S^L[1-n]$ given by a counit map of the adjunction $S^{LL}\dashv S^L$, which can be seen at follows. By \cite[Cor.~2.5.16]{DKSS21}, there exists an equivalence $e\colon \on{id}_{\mathcal{V}^3_F}^*[1-n]S^{LL}\simeq  T_{\mathcal{V}^3_F}S^{LL}\simeq S$. By \cite[Lemma 2.10]{Chr20}, the natural transformation $\eta\circ eS^L$ evaluates at each object of $\V_F^{\times 3}$ to a counit map of $S^{LL}\dashv S^L$, so that $\eta\circ eS^L$ is adjoint to a pointwise autoequivalence $\on{id}_{\mathcal{V}^3_F}^*S^{LL}[1-n]\simeq \on{id}_{\mathcal{V}^3_F}^*S^{LL}[1-n]$. This implies that $\eta\circ eS^L$ is already a counit composed with $\on{id}_{\V_F^3}^*[1-n]$. By \Cref{lem:unitisunit}, this shows that the natural transformation $\nu$ agrees with the counit $\tilde{\counit}$ from \Cref{constr:unitcounit}, up to composition with an autoequivalence $\beta$ of $S^*(\on{id}_{\C^{\times 3}}^*)[1-n]$. 

As we show next, the adjunction $S^L\dashv S$ has the special feature, that the map 
\[ \on{Map}(\on{id}_{\mathcal{C}^{\times 3}}^*[1-n],\on{id}_{\mathcal{C}^{\times 3}}^*[1-n])\xlongrightarrow{S\circ (\mhyphen)\circ S^L}\on{Map}(S^*(\on{id}_{\C^{\times 3}}^*)[1-n],S^*(\on{id}_{\C^{\times 3}}^*)[1-n])\] 
is an equivalence. We denote the inverse image of $\beta$ under this map by $\beta'$. We have already seen this in the special case that $\D=0$ and $\C=\on{RMod}_R$ in the proof of \Cref{lem:1cy}. The argument here is very analogous: the functor $S^LS$ splits as $S^LS\simeq \on{id}_{\mathcal{C}^{\times 3}}\oplus P$ with $P$ the cotwist functor of $S^L\dashv S$, which permutes the three factors of $\C^{\times 3}$ cyclically by one step and then acts on each component as suspension or the cotwist functor of $F\dashv G$. We thus have $\on{Mor}(P,\on{id}_{\C^{\times 3}})\simeq 0$ and $\on{Mor}(P^2,\on{id}_{\C^{\times 3}})\simeq 0$ and 
\begin{align*} 
\on{Map}(S^*(\on{id}_{\C^{\times 3}}^*),S^*(\on{id}_{\C^{\times 3}}^*))
&\simeq \on{Map}(S^LS\on{id}_{\mathcal{C}^{\times 3}}^*S^LS,\on{id}_{\mathcal{C}^{\times 3}}^*)\\
&\simeq \on{Map}(\on{id}_{\mathcal{C}^{\times 3}}^*,\on{id}_{\mathcal{C}^{\times 3}}^*)\oplus \on{Mor}((P\oplus P \oplus P^2)\circ \on{id}_{\mathcal{C}^{\times 3}}^*,\on{id}_{\mathcal{C}^{\times 3}}^*)\\
&\simeq \on{Map}(\on{id}_{\mathcal{C}^{\times 3}}^*,\on{id}_{\mathcal{C}^{\times 3}}^*)\,.
\end{align*}
We adapt the choice of Calabi--Yau structure on $\C$, by postcomposing $\alpha\colon \on{id}_{\C}\simeq \on{id}_{\C}^*[1-n]$ with $\beta'$. Note that $\nu \circ S^*((\beta')^{\times 3})$ is by construction equivalent to $\tilde{\counit}$. The existence of the diagram \eqref{eq:CYfromspherical} thus induces a relative dual Hochschild homology class $\sigma \in \HH(\mathcal{V}^3_F,\C^{\times 3})^*$ which defines a weak right $n$-Calabi--Yau structure on $S^L$. We finally remark that $\sigma$ restricts at $\C^{\times 3}$ to the class corresponding to $(\beta'\circ \alpha)^{\times 3}$.
\end{proof}

\begin{lemma}\label{lem:SerrefunV3F}
Under the assumptions of \Cref{lem:CYfromspherical}, the shifted twist functor $T_{\mathcal{V}^3_F}[n-1]$ of the adjunction \eqref{eq:SSL} is equivalent to the Serre functor $\on{id}_{\mathcal{V}^3_F}^*$.
\end{lemma}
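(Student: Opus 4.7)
My plan is to identify the two $R$-linear endofunctors $T_{\V^3_F}[n-1]$ and $\on{id}_{\V^3_F}^*$ by showing they agree on a set of compact generators of $\V^3_F$; since both preserve colimits, a pointwise equivalence on such generators will extend uniquely to a natural equivalence of endofunctors. The key input is the following elementary fact: for any triple $L\dashv M\dashv N$ of $R$-linear adjoints between compactly generated proper $R$-linear $\infty$-categories $A$ and $B$, there is a canonical natural equivalence $N\circ \on{id}_A^*\simeq \on{id}_B^*\circ L$, obtained by chaining the adjunctions $L\dashv M$ and $M\dashv N$ with Serre duality on $A$ and on $B$ and applying the Yoneda lemma.

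Applying this formula to the adjunction chains $\varsigma_i\dashv\varrho_{i-1}\dashv\varsigma_{i-1}$ for $i=2,3$ from \eqref{eq:rhosigmaadj} and to $\varsigma_1\circ T_{\N}^{-1}[-2]\dashv \varrho_3\dashv \varsigma_3$ obtained by combining \eqref{eq:rhosigmaadj} and \eqref{eq:rhosigmaadj2}, together with the hypothesis $\on{id}_{\C}^*\simeq [n-1]$, yields
\[
\on{id}_{\V^3_F}^*\circ \varsigma_2\simeq \varsigma_1[n-1],\quad
\on{id}_{\V^3_F}^*\circ \varsigma_3\simeq \varsigma_2[n-1],\quad
\on{id}_{\V^3_F}^*\circ \varsigma_1\simeq \varsigma_3\circ T_{\N}[n+1].
\]
On the twist side, passing to right adjoints in \Cref{prop:rotationaltwist} (applied with $m=3$) and using that $T_{\V^3_F}$ is an equivalence yields $T_{\V^3_F}^{-1}\circ \varsigma_i\simeq \varsigma_{i+1}$ for $i=1,2$ and $T_{\V^3_F}^{-1}\circ \varsigma_3\simeq \varsigma_1\circ T_{\N}^{-1}[-2]$, which after composing with $T_{\V^3_F}[n-1]$ precisely recover the three identities above.

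When the spherical functor $F$ is conservative, the essential images $\varsigma_i(\C^{\on{c}})$ for $i=1,2,3$ already compactly generate $\V^3_F$ and the proof concludes. In the general case one must supplement these by the compact generators $p_{\D}^L(\D^{\on{c}})$, where $p_\D\colon \V^3_F\to \D$ is the projection from the pullback presentation of $\V^3_F$ and $p_\D^L$ its left adjoint, sending $d\in \D$ to $(d, F(d)\xrightarrow{\on{id}}F(d)\xrightarrow{\on{id}}F(d))$; that the union of the two families compactly generates is a straightforward check using the adjunctions above. The triple-adjoint formula applied to $p_\D^L\dashv p_\D\dashv p_\D^R$, combined with the hypothesis $T_\D\simeq \on{id}_\D^*[1-n]$, yields $\on{id}_{\V^3_F}^*\circ p_\D^L\simeq p_\D^R\circ T_\D[n-1]$. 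On the twist side, $\varrho_2\circ p_\D^L$ and $\varrho_3\circ p_\D^L$ vanish by direct computation, so $SS^L\circ p_\D^L\simeq \varsigma_1\circ F$ and the defining fibre-cofibre sequence for $T_{\V^3_F}$ gives $T_{\V^3_F}p_\D^L(d)\simeq \on{cof}(p_\D^L(d)\xrightarrow{\unit}\varsigma_1F(d))$. Computing this cofibre pointwise in the pullback presentation -- using that the $\D$-component is $\on{cof}(d\xrightarrow{\unit}GF(d))\simeq T_\D(d)$ and that the higher vertices of the $\on{Fun}(\Delta^2,\N)$-component collapse to zero by the triangle identity $\counit F\circ F\unit\simeq \on{id}_F$ -- I expect to recover exactly the object $p_\D^R(T_\D(d))\simeq (T_\D(d),FT_\D(d)\to 0\to 0)$, matching the Serre-side computation. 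The main technical step is thus this explicit pullback calculation, after which the proof is complete.
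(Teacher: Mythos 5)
Your route is genuinely different from the paper's. The paper computes $T_{\mathcal{V}^3_F}$ explicitly on objects $(d,c_1,c_2)$, writes every compact object as a cofiber of objects coming from $\D$ and from $\on{Fun}(\Delta^1,\C)$, and then chains together the \emph{bifunctorial} Serre dualities of $\D$ (packaged via the relative suspension functor on $\mathcal{V}^2_F$, which is where $T_\D\simeq\on{id}_\D^*[1-n]$ enters) and of $\on{Fun}(\Delta^1,\C)$ to produce the duality $\on{Mor}(x,y)\simeq\on{Mor}(y,T_{\mathcal{V}^3_F}(x)[n-1])^*$ for all compact $x,y$; it then invokes uniqueness of Serre functors (\Cref{lem:Serreunique}). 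Your local computations are essentially correct: the triple-adjoint identity $N\circ\on{id}_A^*\simeq\on{id}_B^*\circ L$ is valid here (all functors involved preserve compact objects), the three identities $\on{id}^*\varsigma_2\simeq\varsigma_1[n-1]$, $\on{id}^*\varsigma_3\simeq\varsigma_2[n-1]$, $\on{id}^*\varsigma_1\simeq\varsigma_3 T_{\C}[n+1]$ do match what one gets from passing to right adjoints in \Cref{prop:rotationaltwist}, and the supplementary computation $T_{\mathcal{V}^3_F}\,p_\D^L(d)\simeq p_\D^R(T_\D(d))$ agrees with the paper's explicit formula for the twist (your $p_\D^L(d)$ should be $(d,F(d)\xrightarrow{\on{id}}F(d))$, a tuple with two $\C$-entries, but that is cosmetic).

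The genuine gap is the very first sentence: two colimit-preserving endofunctors that agree on a set of compact generators do \emph{not} thereby become equivalent, and the agreement does not ``extend uniquely to a natural equivalence.'' Even an exact functor on the stable subcategory generated by a single object $X$ is not determined by its value on $X$ (e.g.\ nontrivial autoequivalences fixing a generator abound). What you actually produce are natural equivalences of the composites $\on{id}_{\mathcal{V}^3_F}^*\circ\varsigma_i\simeq T_{\mathcal{V}^3_F}[n-1]\circ\varsigma_i$ separately for each $i$ (and for $p_\D^L$), but these do not assemble into an equivalence of functors on $\mathcal{V}^3_F$ without compatibility data on morphisms between objects in the images of \emph{different} $\varsigma_i$'s, and you construct no globally defined natural transformation $T_{\mathcal{V}^3_F}[n-1]\to\on{id}_{\mathcal{V}^3_F}^*$ whose invertibility could then be checked on generators. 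To close the gap you must either exhibit such a global comparison map, or upgrade your object-level identities to the full bifunctorial duality $\on{Mor}(x,y)\simeq\on{Mor}(y,T_{\mathcal{V}^3_F}(x)[n-1])^*$ on compact objects and appeal to uniqueness of Serre functors --- which is precisely the extra bookkeeping the paper's cofiber-decomposition argument is designed to carry through.
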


\begin{proof}
During this proof, we will use the following simplified and abusive notation for elements of $\mathcal{V}^3_F$: given an element of $\mathcal{V}^3_F$, i.e.~a diagram $d\to c_1\to c_2$, with the morphism $d\to c_1$ lying in the Grothendieck construction of $F$, meaning that it encodes a morphism $F(d)\to c_1$, we simply write it as a tuple $(d,c_1,c_2)$. We similarly write elements $c_1\to c_2$ of $\on{Fun}(\Delta^1,\mathcal{C})$ as pairs $(c_1,c_2)$ and elements $d\to c$ of $\mathcal{V}^2_F\simeq \on{fib}(\varrho_1)\subset \mathcal{V}^3_F$ as pairs $(d,c)$. Given elements $x=(c_1,c_2)\in \on{Fun}(\Delta^1,\mathcal{C})$ and $d\in \mathcal{D}$, we will also write $(d,x)$ for $(d,c_1,c_2)$. 

The Serre functor of $\mathcal{C}$ is $U_\mathcal{C}\simeq [n-1]$, the Serre functor of $\mathcal{D}$ is given by $T_\mathcal{D}[n-1]$ and the Serre functor of $\on{Fun}(\Delta^1,\mathcal{C})$ is denoted by $U_2$. The functor $U_2$ is given by the tensor product of the Serre functors of $\on{Fun}(\Delta^1,\on{RMod}_R)$ and $\C$. By \Cref{lem:serreofDelta1C} and \Cref{rem:Serreexplicit}, $U_2$ is thus given by the assignment 
\[ U_2\colon (c_1,c_2)\mapsto (c_2,\on{cof}(c_1\to c_2))[n-1] \,.\]
A straightforward computation shows that the twist functor $T_{\mathcal{V}^3_F}\colon \mathcal{V}^3_F\to \mathcal{V}^3_F$ of $S^L\dashv S$ is given by the assignment 
\[
T_{\mathcal{V}^3_F}\colon (d,c_1,c_2) \mapsto (\on{cof}(d\to G(c_2)),\on{cof}(F(d)\to c_2),\on{cof}(c_1\to c_2))\,.
\]
This assignment is to be understood as in \Cref{rem:Serreexplicit}, meaning the apparent functor corresponding to the above formulas constructed using the universal properties of the involved lax limits.

Consider the relative suspension functor $\tau\colon \mathcal{V}^2_F\to \mathcal{V}^2_F$ of \cite[Def.~2.5.8]{DKSS21}, which is on objects given by mapping $(d,c)$ to $(\on{cof}(d\to G(c)),\on{cof}(F(d)\to c))$. The functor $\tau$ is an equivalence by the sphericalness of $F$ and \cite[Cor.~2.5.10]{DKSS21}, and this implies that
\begin{align*}
\on{Mor}_{\mathcal{V}^2_F}((d',c'),(d,F(d)))&\simeq \on{Mor}_{\mathcal{V}^2_F}(\tau((d',c')),\tau((d,F(d))))\\
&\simeq \on{Mor}_{\mathcal{V}^2_F}((\on{cof}(d'\to G(c')),\on{cof}(F(d')\to c')),(T_\mathcal{D}(d),0))\\
&\simeq \on{Mor}_{\mathcal{D}}(\on{cof}(d'\to G(c')),T_\mathcal{D}(d))\,,
\end{align*}
bifunctorial in $(d',c')\in (\mathcal{V}^2_F)^{\on{op}}$ and $d\in \mathcal{D}$.

We thus have the following equivalences, bifunctorial in $(d,c_1,c_2)\in (\mathcal{V}_F^3)^{\on{c}}$ and $(d',c_1',c_2')\in (\mathcal{V}_F^3)^{\on{c},\on{op}}$:
\begin{align*}
&\on{Mor}((d,c_1,c_2),(d',c_1',c_2'))\\
&\simeq \on{cof}(\on{Mor}((d[-1],0,0),(d',c_1',c_2'))\to \on{Mor}((0,c_1,c_2),(d',c_1',c_2')))\\
&\simeq \on{cof}(\on{Mor}_{\mathcal{D}}(d,\on{cof}(d'\to G(c_1')))\to \on{Mor}_{\on{Fun}(\Delta^1,\mathcal{C})}((c_1,c_2),(c_1',c_2')))\\
&\simeq \on{cof}(\on{Mor}(\on{cof}(d'\to G(c_1')),T_\mathcal{D}(d)[n-1])^*\to \on{Mor}((c_1',c_2'),U_2((c_1,c_2)))^*)\\
&\simeq \on{cof}(\on{Mor}((d',c_1',c_2'),(d,F(d),0)[n-1])^* \to \on{Mor}((d',c_1',c_2'),(G(c_2),U_2(c_1,c_2))[n-1])^*)\\
&\simeq \on{Mor}((d',c_1',c_2'), T_{\mathcal{V}^3_F}(d,c_1,c_2)[n-1])^*
\end{align*}
This shows that $T_{\mathcal{V}^3_F}[n-1]$ is a Serre functor and thus equivalent to $\on{id}_{\mathcal{V}^3_F}^*$.
\end{proof}

\begin{proof}[Proof of \Cref{prop:sphericalrightCY}.]
Consider the functor $S^L\colon \mathcal{V}^3_F\to \C^{\times 3}$ from \Cref{lem:CYfromspherical} which admits a weak right $n$-Calabi--Yau structure. Applying \Cref{thm:rightCYglue} to the pullback diagram in $\on{LinCat}_R$ 
\[
\begin{tikzcd}[column sep=large]
\D \arrow[d] \arrow[r] \arrow[rr, "F", bend left=15] \arrow[rd, "\lrcorner", phantom] & \mathcal{V}^3_F \arrow[d, "{(\varrho_1,\varrho_2)}"] \arrow[r, "{\varrho_3[-2]}"'] & \C \\
0 \arrow[r]                                                                        & \C^{\times 2}                                                                              &   
\end{tikzcd}
\]
yields the desired weak right $n$-Calabi--Yau structure on $F$.
\end{proof}

\section{Examples}\label{subsec:firstCYexamples}

We begin in \Cref{subsec:FScats} by describing Fukaya--Seidel categories as the global sections of perverse schobers on the disc and using this to construct relative Calabi--Yau structures on these. In \Cref{subsec:periodicFukaya}, we describe a special case of \Cref{thm:FukayaCY} concerning relative Calabi--Yau structures on periodic topological Fukaya categories of marked surfaces. Finally, we observe in \Cref{subsec:relGinzburg} that the derived categories of relative Ginzburg algebras of $n$-angulated surfaces admit relative left $n$-Calabi--Yau structures, and further exhibit in some cases weak right $n$-Calabi--Yau structures on the $R$-linear versions of the finite derived categories of these relative Ginzburg algebras, where $R$ is an arbitrary base $\mathbb{E}_\infty$-ring spectrum.

\subsection{Fukaya--Seidel categories} \label{subsec:FScats}

The cosheaves of partially wrapped Fukaya categories of \cite{GPS24} give rise to perverse schobers. We will explain this in this section in the setting of Lefschetz fibrations over the disc. We remark that a related construction of Fukaya--Seidel categories using perverse schobers appears in \cite{KSS20}. 

We work in the setup for partially wrapped Fukaya categories of \cite{GPS24}. The original construction of Fukaya--Seidel categories in the different setup of \cite{Sei08}, as a directed $A_\infty$-category, can be treated in a similar way using perverse schobers. 

Let $\mathbb{C}_{\geq 0}$ be the half-plane (considered as a Liouville sector). Let $\pi\colon X\to \mathbb{C}_{\geq 0}$ be a Lefschetz fibration (in the sense of \cite{GPS24}), with $X$ a Liouville sector, with regular Weinstein fiber $F$ and core $\mathfrak{f}\subset F$. Let $2n$ be the dimension of $M$. We assume that $2n\geq 4$. We further assume that the wrapped Fukaya category $\mathcal{W}(F)$ of the fiber is weak left $(n-1)$-Calabi--Yau, which is shown under minor assumption on $F$ in \cite{Gan13}.

The wrapped Fukaya category $\mathcal{W}(X)$ is equivalent to the partially wrapped Fukaya category $\mathcal{W}(\bar{X},\mathfrak{f})$ of the Liouville manifold $\bar{X}$ arising from $X$ with a stop at $\mathfrak{f}$. This $A_\infty$-category is called the Fukaya--Seidel category of the Lefschetz fibration $\pi$. We will denote it by $\on{FS}(\pi)\coloneqq \mathcal{W}(X)$.

From $\on{FS}(\pi)$, we obtain a $k$-linear stable $\infty$-category $\D(\on{FS}(\pi))\in \on{LinCat}_{k}$, by first choosing a quasi-equivalent dg category to $\on{FS}(\pi)$ and then passing to its derived $\infty$-category. 

As described in Example 1.31 of \cite{GPS24}, the Fukaya--Seidel category $\on{FS}(\pi)$ arises as the homotopy colimit of a diagram of $A_\infty$-categories with values given by the $A_\infty$-categories $\mathcal{W}(F)\otimes A_2, \mathcal{W}(F)$ and $\on{Perf}(k)$. This diagram describes a perverse schober and is described in more detail in \Cref{constr:FSschober} below. This allows us to obtain a weak relative left $n$-Calabi--Yau structure on $\D(\on{FS}(\pi))$:

\begin{theorem}\label{thm:FSschober}
Let $\pi\colon X\to \mathbb{C}_{\geq 0}$ be a Lefschetz fibration as above.
\begin{enumerate}[(i)]
\item The derived $\infty$-category of the Fukaya--Seidel category $\D(\on{FS}(\pi))$ arises as the $\infty$-category of global sections of the perverse schober $\mathcal{F}$ on $\mathbb{C}_{\geq 0}$ from \Cref{constr:FSschober} with singularities at the singular values of $\pi$ and generic stalk $\D(\mathcal{W}(F))$. 
\item Passing to derived $\infty$-categories, the canonical functor 
\[ \mathcal{W}(F)\to \on{FS}(\pi)\] 
known as the cup/Orlov functor\footnote{See \cite{Syl19}. The left adjoint is called the cap functor.} agrees with the spherical boundary corestriction functor $\partial \mathcal{F}$, see Equation \eqref{eq:cupfunctor}. The functor $\partial \mathcal{F}$ admits a weak left $n$-Calabi--Yau structure, exhibiting $\D(\on{FS}(\pi))$ as weakly relative left $n$-Calabi--Yau.
\end{enumerate} 
\end{theorem}

\begin{remark}
The derived Fukaya--Seidel category $\on{FS}(\pi)\in \on{LinCat}_k$ is smooth as the colimit of smooth $\infty$-categories and proper as it is generated by the thimbles. There is a pushout diagram in $\on{LinCat}_k$ of the following form, see \cite[Thm.~1.20]{GPS24}:
\[
\begin{tikzcd}
\D(\mathcal{W}(F)) \arrow[r, "\partial \mathcal{F}"] \arrow[d] \arrow[rd, "\ulcorner", phantom] & \D(\on{FS}(\pi)) \arrow[d] \\
0 \arrow[r]                                                                                     & \D(\mathcal{W}(\bar{X}))  
\end{tikzcd}
\] 
Thus $\on{FS}(\pi)$ can be seen as a smooth and proper resolution of the smooth $\D(\mathcal{W}(\bar{X}))$. The Serre functor on $\D(\on{FS}(\pi))$ is given by part (ii) of \Cref{thm:FSschober} by a shift of the cotwist functor of the spherical adjunction $\partial \mathcal{F}\leftrightarrow \on{radj}(\partial \mathcal{F})$.
Note that spherical functors commute with their (co)twist functors, see \cite[Lem.~2.2]{Chr20}. The Serre functor of $\on{FS}(\pi)$ thus stabilizes the stable subcategory generated by the image of $\partial \mathcal{F}$.

Furthermore, by gluing with the Calabi--Yau functor $\D(\mathcal{W}(F))\to 0$ via \Cref{thm:leftCYglue}, we find that $\D(\mathcal{W}(\bar{X}))$ inherits a weak left $n$-Calabi--Yau structure, recovering the result of \cite{Gan13}.
\end{remark}

\begin{construction}\label{constr:FSschober}
Let $n$ be the number of singular values of the Lefschetz fibration $\pi$. Consider the following ribbon graph $\rgraph_\pi$:
\[
\begin{tikzcd}
                        &                                            & s_n \arrow[rd, no head] &                         \\
                        & \dots                                      &                         & v_n \arrow[ld, no head] \\
s_1 \arrow[rd, no head] &                                            & \dots                   &                         \\
                        & v_1 \arrow[ru, no head] \arrow[d, no head] &                         &                         \\
                        & {}                                         &                         &                        
\end{tikzcd}
\]
We can embed $\rgraph_\pi$ into $\mathbb{C}_{\geq 0}$, making it into a spanning ribbon graph, such that each vertex $s_i$ lies at a singular value of $\pi$ and the external edge incident to $v_1$ ends on the unique boundary component of $\mathbb{C}_{\geq 0}$. The embedding of $\rgraph_\pi$ into $\mathbb{C}_{\geq 0}$ decomposes $\mathbb{C}_{\geq 0}$ into $A_2$-sectors, lying near the vertices $v_1,\dots,v_n$, not containing any singular values, as well as $n$ half-planes containing the singular values $s_i$. The derived wrapped Fukaya categories of the inverse images of the half-planes are each equivalent to $\D(k)$. The Fukaya--Seidel category arises by \cite[Example 1.31]{GPS24} as the homotopy colimit of a diagram of $A_\infty$-categories, indexed by the opposite of the exit path category of $\rgraph_\pi$. This diagram assigns
\begin{itemize}
\item to each edge of $\rgraph_\pi$ an $A_\infty$-category Morita-equivalent to the wrapped Fukaya category $\mathcal{W}(F)$ of the fiber,
\item to each vertex $v_i$ an $A_\infty$-category Morita-equivalent to $\mathcal{W}(F)\otimes A_2$, and
\item to each vertex $s_i$ an $A_\infty$-category Morita-equivalent to $\on{Perf}(k)$.
\end{itemize} 
Passing to derived $\infty$-categories and right adjoint functors yields a diagram $\mathcal{F}\colon \on{Exit}(\rgraph)\to \on{LinCat}_k$, which is readily verified to describe a perverse schober with singularities at $s_1,\dots,s_n$. The spherical adjunctions at the vertices $s_i$ arise from the spherical objects in $\mathcal{W}(F)$ given by the vanishing cycles of the Lefschetz fibration.

Note that we can contract the ribbon graph $\rgraph_\pi$ to the following ribbon graph $\rgraph_\pi'$:
\[
\begin{tikzcd}
s_1 \arrow[rd, no head] & \dots                                    & s_n \\
                        & v \arrow[d, no head] \arrow[ru, no head] &     \\
                        & {}                                       &    
\end{tikzcd}
\]
This allows to understand the $\infty$-category of global sections of $\mathcal{F}$ as a  non-full subcategory of $\mathcal{D}(\mathcal{W}(F)\otimes A_n)\simeq \on{Fun}(\Delta^{n-1},\mathcal{D}(\mathcal{W}(F)))$, similar to the directed subcategory construction of \cite{Sei08}. From this perspective, the thimbles of the Lefschetz fibration amount to coCartesian sections of the $\rgraph_\pi'$-parametrized perverse schober of the following form: For $X_i\in \D(\mathcal{W}(F))$ the $i$-th vanishing cycle, the corresponding thimble $Y_i$ is given as follows,
\[
\begin{tikzcd}[column sep=0]
0 \arrow[d] & & k \arrow[d]                                                                                                                                                                                       & & 0 \arrow[d] \\
0           & \rlap{~~~~~~~~~~~~\dots} & X_i                                                                                                                                                                                               & \llap{\dots~~~~~~~~~~~~} & 0           \\
            &       & {\left(0\rightarrow \dots \rightarrow \underbrace{0}_{m-i\text{-th}} \rightarrow X_i \xrightarrow{\on{id}} \dots \xrightarrow{\on{id}} X_i\right)[1]} \arrow[d] \arrow[u] \arrow[llu] \arrow[rru] &       &             \\
            &       & X_i                                                                                                                                                                                               &       &            
\end{tikzcd}
\]
satisfying that the restriction of $Y_i$ to the $j$-th internal edge $e_j$ is given by $X_i$ if $i=j$ and $0$ if $i\neq j$. 
\end{construction}

We next briefly discuss spherical objects and then show that these give rise to functors with Calabi--Yau structures.

We let $k$ be a commutative ring and $\mathcal{C}$ a dualizable $k$-linear $\infty$-category. We fix an object $X\in \mathcal{C}^{\on{c}}$ whose endomorphism object is equivalent to the singular complex of the $(n-1)$-sphere for some $n\geq 2$, meaning that
\[ \on{Map}_{\mathcal{C}}(X,X)\simeq k\oplus k[-n+1] \in \mathcal{D}(k)\,.\]
The object $X$ gives rise to a $k$-linear adjunction 
\begin{equation}\label{eq:spobjadj} 
\mhyphen\otimes_k X\colon \mathcal{D}(k)\longleftrightarrow \mathcal{C}\noloc \on{Map}_{\mathcal{C}}(X,\mhyphen)\,.
\end{equation}
By the assumption on $X$, the twist functor $T_{\mathcal{D}(k)}\simeq \on{cof}(\on{id}_{\mathcal{D}(k)}\rightarrow \mhyphen\otimes_k \on{Map}_{\mathcal{C}}(X,X))$ is equivalent to the $(n-1)$-fold loop functor $[-n+1]$ and thus an equivalence. We call the object $X$ an $(n-1)$-spherical object, if the cotwist functor $T_{\mathcal{C}}$ is also an equivalence and the adjunction \eqref{eq:spobjadj} thus a spherical adjunction. In this case, the right adjoint of $\on{Map}_{\mathcal{C}}(X,\mhyphen)$ is given by $\mhyphen\otimes_k X[n-1]$. If $\mathcal{C}$ is proper and compactly generated, with Serre functor $U$, the adjunction is spherical if and only if $U(X)\simeq X[n-1]$, so that we specialize to the usual notion of a spherical object, see for instance \cite[Def.~8.1]{Huy06}: One implication of this can be proven using \cite[Prop.~4.5]{Chr20} and the fact that iterated adjoints between proper, compactly generated $\infty$-categories are obtained by compositions with powers of the Serre functors.

\begin{lemma}\label{lem:sphCY}
Let $n\not =1$ and $X\in \mathcal{C}$ an $(n-1)$-spherical object in a dualizable $k$-linear $\infty$-category.
\begin{enumerate}[(1)]
\item If $\C$ is a proper and admits a weak right $(n-1)$-Calabi--Yau structure, then the functor
\[
\mhyphen\otimes_k X\colon \D(k)\longrightarrow \C 
\]
admits a compatible weak right $n$-Calabi--Yau structure. 
\item If $\C$ is smooth and admits a weak left $n$-Calabi--Yau structure, then the functor
\[
\on{Map}_\C(X,\mhyphen)\colon \mathcal{C}\longrightarrow \mathcal{D}(k)
\]
admits a compatible weak left $n$-Calabi--Yau structure.
\end{enumerate}
\end{lemma}

\begin{proof}
We only show part (1), part (2) can be shown analogously.
Let $\sigma\colon k[n-1]\to \on{HH}(\C)^*$ be a weak right $(n-1)$-Calabi--Yau structure on $\C$. The class $\sigma$ gives rise to the following diagram:
\[
\begin{tikzcd}
{\on{id}_{\mathcal{D}(k)}} \arrow[r, "\unit"] & {\mhyphen\otimes_k \on{Mor}_{\mathcal{C}}(X,X)} \arrow[d, "\simeq"] &                          \\
                                            & {\mhyphen\otimes_k \on{Mor}_{\mathcal{C}}(X,U(X))[-n+1]} \arrow[r, "\tilde{\counit}"]                               & \on{id}_{\mathcal{D}(k)}^*[-n+1]
\end{tikzcd}
\]
Note that $\on{id}_{\mathcal{D}(k)}^*\simeq \on{id}_{\mathcal{D}(k)}$. Since there are no natural transformations from $\on{id}_{\mathcal{D}(k)}$ to $\on{id}_{\mathcal{D}(k)}[-n+1]$, the above diagram is equivalent to the following diagram:
\[
\begin{tikzcd}
{\on{id}_{\mathcal{D}(k)}} \arrow[r, hook] & {\on{id}_{\mathcal{D}(k)}\oplus \on{id}_{\mathcal{D}(k)}[-n+1]} \arrow[d, "="]       &                          \\
                                                & {\on{id}_{\mathcal{D}(k)}\oplus \on{id}_{\mathcal{D}(k)}[-n+1]} \arrow[r, two heads] & {\on{id}_{\mathcal{D}(k)}[-n+1]}
\end{tikzcd}
\]
This diagram clearly admits a null-homotopy which defines a non-degenerate relative dual Hochschild class $k[n]\to \HH(\mathcal{D}(k),\C)^*$, restricting to $\sigma$ on $\on{HH}(\C)^*$, thus exhibiting the desired weak right $(n+1)$-Calabi--Yau structure on the functor $\mhyphen\otimes_k X\colon \D(k)\longrightarrow \C $.
\end{proof}

\begin{proof}[Proof of \Cref{thm:FSschober}]
We begin with showing part (i). By the cosheaf properties of partially wrapped Fukaya categories, see Example 1.31 in \cite{GPS24}, we find that $\on{FS}(\pi)$ arises as the homotopy colimit of the diagram of $A_\infty$-categories indexed by $\on{Exit}(\rgraph_\pi)^{\on{op}}$ described in \Cref{constr:FSschober}. It remains to show that the passage to the derived $\infty$-category turns the $A_\infty$-categorical homotopy colimit\footnote{We point out that in contrast to the category of dg categories, the category of $A_\infty$-categories does not admit a suitable model structure, so that the notion of 'homotopy colimit' employed in \cite{GPS24} is not understood in a model categorical sense. One can nevertheless expect that the passage to the derived $\infty$-categories to always turns such homotopy colimits into $\infty$-categorical colimits.} into an $\infty$-categorical colimit. This follows from two observations. Firstly, by the universal property of the colimit, there is a comparison functor $\mathcal{H}(\rgraph_\pi,\mathcal{F})\to \D(\on{FS}(\pi))$. Secondly, both stable $\infty$-categories are generated by the thimbles and on these the above functor is a quasi-equivalence.  

To obtain the weak left Calabi--Yau structure in part (ii), we combine \Cref{thm:leftCYglue}, \Cref{lem:sphCY}, as well as \Cref{prop:loccy} applied to the spherical adjunction $\mathcal{D}(\mathcal{W}(F))\leftrightarrow 0$. The existence of the weak right Calabi--Yau structure similarly follows using the observation that the limit of $\mathcal{F}$ is equivalent to the limit of a diagram of proper subcategories in $\on{LinCat}_k^{\on{dual}}$. 
\end{proof}

\subsection{Periodic topological Fukaya categories}\label{subsec:periodicFukaya} 

Let $k$ be a field. We fix an integer $n\geq 1$ and denote by $k[t_{n}^{\pm}]$ the graded ring of Laurent polynomials in a formal variable $t_{n}$ in degree $n$. Note that if $n$ is even, then $k[t_{n}^{\pm}]$ is graded commutative. A $k[t_{n}^{\pm}]$-module amounts to an $n$-periodic $k$-linear chain complexes. If $n$ is even, we thus refer to $k[t_{n}^{\pm}]$-linear $\infty$-categories as $n$-periodic $k$-linear $\infty$-categories. In the following, we discuss how \Cref{thm:FukayaCY} specializes to periodic topological Fukaya categories. 

If $n$ is even, we set $m=n$. Otherwise, we set $m=2n$. As explained in \cite{DK15}, given an oriented marked surface ${\bf S}$ if $m=2$, or more generally a marked $m$-spin surface, with spin structure denoted by $\xi$, one can associate a topological Fukaya of ${\bf S}$ with values in the $n$-periodic stable $\infty$-category $\mathcal{D}(k[t_n^\pm])$. The analogue of its construction in \cite{DK15} in terms of the global sections of perverse schobers is as follows: there is a unique (up to equivalence) $k[t_{m}^{\pm}]$-linear locally constant perverse schober $\mathcal{F}$ on ${\bf S}$ with generic stalk $\mathcal{D}(k[t_n^\pm])$ and whose monodromy local system with respect to $\xi$, in the sense of \Cref{rem:spinstructure}, is trivial. We call its global sections the $n$-periodic topological Fukaya category of ${\bf S}$ and denote it by $\on{Fuk}({\bf S},\mathcal{D}(k[t_n^\pm]))$. The corresponding $\infty$-category of locally compact global sections of $\mathcal{F}$ is by \Cref{lem:loccpt=Indfin} equivalent to $\on{Ind}\on{Fuk}({\bf S},\mathcal{D}(k[t_n^\pm]))^{\on{fin}}$.

\begin{theorem}\label{thm:relCYperiodicFukaya}
Let $k$ be a field with $\on{char}(k)\neq 2$ and $m,n$ as above. 
\begin{enumerate}[(1)]
\item The $k[t_{m}^{\pm}]$-linear topological Fukaya category $\on{Fuk}({\bf S},\mathcal{D}(k[t_n^\pm]))$ valued in the derived $\infty$-category of $n$-periodic chain complexes admits a relative left $(n+1)$-Calabi--Yau structure. 
\item The $\on{Ind}$-finite subcategory $\on{Ind}\on{Fuk}({\bf S},\mathcal{D}(k[t_n^\pm]))^{\on{fin}}$ is proper and admits a relative right $(n+1)$-Calabi--Yau structure. Further, if each boundary component of ${\bf S}$ has at least one marked point, then $\on{Fuk}({\bf S},\mathcal{D}(k[t_n^\pm]))\simeq \on{Ind}\on{Fuk}({\bf S},\mathcal{D}(k[t_n^\pm]))^{\on{fin}}$.
\end{enumerate}
\end{theorem}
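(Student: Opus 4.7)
The plan is to deduce both parts of the theorem from \Cref{introthm:FukayaCY} (i.e.~\Cref{thm:FukayaCY}) applied to the $k[t_m^\pm]$-linear locally constant perverse schober $\mathcal{F}$ with generic stalk $\mathcal{N}=\mathcal{D}(k[t_n^\pm])$ and trivial monodromy relative the spin structure $\xi$. The main verifications required are (a) that over the base $R=k[t_m^\pm]$ the generic stalk $\mathcal{N}$ is simultaneously smooth and proper and carries both a left and a right $n$-Calabi--Yau structure, and (b) that the triviality of the $\xi$-monodromy local system $\xi^*\mathcal{L}\mathcal{F}$ forces $\HH(\mathcal{L}\mathcal{F})^{S^1}$ and $\HH(\mathcal{L}\mathcal{F})_{S^1}^*$ to preserve any chosen cyclic class on $\mathcal{N}$.

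For (a), observe that $\mathcal{D}(k[t_n^\pm])$, viewed over $R=k[t_m^\pm]$ (which is graded commutative by the case distinction on the parity of $n$), is the derived $\infty$-category of a graded algebra whose underlying $R$-module $k[t_n^\pm]$ is dualizable: if $n$ is even it equals $R$; if $n$ is odd it is a free $R$-module of rank $2$ generated by $1$ and $t_n$. In either case $k[t_n^\pm]$ is a smooth and proper $R$-algebra, so $\mathcal{N}$ is a smooth and proper $R$-linear $\infty$-category. The class $[k[t_n^\pm]]\in \HH(\mathcal{N})$ represented by the bimodule $k[t_n^\pm]$ over itself provides an equivalence $\on{id}_{\mathcal{N}}^!\simeq \on{id}_{\mathcal{N}}[-n]$ and dually $\on{id}_\mathcal{N}\simeq \on{id}_{\mathcal{N}}^*[-n]$; lifting these to the respective cyclic theories (using that $\on{char}(k)\neq 2$ to avoid the well-known obstructions and to ensure the cyclic lift exists) produces an $S^1$-invariant class $\eta\colon R[n]\to\HH(\mathcal{N})^{S^1}$ and dually $\eta^*\colon R[n]\to\HH(\mathcal{N})_{S^1}^*$. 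Each of these defines a left, resp.~right, $n$-Calabi--Yau structure on $\mathcal{N}$.

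For (b), by construction $\mathcal{F}$ is chosen so that $\xi^*\mathcal{L}\mathcal{F}$ is trivial (see \Cref{rem:spinstructure}); applying the additive invariants $\HH(\mhyphen)^{S^1}$ and $\HH(\mhyphen)_{S^1}^*$ (cf.~\Cref{rem:monodromyHH}) yields trivial local systems on ${\bf S}$, which therefore fix both $\eta$ and $\eta^*$. The hypotheses of \Cref{thm:FukayaCY}(i) are thus satisfied, giving a left $(n+1)$-Calabi--Yau structure on $\partial \mathcal{F}\colon\prod_{e\in \rgraph_1^\partial}\mathcal{F}(e)\to\glsec(\rgraph,\mathcal{F})=\on{Fuk}({\bf S},\mathcal{D}(k[t_n^\pm]))$; this proves part (1). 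Similarly, \Cref{thm:FukayaCY}(ii) provides a right $(n+1)$-Calabi--Yau structure on the evaluation functor from $\cptglsec(\rgraph,\mathcal{F})$ to $\prod_{e\in \rgraph_1^\partial}\mathcal{F}(e)$.

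Finally, for part (2), since $F_v=0_{\mathcal{N}}$ is trivially conservative and reflects compact objects at every vertex and $\mathcal{N}$ is proper, \Cref{lem:loccpt=Indfin} identifies $\cptglsec(\rgraph,\mathcal{F})$ with $\on{Ind}\glsec(\rgraph,\mathcal{F})^{\on{fin}}=\on{Ind}\on{Fuk}({\bf S},\mathcal{D}(k[t_n^\pm]))^{\on{fin}}$, which is automatically proper as a limit of proper categories in $\on{LinCat}_R^{\on{cpt-gen}}$. When each boundary component of ${\bf S}$ carries at least one marked point, \Cref{rem:cptglsec=glsec} gives the equivalence $\on{Fuk}({\bf S},\mathcal{D}(k[t_n^\pm]))\simeq \on{Ind}\on{Fuk}({\bf S},\mathcal{D}(k[t_n^\pm]))^{\on{fin}}$. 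The anticipated main obstacle is the careful lift of the $n$-Calabi--Yau class of $\mathcal{N}$ to negative/dual cyclic homology over $R=k[t_m^\pm]$; apart from this, the argument is a direct application of the gluing framework already developed.
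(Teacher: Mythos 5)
Your overall strategy coincides with the paper's: apply \Cref{thm:FukayaCY} to the unique locally constant schober with trivial $\xi$-monodromy, use \Cref{lem:loccpt=Indfin} and \Cref{rem:cptglsec=glsec} for part (2), and reduce everything to exhibiting left and right $n$-Calabi--Yau structures on the generic stalk over the base $k[t_m^\pm]$. Your step (b) is fine (triviality of $\xi^*\mathcal{L}\mathcal{F}$ immediately gives preservation of any class), and the even-$n$ case of step (a) is genuinely trivial since then $\mathcal{N}=\on{RMod}_R$ with $[n]\simeq \on{id}$.

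The gap is the odd-$n$ case of step (a), which is the only substantive input beyond \Cref{thm:FukayaCY} and which the paper isolates as \Cref{lem:periodicCY}. You assert that $A=k[t_n^\pm]$ is a smooth and proper $k[t_{2n}^\pm]$-algebra and that the diagonal bimodule ``provides an equivalence $\on{id}_\mathcal{N}^!\simeq\on{id}_\mathcal{N}[-n]$'', but neither is automatic. Smoothness requires exhibiting $A$ as a compact module over its $k[t_{2n}^\pm]$-linear enveloping algebra $A^e\simeq k[t^\pm,s^\pm]/(s^2-t^2)$ with $st=-ts$; the paper does this by splitting $A^e\simeq A\oplus\bar{A}$ via $1\mapsto 1\mp s^{-1}t$, and it is precisely here --- the composite being multiplication by $2$ --- that $\on{char}(k)\neq 2$ enters, not in the cyclic lift as you suggest. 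Likewise $\on{id}_\mathcal{N}^!\simeq\on{id}_\mathcal{N}[-n]$ is a sign-sensitive computation: $\on{RHom}_{A^e}(A,A^e)$ is $A$ with the actions of $t$ and $s$ twisted by a sign, and only because $n$ is odd does the shift $[-n]$ implement exactly this twist (note that $[n]\not\simeq\on{id}_\mathcal{N}$ as a $k[t_{2n}^\pm]$-linear endofunctor, so one cannot appeal to periodicity here). Finally, the cyclic lift you flag as the ``main obstacle'' is in fact the easy part: the $S^1$-action on $\HH(\mathcal{D}(A))\simeq A$ is trivial for degree reasons, since $k[S^1]\simeq k[s_1]/(s_1^2)$ with $|s_1|=1$ and the relevant summand sits in degrees differing by $2n\geq 2$. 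So your plan is the correct one, but the odd-$n$ stalk computation must actually be carried out, and the role of the characteristic assumption relocated.
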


\begin{proof}
Combine \Cref{thm:FukayaCY}, \Cref{lem:periodicCY} for $n$ odd and \Cref{rem:cptglsec=glsec}.
\end{proof}

\begin{lemma}\label{lem:periodicCY}
Let $k$ be a field with $\on{char}(k)\neq 2$. Let $n\geq 1$ be odd. Then $\mathcal{D}(k[t_{n}^\pm])$ is smooth and proper as a $k[t_{2n}^\pm]$-linear $\infty$-category and further admits left and right $n$-Calabi--Yau structures.
\end{lemma}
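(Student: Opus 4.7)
The plan is as follows. Write $A = k[t_n^\pm]$ and $B = k[t_{2n}^\pm]$. Properness is immediate: as a $B$-module $A = B\cdot 1 \oplus B\cdot t_n$ is finite free of rank $2$. For smoothness, I would check that the $A$-bimodule $A$ is compact over the enveloping algebra $A^e = A \otimes_B A^{\mathrm{op}}$. When $n$ is odd, $A$ is not graded commutative, so $A^{\mathrm{op}}$ differs from $A$: writing $x_1 = t_n \otimes 1$ and $x_2 = 1 \otimes t_n$ for the two natural degree-$n$ generators of $A^e$, one computes $x_1^2 = t_{2n}$, $x_2^2 = -t_{2n}$, and $x_1 x_2 + x_2 x_1 = 0$. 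This identifies $A^e$ with a Clifford $B$-algebra of rank $4$. Since $x_1 + x_2$ is isotropic and $\mathrm{char}(k)\neq 2$, the Clifford algebra splits, yielding an isomorphism $A^e \simeq \on{End}_B(W)$ with $W = B \oplus B[n]$. Under this identification the regular bimodule $A$ corresponds to $W$, which is $A^e$-projective of finite rank; hence $\mathcal{D}(A)$ is smooth over $B$.

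For the Calabi--Yau structures, I would exhibit the graded symmetric Frobenius form
\[
\beta : A \otimes_B A \longrightarrow B, \qquad \beta(a, a') = \on{coeff}_{t_n}(aa'),
\]
a $B$-linear map of homological degree $-n$ whose matrix in the basis $\{1, t_n\}$ is $\bigl(\begin{smallmatrix} 0 & 1 \\ 1 & 0\end{smallmatrix}\bigr)$ and which is therefore non-degenerate. A direct check verifies that $\beta$ is cyclic, $\beta(ab,c) = \beta(a,bc)$, and graded symmetric in the Koszul sense, $\beta(a,a') = (-1)^{|a||a'|}\beta(a',a)$: the only non-trivial sign check is $\beta(t_n, t_n) = 0 = (-1)^{n^2}\beta(t_n, t_n)$, where both sides vanish precisely because $n$ is odd. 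Thus $\beta$ induces an $A$-bimodule equivalence $A[n] \simeq A^\vee = \on{RHom}_B(A, B)$, so the Serre bimodule is $A[n]$ and the Serre functor $\on{id}_{\mathcal{D}(A)}^*$ is equivalent to $[n]$, producing a weak right $n$-Calabi--Yau structure. Dually, since $\mathcal{D}(A)$ is smooth and proper, $\on{id}^!$ is the inverse of $\on{id}^*$ by \Cref{lem:serreinv}, so $\on{id}^! \simeq [-n]$, giving a weak left $n$-Calabi--Yau structure.

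It remains to promote the weak structures to honest left and right $n$-Calabi--Yau structures by exhibiting (dual) cyclic / negative cyclic lifts of the corresponding Hochschild classes. Since $A$ is $A^e$-projective, the bar complex has no higher homology and $\HH_B(\mathcal{D}(A)) \simeq A / [A, A]^{\mathrm{gr}}$ concentrated in homological degree zero. The graded commutator subspace $[A,A]^{\mathrm{gr}}$ is generated by elements $t_n^j t_n^k - (-1)^{n^2 jk}t_n^k t_n^j = (1-(-1)^{jk})\,t_n^{j+k}$ with $j,k$ both odd, which for $\on{char}(k)\neq 2$ and $n$ odd produces exactly the even part $B \subset A$; hence $\HH_B(\mathcal{D}(A)) \simeq A/B \simeq B[n]$. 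Because $\HH_B(\mathcal{D}(A))$ is concentrated in a single homological degree, the Connes $B$-operator vanishes, the $S^1$-action is trivial, and the class representing $\beta$ lifts canonically both to negative cyclic homology (for the left CY structure) and, dually, to dual cyclic homology (for the right CY structure). The main technical subtlety is the sign bookkeeping underlying both the Clifford description of $A^e$ and the computation of $[A,A]^{\mathrm{gr}}$; once the graded conventions are fixed, no further obstacles arise.
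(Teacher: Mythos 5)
Your proposal is correct and reaches the paper's conclusions, but by a genuinely different route at almost every stage, so a comparison is worthwhile. For smoothness, where the paper exhibits explicit idempotents $\tfrac12(1\pm s^{-1}t)\in A^e$ splitting $A$ off as a direct summand of $A^e$, you identify $A^e$ as the graded Clifford algebra of the hyperbolic form $\langle t_{2n},-t_{2n}\rangle$ and invoke the splitting $C(H)\simeq \on{End}_B(B\oplus B[n])$; these are the same computation in different clothing (the idempotent is the rank-one projector attached to the isotropic decomposition), and both isolate $\on{char}(k)\neq 2$ as the essential hypothesis. Your presentation of $A^e$ (with $x_2^2=-t_{2n}$) is in fact the correct one; the paper's stated relation $s^2=t^2$ is a sign slip, though its idempotents work with the corrected relation. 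Your identification of the diagonal bimodule with $W$ itself holds only up to tensoring with an invertible $B$-module, but projectivity — all you need — is unaffected. For the Calabi--Yau identifications you work on the proper side first, via the graded symmetric Frobenius form $\on{coeff}_{t_n}$, whereas the paper computes $A^!=\on{RHom}_{A^e}(A,A^e)\simeq A[-n]$ on the smooth side and dualizes; both are valid. (One cosmetic point: $\beta(t_n,t_n)=0$ holds for every $n$; what oddness buys is that graded symmetry \emph{forces} this vanishing via $x=-x$, and it is satisfied.)

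The most interesting divergence is the Hochschild computation. Your $\HH(\mathcal{D}(A))\simeq A/[A,A]^{\mathrm{gr}}\simeq B[n]$, a rank-one invertible $B$-module, disagrees with the paper's asserted $\HH(\mathcal{D}(A))\simeq A$ (rank two over $B$); your answer is the correct one. Indeed $\on{Hom}_{A^e}(A,A)$ is the graded center of $A$, which is $B$ rather than $A$, and $\on{Hom}_{A^e}(A,\bar{A})$ is a twisted center isomorphic to $Bt_n\neq 0$, contrary to the paper's intermediate claims; consistently, $A$ is a nontrivial graded Azumaya algebra over $B$, so its Hochschild homology must be an invertible $B$-module. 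This discrepancy does not affect either argument for the cyclic lifts: your observation that the Hochschild homology is concentrated in bar degree zero (so the Connes operator vanishes) combined with the absence of degree-one elements in $B[n]$ for $2n\geq 2$ is the same degree argument the paper uses, and it suffices to lift the nondegenerate classes to negative cyclic and dual cyclic homology.
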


\begin{proof}
Denote $A\coloneqq k[t_n^\pm]$. The $k[t_{2n}^\pm]$-linear enveloping algebra $A^e$ of $A$ is given by the graded commutative dg algebra $k[t^\pm,s^\pm]/(s^2-t^2)$, with generators $t,s$ in degrees $n$ and satisfying $st=(-1)^{m^2}ts=-ts$ (graded commutativity), as well as $s^2=t^2$. As a right $A^e$-module, $A$ is equipped with the action $1.t=t$ and $1.s=-t$. As a left $(A^e)^{\on{op}}$-module, $A$ is equipped with the action $t.1=-t$ and $s.1=t$. We denote by $\bar{A}$ the the right $A^e$-module $A$ with the action $1.t=t$ and $1.s=t$. 

We consider $A^e$ as a right module over itself. There is a retract of right $A^e$-modules  
\[
A\xrightarrow{1\mapsto 1-s^{-1}t}A^e\xrightarrow{1\mapsto 1}A
\]
since the composite is given by multiplication by $1-1.s^{-1}t=1+t^{-1}t=2\neq 0$ and thus invertible. There is a similar retract
\[
\bar{A}\xrightarrow{1\mapsto 1+s^{-1}t}A^e\xrightarrow{1\mapsto 1}\bar{A}
\]
and $A^e\simeq A \oplus \bar{A}$. 

It follows that $A$ is compact as a right $A^e$-module. The inverse dualizing functor $\on{id}_{\mathcal{D}(A)}^!$ is given by the tensor product with the left $A^e$-module $A^!=\on{RHom}_{A^e}(A,A^e)$. One finds $\on{RHom}_{A^e}(A,\bar{A})\simeq \on{RHom}_{A^e}(\bar{A},A)\simeq 0$. We thus have $\on{RHom}_{A^e}(A,A^e)\simeq \on{RHom}_{A^e}(A,A)\simeq \on{RHom}_{A^e}(A^e,A)\simeq A$ on $k$-linear homology, with $1\in A$ being the image of $\phi\colon A\to A^e,\,1\mapsto 1-s^{-1}t$. The element $t\in A$ corresponds to $\phi'\colon 1\mapsto t-s$. The left action of $A^e$ on $\on{Hom}_{A^e}(A,A^e)\simeq A$ is determined by $t.\phi=\phi'$ and $s.\phi=-\phi'$. It follows that $A^!=\on{RHom}_{A^e}(A,A^e)\simeq A[-n]$ as left $A^e$-modules, since the shift by $n$ preserves the homology of $A$ over $k$ but flips the signs of the actions of $t$ and $s$ (since $n$ is odd). This shows that $\on{id}_{\mathcal{D}(A)}^!\simeq \on{id}_{\mathcal{D}(A)}[-n]$, as desired. Composing with $\on{id}_{\mathcal{D}(A)}^*$ also yields $\on{id}_{\mathcal{D}(A)}\simeq \on{id}_{\mathcal{D}(A)}^*[-n]$.

It remains to show that the (dual) Hochschild homology classes of these weak left and right $m$-Calabi--Yau structures lift to negative cyclic homology and dual cyclic homology, respectively. We show this by proving the triviality of the $S^1$-action on $\HH(\mathcal{D}(A))$. We have $\on{HH}(\mathcal{D}(A))\simeq \on{RHom}_{A^e}(A^!,A)\simeq \on{RHom}_{A^e}(A,A)[n]\simeq A[n]\simeq A$. The $S^1$-action on the $k[t_{2n}^\pm]$-linear Hochschild homology $\on{HH}(\mathcal{D}(k[t_{2n}^\pm]))\simeq k[t_{2n}^\pm]$ is trivial by \Cref{rem:traceofunit} and the same thus holds for its image $k[t_{2n}^\pm]\subset A$ under 
\[\on{HH}(\mhyphen \otimes_{k[t_{2n}^\pm]} A)\colon \HH(\mathcal{D}(k[t_{2n}^\pm]))\to \HH(\mathcal{D}(A))\,.\] 
We are left with determining the $S^1$-action on the summand $k[t_{2n}^\pm][n]\subset A\simeq k[t_{2n}^\pm]\oplus k[t_{2n}^\pm][n]$. An $S^1$-action on a $k[t_{2n}^\pm]$-module is the same as a $k[t_{2n}^\pm][S^1]$-module structure, where $k[t_{2n}^\pm][S^1]\simeq k[t_{2n}^\pm]\otimes_k k[S^1]\simeq k[t_{2n}^\pm]\otimes_k k[s_1]/(s_1^2=0)$, with $|s_1|=1$, see for instance \cite[Prop.~3.3]{HR20} for the latter equivalence. Since $2n\geq 2$, the action of $s_1$, and thus of $S^1$, on $k[t_{2n}^\pm][n]$ is trivial for degree reasons, concluding the proof.
\end{proof}

\begin{remark}
There is a $2$-periodic version of the sphere spectrum and topological Fukaya categories with coefficients in the modules over this ring spectrum have been considered in \cite{LurWaldhausen}. Interestingly, the $2$-periodic sphere spectrum is not an $\mathbb{E}_\infty$-ring spectrum, but only an $\mathbb{E}_2$-ring spectrum, so that \Cref{thm:relCYperiodicFukaya} cannot be directly lifted to this setting.
\end{remark}

\subsection{Relative Ginzburg algebras of surfaces}\label{subsec:relGinzburg}

Fix a base $\mathbb{E}_\infty$-ring spectrum $R$ and let $n\geq 3$. Let ${\bf S}$ be a marked surface, equipped with an $n$-valent spanning ribbon graph $\rgraph$. It is dual to a so-called  ideal $n$-angulation, roughly meaning a decomposition of ${\bf S}$ into $n$-gons with vertices at the marked points of ${\bf S}$. There is an associated perverse schober $\mathcal{F}_{\rgraph}(R)$, see \cite{Chr21b}. If $R$ is discrete, i.e.~a commutative ring, the $\infty$-category of global sections $\glsec({\rgraph},\mathcal{F}_{\rgraph})$ is equivalent to the derived $\infty$-category $\mathcal{D}(\mathscr{G}_\rgraph)$ of a relative Ginzburg algebra $\mathscr{G}_\rgraph$, see \cite{Chr21b}. The generic stalk of $\mathcal{F}_{\rgraph}$ is given by the $\infty$-category $\on{Fun}(S^{n-1},\on{RMod}_R)$ of $\on{RMod}_R$-valued local systems on the $(n-1)$-sphere. At every vertex of ${\rgraph}$, the spherical adjunction underlying the perverse schober $\mathcal{F}_{\rgraph}(R)$ is given by the adjunction
\[ f^*:\on{RMod}_R\longleftrightarrow \on{Fun}(S^{n-1},\on{RMod}_R):f_*\]
arising from the pullback functor along the inclusion of the boundary $f:S^{n-1}\rightarrow D^n\simeq \ast$ of the $n$-ball. 

As shown in \cite{Chr20}, there is an equivalence of $R$-linear $\infty$-categories
\[ \on{RMod}_{R[t_{n-2}]}\simeq \on{Fun}(S^{n-1},\on{RMod}_R)\,,\]
where  $R[t_{n-2}]$ denotes the free $R$-linear algebra generated by $R[n-2]\in \on{RMod}_R$. Under this equivalence, the functor $f_*$ is identified with the pullback functor $\phi^*$ along $R[t_{n-2}]\xrightarrow{t_{n-2}\mapsto 0}R$. Note that if $R=k$ is a field, then $k[t_{n-2}]$ is the graded polynomial algebra with generator in degree $|t_{n-2}|=n-2$.

If $R=k$ is a field, it is shown in \cite[Thm.~5.7]{BD19} that the functor $f_*$ admits a relative left $n$-CY structure, which further restricts to a left $(n-1)$-Calabi--Yau structure on $\on{Fun}(S^{n-1},\on{RMod}_k)$. The next theorem states, that \Cref{thm:schobercy} applies to give a relative Calabi--Yau structure on the global sections of $\mathcal{F}_{\rgraph}(k)$ if either $n$ is odd, or the spanning graph $\rgraph$ is orientable in the following sense.

\begin{definition}
Let $n$ be even. We call the $n$-valent spanning graph $\rgraph$ of ${\bf S}$ orientable, if there exist choices of orientations of the edges of $\rgraph$, such that the directions of the halfedges at any vertex of $\rgraph$ alternate in their cyclic order. 
\end{definition}

\begin{theorem}\label{thm:ginzburgcy}
Let $\rgraph$ be an $n$-valent spanning graph of a marked surface ${\bf S}$ and $\mathcal{D}(\mathscr{G}_{\rgraph})$ the derived $\infty$-category of the corresponding relative Ginzburg algebra $\mathscr{G}_{\rgraph}$. If $n$ is odd or $\rgraph$ orientable, then the functor
\[ \prod_{e\in \rgraph_1^\partial} \mathcal{D}(k[t_{n-2}])\simeq  \prod_{e\in {\rgraph}_1^\partial}\mathcal{F}_{\rgraph}(k)(e)\xlongrightarrow{\partial \mathcal{F}_{\rgraph}(k)} \glsec({\rgraph},\mathcal{F}_{\rgraph}(k))\simeq \mathcal{D}(\mathscr{G}_{\rgraph})\]
admits a $k$-linear left $n$-Calabi--Yau structure. 
\end{theorem}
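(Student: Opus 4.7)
My plan is to deduce the theorem from the gluing result thm:schobercy(i) applied to $\mathcal{F}_\rgraph(k)$. Since $\glsec(\rgraph, \mathcal{F}_\rgraph(k)) \simeq \mathcal{D}(\mathscr{G}_\rgraph)$, the conclusion will follow once I check the two hypotheses of that theorem: at every vertex $v$ of the $n$-valent graph $\rgraph$, the local functor $\prod_{i=1}^n \on{ladj}(\mathcal{F}_\rgraph(k)(v \to e_i))\colon \N^{\times n}\to \mathcal{F}_\rgraph(k)(v)$ should carry a left $n$-Calabi--Yau structure, and at every internal edge $e$ the two restrictions $\eta_{e,a}, \eta_{e,b}\colon k[n-1]\to \HH(\N)^{S^1}$ should be opposite classes.

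For the local Calabi--Yau structures, I would invoke Brav--Dyckerhoff's result \cite[Thm.~5.7]{BD19}, which produces a left $n$-Calabi--Yau structure on the spherical functor $f_*\colon \N \to \mathcal{D}(k)$ restricting to a left $(n-1)$-Calabi--Yau structure $\eta_\N\colon k[n-1]\to \HH(\N)^{S^1}$ on the generic stalk. The spherical adjunction $f^*\dashv f_*$ underlies $\mathcal{F}_\rgraph(k)$ at every vertex of $\rgraph$, so Proposition prop:loccy(1) then supplies, at each $v$, a canonical left $n$-Calabi--Yau structure on $\mathrm{S}^n_{f^*}$ whose restriction on each component of $\N^{\times n}$ equals $\eta_\N$. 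This takes care of hypothesis (a).

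The main obstacle is the sign compatibility (b). Once translated from the ``shifted'' local model of prop:loccy to the actual functors appearing in $\mathcal{F}_\rgraph(k)$, the restriction of the local class at the $i$-th halfedge of $v$ picks up a sign $\epsilon_{v,i}\in\{\pm 1\}$ governed by the parity of the shifts $[-i]$ appearing in $\mathrm{R}^n_{f^*}$ together with any autoequivalence of $\N$ used to identify the local spherical adjunction with the schober at $v$ (using that $\HH([1])^{S^1}=-\on{id}$). The problem is to arrange these signs so that the two halfedges of every internal edge carry opposite values. When $n$ is odd, the $n$-periodicity around each vertex combined with the odd shift $[1-n]\simeq T_\N$ of the cotwist forces the parities around the cyclic order to alternate with period $2$ in a way consistent across any edge, so the required sign flip comes for free. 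When $n$ is even and $\rgraph$ is orientable, the orientation labels halfedges as incoming/outgoing in strictly alternating cyclic order; I would twist the local model at each vertex by a suitable shift autoequivalence of $\N$ (absorbing one extra $[1]$ on outgoing halfedges) so that the sign $\epsilon_{v,i}$ agrees with the orientation of the halfedge, and on any internal edge the two halfedges carry opposite orientations and therefore opposite signs. With this compatibility in place, thm:schobercy(i) produces the desired relative left $n$-Calabi--Yau structure on $\partial\mathcal{F}_\rgraph(k)$.
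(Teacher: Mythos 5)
Your overall strategy coincides with the paper's: feed \cite[Thm.~5.7]{BD19} into \Cref{prop:loccy}(1) to get local left $n$-Calabi--Yau structures at each vertex, then glue with \Cref{thm:schobercy}(i), the only real issue being the sign compatibility $\eta_{e,a}\simeq-\eta_{e,b}$ at internal edges. You also correctly identify that the restricted class at the $i$-th halfedge carries a sign coming from the shifts $[-i]$ in $\mathrm{R}^n_{f^*}$ and from whatever autoequivalences of $\N$ enter the identification of the local model with $\mathcal{F}_\rgraph(k)$ near $v$. The even/orientable case is essentially the paper's argument, although your mechanism is imprecise: adding an extra $[1]$ only on outgoing halfedges does not produce the alternating pattern $\epsilon_{v,i}=o(i)$ (check positions of both parities), and modifying single halfedges changes the schober. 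What the paper does instead is flip the \emph{overall} sign of the relative cyclic class at a vertex according to whether one chosen reference halfedge is outgoing or incoming; since parity of position and orientation are both alternating $2$-colorings of the cyclically ordered halfedges, they agree up to this one global sign per vertex.

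The genuine gap is in the odd case. The claim that compatibility across an internal edge ``comes for free'' from $n$-periodicity and the shift $[1-n]$ is not justified and is false as a general statement: if the edge sits in position $i$ at $v$ and $i'$ at $v'$, the shift-induced signs are opposite only when $i-i'$ is odd, and nothing about periodicity prevents $i-i'$ from being even. What saves the day is the specific construction of $\mathcal{F}_\rgraph(k)$ from \cite{Chr21b}: the functor $\mathcal{F}_\rgraph(k)(v\to e_i)$ is $\varrho_i$ or $\varphi^*\circ\varrho_i$, where $\varphi^*$ is pullback along $t_{n-2}\mapsto(-1)^n t_{n-2}$, and $\varphi^*$ occurs at exactly one of the two halfedges of $e$ precisely when $i-i'$ is even. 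For $n$ odd, $\HH(\varphi^*)^{S^1}=-\on{id}$, so this extra twist supplies the missing sign reversal; for $n$ even, $\varphi^*=\on{id}$ and one genuinely needs the orientability hypothesis. Your proof must engage with these $\varphi^*$-twists in the definition of the schober; without them the odd case does not close.
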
 

\begin{proof}
For the construction of $\mathcal{F}_{\rgraph}(k)$, each vertex $v$ of ${\rgraph}$ is equipped with a choice of total order of its incident halfedges, whose corresponding edges we denote by $e_1,\dots,e_n$. The functor \[\mathcal{F}_\rgraph(k)(v\to e_i)\colon \mathcal{V}^n_{f^*}\to \on{Fun}(S^{n-1},\on{RMod}_k)\]
is either given by $\varrho_i$ or $\varphi^*\circ \varrho_i$, with $\varphi^*$ the autoequivalence of $\on{Fun}(S^{n-1},\on{RMod}_k)\simeq \mathcal{D}(k[t_{n-2}])$ given by pullback along $\varphi\colon k[t_{n-2}]\xrightarrow{t_{n-2}\mapsto (-1)^{n}t_{n-2}}k[t_{n-2}]$. Inspecting \Cref{prop:loccy}, using $\HH([1])^{S^1}=-\on{id}$, and ignoring the equivalence $\varphi^*$ for the moment, we see that the signs alternate cyclically of the classes describing the left $(n-1)$-Calabi--Yau structures of $\mathcal{F}_\rgraph(k)(e_i)$, arising from restricting the relative left $n$-Calabi--Yau structure of $\mathcal{F}_\rgraph(k)(v)$. If $n$ is odd, the map $\HH(\varphi^*)^{S^1}$ reverses the sign of the class, whereas if $n$ is even, $\varphi^*=\on{id}_{\mathcal{D}(k[t_{n-2}])}$ fixes the class.

We suppose that $n$ is even and choose an orientation of $\rgraph$. Choose further for each vertex $v$ of $\rgraph$ an incident halfedge; if it points outwards from $v$, we equip $\mathcal{F}_\rgraph(k)(v)$ with the relative Calabi--Yau structure from \Cref{prop:loccy}, and if $v$ points inwards, we equip $\mathcal{F}_\rgraph(k)(v)$ with the same relative Calabi--Yau structure, except for reversing the sign of the relative dual cyclic homology class. With these choices, \Cref{thm:schobercy} applies. 

It remains to consider the case that $n$ is odd. For all vertices $v$, we equip $\mathcal{F}_\rgraph(k)(v)$ with the relative Calabi--Yau structure from \Cref{prop:loccy}. Consider the two vertices $v,v'$ incident to an edge $e_i$, in position $i$ in the total order of halfedges at $v$ and in the $i'$-th position in the total order of halfedges at $v'$. The equivalence $\varphi^*$ appears in one of the functors $\mathcal{F}_\rgraph(k)(v\to e_i)$, $\mathcal{F}_\rgraph(k)(v'\to e_i)$ if and only if the difference $i-i'$ of the two positions in the total orders of the two halfedges of $e_i$ is even. We thus see that the induced Calabi--Yau structures on $\mathcal{F}_\rgraph(k)(e_i)$ are compatible for any edge $e_i$, so that \Cref{thm:schobercy} again applies.
\end{proof}

We next note a variant of \Cref{thm:ginzburgcy} on the existence of a weak right $n$-Calabi--Yau structure on the proper $R$-linear $\infty$-category $\cptglsec(\rgraph,\mathcal{F}_{\rgraph}(R))$, with $n$ even and $R$ an arbitrary $\mathbb{E}_\infty$-ring spectrum.

\begin{theorem}\label{thm:spectralginzburgcy}
Suppose that $n$ is even and $\rgraph$ orientable. Then the functor
\[\prod_{e\in {\rgraph}_1^\partial}\on{ev}_e\colon \cptglsec(\rgraph,\mathcal{F}_{\rgraph}(R)) \longrightarrow \prod_{e\in (\rgraph)_1^\partial}\mathcal{F}_{\rgraph}(R)(e)\]
admits an $R$-linear weak right $n$-Calabi--Yau structure. 
\end{theorem}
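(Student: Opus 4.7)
The strategy is to reduce the theorem, via the (weak variant of) \Cref{thm:schobercy}(ii), to the construction of compatible weak right $n$-Calabi--Yau structures on each local evaluation functor at a vertex of $\rgraph$. For each vertex $v$ the underlying spherical adjunction supplied by \Cref{prop:localmodel} is the $R$-linear lift $\bar{f}^*\colon \on{RMod}_R\leftrightarrow \mathcal{N}\noloc \bar{f}_*$, where $\mathcal{N}=\on{Ind}\on{Fun}(S^{n-1},\on{RMod}_R^{\on{c}})$ denotes the locally-compact generic stalk. Applying (the weak version of) \Cref{prop:loccy}(2) at each vertex reduces the problem to (i) producing a weak right $n$-Calabi--Yau structure on $\bar{f}^*$ that restricts on $\mathcal{N}$ to a weak right $(n-1)$-Calabi--Yau structure and (ii) achieving the sign compatibility of restrictions across every internal edge.

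For (i) I would verify the hypotheses of \Cref{prop:sphericalrightCY}. The source $\on{RMod}_R$ is compactly generated, proper, and has Serre functor equivalent to the identity, so its shifted Serre functor is the shift $[1-n]$. The category $\mathcal{N}$ is compactly generated by construction, and proper since $S^{n-1}$ admits a finite cell structure: for any $X,Y\in \on{Fun}(S^{n-1},\on{RMod}_R^{\on{c}})$ the morphism object $\on{Mor}_{\mathcal{N}}(X,Y)$ is built by finite (co)limits from perfect $R$-modules, hence is itself perfect. The adjunction $\bar{f}^*\dashv\bar{f}_*$ inherits sphericalness and twist from $f^*\dashv f_*$, and the direct computation $\bar{f}_*\bar{f}^*(R)\simeq R^{S^{n-1}}\simeq R\oplus R[1-n]$ identifies the twist of $\bar{f}^*$ with $[1-n]$, which matches the shifted Serre functor $\on{id}_{\on{RMod}_R}^*[1-n]$. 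The only remaining input to \Cref{prop:sphericalrightCY} is a weak right $(n-1)$-Calabi--Yau structure on $\mathcal{N}$.

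This last construction is the main obstacle. Since $n$ is even, $S^{n-1}$ is an orientable closed manifold of odd dimension $n-1$, and a Poincaré-type duality for local systems with perfect fibre should supply a Serre functor on $\mathcal{N}$ equivalent to the shift $[n-1]$. Concretely, I would exploit the equivalence $\mathcal{N}\simeq \on{Ind}\on{RMod}_{R[t_{n-2}]}^{\on{fin}}$ to carry out a computation analogous to that of \Cref{lem:periodicCY}, replacing the Laurent base $k[t_n^{\pm}]$ by the polynomial base $R[t_{n-2}]$ and controlling the relevant $\on{Ext}$-groups between compact objects such as the constant local system $R$ and the free module $R[t_{n-2}]$. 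The resulting identification $\on{id}_{\mathcal{N}}^*\simeq [n-1]$ furnishes the required Hochschild class $R[n-1]\to \HH(\mathcal{N})^*$, producing the weak right $(n-1)$-Calabi--Yau structure.

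With the local weak right $n$-Calabi--Yau structures in hand, point (ii) is dealt with exactly as in the even orientable case of \Cref{thm:ginzburgcy}: the shifts $[-i]$ appearing in the definition of $\mathrm{R}^n_{\bar{f}^*}$ in \Cref{prop:loccy} cause the restrictions to the incident edges at a vertex to alternate in sign, and an orientation of $\rgraph$ supplies a consistent choice of sign at each vertex arranging the compatibility $\eta_{e,a}\simeq -\eta_{e,b}$ at every internal edge. The weak variant of \Cref{thm:schobercy}(ii) then yields the desired relative weak right $n$-Calabi--Yau structure on $\prod_{e\in\rgraph_1^\partial}\on{ev}_e\colon \cptglsec(\rgraph,\mathcal{F}_{\rgraph}(R))\to \prod_{e\in \rgraph_1^\partial}\mathcal{F}_{\rgraph}(R)(e)$.
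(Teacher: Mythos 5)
Your overall skeleton coincides with the paper's: reduce via the weak variant of \Cref{thm:schobercy}(ii) to local weak right $n$-Calabi--Yau structures supplied by \Cref{prop:loccy}(2), obtain the needed structure on $\bar f^*$ from \Cref{prop:sphericalrightCY}, and handle the sign compatibility at internal edges exactly as in the even orientable case of \Cref{thm:ginzburgcy}. The paper's proof is literally ``the proof of \Cref{thm:ginzburgcy} translates, using \Cref{lem:cyf*}'', and \Cref{lem:cyf*} is precisely your items (i)--(ii) for $\bar f^*$. Where you genuinely diverge is in producing the weak right $(n-1)$-Calabi--Yau structure on the stalk $\N=\on{Ind}\on{Fun}(S^{n-1},\on{RMod}_R^{\on{perf}})$. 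The paper does this with no bimodule computation at all: it applies $\on{Fun}(\mhyphen,\on{RMod}_R^{\on{perf}})$ to the pushout $S^{i+1}\simeq \ast\amalg_{S^i}\ast$, obtaining a pullback square of compactly generated proper $R$-linear $\infty$-categories, and runs an induction on $i$ in which \Cref{thm:rightCYglue} produces the absolute weak right $(i+1)$-structure on the $(i+1)$-sphere category from two copies of the relative structure on $\bar f^*$ over $S^i$, and \Cref{prop:sphericalrightCY} then upgrades this to the relative structure on $\bar f^*$ over $S^{i+1}$. This route also shows, in passing, that evenness of $n$ plays no role at this step (the lemma holds for all $n\geq 1$); it only matters for the sign bookkeeping on $\rgraph$.

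Your replacement of this induction by a direct Serre functor computation is plausible but is vague exactly where the work lies, and the stated analogy with \Cref{lem:periodicCY} does not transfer directly. In \Cref{lem:periodicCY} the algebra $k[t_n^{\pm}]$ is smooth \emph{and} proper over the base, so one computes the inverse dualizing bimodule $A^!=\on{RHom}_{A^e}(A,A^e)$ and inverts it to get the Serre functor. Here $A=R[t_{n-2}]$ is smooth but not proper over $R$ (it is not perfect as an $R$-module), and the proper category $\N=\on{Ind}\on{RMod}_A^{\on{fin}}$ is not the one on which $A^!$ acts as an inverse Serre functor; moreover the free module $A$, which you list among the ``compact objects'' to be tested, is compact in $\on{RMod}_A$ but is \emph{not} a finite (hence not a compact) object of $\N$. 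The correct bridge is \Cref{lem:inverseSerreforsmooth}: from the standard two-term bimodule resolution of the free algebra one gets $\on{id}_{\on{RMod}_A}^!\simeq[1-n]$, whence $\on{Mor}(X,Y)^*\simeq\on{Mor}(Y,X[n-1])$ for $X$ compact and $Y$ finite, and then $[n-1]$ is a Serre functor on $\N$ by \Cref{lem:Serreunique}. With that step made precise your argument closes; as written, the justification of the key input to \Cref{prop:sphericalrightCY} is the one place where your proposal falls short of a proof.
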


\begin{proof}
The proof of \Cref{thm:ginzburgcy} directly translates by using \Cref{lem:cyf*}.
\end{proof}

The adjunction $f^*\dashv f_*$ restricts to the adjunction of proper $R$-linear $\infty$-categories:
\[ \bar{f}^*\colon \on{RMod}_R \longleftrightarrow \on{Ind}\on{Fun}(S^{n-1},\on{RMod}_{R}^{\on{perf}})\noloc \bar{f}_*\,.\]
We note that
\[ \on{Fun}(S^{n-1},\on{RMod}_{R}^{\on{perf}})\simeq \on{Fun}(S^{n-1},\on{RMod}_{R})^{\on{fin}}\,.\]

\begin{lemma}\label{lem:cyf*}
For any $n\geq 1$, the $R$-linear $\infty$-category $\on{Ind}\on{Fun}(S^{n-1},\on{RMod}_{R}^{\on{perf}})$ admits a weak right $(n-1)$-Calabi--Yau structure and the functor $\bar{f}^*$ admits a compatible weak right $n$-Calabi--Yau structure.
\end{lemma}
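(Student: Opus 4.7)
The plan is to apply Proposition~\Cref{prop:sphericalrightCY} to the restricted spherical adjunction $\bar{f}^* \dashv \bar{f}_*$. First I would verify that this restricted adjunction is indeed spherical between compactly generated, proper $R$-linear $\infty$-categories. Sphericalness is inherited from $f^* \dashv f_*$ once one checks that the twist and cotwist functors preserve the relevant subcategories of ($\on{Ind}$-)perfect objects, and compact generation of $\on{Ind}\on{Fun}(S^{n-1}, \on{RMod}_R^{\on{perf}})$ is built in. Properness of this codomain follows from the observation that for $X, Y \in \on{Fun}(S^{n-1}, \on{RMod}_R^{\on{perf}})$ the morphism object is a finite limit
\[
\on{Mor}(X, Y) \simeq \lim\nolimits_{S^{n-1}} \on{Mor}_R(X(\mhyphen), Y(\mhyphen))
\]
of perfect $R$-modules and hence itself perfect; properness of $\on{RMod}_R$ is immediate.

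Next I would carry out the twist computation on $\on{RMod}_R$: since $f_* f^*(M) \simeq \on{Map}(S^{n-1}, R) \otimes M \simeq M \oplus M[1-n]$ with the unit being inclusion of the first summand, the twist functor $T_{\on{RMod}_R}$ is the shift $[1-n]$. Because the Serre functor $\on{id}_{\on{RMod}_R}^*$ is trivial, this coincides with $\on{id}_{\on{RMod}_R}^*[1-n]$, as required by Proposition~\Cref{prop:sphericalrightCY}.

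The main step is producing the weak right $(n-1)$-Calabi--Yau structure on $\on{Ind}\on{Fun}(S^{n-1}, \on{RMod}_R^{\on{perf}})$, i.e.~identifying its Serre functor with $[n-1]$. I would reduce this to Poincar\'e duality for $S^{n-1}$: under the identification $\on{Fun}(S^{n-1}, \on{RMod}_R) \simeq \on{RMod}_A$ with $A = \on{Map}(\Sigma^\infty_+ S^{n-1}, R)$ the $R$-linear cochain algebra, the Serre duality $\on{Mor}(X, Y)^* \simeq \on{Mor}(Y, X[n-1])$ translates into an equivalence of $A$-bimodules $A \simeq A^\vee[1-n]$, which encodes the orientability of $S^{n-1}$. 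An alternative, more hands-on verification checks Serre duality on compact generators of the form $\bar{f}^*(M)$ using the splitting $\bar{f}_*\bar{f}^*(M) \simeq M \oplus M[1-n]$, with care in the degenerate cases $n=1$ (where $S^0$ is disconnected and the category decomposes as a product of two copies of $\on{RMod}_R$, each $0$-Calabi--Yau) and $n=2$ (where the constant local systems do not suffice as compact generators).

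Finally, Proposition~\Cref{prop:sphericalrightCY} produces a weak right $n$-Calabi--Yau structure on $\bar{f}^*$, compatible with the chosen structure on the codomain: this compatibility is visible from the proof of Lemma~\Cref{lem:CYfromspherical}, since the produced relative dual Hochschild class restricts by construction to the Calabi--Yau class on $\on{Ind}\on{Fun}(S^{n-1}, \on{RMod}_R^{\on{perf}})$, up to replacing the latter by a scaling via an autoequivalence that can be absorbed into the choice of Calabi--Yau structure. I expect the hard part to be the Serre functor computation on the codomain; the remaining verifications are essentially bookkeeping within the formalism already developed.
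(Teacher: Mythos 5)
Your proposal is correct in outline and agrees with the paper on the second half: both arguments compute the twist functor of $\bar f^*\dashv \bar f_*$ to be $[1-n]$ and then invoke \Cref{prop:sphericalrightCY} to upgrade an absolute weak right $(n-1)$-Calabi--Yau structure on $\on{Ind}\on{Fun}(S^{n-1},\on{RMod}_R^{\on{perf}})$ to a compatible relative one on $\bar f^*$ (and your remark that the compatibility only holds up to an autoequivalence absorbed into the choice of absolute structure matches the $\beta'$ appearing in the proof of \Cref{lem:CYfromspherical}). Where you genuinely diverge is in producing that absolute structure. You propose to identify the Serre functor with $[n-1]$ directly via Poincar\'e/Atiyah duality for the sphere, e.g.\ as a bimodule self-duality $A^\vee\simeq A[n-1]$ of the cochain algebra; this is true and geometrically transparent, but it requires an honest duality computation in the spectral setting and, as you note yourself, separate treatment of $S^0$ (disconnected) and $S^1$ (not simply connected, so constant local systems do not generate and the cochain model misses the monodromy) --- cases you flag but do not actually carry out. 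The paper instead runs an induction on the dimension of the sphere: applying $\on{Fun}(\mhyphen,\on{RMod}_R^{\on{perf}})$ to the hemisphere pushout $S^{i+1}\simeq \ast\amalg_{S^i}\ast$ yields the pullback square \eqref{funpueq}, and \Cref{thm:rightCYglue} glues two copies of the relative structure on $\bar f^*\colon\on{RMod}_R\to\on{Ind}\on{Fun}(S^i,\on{RMod}_R^{\on{perf}})$ into the absolute structure one dimension up, alternating with \Cref{prop:sphericalrightCY} at each stage. The paper's route stays entirely inside the gluing formalism and never verifies orientability or any duality of bimodules by hand; your route makes the underlying Poincar\'e duality explicit at the cost of a computation whose low-dimensional cases still need to be filled in.
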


\begin{proof}
The adjunction $f^*\dashv f_*$ is spherical with twist functor $T_{\on{RMod}_R}\simeq [1-n]$, see \cite{Chr20}, the same thus holds for $\bar{f}^*\dashv \bar{f}_*$. 
By \Cref{prop:sphericalrightCY}, the functor $\bar{f}^*$ thus admits a weak right $n$-Calabi--Yau structure if $\on{Ind}\on{Fun}(S^{n-1},\on{RMod}_{R}^{\on{perf}})$ admits a weak right $n$-Calabi--Yau structure. 

Applying $\on{Fun}(\mhyphen,\on{RMod}_R^{\on{perf}})$ to the following pushout diagram of spaces
\[
\begin{tikzcd}
S^i \arrow[r, "f"] \arrow[d, "f"] \arrow[rd, "\ulcorner", phantom] & \ast \arrow[d] \\
\ast \arrow[r]                                                     & S^{i+1}       
\end{tikzcd}
\]
and $\on{Ind}$-completing, we obtain the pullback diagram of compactly generated $R$-linear $\infty$-categories 
\begin{equation}\label{funpueq}
\begin{tikzcd}
{\on{Ind}\on{Fun}(S^{i+1},\on{RMod}_{R}^{\on{perf}})} \arrow[r] \arrow[d] \arrow[rd, "\lrcorner", phantom] & \on{RMod}_R \arrow[d, "\bar{f}^*"]           \\
\on{RMod}_R \arrow[r, "\bar{f}^*"]                                                                            & {\on{Ind}\on{Fun}(S^{i},\on{RMod}_{R}^{\on{perf}})}
\end{tikzcd}
\end{equation}
Applying \Cref{thm:rightCYglue} to \eqref{funpueq}, it now follows by induction on $i$ that $\on{Ind}\on{Fun}(S^{i},\on{RMod}_{R}^{\on{perf}})$ admits a weak right $i$-Calabi--Yau structure and that $\bar{f}^*\colon \on{RMod}_R \to \on{Ind}\on{Fun}(S^{i},\on{RMod}_{R}^{\on{perf}})$ admits a weak right $(i+1)$-Calabi--Yau structure, concluding the proof.
\end{proof}

\bibliography{biblio} 
\bibliographystyle{alpha}

\textsc{MC: Mathematisches Institut, Universität Bonn, Endenicher Allee 60, 53115 Bonn, Germany.}

\textit{Email address:} \texttt{christ@math.uni-bonn.de}

\end{document}